\documentclass[10pt,leqno]{amsart}
\usepackage{graphicx}
\baselineskip=16pt

\usepackage{indentfirst,csquotes}

\topmargin= .5cm
\textheight= 20cm
\textwidth= 32cc
\baselineskip=16pt

\evensidemargin= .9cm
\oddsidemargin= .9cm

\usepackage{latexsym}
\usepackage{amsfonts}
\usepackage{float}
\usepackage{listings}
\usepackage{url}

\usepackage{tikz}
\usetikzlibrary{calc}
\usetikzlibrary{shapes.arrows}

\usepackage{amssymb,amsthm,amsmath}
\usepackage{xcolor,paralist,hyperref,fancyhdr,etoolbox}

\newtheorem{theorem}{Theorem}
\newtheorem*{theorem*}{Theorem}
\newtheorem{lemma}{Lemma}
\newtheorem*{lemma*}{Lemma}

\newtheorem{remark}[theorem]{Remark}
\newtheorem*{definition*}{Definition}

\hypersetup{ colorlinks=true, linkcolor=black, filecolor=black, urlcolor=black }

\floatstyle{boxed}

\numberwithin{table}{section}
\numberwithin{figure}{section}
\numberwithin{equation}{section}
\numberwithin{theorem}{section}
\numberwithin{lemma}{section}

\lstdefinelanguage{mycode}{
	morekeywords=[1]{	function, sym, end, if, else, intval, eval, floor, ceil, disp, repmat, error,
	    isspd, num2str, mid, length, format, long, zeros, ones, for,
	    simplifyFraction, subs, int, diff,},
	sensitive=true,
	morecomment=[l]{\%},
	morestring=[d]{'}{'},
	morestring=[d]{'},
	morestring=[b]{"},
}
\lstset{
	language=mycode,
	breaklines = false,
	basicstyle = \ttfamily\footnotesize,
	commentstyle = {\itshape},
	classoffset = 0,
	keywordstyle = {\ttfamily\bfseries},
 	stringstyle = {},
	frame = tbrl,
	frameround = tttt,
	keywordstyle = [1]{\ttfamily\bfseries},
	framesep = 5pt,
	tabsize = 4,
	captionpos = t,
	showstringspaces=false,
	columns=fixed,
	basewidth=0.6em,	
}

\allowdisplaybreaks

\begin{document}

\title[Remarkable upper bounds for interpolation error constants on triangles]{Remarkable upper bounds for interpolation error constants on triangles}
\author[K Kobayashi]{Kenta Kobayashi${}^\dagger$}
\date{\today}

\maketitle

\let\thefootnote\relax
\footnotetext{MSC2020: 65D05(Primary), 41A44(Secondary), 65N30, 41A44} 
\footnotetext{${}^\dagger$Graduate School of Business Administration, Hitotsubashi Univeristy}
\footnotetext{\address{2-1, Naka, kunitachi, Tokyo, 186-8601, Japan}}
\footnotetext{\email{kenta.k@r.hit-u.ac.jp}}

\begin{abstract}
We introduce remarkable upper bounds, which are sharp and given by simple formulas, for the interpolation error constants on triangles.
These constants are crucial for analyzing interpolation errors, particularly those associated with the finite element method.
We prove boundedness via a numerical verification method and asymptotic analysis.
The proof process used here can be applied to various other norm inequalities.
\end{abstract} 

\bigskip

\section{Introduction} \label{Introduction}
The analysis of interpolation error is important in many applications, such as approximation theory and error estimation for the solution of the finite element method.
To estimate interpolation error, we have to obtain the upper bounds of the constants that appear in the corresponding norm inequalities, referred to as the interpolation error constants.

Let $T$ be a given triangle in $\mathbb{R}^2$ and define the function spaces
$V^{1,1}(T),V^{1,2}(T)$, and $V^2(T)$ as follows:
\begin{align*}
	V^{1,1}(T)&=\left\{\varphi\in H^1(T)\; \Big| \; \int_T\varphi\,dxdy=0\right\}, \\
	V^{1,2}(T)&=\left\{\varphi\in H^1(T)\; \Big| \; \int_{\gamma_k}\varphi\,ds=0,\quad k=1,2,3\right\}, \\
	V^2(T)&=\left\{\varphi\in H^2(T)\; \Big| \;\varphi(p_k)=0,\quad k=1,2,3\right\},
\end{align*}
where $p_1,p_2,p_3$ and $\gamma_1,\gamma_2,\gamma_3$ are the vertices and the edges of $T$, respectively.
Under these settings, it is known that the following interpolation error constants
$C_1(T),C_2(T),C_3(T)$, and $C_4(T)$ exist.
\begin{align*}
	C_1(T)&=\sup_{u\in V^{1,1}(T)\setminus 0}\frac{\|u\|_{L^2(T)}}{\|\nabla u\|_{L^2(T)}}, &
	C_2(T)&=\sup_{u\in V^{1,2}(T)\setminus 0}\frac{\|u\|_{L^2(T)}}{\|\nabla u\|_{L^2(T)}}, \\
	C_3(T)&=\sup_{u\in V^2(T)\setminus 0}\frac{\|u\|_{L^2(T)}}{|u|_{H^2(T)}}, &
	C_4(T)&=\sup_{u\in V^2(T)\setminus 0}\frac{\|\nabla u\|_{L^2(T)}}{|u|_{H^2(T)}},
\end{align*}
where $|\cdot|_{H^k(\Omega)}$ denotes the $H^k$ semi-norm (defined later).

Here, we obtain the following remarkable formulas for the upper bounds of the above error constants.
\begin{theorem} \label{formula}
Let $T$ be an arbitrary triangle. Then, 
\begin{align*}
	C_1(T)&<K_1(T)=\sqrt{\frac{A^2+B^2+C^2}{28}-\frac{S^4}{A^2B^2C^2}}, \\
	C_2(T)&<K_2(T)=\sqrt{\frac{A^2+B^2+C^2}{54}-\frac{S^4}{2A^2B^2C^2}}, \\
	C_3(T)&<K_3(T)=\sqrt{\frac{A^2B^2+B^2C^2+C^2A^2}{83}-\frac{1}{24}\left(\frac{A^2B^2C^2}{A^2+B^2+C^2}+S^2\right)}, \\
	C_4(T)&<K_4(T)=\sqrt{\frac{A^2B^2C^2}{16S^2}-\frac{A^2+B^2+C^2}{30}-\frac{S^2}{5}\left(\frac{1}{A^2}+\frac{1}{B^2}+\frac{1}{C^2}\right)},
\end{align*}
hold, where $A,B,C$ are the edge lengths of $T$ and $S$ is the area of $T$.
\end{theorem}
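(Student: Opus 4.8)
The plan is to realize each constant as the square root of the largest eigenvalue of an associated self-adjoint generalized eigenvalue problem, to collapse the dependence on $T$ down to two shape parameters by an affine change of variables, and then to establish the strict inequality on a compact set of shapes by the numerical verification method while treating the degenerate limits by asymptotic analysis. Concretely, each $C_i(T)^2$ is the largest eigenvalue $\lambda_{\max}$ of the problem $N_T(u,\cdot)=\lambda\,D_T(u,\cdot)$, where $N_T$ and $D_T$ are the bilinear forms appearing in the numerator and denominator of the corresponding Rayleigh quotient, taken over the relevant constrained space. For $C_1,C_2$ this is a Poincar\'e--Neumann (second-order) problem on a mean-zero subspace, while for $C_3,C_4$ it is a biharmonic-type (fourth-order) problem with the point/edge constraints built in. Equivalently, the bound $C_i<K_i$ is the assertion that the quadratic form $K_i(T)^2 D_T-N_T$ is positive definite on the constrained space, i.e.\ that its smallest eigenvalue is strictly positive.

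Next I would map $T$ onto a fixed reference triangle $\hat T$ by an affine transformation. Under this pullback, $D_T$ and $N_T$ become forms on $\hat T$ whose dependence on $T$ enters only through a symmetric positive-definite $2\times2$ matrix (the pullback metric) together with a Jacobian factor. Since both numerator and denominator are covariant under similarity transformations, the ratio $C_i(T)/K_i(T)$ is invariant under scaling, rotation, translation and reflection, and is therefore a function on the two-dimensional moduli space of triangle shapes. Using the symmetry group of $\hat T$, I would restrict the two shape parameters to a fundamental domain $\mathcal D$, whose closure adds only the degenerate shapes where $T$ collapses to a segment.

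On a compact interior region of $\mathcal D$, where the eigenvalue problems stay uniformly well conditioned, I would apply the numerical verification method: cover the region by finitely many parameter boxes and, on each box, use interval arithmetic together with a verified \emph{lower} bound for the smallest eigenvalue of $K_i(T)^2 D_T-N_T$ (the hard direction, obtained for instance via a Lehmann--Goerisch or Liu-type enclosure after a polynomial Rayleigh--Ritz discretization with rigorous control of the infinite-dimensional remainder), thereby certifying $C_i<K_i$ uniformly on that box. For the remaining boundary layer I would expand both $C_i$ and $K_i$ asymptotically in the degeneration parameter. The formulas $K_i$ are evidently designed so that $C_i/K_i\to 1$ in the collapsing limit, which is exactly the claimed sharpness; the task is then to compute enough terms of the expansion to verify that the next-order correction has the sign forcing $C_i<K_i$ strictly in a neighborhood of the degenerate boundary.

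The main obstacle I expect is the junction between the two regimes. The near-degenerate triangles are precisely where $C_i$ is largest, where the discretized eigenvalue problem is most ill-conditioned, and where the bound is tightest, so the numerical verification loses reliability exactly where the inequality is hardest to establish. Making the asymptotic analysis quantitative and uniform all the way down to the boundary, pushing it to the order at which the strict inequality first becomes visible, and then certifying that the asymptotically controlled neighborhood genuinely overlaps the numerically verified compact region, is the delicate heart of the argument.
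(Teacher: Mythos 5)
Your high-level scaffolding (reduce to a two-parameter family of shapes, certify a compact region by verified computation, treat the degenerate boundary separately) matches the paper, but the two load-bearing steps are missing or would fail as described.

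First, the reduction from the infinite-dimensional Rayleigh quotient to something a computer can certify. You defer this to ``a Lehmann--Goerisch or Liu-type enclosure after a polynomial Rayleigh--Ritz discretization with rigorous control of the infinite-dimensional remainder.'' For $C_1,C_2$ such machinery exists, but for $C_3,C_4$ the relevant quotient involves the full $H^2$ semi-norm with vertex constraints, and rigorous lower spectral bounds with explicit remainder control are exactly the hard, nonstandard part; naming the framework does not supply it. The paper avoids this entirely with a structural trick: split $T$ into $n^2$ self-similar copies, interpolate piecewise by the enriched Crouzeix--Raviart/Morley-type projections $\Pi^{(\alpha)},\Pi^{(\beta)}$, and use their orthogonality (Lemmas 3.2, 3.3) plus the scaling $C_j(\tau_k)=C_j(T)/n$ (or $/n^2$) to get the self-referential bound $C_j(T)^2\le C_j^{(n)}(T)^2+C_j(T)^2/n^2$, hence $C_j(T)\le\sqrt{n^2/(n^2-1)}\,C_j^{(n)}(T)$, where $C_j^{(n)}(T)$ is a genuinely finite-dimensional generalized eigenvalue problem (Theorem 4.1; note the extra twist that the $C_4$ bound involves $C_2$, so the constants must be handled in the right order). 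Positive definiteness of $\lambda'B-A$ is then checked by interval arithmetic at finitely many shapes, and explicit perturbation theorems (Theorems 7.1, 7.2, with hand-proved derivative bounds on $K_j(T_{a,b})^2$) extend the inequality to all intermediate shapes --- functionally similar to your ``boxes,'' but with the inflation factors built into what is verified at the grid points.

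Second, the degenerate boundary. Your plan rests on the premise that $C_i/K_i\to1$ in the collapsing limit, so that one must extract the sign of a next-order correction. That premise is false for $i=1,2,3$ (the paper's own tables show, e.g., $\widetilde C_1/K_1\approx0.90$ for $T_{1/2,1/10}$); it is true only for $i=4$, where the leading term $(ABC/4S)^2$ is exact. Moreover the paper never performs an asymptotic expansion with remainder control. For $j=1,2,3$ it uses a monotonicity argument: flattening the triangle ($b\mapsto\eta b$, $\eta\le1$) can only decrease $C_j$ (Lemma 9.1), while $L_j(a,b)$ stays strictly above its $b\downarrow0$ limit (Lemma 9.2), so a single verified inequality at $b=1/10$ against $\lim_{y\downarrow0}L_j(a,y)$ covers all $0<b\le1/10$. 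For $j=4$ it combines the $n=2$ subdivision bound with an exact algebraic inequality --- the nonnegativity of a $14$-variable polynomial established by an explicit sum-of-squares-type decomposition (Lemma 14.9) valid uniformly for $0<b\le1/10$. Without substitutes for these two mechanisms, your ``delicate heart of the argument'' remains an acknowledged hole rather than a proof.
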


\medskip

As we will show in Section~\ref{Numerical_Results}, the upper bounds obtained by Theorem~\ref{formula} are sufficiently sharp for practical applications.
Moreover, $K_j(T)$ is convenient for practical calculations since these formulas consist of just four arithmetic operations and
the square root.

We now explain the relationship between interpolation error and an interpolation error constant.
For $u\in H^1(T)$, let $\Pi^{(P0)}u$ be a constant function such that the mean value on triangle $T$ is equal to that of $u$; then,
$$
	\|u-\Pi^{(P0)}u\|_{L^2(T)}\le C_1(T)\|\nabla u\|_{L^2(T)}
$$
holds.
In addition, for $u\in H^2(T)$, let $\Pi^{(P1)}u$ be a linear function whose values coincide with those of $u$ at each vertex of triangle $T$;
then, the following estimate holds.
\begin{align*}
	\|u-\Pi^{(P1)}u\|_{L^2(T)}&\le C_3(T)|u|_{H^2(T)}, \\
	\|\nabla(u-\Pi^{(P1)}u)\|_{L^2(T)}&\le C_4(T)|u|_{H^2(T)}.
\end{align*}
$C_2(T)$ is related to Crouzeix-Raviart interpolation: for $u\in H^1(T)$, let $\Pi^{(CR)}u$ be a linear function
such that the average value on each edge of $T$ coincides with that of $u$; then,
\begin{equation}
	\|u-\Pi^{(CR)}u\|_{L^2(T)}\le C_2(T)\|\nabla(u-\Pi^{(CR)}u)\|_{L^2(T)}\le C_2(T)\|\nabla u\|_{L^2(T)} \label{CR}
\end{equation}
holds.
The second inequality in \eqref{CR} can be shown using the divergence theorem and a proof similar to that in Lemma~\ref{projection1},
which will be discussed later. Moreover, for $u\in H^2(T)$, using the fact that $(u-\Pi^{(CR)}u)_x, (u-\Pi^{(CR)}u)_y\in V^{1,1}(T)$
and \eqref{CR}, the following inequalities also hold.
\begin{align*}
	\|u-\Pi^{(CR)}u\|_{L^2(T)}&\le C_1(T)C_2(T)|u|_{H^2(T)}, \\
	\|\nabla(u-\Pi^{(CR)}u)\|_{L^2(T)}&\le C_1(T)|u|_{H^2(T)}.
\end{align*}

\begin{remark}\upshape
Here, we explain how we derived the formulas $K_j(T)$ presented in Theorem~\ref{formula}.
First, we prepared approximate values of $C_j(T)$ with hundreds of concrete triangles.
Then, we tried to find simple polynomial expressions with $S^2$ and symmetric polynomials of $A^2,B^2,C^2$ using the least-square method that
fit the approximated values of $C_j(T)$.
Note that for $(ABC/S)^2$, the leading term of $K_4(T)$, with a coefficient of $1$, is optimal and cannot be decreased.
This term is related to the divergence rate of $C_4(T)$ when $T$ degenerates.
Although the other coefficients of each term in $K_j(T)$ are slightly adjustable, 
we decided to make the coefficients as simple as possible and convenient in practical applications.
\end{remark}

\begin{remark} \upshape
Theorem~\ref{formula} and its proof for certain concrete
triangles have been previously presented~\cite{Kobayashi}. However, the complete proof for
arbitrary triangles is given in this paper for the first time.
\end{remark}

The rest of this paper is organized as follows.  In Section~\ref{PreResult}, 
prior works on the upper bound or the approximation of $C_j(T)$ are reviewed.
In Section~\ref{Def_Pre}, definitions and preliminaries are given.
The relationship between $C_j(T)$ and specific finite-dimensional eigenvalue problems is explained in Section~\ref{RelationFDEP},
and its concrete matrix form is given in Section~\ref{Construction_FDGEP}. 
In Section~\ref{Positive_Definiteness}, we first show that the validation of the first eigenvalues reduces to verifying the positive definiteness of a specific matrix
and explain how this is accomplished using a numerical verification method.
Using the developed method, we show that $(1+\varepsilon)C_j(T) < K_j(T)$
holds for certain small $\varepsilon > 0$ on 12,168 concrete triangles.
In Section~\ref{Continuification}, we show that, when evaluations of interpolation error constants
 have been obtained for two triangles with very close shapes, we can obtain slightly milder evaluations
 on triangles with intermediate shapes between those of the two triangles.
In Section~\ref{Non_Degenerate}, using the results given in the previous two sections, we prove Theorem~\ref{formula} for triangles that are not significantly degenerate.
In Section~\ref{Degenerate}, we first show a kind of monotonicity
of $C_j(T),\; j=1,2,3$ when triangles degenerate.  Using these results
and the results in Section~\ref{Positive_Definiteness} and Section~\ref{Continuification}, $C_j(T) < K_j(T),\; j=1,2,3$ in Theorem~\ref{formula} is proved when triangles degenerate.
In Section~\ref{Asymptotic_Analysis}, $C_4(T) < K_4(T)$ in Theorem~\ref{formula} is proved 
for degenerate triangles using asymptotic analysis. 
In Section~\ref{Numerical_Results}, we give numerical examples that show the well-fitness
of the formulas $K_j(T)$.
In Section~\ref{Circumradius_Condition}, we describe how the expression of $K_4(T)$ is closely related to a property that we call the circumradius condition,
which has significant applications in error analysis for the finite element method.
In Section~\ref{Conclusion}, we give the concluding remarks.

Appendix~\ref{Appendix1} contains lemmas on the deformation of expressions and inequality evaluations
that are too complex to be included in the main text.
In the proofs of some theorems and lemmas, we employed a formula manipulation system and a numerical verification method.
To make the paper as self-contained as possible, we have listed the program codes used in this process in Appendix~\ref{Appendix2}.
The same programs can also be downloaded from the authors' GitHub repository~\cite{GitHub}.

To prove Theorem~\ref{formula}, this study uses many theorems and lemmas.
Their relationship is shown in Fig.~\ref{Block_diagram}.

\begin{figure}[!t]
	\centering
\begin{tikzpicture}[scale=0.85]
\draw[rounded corners](5,17) rectangle (7.1,18.1)node[pos=0.5,align=center]{Lemma\\3.1};
\draw[rounded corners](8,17) rectangle (10.1,18.1)node[pos=0.5,align=center]{Lemma\\3.3};

\draw[rounded corners](5,15) rectangle (7.1,16.1)node[pos=0.5,align=center]{Lemma\\3.2};
\draw[rounded corners](8,15) rectangle (10.1,16.1)node[pos=0.5,align=center]{Theorem\\4.1};

\draw[rounded corners](1,15) rectangle (3.1,16.1)node[pos=0.5,align=center]{Lemma\\A.1};

\draw[rounded corners](1,13) rectangle (3.1,14.1)node[pos=0.5,align=center]{Lemma\\A.2};
\draw[rounded corners](4,13) rectangle (6.1,14.1)node[pos=0.5,align=center]{Lemma\\A.4};

\draw[rounded corners](7,12) rectangle (9.1,13.1)node[pos=0.5,align=center]{Theorem\\7.2};
\draw[rounded corners](11,12) rectangle (13.1,13.1)node[pos=0.5,align=center]{Theorem\\8.1};

\draw[rounded corners](1,11) rectangle (3.1,12.1)node[pos=0.5,align=center]{Lemma\\A.5};
\draw[rounded corners](4,11) rectangle (6.1,12.1)node[pos=0.5,align=center]{Lemma\\7.1};

\draw[rounded corners](7,10) rectangle (9.1,11.1)node[pos=0.5,align=center]{Theorem\\7.1};

\draw[rounded corners](1,9) rectangle (3.1,10.1)node[pos=0.5,align=center]{Lemma\\A.7};
\draw[rounded corners](4,9) rectangle (6.1,10.1)node[pos=0.5,align=center]{Lemma\\A.6};

\draw[rounded corners](7,8) rectangle (9.1,9.1)node[pos=0.5,align=center]{Lemma\\9.1};
\draw[rounded corners](11,8) rectangle (13.1,9.1)node[pos=0.5,align=center]{Theorem\\9.1};

\draw[rounded corners](14,7.4) rectangle (16.1,9.7)node[pos=0.5,align=center]{Theorem\\1.1\\(Main\;\;\\\;\;Result)};

\draw[rounded corners](1,7) rectangle (3.1,8.1)node[pos=0.5,align=center]{Lemma\\A.3};

\draw[rounded corners](4,6) rectangle (6.1,7.1)node[pos=0.5,align=center]{Lemma\\A.8};
\draw[rounded corners](7,6) rectangle (9.1,7.1)node[pos=0.5,align=center]{Lemma\\9.2};

\draw[rounded corners](1,4) rectangle (3.1,5.1)node[pos=0.5,align=center]{Lemma\\A.10};
\draw[rounded corners](4,4) rectangle (6.1,5.1)node[pos=0.5,align=center]{Lemma\\A.11};
\draw[rounded corners](7,4) rectangle (9.1,5.1)node[pos=0.5,align=center]{Lemma\\A.9};
\draw[rounded corners](11,4) rectangle (13.1,5.1)node[pos=0.5,align=center]{Theorem\\10.1};

\draw[rounded corners](5,2) rectangle (7.1,3.1)node[pos=0.5,align=center]{Lemma\\5.1};
\draw[rounded corners](8,2) rectangle (10.1,3.1)node[pos=0.5,align=center]{Theorem\\6.1};

\draw[rounded corners](5,0) rectangle (7.1,1.1)node[pos=0.5,align=center]{Lemma\\5.2};
\draw[rounded corners](8,0) rectangle (10.1,1.1)node[pos=0.5,align=center]{Theorem\\6.2};

\draw[->,thick](7.1,17.55)--(7.9,17.55); 
\draw[->,thick](7.05,17.05)--(7.95,16.15); 
\draw[->,thick](9.05,17)--(9.05,16.2); 
\draw[->,thick](7.1,15.55)--(7.9,15.55); 

\draw[->,thick,rounded corners=10pt](1.05,15.05)--(0.5,14.5)--(0.5,10.6)--(0.95,10.15); 
\draw[->,thick](2.05,15)--(2.05,14.2); 
\draw[->,thick](3.1,13.55)--(3.9,13.55); 
\draw[->,thick](2.05,13)--(2.05,12.2); 
\draw[->,thick](5.05,13)--(5.05,12.2); 
\draw[->,thick](9.1,12.55)--(10.9,12.55); 
\draw[->,thick](6.1,11.7)--(6.9,12.4); 
\draw[->,thick](6.1,11.4)--(6.9,10.7); 
\draw[->,thick](9.1,10.7)--(10.95,12); 
\draw[->,thick](3.1,11.55)--(3.9,11.55); 
\draw[->,thick](3.05,10.05)--(3.95,10.95); 
\draw[->,thick](3.05,8.05)--(3.95,8.95); 
\draw[->,thick](5.05,10.1)--(5.05,10.9); 
\draw[->,thick](9.1,10.4)--(10.95,9.1); 
\draw[->,thick](9.1,8.55)--(10.9,8.55); 
\draw[->,thick](9.1,6.7)--(10.95,8); 
\draw[->,thick](6.1,6.55)--(6.9,6.55); 
\draw[->,thick](3.1,4.55)--(3.9,4.55); 
\draw[->,thick](6.1,4.55)--(6.9,4.55); 
\draw[->,thick](9.1,4.55)--(10.9,4.55); 

\draw[->,thick](7.1,2.55)--(7.9,2.55); 
\draw[->,thick](7.05,2.05)--(7.95,1.15); 
\draw[->,thick](7.1,0.55)--(7.9,0.55); 
\draw[->,thick](7.05,1.05)--(7.95,1.95); 

\draw[->,thick,rounded corners=10pt](9.5,15)--(9.5,6.6)--(10.95,5.15); 
\draw[->,thick,rounded corners=10pt](10,3.1)--(10,10.1)--(11.7,11.9); 
\draw[->,thick,rounded corners=10pt](10,15)--(10,11.4)--(11.7,9.2); 
\draw[->,thick,rounded corners=10pt](10.05,1.05)--(10.5,1.5)--(10.5,6.5)--(11.7,7.9); 
\draw[->,thick,rounded corners=10pt](10.1,15.55)--(11.7,15)--(11.7,13.2); 
\draw[->,thick,rounded corners=10pt](7.05,0.05)--(7.6,-0.5)--(10.5,-0.5)--(11.7,0.7)--(11.7,3.9); 

\draw[->,thick](13.05,12.05)--(14.2,9.8); 
\draw[->,thick](13.1,8.55)--(13.9,8.55); 
\draw[->,thick](12.2,8)--(12.2,5.2); 
\draw[->,thick](13.05,5.05)--(14.2,7.3); 

\end{tikzpicture}
	\caption{Block diagram of theorems and lemmas.}
	\label{Block_diagram}
\end{figure}

\section{Previously reported results} \label{PreResult}
In connection with the finite element method, there are numerous studies on the relation
between $C_4(T)$ and error estimates,
such as those on {\it a priori} error estimates~\cite{BabuskaAziz,BrennerScott,Ciarlet,KobayashiTsuchiya,KobayashiTsuchiya2,
Lehmann,LiuKikuchi,NakaoYamamoto,Zlamal}
 and those on {\it a posteriori} error estimates~\cite{BrennerScott,KikuchiSaito,LiuKikuchi}.

For the explicit upper bound for $C_4(T)$, Arcangeli and Gout~\cite{ArcangeliGout} obtained the following estimates.
$$
	C_4(T)\le \frac{3d(T)^2}{\rho(T)},
$$
where $d(T)$ is the diameter of $T$ and $\rho(T)$ is the diameter of the inscribed circle of $T$.
They also obtained the following upper bound for $C_3(T)$.
\begin{equation*}
	C_3(T)\le 3d(T)^2.
\end{equation*}
Meinguet and Descloux~\cite{MeinguetDescloux} improved this result and obtained
\begin{equation*}
	C_4(T)\le \frac{1.21d(T)^2}{\rho(T)}.
\end{equation*}
Natterer~\cite{Natterer} showed that $C_4(T)$ is bounded in terms of
$C_4(T_{0,1})$, where $T_{0,1}$ is an isosceles right triangle with edge lengths of $1,1$ and $\sqrt{2}$.
Specifically, they showed that
\begin{equation}
	C_4(T)\le C_4(T_{0,1})\cdot\frac{\alpha^2+\beta^2+\sqrt{\alpha^4+2\alpha^2\beta^2\cos2\theta+\beta^4}}
		{\sqrt{2(\alpha^2+\beta^2-\sqrt{\alpha^4+2\alpha^2\beta^2\cos2\theta+\beta^4})}},
\label{NattererEstimation}
\end{equation}
where $\alpha$ and $\beta$ are the lengths of two sides of $T$ and $\theta$ is an included angle (Fig.~\ref{Fig1}).
In the same paper, they proved $C_4(T_{0,1})\le0.81$.

\begin{figure}[t]
	\centering
\begin{tikzpicture}(line width=1.5pt, scale=1.0)
  \coordinate (A) at (0.0,0.0);
  \coordinate (B) at (4.2,1.2);
  \coordinate (C) at (-1.5,3.0);
  \draw[thick] (A) -- node[below]{$\beta$}(B) -- (C) -- node[left]{$\alpha$}(A);
  \coordinate (D) at (0.525,0.15);
  \coordinate (E) at (-0.25,0.5);
  \draw [bend right,thin] (D) to node[pos=0.38,above]{$\theta$} (E) ;
\end{tikzpicture}
	\caption{$\alpha,\beta$, and $\theta$ for triangle $T$.}
	\label{Fig1}
\end{figure}

Nakao and Yamamoto~\cite{NakaoYamamoto} proved that
$$
	C_4(T_{0,1})\le 0.4939
$$
using a numerical verification method. Kikuchi and Liu~\cite{KikuchiLiu2007} proved that
$C_4(T_{0,1})$ is bounded by the maximum positive solution of the transcendental equation for $\mu$:
$$
	\frac{1}{\mu}+\tan\frac{1}{\mu}=0
$$
and showed that
\begin{equation*}
	C_4(T_{0,1})\le 0.49293.
\end{equation*}
Moreover, Liu and Kikuchi~\cite{LiuKikuchi} proved that
\begin{equation}
	C_4(T)\le C_4(T_{0,1})\cdot\frac{1+\cos\theta}{\sin\theta}
\sqrt{\frac{\alpha^2+\beta^2+\sqrt{\alpha^4+2\alpha^2\beta^2\cos2\theta+\beta^4}}{2}}.
\label{LiuKikuchiEstimation}
\end{equation}

Note that the estimation \eqref{LiuKikuchiEstimation} is consistent with the maximum angle condition~\cite{BabuskaAziz}, whereas the estimation \eqref{NattererEstimation} is not.
In fact, if we fix $\beta$ and $\theta$ and let $\alpha\rightarrow0$, the right-hand side of 
\eqref{NattererEstimation} diverges to infinity, whereas the right-hand side of \eqref{LiuKikuchiEstimation}
remains bounded.

$C_1(T)$ is known as the Poincar\'e--Friedrichs constant.
Payne and Weinberger~\cite{PayneWeinberger} obtained
$$
	C_1(T)\le\frac{d(T)}{\pi}.
$$
This estimation is valid for any convex domain. For an arbitrary triangle $T$,
Laugesen and Siudeja~\cite{LaugesenSiudeja} obtained
\begin{equation}
	C_1(T)\le\frac{d(T)}{j_{1,1}},
\label{LaugesenSiudejaEstimation}
\end{equation}
where $j_{1,1}=3.83170597\dots$ denotes the first positive root of the Bessel function $J_1$.

On the other hand, Kikuchi and Liu~\cite{KikuchiLiu2007} proved that
$$
	C_1(T_{0,1})=\frac{1}{\pi}
$$
and
\begin{equation}
	C_1(T)\le C_1(T_{0,1})\sqrt{1+|\cos\theta|}\;\max(\alpha,\beta).
\label{KikuchiLiuEstimation2}
\end{equation}

There are only a few results for $C_2(T)$ itself.
However, $C_2(T)$ is bounded by the so-called Babu\v{s}ka-Aziz constant, whose existence was proved
by Babu\v{s}ka and Aziz\cite[Lemma 2.1]{BabuskaAziz}.
From the upper bound for the Babu\v{s}ka-Aziz constant obtained by Liu and Kikuchi~\cite{LiuKikuchi},
we have
$$
	C_2(T)\le 0.34856\sqrt{1+|\cos\theta|}\;\max(\alpha,\beta).
$$

For most triangles, our formulas $K_j(T)$ give better upper bounds than the preceding results.
The exception is that \eqref{LaugesenSiudejaEstimation}
or \eqref{KikuchiLiuEstimation2} provides a slightly lower value than that provided by $K_1(T)$ for some triangles.

There are some results on computing lower bounds of eigenvalues of elliptic operators~\cite{CarstensenGedicke,LiuOishi,LuoLinXie,Repin,SebestovaVejchodsky}
which can be applied to compute the upper bounds of $C_1(T)$ or $C_2(T)$.
Compared to these results, our method is only applicable to the triangular domain,
but it has the advantage that sharp upper bounds can be obtained through a simple implementation.

\section{Definitions and preliminaries} \label{Def_Pre}
In this section, we provide definitions of symbols and notations, as well as some preliminary lemmas.

\begin{figure}[t]
	\centering
\begin{tikzpicture}(scale=0.8)
  \coordinate [label=below:{$p_1$}](A) at (0.0,0.0);
  \coordinate [label=below:{$p_2$}](B) at (5.0,0.0);
  \coordinate [label=above:{$p_3$}](C) at (1.5,3.0);
  \draw [line width = 0.5pt](A) -- node[below]{$\gamma_3$}(B)
            -- node[pos=0.3,right=1mm]{$\gamma_1$}(C)
             -- node[left=1mm]{$\gamma_2$}(A);
  \coordinate (D) at (3.4,1.4);
  \coordinate (E) at (2.2,2.4);
  \coordinate (F) at (4.4,2.6);
  \draw [->,>=stealth, line width = 1.5pt]  (D) -- node[pos=0.62,below=2mm]{$\nu$}(E);
  \draw [->,>=stealth, line width = 1.5pt]  (D) -- node[pos=0.65,below=2mm]{$n$}(F);
\end{tikzpicture}
	\caption{Notation of vertices and edges of $T$.}
	\label{Fig1.5}
\end{figure}

For a given triangle $T$, let $p_1(T),p_2(T),p_3(T)$ be vertices of $T$ and
$\gamma_1(T), \gamma_2(T), \gamma_3(T)$ be edges $p_2(T)p_3(T),\, p_3(T)p_1(T),\, p_1(T)p_2(T)$, respectively.
Let $n(T)$ be the outer normal unit vector on $\partial T$,
$\nu(T)$ be the unit direction vector that takes a counterclockwise direction through $\partial T$, and
$ds(T)$ be the line element on $\partial T$ (Fig.~\ref{Fig1.5}).
We omit $``(T)"$ if there is no possibility of confusion.
We use Cartesian coordinates $(x,\,y)$ and the usual notation for the $L^2$ norm.
We define the $H^k$ semi-norm $|\cdot|_{H^k(T)}$ as
$
	|u|_{H^k(\Omega)}^2=\sum_{j=0}^k\binom{k}{j}\left\|\frac{\partial^ku}{\partial x^j\partial y^{k-j}}\right\|_{L^2(\Omega)}^2.
$
For functions, subscripts are used to indicate partial derivatives.
With $T_{a,b}$ denoting a triangle whose vertices are $(0,0)$,\;$(1,0)$, and $(a,b)$,
we define
\begin{equation}
	L_j(a,b)=K_j(T_{a,b})^2,\qquad j=1,2,3,4. \label{Ljab}
\end{equation}
In Appendix~\ref{Appendix1}, we use the notation $d_k=1-ka$.

Let $Q_\alpha$ and $Q_\beta$ denote the following polynomial spaces.
\begin{align*}
	Q_\alpha&=\Big\{ a_1(x^2+y^2) + a_2x + a_3y + a_4 \;\Big|\; a_1,\dots,a_4\in \mathbb{R}\Big\}, \\
	Q_\beta&=\Big\{ a_1x^2+a_2xy+a_3y^2 + a_4x + a_5y + a_6 \;\Big|\; a_1,\dots,a_6\in \mathbb{R}\Big\}.
\end{align*}
Note that both $Q_\alpha$ and $Q_\beta$ are invariant under constant shifts and rotations,
and thus they are independent of the choice of coordinates.
Let $\tau$ be a given triangle. We define two kinds of second-order interpolation, 
$\Pi^{(\alpha)}_\tau\varphi$ for $\varphi\in H^1(\tau)$ and
$\Pi^{(\beta)}_\tau\varphi$ for $\varphi\in H^2(\tau)$, on triangle $\tau$ as follows:
\begin{align*}
	&\left\{\begin{aligned}
		\Pi^{(\alpha)}_\tau&\varphi\in Q_\alpha, \\
		\int_{\gamma_k}\Pi^{(\alpha)}_\tau&\varphi\,ds=\int_{\gamma_k}\varphi\,ds,\qquad k=1,2,3, \\
		\iint_\tau \Pi^{(\alpha)}_\tau&\varphi\,dxdy=\iint_\tau\varphi\,dxdy,
	\end{aligned}\right. \\
	&\left\{\begin{aligned}
		\Pi^{(\beta)}_\tau&\varphi\in Q_\beta, \\
		\Pi^{(\beta)}_\tau&\varphi(p_k) = \varphi(p_k),\qquad k=1,2,3, \\
		\int_{\gamma_k}\nabla \Pi^{(\beta)}_\tau&\varphi\cdot n\,ds=\int_{\gamma_k}\nabla\varphi\cdot n\,ds,\qquad k=1,2,3. \\
	\end{aligned}\right. 
\end{align*}
Since the number of constraints and degrees of freedom are equal, these two interpolations are uniquely determined.
Non-conforming finite elements with bases of the same form as $\Pi^{(\alpha)}_\tau\varphi$
 and $\Pi^{(\beta)}_\tau\varphi$ are known as the enriched Crouzeix-Raviart element~\cite{Hu}
and the Morley element~\cite{Morley}, respectively.

In the rest of this section, we prepare some preliminary lemmas.
\begin{lemma} \label{C2-gradient}
If $\varphi\in V^2(\tau)$ satisfies
$$
	\int_{\gamma_k}\nabla\varphi\cdot n\,ds=0,\quad k=1,2,3,
$$
then
$$
	\varphi_x,\;\varphi_y\in V^{1,2}(\tau)
$$
holds.
\end{lemma}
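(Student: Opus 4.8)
The plan is to exploit the orthogonal decomposition of the gradient on each edge into its normal and tangential parts. First I would observe that the membership $\varphi_x,\varphi_y\in H^1(\tau)$ is automatic: since $\varphi\in H^2(\tau)$, both first partial derivatives lie in $H^1(\tau)$. Hence the only thing left to verify is the defining constraint of $V^{1,2}(\tau)$, namely
$$
	\int_{\gamma_k}\varphi_x\,ds=\int_{\gamma_k}\varphi_y\,ds=0,\qquad k=1,2,3.
$$

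Fix an edge $\gamma_k$. Because $\gamma_k$ is a straight segment, the outer normal $n=(n_1,n_2)$ and the counterclockwise tangent $\nu=(\nu_1,\nu_2)$ are \emph{constant} along $\gamma_k$ and form an orthonormal pair. Writing $I_x=\int_{\gamma_k}\varphi_x\,ds$ and $I_y=\int_{\gamma_k}\varphi_y\,ds$, constancy of $n,\nu$ gives
$$
	\int_{\gamma_k}(\nabla\varphi\cdot n)\,ds=n_1 I_x+n_2 I_y,\qquad \int_{\gamma_k}(\nabla\varphi\cdot\nu)\,ds=\nu_1 I_x+\nu_2 I_y.
$$
The first of these vanishes by hypothesis. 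For the second I would use that the tangential derivative $\nabla\varphi\cdot\nu$ is precisely $\frac{d}{ds}\varphi$ along $\gamma_k$; since $\varphi\in H^2(\tau)\hookrightarrow C^0(\bar\tau)$ by the two-dimensional Sobolev embedding, the restriction of $\varphi$ to $\gamma_k$ is continuous, so by the fundamental theorem of calculus $\int_{\gamma_k}(\nabla\varphi\cdot\nu)\,ds$ equals the difference of the values of $\varphi$ at the two endpoints of $\gamma_k$. Those endpoints are vertices of $\tau$, where $\varphi$ vanishes by the definition of $V^2(\tau)$; hence this integral is $0$ as well.

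Thus $(I_x,I_y)$ solves the homogeneous linear system with coefficient matrix $\begin{pmatrix} n_1 & n_2\\ \nu_1 & \nu_2\end{pmatrix}$, whose determinant $n_1\nu_2-n_2\nu_1=\pm1\neq0$ because $n$ and $\nu$ are orthonormal and therefore linearly independent. Consequently $I_x=I_y=0$. Repeating this argument on each of the three edges yields $\int_{\gamma_k}\varphi_x\,ds=\int_{\gamma_k}\varphi_y\,ds=0$ for $k=1,2,3$, which is exactly the condition for $\varphi_x,\varphi_y\in V^{1,2}(\tau)$.

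The only delicate point is the step identifying $\int_{\gamma_k}(\nabla\varphi\cdot\nu)\,ds$ with the endpoint difference of $\varphi$: at merely $H^2$ regularity this rests on trace theory (the trace of $\nabla\varphi$ on $\gamma_k$ lies in $L^2(\gamma_k)$, and the tangential derivative of the trace of $\varphi$ coincides with the trace of $\nabla\varphi\cdot\nu$), together with the Sobolev embedding guaranteeing that $\varphi$ has well-defined continuous boundary values. Everything else is the elementary linear-algebra inversion above, so I expect no substantive obstacle.
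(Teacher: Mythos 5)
Your proposal is correct and follows essentially the same route as the paper's proof: the vanishing of $\varphi$ at the vertices gives $\int_{\gamma_k}\nabla\varphi\cdot\nu\,ds=0$ by the fundamental theorem of calculus, which combined with the hypothesis $\int_{\gamma_k}\nabla\varphi\cdot n\,ds=0$ and the fact that $n,\nu$ span $\mathbb{R}^2$ on each (straight) edge forces $\int_{\gamma_k}\nabla\varphi\cdot w\,ds=0$ for every fixed vector $w$, in particular for $w=(1,0)$ and $w=(0,1)$. Your version merely makes explicit the trace-theoretic justification and the $2\times2$ linear-algebra step that the paper leaves implicit.
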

\begin{proof}\quad
From $\varphi(p_1)=\varphi(p_2)=\varphi(p_3)=0$, we have
$$
	\int_{\gamma_k}\nabla\varphi\cdot\nu\,ds=0,\qquad k=1,2,3.
$$
Then, together with the assumption of this lemma,
$$
	\int_{\gamma_k}\nabla\varphi\cdot w\,ds=0,\qquad k=1,2,3,
$$
holds for any fixed vector $w$, which proves the lemma.
\end{proof}

On the interpolations $\Pi^{(\alpha)}_\tau$ and $\Pi^{(\beta)}_\tau$, the following orthogonal properties hold.
\begin{lemma} \label{projection1}
For $\varphi\in H^1(\tau)$, it holds that
$$
	\|\nabla \Pi^{(\alpha)}_\tau\varphi\|_{L^2(\tau)}^2+\|\nabla(\varphi-\Pi^{(\alpha)}_\tau\varphi)\|_{L^2(\tau)}^2=\|\nabla\varphi\|_{L^2(\tau)}^2.
$$
\end{lemma}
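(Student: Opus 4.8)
The plan is to establish the stated Pythagorean identity by expanding
$$
\nabla\varphi=\nabla\Pi^{(\alpha)}_\tau\varphi+\nabla(\varphi-\Pi^{(\alpha)}_\tau\varphi)
$$
and showing that the mixed term vanishes. Writing $\psi=\Pi^{(\alpha)}_\tau\varphi\in Q_\alpha$ and $e=\varphi-\psi$, we have $e\in H^1(\tau)$, so Green's first identity applies and gives
$$
\int_\tau\nabla\psi\cdot\nabla e\,dxdy=\int_{\partial\tau}e\,(\nabla\psi\cdot n)\,ds-\int_\tau e\,\Delta\psi\,dxdy.
$$
The whole lemma then reduces to killing both terms on the right-hand side, which is exactly what the defining constraints of $\Pi^{(\alpha)}_\tau$ are designed to do.

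The crucial observation -- and the reason the space $Q_\alpha$ is chosen in this particular form -- concerns an arbitrary element $\psi=a_1(x^2+y^2)+a_2x+a_3y+a_4\in Q_\alpha$. First, $\Delta\psi=4a_1$ is a constant. Second, along each edge $\gamma_k$ the outward normal $n$ is a fixed vector and the edge lies on a line along which $(x,y)\cdot n$ is constant; since $\nabla\psi=(2a_1x+a_2,\,2a_1y+a_3)$, the normal derivative $\nabla\psi\cdot n=2a_1\,(x,y)\cdot n+(a_2,a_3)\cdot n$ is therefore constant on $\gamma_k$. These two structural facts pair precisely with the two kinds of constraints imposed on $\Pi^{(\alpha)}_\tau$, namely the matching of edge integrals and of the area integral.

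With these facts in hand the computation is immediate. The interior term is
$$
\int_\tau e\,\Delta\psi\,dxdy=4a_1\int_\tau e\,dxdy=0,
$$
because the area-average constraint gives $\iint_\tau(\varphi-\Pi^{(\alpha)}_\tau\varphi)\,dxdy=0$. For the boundary term I would split the integral over the three edges and pull out the edge-wise constant factor $\nabla\psi\cdot n$, obtaining
$$
\int_{\gamma_k}e\,(\nabla\psi\cdot n)\,ds=(\nabla\psi\cdot n)\big|_{\gamma_k}\int_{\gamma_k}e\,ds=0,
$$
since $\int_{\gamma_k}(\varphi-\Pi^{(\alpha)}_\tau\varphi)\,ds=0$ for each $k$. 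Hence the cross term $\int_\tau\nabla\psi\cdot\nabla e\,dxdy$ is zero, and expanding $\|\nabla\varphi\|_{L^2(\tau)}^2=\|\nabla\psi+\nabla e\|_{L^2(\tau)}^2$ yields the claimed identity. I expect the only genuinely substantive step to be recognizing the edge-wise constancy of $\nabla\psi\cdot n$ for $\psi\in Q_\alpha$; everything else is bookkeeping with the interpolation constraints. This same mechanism is what the paper alludes to when it remarks that the second inequality in \eqref{CR} follows "using the divergence theorem and a similar proof."
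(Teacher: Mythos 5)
Your proof is correct and is essentially the paper's own argument: both apply the divergence theorem (Green's identity) to the cross term, kill the interior term via $\Delta\Pi^{(\alpha)}_\tau\varphi=4a_1$ and the area constraint, and kill the boundary term via the edge-integral constraints. The only cosmetic difference is that you observe directly that $\nabla\Pi^{(\alpha)}_\tau\varphi\cdot n$ is constant on each edge, whereas the paper reaches the same fact in two steps, first discarding the constant vector $(a_2,a_3)$ and then writing $\binom{x}{y}\cdot n$ as a constant on each edge by subtracting a vertex and using orthogonality to $n$.
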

\begin{lemma} \label{projection2}
For $\varphi\in H^2(\tau)$, it holds that
$$
	|\Pi^{(\beta)}_\tau\varphi|_{H^2(\tau)}^2+|\varphi-\Pi^{(\beta)}_\tau\varphi|_{H^2(\tau)}^2=|\varphi|_{H^2(\tau)}^2.
$$
\end{lemma}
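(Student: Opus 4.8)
The plan is to read $|\cdot|_{H^2(\tau)}$ as the (semi-)norm induced by the symmetric bilinear form
$$
	(u,v)_{H^2(\tau)}=\int_\tau\left(u_{xx}v_{xx}+2\,u_{xy}v_{xy}+u_{yy}v_{yy}\right)dxdy,
$$
so that the asserted identity is simply the Pythagorean theorem for this form. Writing $w=\Pi^{(\beta)}_\tau\varphi$ and $e=\varphi-w$, and expanding $|w+e|_{H^2(\tau)}^2$, the statement reduces entirely to showing that the cross term $(w,e)_{H^2(\tau)}$ vanishes. This mirrors the structure one expects from the companion statement Lemma~\ref{projection1}, and the defining conditions of $\Pi^{(\beta)}_\tau$ are precisely what should make the cross term disappear.

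To exploit those conditions I would integrate by parts. Since $w\in Q_\beta$ is a quadratic polynomial, its Hessian
$$
	H_w=\begin{pmatrix} w_{xx} & w_{xy} \\ w_{xy} & w_{yy}\end{pmatrix}
$$
is a constant matrix on $\tau$. Applying the divergence theorem to each term (taking the symmetric split $\int_\tau e_{xy}\,dxdy=\tfrac12\int_{\partial\tau}(e_x n_y+e_y n_x)\,ds$ to handle the factor $2$ cleanly) collapses the volume integral into a boundary integral,
$$
	(w,e)_{H^2(\tau)}=\int_{\partial\tau}\left(H_w\,n\right)\cdot\nabla e\,ds.
$$
On each edge $\gamma_k$ the outer normal $n$ is constant, so $H_w n$ is a fixed vector $c_k$ there, and the expression becomes $\sum_{k=1}^3 c_k\cdot\int_{\gamma_k}\nabla e\,ds$. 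It therefore suffices to prove that each edge integral $\int_{\gamma_k}\nabla e\,ds$ is the zero vector.

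For this I would decompose $\nabla e$ on $\gamma_k$ into its normal and tangential components in the orthonormal frame $\{n,\nu\}$. The normal component integrates to zero by the interpolation condition $\int_{\gamma_k}\nabla w\cdot n\,ds=\int_{\gamma_k}\nabla\varphi\cdot n\,ds$, i.e.\ $\int_{\gamma_k}\nabla e\cdot n\,ds=0$. The tangential component integrates to $e(\text{endpoint})-e(\text{startpoint})=0$, since $\nabla e\cdot\nu$ is the arclength derivative of $e$ along the edge and $e$ vanishes at both endpoints (the vertices) by the condition $w(p_k)=\varphi(p_k)$; this is exactly the reasoning used in the proof of Lemma~\ref{C2-gradient}. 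Hence $\int_{\gamma_k}\nabla e\,ds=0$ for each $k$, the cross term vanishes, and the identity follows. The step requiring most care is the integration by parts together with its regularity justification: for $\varphi\in H^2(\tau)$ one has $\nabla\varphi\in H^1(\tau)^2$, whose traces on $\partial\tau$ lie in $L^2$, so the edge integrals of $\nabla e$ are well defined, while the two-dimensional embedding $H^2(\tau)\hookrightarrow C^0(\overline\tau)$ legitimizes the point values $e(p_k)$. These observations make every manipulation above rigorous.
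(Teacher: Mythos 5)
Your proposal is correct and follows essentially the same route as the paper: both expand the cross term $(w,e)_{H^2(\tau)}$, use the constancy of the Hessian of $\Pi^{(\beta)}_\tau\varphi$ together with the divergence theorem to collapse it into the boundary integral $\oint_{\partial\tau}(H_w n)\cdot\nabla e\,ds$, and then kill each edge contribution by showing $\int_{\gamma_k}\nabla e\,ds=0$. Your normal/tangential decomposition on each edge is precisely the content of Lemma~\ref{C2-gradient}, which the paper invokes at that point (applied to $e=\varphi-\Pi^{(\beta)}_\tau\varphi$, which vanishes at the vertices and has zero edge integrals of its normal derivative by the defining conditions of the interpolation).
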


\begin{proof}[Proof of Lemma~\ref{projection1}]\quad
Let
$$
	\Pi^{(\alpha)}_\tau\varphi=a_1(x^2+y^2) + a_2x + a_3y + a_4.
$$
Then, the divergence theorem yields
\begin{align*}
	\|\nabla\varphi\|_{L^2(\tau)}^2&-\|\nabla \Pi^{(\alpha)}_\tau\varphi\|_{L^2(\tau)}^2-\|\nabla(\varphi-\Pi^{(\alpha)}_\tau\varphi)\|_{L^2(\tau)}^2 \\
	&=2\iint_\tau \nabla(\varphi-\Pi^{(\alpha)}_\tau\varphi)\cdot\nabla \Pi^{(\alpha)}_\tau\varphi\,dxdy \\
	&=2\iint_\tau \text{div}\Big( (\varphi-\Pi^{(\alpha)}_\tau\varphi)\nabla \Pi^{(\alpha)}_\tau\varphi\Big)dxdy
		- 2\iint_\tau (\varphi-\Pi^{(\alpha)}_\tau\varphi)\Delta \Pi^{(\alpha)}_\tau\varphi\,dxdy \\
	&=2\oint_{\partial \tau}(\varphi-\Pi^{(\alpha)}_\tau\varphi)\nabla \Pi^{(\alpha)}_\tau\varphi\cdot n\,ds
		- 8a_1\iint_\tau (\varphi-\Pi^{(\alpha)}_\tau\varphi)\,dxdy \\
	&=2\oint_{\partial \tau}(\varphi-\Pi^{(\alpha)}_\tau\varphi)\binom{2a_1x+a_2}{2a_1y+a_3}\cdot n\,ds \\
	&=4a_1\oint_{\partial \tau}(\varphi-\Pi^{(\alpha)}_\tau\varphi)\binom{x}{y}\cdot n\,ds \\
	&=4a_1\int_{\gamma_1}(\varphi-\Pi^{(\alpha)}_\tau\varphi)
	\left(\binom{x}{y} - p_2\right) \cdot n\,ds \\
	& \qquad \qquad	+4a_1\int_{\gamma_2}(\varphi-\Pi^{(\alpha)}_\tau\varphi)
		  \left(\binom{x}{y} - p_3\right)\cdot n\,ds \\
	&\qquad\qquad\qquad\qquad +4a_1\int_{\gamma_3}(\varphi-\Pi^{(\alpha)}_\tau\varphi)\left(\binom{x}{y} - p_1\right)\cdot n\,ds \\
	&	=0.
\end{align*}
\end{proof}

\begin{proof}[Proof of Lemma~\ref{projection2}]\quad
Let
$$
	\Pi^{(\beta)}_\tau\varphi=a_1x^2+a_2xy+a_3y^2 + a_4x + a_5y + a_6.
$$
Then, the divergence theorem yields
\begin{align*}
	|\varphi|_{H^2(\tau)}^2&-|\Pi^{(\beta)}_\tau\varphi|_{H^2(\tau)}^2-|\varphi-\Pi^{(\beta)}_\tau\varphi|_{H^2(\tau)}^2 \\
	&=2\iint_\tau\Big((\varphi-\Pi^{(\beta)}_\tau\varphi)_{xx}(\Pi^{(\beta)}_\tau\varphi)_{xx}
		+2(\varphi-\Pi^{(\beta)}_\tau\varphi)_{xy}(\Pi^{(\beta)}_\tau\varphi)_{xy} \\
	&\qquad\qquad\qquad\qquad\qquad\qquad\qquad
		+(\varphi-\Pi^{(\beta)}_\tau\varphi)_{yy}(\Pi^{(\beta)}_\tau\varphi)_{yy}\Big)dxdy \\
	&=2\iint_\tau \text{div}\binom{\nabla(\varphi-\Pi^{(\beta)}_\tau\varphi)\cdot \nabla(\Pi^{(\beta)}_\tau\varphi)_x}
		{\nabla(\varphi-\Pi^{(\beta)}_\tau\varphi)\cdot \nabla(\Pi^{(\beta)}_\tau\varphi)_y}dxdy \\
	&=2\oint_{\partial \tau}\binom{\nabla(\varphi-\Pi^{(\beta)}_\tau\varphi)\cdot \nabla(\Pi^{(\beta)}_\tau\varphi)_x}
		{\nabla(\varphi-\Pi^{(\beta)}_\tau\varphi)\cdot \nabla(\Pi^{(\beta)}_\tau\varphi)_y}\cdot n\,ds \\
	&=2\oint_{\partial \tau}\nabla(\varphi-\Pi^{(\beta)}_\tau\varphi)\cdot \nabla(\nabla \Pi^{(\beta)}_\tau\varphi\cdot n)\,ds \\
	&=2\oint_{\partial \tau}\nabla(\varphi-\Pi^{(\beta)}_\tau\varphi)\cdot \binom{2a_1 \;\; a_2}{a_2\;\;2a_3}n\,ds.
\end{align*}
Here, Lemma~\ref{C2-gradient} yields
$$
	\int_{\gamma_k}(\varphi-\Pi^{(\beta)}_\tau\varphi)_x\,ds
	=\int_{\gamma_k}(\varphi-\Pi^{(\beta)}_\tau\varphi)_y\,ds
	=0,\qquad k=1,2,3,
$$
which leads us to the conclusion.
\end{proof}

\section{Relation to finite-dimensional eigenvalue problem} \label{RelationFDEP}
In this section, we explain how to bound $C_j(T)$ by the solutions of the finite-dimensional problem.  Let $n \ge 2$ be an integer.
We divide triangle $T$ into $n^2$ congruent small triangles with a similarity ratio of $1/n$ (Fig.~\ref{Fig2}). 

\begin{figure}[t]
	\centering
\begin{tikzpicture}[scale=1.2]
  \coordinate (A) at (0.0,0.0);
  \coordinate (B) at (5.0,0.0);
  \coordinate (C) at (3.5,3.0);
  \draw [line width = 1.4pt](A) -- (B) -- (C) -- (A);
  \coordinate (A1) at ($(A)!0.25!(B)$);
  \coordinate (A2) at ($(A)!0.5!(B)$);
  \coordinate (A3) at ($(A)!0.75!(B)$);
  \coordinate (B1) at ($(B)!0.25!(C)$);
  \coordinate (B2) at ($(B)!0.5!(C)$);
  \coordinate (B3) at ($(B)!0.75!(C)$);
  \coordinate (C1) at ($(C)!0.25!(A)$);
  \coordinate (C2) at ($(C)!0.5!(A)$);
  \coordinate (C3) at ($(C)!0.75!(A)$);
  \draw [line width = 0.6pt] (A3) -- (B1);
  \draw [line width = 0.6pt] (A2) -- (B2);
  \draw [line width = 0.6pt] (A1) -- (B3);
  \draw [line width = 0.6pt] (A3) -- (C1);
  \draw [line width = 0.6pt] (A2) -- (C2);
  \draw [line width = 0.6pt] (A1) -- (C3);
  \draw [line width = 0.6pt] (C3) -- (B1);
  \draw [line width = 0.6pt] (C2) -- (B2);
  \draw [line width = 0.6pt] (C1) -- (B3);
  \draw (0.78,0.25) node{$\tau_1$};
  \draw (1.4,0.45)  node{$\tau_2$};
  \draw (2.0,0.25)  node{$\tau_3$};
  \draw (3.4,2.5)   node{$\tau_{n^2}$};
  \coordinate (D1) at (2.6,0.375);
  \coordinate (D2) at (4.45,0.375);
  \draw [loosely dotted, line width = 1.0pt] (D1) -- (D2);
  \coordinate (D3) at (1.7,1.125);
  \coordinate (D4) at (4.1,1.125);
  \draw [loosely dotted, line width = 1.0pt] (D3) -- (D4);
  \coordinate (D5) at (2.5,1.875);
  \coordinate (D6) at (3.7,1.875);
  \draw [loosely dotted, line width = 1.0pt] (D5) -- (D6);
  \coordinate [label=above:{\Large $T$}] (E) at (2.3,2.2);
\end{tikzpicture}
	\caption{Division of $T$ into $n^2$ congruent small triangles.}
	\label{Fig2}
\end{figure}
We assume that each $\tau_k$ is an open set, namely it does not contain its boundary. Furthermore, we define
$$
	T'=\bigcup_{k=1}^{n^2}\tau_k.
$$
Now, we define $\Pi^{(\alpha)}u$ for $u\in H^1(T)$ and $\Pi^{(\beta)}u$ for $u\in H^2(T)$ as follows:
$$
	\Pi^{(\alpha)}u|_{\tau_k} = \Pi^{(\alpha)}_{\tau_k}u, \qquad
	\Pi^{(\beta)}u|_{\tau_k} = \Pi^{(\beta)}_{\tau_k}u.
$$
Note that, in general, $\Pi^{(\alpha)}u$ and $\Pi^{(\beta)}u$ cannot be extended as continuous functions on $T$.

We assume that the following values exist.
\begin{align*}
	C_1^{(n)}(T)&=\sup_{u\in V^{1,1}(T)\setminus 0}\frac{\|\Pi^{(\alpha)}u\|_{L^2(T')}}{\|\nabla \Pi^{(\alpha)}u\|_{L^2(T')}}, &
	C_2^{(n)}(T)&=\sup_{u\in V^{1,2}(T)\setminus 0}\frac{\|\Pi^{(\alpha)}u\|_{L^2(T')}}{\|\nabla \Pi^{(\alpha)}u\|_{L^2(T')}}, \\
	C_3^{(n)}(T)&=\sup_{u\in V^2(T)\setminus 0}\frac{\|\Pi^{(\beta)}u\|_{L^2(T')}}{|\Pi^{(\beta)}u|_{H^2(T')}}, &
	C_4^{(n)}(T)&=\sup_{u\in V^2(T)\setminus 0}\frac{\|\nabla \Pi^{(\beta)}u\|_{L^2(T')}}{|\Pi^{(\beta)}u|_{H^2(T')}}.
\end{align*} 
Since $\Pi^{(\alpha)}u$ and $\Pi^{(\beta)}u$ are piecewise polynomials on $T'$,
we can obtain these values by solving finite-dimensional generalized eigenvalue problems,
namely the upper bound of the Rayleigh quotient.

Concerning these constants, we have the following theorem.
\begin{theorem} \label{BoundbyFiniteResult}
\begin{align*}
	C_1(T)&\le\sqrt{\frac{n^2}{n^2-1}}\;C_1^{(n)}(T), &
	C_2(T)&\le\sqrt{\frac{n^2}{n^2-1}}\;C_2^{(n)}(T), \\
	C_3(T)&\le\sqrt{\frac{n^4}{n^4-1}}\;C_3^{(n)}(T), &
	C_4(T)&\le\sqrt{C_4^{(n)}(T)^2+\frac{C_2(T)^2}{n^2}}.
\end{align*}
\end{theorem}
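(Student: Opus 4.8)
The plan is to treat all four inequalities by a single scheme and then to point out where the fourth genuinely diverges from the other three. Fix $u$ in the relevant space, write $w=\Pi^{(\alpha)}u$ (or $\Pi^{(\beta)}u$) and $e=u-w$, and split the quantity to be bounded by the triangle inequality into a contribution from the interpolant $w$ and a contribution from the remainder $e$ on $T'$. The interpolant contribution is controlled directly by the discrete constant $C_j^{(n)}(T)$, while the remainder is controlled triangle-by-triangle: each restriction $e|_{\tau_k}=u-\Pi_{\tau_k}u$ lands in the appropriate space on the small triangle $\tau_k$, so the continuous constant on $\tau_k$ applies. Because every $\tau_k$ is similar to $T$ with ratio $1/n$ up to a rigid motion, under which all the norms involved and the spaces $Q_\alpha,Q_\beta$ are invariant, the local constants scale explicitly: $C_1,C_2,C_4$ like $1/n$ and $C_3$ like $1/n^2$, which I would record as a short preliminary computation. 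I would also note that the global $u$ has equal norms over $T$ and $T'$ since their difference is a null set. Summing the local estimates and invoking the Pythagorean identities of Lemma~\ref{projection1} and Lemma~\ref{projection2} then turns the remainder bound into an expression in the defect $|u|-|w|$ of the interpolant.

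For $C_1,C_2,C_3$ the argument closes by a bootstrap. Take for instance $C_1$: since $\iint_{\tau_k}(u-\Pi^{(\alpha)}_{\tau_k}u)=0$, we have $e|_{\tau_k}\in V^{1,1}(\tau_k)$, so $\|e\|_{L^2(\tau_k)}\le C_1(\tau_k)\|\nabla e\|_{L^2(\tau_k)}=\tfrac1n C_1(T)\|\nabla e\|_{L^2(\tau_k)}$; squaring, summing, and using Lemma~\ref{projection1} gives $\|e\|_{L^2(T')}^2\le \tfrac{1}{n^2}C_1(T)^2\bigl(\|\nabla u\|_{L^2(T)}^2-\|\nabla w\|_{L^2(T')}^2\bigr)$. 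Writing $t=\|\nabla w\|_{L^2(T')}$ and normalizing $\|\nabla u\|_{L^2(T)}=1$, the triangle inequality together with $\|w\|_{L^2(T')}\le C_1^{(n)}(T)\,t$ yields $\|u\|_{L^2(T)}\le C_1^{(n)}(T)\,t+\tfrac1n C_1(T)\sqrt{1-t^2}\le\sqrt{C_1^{(n)}(T)^2+C_1(T)^2/n^2}$, where the last step maximizes over $t\in[0,1]$. Taking the supremum over $u$ gives $C_1(T)^2\le C_1^{(n)}(T)^2+C_1(T)^2/n^2$, and solving for $C_1(T)^2$ (legitimate since $n\ge2$ makes $1-1/n^2>0$ and $C_1(T)$ is finite) produces the claimed factor $\sqrt{n^2/(n^2-1)}$. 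The cases $C_2$ and $C_3$ are identical in form: the membership $e|_{\tau_k}\in V^{1,2}(\tau_k)$ (resp. $V^2(\tau_k)$) follows from the edge-mean (resp. vertex) interpolation conditions, and for $C_3$ the scaling $C_3(\tau_k)=n^{-2}C_3(T)$ together with Lemma~\ref{projection2} replaces $n^2$ by $n^4$ throughout.

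The fourth inequality is the crux, because there is no $H^1$-orthogonality analogue of Lemma~\ref{projection1} or Lemma~\ref{projection2} for $\Pi^{(\beta)}$, so $\|\nabla u\|$ cannot be split orthogonally and a naive repeat would reintroduce $C_4(T)$ on the right with a non-contractive structure. The device I would use is Lemma~\ref{C2-gradient}: on each $\tau_k$ the error $e=u-\Pi^{(\beta)}_{\tau_k}u$ vanishes at the vertices and satisfies $\int_{\gamma_j}\nabla e\cdot n\,ds=0$ by the defining conditions of $\Pi^{(\beta)}$, hence $e_x,e_y\in V^{1,2}(\tau_k)$. Applying $C_2(\tau_k)=\tfrac1n C_2(T)$ to each partial derivative and adding gives $\|\nabla e\|_{L^2(\tau_k)}^2\le C_2(\tau_k)^2\bigl(\|\nabla e_x\|^2+\|\nabla e_y\|^2\bigr)=\tfrac{1}{n^2}C_2(T)^2\,|e|_{H^2(\tau_k)}^2$, the last equality being the definition of the $H^2$ seminorm. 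Summing and applying Lemma~\ref{projection2} bounds $\|\nabla e\|_{L^2(T')}$ by $\tfrac1n C_2(T)\sqrt{|u|_{H^2(T)}^2-|w|_{H^2(T')}^2}$. Then, exactly as before, $\|\nabla u\|_{L^2(T)}\le C_4^{(n)}(T)\,s+\tfrac1n C_2(T)\sqrt{1-s^2}$ with $s=|w|_{H^2(T')}$ and $|u|_{H^2(T)}=1$, and maximizing over $s$ gives $\|\nabla u\|_{L^2(T)}\le\sqrt{C_4^{(n)}(T)^2+C_2(T)^2/n^2}$; since the remainder is now governed by $C_2(T)$ rather than $C_4(T)$, no bootstrap is needed and the supremum over $u$ yields the stated bound directly. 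I expect the main obstacle to be exactly this step, namely recognizing that Lemma~\ref{C2-gradient} lets the gradient of the $\Pi^{(\beta)}$-error be measured by the $V^{1,2}$ constant $C_2$, together with carefully verifying the rigid-motion invariance and similarity scaling of the local constants on which all four estimates rely.
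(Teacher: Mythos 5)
Your proposal is correct and follows essentially the same route as the paper: the same triangle-inequality splitting into interpolant and remainder, the Pythagorean identities of Lemma~\ref{projection1} and Lemma~\ref{projection2}, the scaling of the local constants, the bootstrap resolution for $j=1,2,3$, and crucially the use of Lemma~\ref{C2-gradient} to bound the gradient of the $\Pi^{(\beta)}$-error by $C_2(T)/n$ in the $C_4$ case. Your normalization-and-maximization over $t$ is just the Cauchy--Schwarz step the paper uses, so the two arguments coincide.
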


\begin{proof}\quad
We first note that the scaling properties $C_j(\tau_k)=C_j(T)/n,\;j= 1, 2,4$ and
$C_3(\tau_k)=C_3(T)/n^2$ hold. This property can be easily shown by a change of variables.

From Lemma~\ref{projection1}, for $u\in V^{1,j}(T),\;\; j=1,2$, we have
\begin{align*}
	\|u\|_{L^2(T)}&\le \|\Pi^{(\alpha)}u\|_{L^2(T')}+\|u-\Pi^{(\alpha)}u\|_{L^2(T')} \\
	&=\|\Pi^{(\alpha)}u\|_{L^2(T')}
		+\sqrt{\sum_{k=1}^{n^2}\|u-\Pi^{(\alpha)}_{\tau_k}u\|_{L^2(\tau_k)}^2} \\
	&\le C_j^{(n)}(T)\;\|\nabla \Pi^{(\alpha)}u\|_{L^2(T')}
		+\frac{C_j(T)}{n}\sqrt{\sum_{k=1}^{n^2}\|\nabla(u-\Pi^{(\alpha)}_{\tau_k}u)\|_{L^2(\tau_k)}^2} \\
	&\le \sqrt{C_j^{(n)}(T)^2+\frac{C_j(T)^2}{n^2}}
		\sqrt{\sum_{k=1}^{n^2}\left(\|\nabla \Pi^{(\alpha)}_{\tau_k}u\|_{L^2(\tau_k)}^2
		+\|\nabla(u-\Pi^{(\alpha)}_{\tau_k}u)\|_{L^2(\tau_k)}^2\right)} \\
	&= \sqrt{C_j^{(n)}(T)^2+\frac{C_j(T)^2}{n^2}}
		\sqrt{\sum_{k=1}^{n^2}\|\nabla u\|_{L^2(\tau_k)}^2} \\
	&= \sqrt{C_j^{(n)}(T)^2+\frac{C_j(T)^2}{n^2}}\;\|\nabla u\|_{L^2(T)}.
\end{align*}
Furthermore, from Lemma~\ref{projection2}, for $u\in V^2(T)$,
\begin{align*}
	\|u\|_{L^2(T)}&\le \|\Pi^{(\beta)}u\|_{L^2(T')}+\|u-\Pi^{(\beta)}u\|_{L^2(T')} \\
	&= \|\Pi^{(\beta)}u\|_{L^2(T')}
		+\sqrt{\sum_{k=1}^{n^2}\|u-\Pi^{(\beta)}_{\tau_k}u\|_{L^2(\tau_k)}^2} \\
	&\le C_3^{(n)}(T)\;|\Pi^{(\beta)}u|_{H^2(T')}
		+\frac{C_3(T)}{n^2}\sqrt{\sum_{k=1}^{n^2}|u-\Pi^{(\beta)}_{\tau_k}u|_{H^2(\tau_k)}^2} \\
	&\le \sqrt{C_3^{(n)}(T)^2+\frac{C_3(T)^2}{n^4}}
		\sqrt{\sum_{k=1}^{n^2}\left(|\Pi^{(\beta)}_{\tau_k}u|_{H^2(\tau_k)}^2
		+|u-\Pi^{(\beta)}_{\tau_k}u|_{H^2(\tau_k)}^2\right)} \\
	&= \sqrt{C_3^{(n)}(T)^2+\frac{C_3(T)^2}{n^4}}
		\sqrt{\sum_{k=1}^{n^2}|u|_{H^2(\tau_k)}^2} \\
	&= \sqrt{C_3^{(n)}(T)^2+\frac{C_3(T)^2}{n^4}}\;|u|_{H^2(T)}
\end{align*}
and from Lemma~\ref{C2-gradient} and Lemma~\ref{projection2},
\begin{align*}
	\|\nabla u\|_{L^2(T)}&\le \|\nabla \Pi^{(\beta)}u\|_{L^2(T')}+\|\nabla(u-\Pi^{(\beta)}u)\|_{L^2(T')} \\
	&= \|\nabla \Pi^{(\beta)}u\|_{L^2(T')}
		+\sqrt{\sum_{k=1}^{n^2}\left(\|(u-\Pi^{(\beta)}_{\tau_k}u)_x\|_{L^2(\tau_k)}^2 + \|(u-\Pi^{(\beta)}_{\tau_k}u)_y\|_{L^2(\tau_k)}^2\right)} \\
	&\le C_4^{(n)}(T)\;|\Pi^{(\beta)}u|_{H^2(T')}  \\
	&\qquad\qquad +\frac{C_2(T)}{n}\sqrt{\sum_{k=1}^{n^2}\left(\|\nabla(u-\Pi^{(\beta)}_{\tau_k}u)_x\|_{L^2(\tau_k)}^2 + \|\nabla(u-\Pi^{(\beta)}_{\tau_k}u)_y\|_{L^2(\tau_k)}^2\right)} \\
	&=C_4^{(n)}(T)\;|\Pi^{(\beta)}u|_{H^2(T')}+\frac{C_2(T)}{n}\sqrt{\sum_{k=1}^{n^2}|u-\Pi^{(\beta)}_{\tau_k}u|_{H^2(\tau_k)}^2} \\
	&\le \sqrt{C_4^{(n)}(T)^2+\frac{C_2(T)^2}{n^2}}
		\sqrt{\sum_{k=1}^{n^2}\left(|\Pi^{(\beta)}_{\tau_k}u|_{H^2(\tau_k)}^2
		+|u-\Pi^{(\beta)}_{\tau_k}u|_{H^2(\tau_k)}^2\right)} \\
	&= \sqrt{C_4^{(n)}(T)^2+\frac{C_2(T)^2}{n^2}}
		\sqrt{\sum_{k=1}^{n^2}|u|_{H^2(\tau_k)}^2} \\
	&= \sqrt{C_4^{(n)}(T)^2+\frac{C_2(T)^2}{n^2}}\;|u|_{H^2(T)}
\end{align*}
holds. 
From the above evaluations, we have the following.
\begin{align*}
	C_1(T)&\le \sqrt{C_1^{(n)}(T)^2+\frac{C_1(T)^2}{n^2}}, &
	C_2(T)&\le \sqrt{C_2^{(n)}(T)^2+\frac{C_2(T)^2}{n^2}}, \\
	C_3(T)&\le \sqrt{C_3^{(n)}(T)^2+\frac{C_3(T)^2}{n^4}}, &
	C_4(T)&\le \sqrt{C_4^{(n)}(T)^2+\frac{C_2(T)^2}{n^2}},
\end{align*}
which leads us to the conclusion.
\end{proof}

This result shows that we can bound the constants $C_j(T)$ using $C_j^{(n)}(T)$.
For specific $T$ and $n$, we can compute the upper bounds of $C_j^{(n)}(T)$ numerically
and obtain guaranteed results via a numerical verification method.

\section{Construction of finite-dimensional generalized eigenvalue problems} \label{Construction_FDGEP}
By the definitions of $C_j^{(n)}(T)$,  they can be evaluated
 by bounding the eigenvalues of the corresponding finite-dimensional generalized eigenvalue problem.
First, we describe how to construct the corresponding finite-dimensional generalized eigenvalue problem
 for a particular fixed triangle.

Lets $s$ be the area of the triangle $\tau$ and
$$
	c_1 = \frac{|\gamma_1|^2}{s},
	\qquad c_2 = \frac{|\gamma_2|^2}{s},
	\qquad c_3 = \frac{|\gamma_3|^2}{s}.
$$
Then, the following two lemmas hold.
\begin{lemma} \label{F1}
Let
\begin{align}
	w_k &= \frac{1}{|\gamma_k|}\int_{\gamma_k}\Pi^{(\alpha)}_\tau u\,ds,\qquad k=1,2,3, \label{F1-1} \\
	u_0 &= \frac{1}{s}\iint_\tau \Pi^{(\alpha)}_\tau u\,dxdy. \label{F1-2}
\end{align}
Then, it holds that
\begin{align}
	\|\Pi^{(\alpha)}_\tau u\|_{L^2(\tau)}^2&=F^{(\alpha)}_0(s,w_1,w_2,w_3,u_0), \label{F1-3} \\
	\|\nabla \Pi^{(\alpha)}_\tau u\|_{L^2(\tau)}^2&=F^{(\alpha)}_1(s,w_1,w_2,w_3,u_0), \label{F1-4}
\end{align}
where 
\begin{align*}
	F^{(\alpha)}_0(s,w_1,&w_2,w_3,u_0) \\
	&=\frac{s}{15}\left\{\frac{8(c_1^2+c_2^2+c_3^2)(w_1+w_2+w_3-3u_0)^2}{(c_1+c_2+c_3)^2}\right. \\
	&\qquad\quad
		\left.\phantom{\left(\frac{c_1^2}{c_1^2}\right)}
		-\frac{2(w_1+w_2+w_3-3u_0)v_0}{c_1+c_2+c_3}+ 5\big(w_1^2+w_2^2+w_3^2\big) \right\}, \\
	v_0&=\big(3c_2+3c_3-c_1\big)w_1+\big(3c_3+3c_1-c_2\big)w_2+\big(3c_1+3c_2-c_3\big)w_3, \\
	F^{(\alpha)}_1(s,w_1,&w_2,w_3,u_0) \\
	&=\frac{1}{2}\left\{\frac{32(w_1+w_2+w_3-3u_0)^2}{c_1+c_2+c_3}+\big(c_1+c_2-c_3\big)\big(w_1-w_2\big)^2\right. \\
	&\quad \left.\phantom{\frac{32(w_3)^2}{c_1}}
		+\big(c_2+c_3-c_1\big)\big(w_2-w_3\big)^2+\big(c_3+c_1-c_2\big)\big(w_3-w_1\big)^2\right\}.
\end{align*}
\end{lemma}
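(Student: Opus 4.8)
The plan is to treat $\Pi^{(\alpha)}_\tau u$ as an unknown element $a_1(x^2+y^2)+a_2x+a_3y+a_4$ of the four-dimensional space $Q_\alpha$, to express both the four interpolation functionals $w_1,w_2,w_3,u_0$ and the two target quantities $\|\Pi^{(\alpha)}_\tau u\|_{L^2(\tau)}^2$ and $\|\nabla\Pi^{(\alpha)}_\tau u\|_{L^2(\tau)}^2$ as explicit functions of the coefficients $a_1,\dots,a_4$, and then to eliminate the coefficients. Since the constraints defining $\Pi^{(\alpha)}_\tau$ are exactly four and uniquely solvable, the map from $(a_1,\dots,a_4)$ to $(w_1,w_2,w_3,u_0)$ is a linear bijection, so the elimination is legitimate. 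Because $Q_\alpha$ and both claimed right-hand sides are invariant under rigid motions (the latter depend only on $s$ and $c_1,c_2,c_3$), I am free to fix any convenient frame; placing the origin at the centroid of $\tau$ kills the first moments $\iint_\tau x\,dxdy=\iint_\tau y\,dxdy=0$ and shortens every computation below.

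The computation rests on two elementary averaging facts for the radial term $x^2+y^2$. First, over an edge joining vertices $P$ and $Q$ its mean value is $\frac13(|P|^2+P\cdot Q+|Q|^2)$, while linear and constant terms average to their midpoint value; this makes each $w_k$ explicit. Second, the three-edge-midpoint quadrature is exact for quadratics, so $u_0$ equals the average of $\Pi^{(\alpha)}_\tau u$ over the three edge midpoints. Combining the two, the linear and constant contributions to $w_1+w_2+w_3$ and to $3u_0$ coincide, and a short calculation using $\sum_k|\gamma_k|^2=2\left(\sum_i|p_i|^2-\sum_{i<j}p_i\cdot p_j\right)$ gives the clean identity $w_1+w_2+w_3-3u_0=\frac{s}{12}(c_1+c_2+c_3)\,a_1$. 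This isolates $a_1$ and already explains the recurring factors $(w_1+w_2+w_3-3u_0)$ and $(c_1+c_2+c_3)$ appearing in both $F^{(\alpha)}_0$ and $F^{(\alpha)}_1$.

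For the gradient norm I would write $\|\nabla\Pi^{(\alpha)}_\tau u\|_{L^2(\tau)}^2=4a_1^2\iint_\tau(x^2+y^2)\,dxdy+(a_2^2+a_3^2)s$, the cross term vanishing by the centroid normalization. The first summand evaluates to $4a_1^2\iint_\tau(x^2+y^2)\,dxdy=\frac{s^2}{9}(c_1+c_2+c_3)\,a_1^2$, which matches the first term of $F^{(\alpha)}_1$ once $a_1$ is substituted; the second summand $(a_2^2+a_3^2)s$ must then reproduce the cotangent-weighted sum $\frac12\sum(c_i+c_j-c_k)(w_i-w_j)^2$. The latter is obtained by recovering the linear part of $\Pi^{(\alpha)}_\tau u$ from the differences $w_i-w_j$: after subtracting the already-known $a_1$-contributions, these differences are the values of the linear part along the edge directions, and the weights $c_i+c_j-c_k$ arise exactly as the stiffness-type (law-of-cosines) coefficients. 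The $L^2$ norm $F^{(\alpha)}_0$ is handled in the same spirit but requires, in addition, the moments $\iint_\tau(x^2+y^2)\,dxdy$ and $\iint_\tau(x^2+y^2)^2\,dxdy$ together with the mixed moments coupling $a_1$ to the linear coefficients.

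I expect the algebraic bookkeeping for $F^{(\alpha)}_0$ to be the main obstacle. Once the coefficients are solved for, substituting them into the quadratic form for the $L^2$ norm and verifying that every dependence on the individual vertex coordinates collapses to $s,c_1,c_2,c_3$ is lengthy, and is most safely carried out with the formula-manipulation system already employed elsewhere in the paper. The conceptual input, by contrast, is confined to the two averaging facts and the centroid normalization; no inequality or limiting argument is needed, since the statement is an exact identity on a finite-dimensional space.
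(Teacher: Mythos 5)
Your proposal is correct, and it reaches the identities by a genuinely different route than the paper. The paper's proof (verified by Code~\ref{Lemma5-1}) also exploits rigid-motion invariance and uniqueness, but it works in the vertex frame $p_1=(0,0)$, $p_2=(h,0)$, $p_3=(ah,bh)$, writes down the solved interpolant $\Pi^{(\alpha)}_\tau u$ explicitly as a function of $(w_1,w_2,w_3,u_0)$, and then checks \eqref{F1-3} and \eqref{F1-4} wholesale by symbolic computation; no structure of the formulas is explained. You instead keep the coefficients $a_1,\dots,a_4$ as unknowns, normalize to the centroid, and isolate the two mechanisms behind $F^{(\alpha)}_1$: the identity $w_1+w_2+w_3-3u_0=\tfrac{s}{12}(c_1+c_2+c_3)\,a_1$ (correct, by your edge-average formula plus exactness of the midpoint rule), which makes the first term of $F^{(\alpha)}_1$ exactly the radial energy $4a_1^2\iint_\tau(x^2+y^2)\,dxdy=\tfrac{s^2}{9}(c_1+c_2+c_3)a_1^2$, and the classical cotangent (law-of-cosines) formula for the Dirichlet energy of the linear part, which produces the weighted differences with weights $c_i+c_j-c_k=4\cot\angle_k$. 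This buys conceptual insight the paper's proof lacks, at the price of still needing machine algebra for $F^{(\alpha)}_0$. One step you state too loosely: $F^{(\alpha)}_1$ involves the raw differences $w_i-w_j$, so your matching argument requires that their $a_1$-parts \emph{vanish}, not merely that they can be ``subtracted off.'' This is in fact automatic in your centroid frame: $m_k=-p_k/2$ gives $|m_i|^2-|m_j|^2=\tfrac14\bigl(|p_i|^2-|p_j|^2\bigr)$, while $p_1+p_2+p_3=0$ gives $|\gamma_i|^2-|\gamma_j|^2=3\bigl(|p_j|^2-|p_i|^2\bigr)$, so the total $a_1$-contribution $a_1\bigl(|m_i|^2-|m_j|^2\bigr)+\tfrac{a_1}{12}\bigl(|\gamma_i|^2-|\gamma_j|^2\bigr)$ cancels identically and $w_i-w_j$ depends only on $(a_2,a_3)$. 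With that observation supplied, your treatment of $F^{(\alpha)}_1$ is a complete hand proof, and $F^{(\alpha)}_0$ goes through by the coefficient elimination plus computer algebra exactly as you describe.
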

\begin{proof}\quad
Since $Q_\alpha$ is invariant under constant shifts and rotations,
we take $p_1=(0,0),\;p_2=(h,0),\;p_3=(ah,bh)$. Then, it holds that
$$
	s = \frac{bh^2}{2},
	\qquad c_1 = \frac{2((1-a)^2+b^2)}{b},
	\qquad c_2 = \frac{2(a^2+b^2)}{b},
	\qquad c_3 = \frac{2}{b}
$$
and $\Pi^{(\alpha)}_\tau u$ satisfying \eqref{F1-1} and \eqref{F1-2} is obtained as follows:
\begin{align*}
	\Pi^{(\alpha)}_\tau u= \frac{1}{bh}&\left\{
		\Big( b(2x-h)+2(1-a)y \Big)w_1 - \Big( b(2x-h)-2ay\Big)w_2 - (2y-bh)w_3\right\} \\
	& + \frac{2(3(x^2+y^2) - 2(1+a)hx - 2bhy + ah^2)}{(1-a+a^2+b^2)h^2}\Big(w_1+w_2+w_3-3u_0\Big).
\end{align*}
Then, the proof is shown from the uniqueness of $\Pi^{(\alpha)}_\tau u$ and the fact that $\Pi^{(\alpha)}_\tau u$
satisfies \eqref{F1-3} and \eqref{F1-4}.
\end{proof}

\begin{lemma} \label{F2}
Let
\begin{align}
	u_k&=\Pi^{(\beta)}_\tau u(p_k),\qquad k=1,2,3, \label{F2-1} \\
	w_k&=\int_{\gamma_k}\nabla \Pi^{(\beta)}_\tau u\cdot n\,ds,\qquad k=1,2,3. \label{F2-2}
\end{align}
Then it holds that
\begin{align}
	\|\Pi^{(\beta)}_\tau u\|_{L^2(\tau)}^2&=F^{(\beta)}_0(s,u_1,u_2,u_3,w_1,w_2,w_3), \label{F2-3} \\
	\|\nabla \Pi^{(\beta)}_\tau u\|_{L^2(\tau)}^2&=F^{(\beta)}_1(s,u_1,u_2,u_3,w_1,w_2,w_3), \label{F2-4} \\
	|\Pi^{(\beta)}_\tau u|_{H^2(\tau)}^2&=F^{(\beta)}_2(s,u_1,u_2,u_3,w_1,w_2,w_3), \label{F2-5}
\end{align}
where
\begin{align*}
	F^{(\beta)}_0(s,u_1,u_2,&u_3,w_1,w_2,w_3)
		=\frac{s}{720}\Big\{
		21\big(u_1^2+u_2^2+u_3^2\big)
		-6\big(u_1u_2+u_2u_3+u_3u_1\big) \\
	&\qquad\qquad
		+6\big({v_1}^2+{v_2}^2+{v_3}^2\big)
		+10\big(v_1v_2+v_2v_3+v_3v_1\big)\Big\}, \\
	v_1&=\frac{13u_1+u_2+u_3}{4}-\frac{4w_1-(c_2-c_3)(u_2-u_3)}{c_1}, \\
	v_2&=\frac{13u_2+u_3+u_1}{4}-\frac{4w_2-(c_3-c_1)(u_3-u_1)}{c_2}, \\
	v_3&=\frac{13u_3+u_1+u_2}{4}-\frac{4w_3-(c_1-c_2)(u_1-u_2)}{c_3}, \\
	F^{(\beta)}_1(s,u_1,u_2,&u_3,w_1,w_2,w_3) \\
		&=\frac{1}{3}\left(
		\frac{(u_2-u_3)^2+w_1^2}{c_1}
		+\frac{(u_3-u_1)^2+w_2^2}{c_2}
		+\frac{(u_1-u_2)^2+w_3^2}{c_3}
	\right), \\
	F^{(\beta)}_2(s,u_1,u_2,&u_3,w_1,w_2,w_3)
		=\frac{1}{16s}\Big\{\big(c_1v_4+c_2v_5+c_3v_6\big)^2
		-8\big(v_4v_5+v_5v_6+v_6v_4\big)\Big\}, \\
	v_4&=2u_1-u_2-u_3+\frac{4w_1-(c_2-c_3)(u_2-u_3)}{c_1}, \\
	v_5&=2u_2-u_3-u_1+\frac{4w_2-(c_3-c_1)(u_3-u_1)}{c_2}, \\
	v_6&=2u_3-u_1-u_2+\frac{4w_3-(c_1-c_2)(u_1-u_2)}{c_3}.
\end{align*}            
\end{lemma}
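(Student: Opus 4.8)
The plan is to follow the scheme used for Lemma~\ref{F1}: reduce to a canonical position, write the quadratic interpolant explicitly, and evaluate the three norms by direct integration, recognising the symmetric invariant form only at the very end. Since $Q_\beta$ is invariant under constant shifts and rotations, and since the data $u_k=\Pi^{(\beta)}_\tau u(p_k)$ and $w_k=\int_{\gamma_k}\nabla\Pi^{(\beta)}_\tau u\cdot n\,ds$, as well as the three target quantities $\|\Pi^{(\beta)}_\tau u\|_{L^2(\tau)}^2$, $\|\nabla\Pi^{(\beta)}_\tau u\|_{L^2(\tau)}^2$ and $|\Pi^{(\beta)}_\tau u|_{H^2(\tau)}^2$, are all invariant under rigid motions, each target quantity is a universal function of the rigid-motion invariants of the configuration, i.e.\ of $s,c_1,c_2,c_3$ and $u_1,u_2,u_3,w_1,w_2,w_3$ alone. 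It therefore suffices to compute in the canonical position $p_1=(0,0)$, $p_2=(h,0)$, $p_3=(ah,bh)$ with $b>0$, where, exactly as in the proof of Lemma~\ref{F1},
\begin{equation*}
 s=\frac{bh^2}{2},\qquad c_1=\frac{2((1-a)^2+b^2)}{b},\qquad c_2=\frac{2(a^2+b^2)}{b},\qquad c_3=\frac{2}{b},
\end{equation*}
and to re-express the result in the symmetric variables afterwards.

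Next I would determine $P:=\Pi^{(\beta)}_\tau u=a_1x^2+a_2xy+a_3y^2+a_4x+a_5y+a_6\in Q_\beta$ explicitly by solving the $6\times6$ linear system given by the three vertex conditions $P(p_k)=u_k$ and the three normal-derivative conditions $\int_{\gamma_k}\nabla P\cdot n\,ds=w_k$. The structure of this system is clarified by the fact that the Hessian of $P$ is the constant matrix with entries $2a_1,a_2,a_2,2a_3$: the $w_k$ are then affine in $(a_1,a_2,a_3,a_4,a_5)$, and eliminating the affine part $(a_4,a_5,a_6)$ through the vertex conditions reduces the normal-derivative conditions to a $3\times3$ system for the constant Hessian. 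Carrying this out (or, following Lemma~\ref{F1}, simply exhibiting the resulting polynomial and appealing to the uniqueness of $\Pi^{(\beta)}_\tau u$) identifies the auxiliary variables of the statement: $v_4,v_5,v_6$ turn out to be edge-adapted combinations of the Hessian entries alone, while $v_1,v_2,v_3$ re-package the full data $(u_k,w_k)$ into an alternative set of six degrees of freedom $(u_1,u_2,u_3,v_1,v_2,v_3)$, each chosen so that the resulting quadratic forms become symmetric in the three edges.

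I would then evaluate the three integrals in turn. The seminorm is immediate: because the second derivatives of $P$ are constant,
\begin{equation*}
 |P|_{H^2(\tau)}^2=\|P_{xx}\|_{L^2(\tau)}^2+2\|P_{xy}\|_{L^2(\tau)}^2+\|P_{yy}\|_{L^2(\tau)}^2=s\,(4a_1^2+2a_2^2+4a_3^2),
\end{equation*}
and rewriting $a_1,a_2,a_3$ in terms of $v_4,v_5,v_6$ yields $F^{(\beta)}_2$. For $\|\nabla P\|_{L^2(\tau)}^2$ the integrand $P_x^2+P_y^2$ is quadratic, so the integral reduces to triangle moments of degree at most two and collapses to the remarkably simple $F^{(\beta)}_1$, involving only the vertex differences $u_i-u_j$ and the $w_k$. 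For $\|P\|_{L^2(\tau)}^2$ the integrand $P^2$ has degree four; here I would use the barycentric moment formula $\iint_\tau\lambda_1^{i}\lambda_2^{j}\lambda_3^{k}\,dxdy=2s\,i!\,j!\,k!/(i+j+k+2)!$ (equivalently, pass to the $P_2$ Lagrange data of vertex and edge-midpoint values and use the classical quadratic mass matrix). Collecting terms and re-expressing in the invariants produces $F^{(\beta)}_0$; the change of variables to $v_1,v_2,v_3$ is what eliminates the $u_i$–$v_j$ cross terms and leaves the block form written in the statement.

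The main difficulty is not conceptual but the final algebraic condensation: after inversion and integration each quantity first appears as a bulky rational expression in $(a,b,h)$, and one must confirm that it contracts exactly into the stated symmetric formula in $s,c_1,c_2,c_3,u_k,w_k$. This is the step for which a computer-algebra system is indispensable. The most economical route—mirroring the terse proof of Lemma~\ref{F1}—is verification rather than re-derivation: since both sides of \eqref{F2-3}, \eqref{F2-4} and \eqref{F2-5} are quadratic forms in the six data $(u_1,u_2,u_3,w_1,w_2,w_3)$, it suffices to check the three identities on a spanning set of $Q_\beta$, for instance the monomial basis $\{1,x,y,x^2,xy,y^2\}$, computing $u_k$, $w_k$ and the three norms for each basis element and comparing coefficient matrices. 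Uniqueness of $\Pi^{(\beta)}_\tau u$ then completes the proof.
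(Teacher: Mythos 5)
Your proposal is correct and follows essentially the same route as the paper: reduction to the canonical position $p_1=(0,0)$, $p_2=(h,0)$, $p_3=(ah,bh)$ via the invariance of $Q_\beta$, explicit determination of the interpolant, an appeal to its uniqueness, and verification of \eqref{F2-3}--\eqref{F2-5} by a computer-algebra system (the paper's Code~\ref{Lemma5-2} does exactly this). The one point to watch in your final step is that evaluating the three norms on the six monomials alone would not suffice---both sides are quadratic forms in the data, so the full $6\times6$ coefficient (Gram) matrices, including all cross terms, must be compared---which is what your phrase ``comparing coefficient matrices'' correctly requires.
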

\begin{proof}\quad
Since $Q_\beta$ is invariant under constant shifts and rotations, as in the previous lemma,
we take $p_1=(0,0),\;p_2=(h,0),\;p_3=(ah,bh)$. Then, it holds that
$$
	s = \frac{bh^2}{2},
	\qquad c_1 = \frac{2((1-a)^2+b^2)}{b},
	\qquad c_2 = \frac{2(a^2+b^2)}{b},
	\qquad c_3 = \frac{2}{b}
$$
and $\Pi^{(\beta)}_\tau u$ satisfying \eqref{F2-1} and \eqref{F2-2} is obtained as follows:
\begin{align*}
	\Pi^{(\beta)}_\tau u= &\frac{1}{bh}\left\{-\Big(b(x-h)+(1-a)y\Big)u_1 + (bx-ay)u_2 + yu_3 \right\} \\
	&+ \frac{(b(x-h)+(1-a)y)(bx+(1-a)y)}{((1-a)^2+b^2)b^2h^2} \\
	&\qquad\qquad\qquad \times\Big( bw_1 + ((1-a)^2+b^2)u_1 + (a(1-a)-b^2)u_2 - (1-a)u_3 \Big) \\
	&+ \frac{(b(x-h)-ay)(bx-ay)}{(a^2+b^2)b^2h^2}\Big( bw_2 + (a(1-a)-b^2)u_1 + (a^2+b^2)u_2 - au_3 \Big) \\
	&+ \frac{(y-bh)y}{h^2b^2}\Big( bw_3 - (1-a)u_1 - au_2 + u_3 \Big).
\end{align*}
Then, the proof is shown from the uniqueness of $\Pi^{(\beta)}_\tau u$ and the fact that $\Pi^{(\beta)}_\tau u$
 satisfies \eqref{F2-3}, \eqref{F2-4}, and \eqref{F2-5}.
\end{proof}
The proofs of Lemma~\ref{F1} and Lemma~\ref{F2} can be checked by hand calculation in principle.
However, it is more realistic to verify them using a formula manipulation system.
Code~\ref{Lemma5-1} and Code~\ref{Lemma5-2} in Appendix~\ref{Appendix2} show the verification codes developed
using MATLAB with Symbolic Math Toolbox.

\begin{figure}[t]
	\centering
\begin{tikzpicture}[scale=1.2]
  \coordinate (A) at (0.0,0.0);
  \coordinate (B) at (5.0,0.0);
  \coordinate (C) at (3.5,3.0);
  \draw [line width = 0.8pt](A) -- (B) -- (C) -- cycle;
  \coordinate (A2) at ($(A)!0.5!(B)$);
  \coordinate (B2) at ($(B)!0.5!(C)$);
  \coordinate (C2) at ($(C)!0.5!(A)$);
  \draw [line width = 0.8pt] (A2) -- (B2) -- (C2) -- cycle;
  \coordinate (A1) at ($(A)!0.5!(A2)$);
  \draw (A1) circle [radius=0.07];
  \draw (A1) node[below=1mm]{$w_3$};
  \coordinate (A3) at ($(B)!0.5!(A2)$);
  \draw (A3) circle [radius=0.07];
  \draw (A3) node[below=1mm]{$w_6$};
  \coordinate (B1) at ($(B)!0.5!(B2)$);
  \draw (B1) circle [radius=0.07];
  \draw (B1) node[right=0.2mm]{$w_1$};
  \coordinate (B3) at ($(C)!0.5!(B2)$);
  \draw (B3) circle [radius=0.07];
  \draw (B3) node[right=0.2mm]{$w_4$};
  \coordinate (C1) at ($(C)!0.5!(C2)$);
  \draw (C1) circle [radius=0.07];
  \draw (C1) node[left=2.7mm, above=-0.1mm]{$w_2$};
  \coordinate (C3) at ($(A)!0.5!(C2)$);
  \draw (C3) circle [radius=0.07];
  \draw (C3) node[left=2.7mm, above=-0.1mm]{$w_5$};
  \coordinate (D1) at ($(A2)!0.5!(B2)$);
  \draw (D1) circle [radius=0.07];
  \draw (D1) node[right=2.2mm,below=0.1mm]{$w_8$};
  \coordinate [label=above:{$w_9$}] (D2) at ($(C2)!0.5!(B2)$);
  \draw (D2) circle [radius=0.07];
  \coordinate (D3) at ($(A2)!0.5!(C2)$);
  \draw (D3) circle [radius=0.07];
  \draw (D3) node[left=2.1mm,below=-0.1mm]{$w_7$};
  \coordinate (E1) at ($0.3333*($($(B2)+(A2)$)+(C2)$)$);
  \draw (E1) circle [radius=0.07];
  \draw (E1) node[below=0.25mm]{$u_4$};
  \coordinate (E2) at ($0.3333*($($(A)+(A2)$)+(C2)$)$);
  \draw (E2) circle [radius=0.07];
  \draw (E2) node[above=0.25mm]{$u_1$};
  \coordinate (E3) at ($0.3333*($($(A2)+(B)$)+(B2)$)$);
  \draw (E3) circle [radius=0.07];
  \draw (E3) node[below=2.0mm,right=-0.1mm]{$u_2$};
  \coordinate (E4) at ($0.3333*($($(C2)+(C)$)+(B2)$)$);
  \draw (E4) circle [radius=0.07];
  \draw (E4) node[above=1.0mm,right=-0.2mm]{$u_3$};
  \coordinate [label=above:{\Large $T$}] (E) at (1.8,1.7);
\end{tikzpicture}
	\caption{Variable specification for $C_1^{(n)}(T)$ and $C_2^{(n)}(T)$ when $n=2$.}
	\label{Fig3}
\end{figure}

We now explain the practical construction of the finite-dimensional eigenvalue problem.
Divide the given triangle $T$ into $n^2$ similar small triangles $\tau_1,\tau_2,\dots,\tau_{n^2}$,
as shown in Fig.~\ref{Fig2}, and let $S$ be the area of triangle $T$.
Under the constraints, for $u\in V^{1,j},\;j=1,2$, the problem of finding $C_1^{(n)}(T)$ is a generalized eigenvalue problem
in $(n+1)(5n-2)/2$ dimensions and the problem of finding $C_2^{(n)}(T)$ is a generalized eigenvalue problem in
$(5n^2+3n-6)/2$ dimensions.
We show an example for $n=2$.
Take $u_1,u_2,\dots,u_4$ and $w_1,w_2,\dots,w_9$, as in Fig.~\ref{Fig3}.
For $u\in V^{1,1}$, with the constraint
$$
	u_1+u_2+u_3+u_4=0
$$
taken into account, we obtain
\begin{align*}
	\|\Pi^{(\alpha)}u&\|_{L^2(T')}^2=F^{(\alpha)}_0(S/4, w_7, w_5, w_3, u_1)
		+ F^{(\alpha)}_0(S/4, w_1, w_8, w_6, u_2) \\
	&\qquad\qquad + F^{(\alpha)}_0(S/4, w_4, w_2, w_9, u_3)
     + F^{(\alpha)}_0(S/4, w_7, w_8, w_9, -u_1-u_2-u_3), \\
	\|\nabla\Pi^{(\alpha)}&u\|_{L^2(T')}^2=F^{(\alpha)}_1(S/4, w_7, w_5, w_3, u_1)
		+ F^{(\alpha)}_1(S/4, w_1, w_8, w_6, u_2) \\
	&\qquad\qquad + F^{(\alpha)}_1(S/4, w_4, w_2, w_9, u_3)
     + F^{(\alpha)}_1(S/4, w_7, w_8, w_9, -u_1-u_2-u_3).
\end{align*}

For $u\in V^{1,2}$, with the constraints
$$
	w_1+w_4=w_2+w_5=w_3+w_6=0
$$
taken into account, we obtain
\begin{align*}
	\|\Pi^{(\alpha)}u\|_{L^2(T')}^2&=F^{(\alpha)}_0(S/4, w_7, -w_2, w_3, u_1)
		+ F^{(\alpha)}_0(S/4, w_1, w_8, -w_3, u_2) \\
	&\qquad\qquad + F^{(\alpha)}_0(S/4, -w_1, w_2, w_9, u_3)
     + F^{(\alpha)}_0(S/4, w_7, w_8, w_9, u_4), \\
	\|\nabla\Pi^{(\alpha)}u\|_{L^2(T')}^2&=F^{(\alpha)}_1(S/4, w_7, -w_2, w_3, u_1)
		+ F^{(\alpha)}_1(S/4, w_1, w_8, -w_3, u_2) \\
	&\qquad\qquad + F^{(\alpha)}_1(S/4, -w_1, w_2, w_9, u_3)
     + F^{(\alpha)}_1(S/4, w_7, w_8, w_9, u_4).
\end{align*}

\begin{figure}[t]
	\centering
\begin{tikzpicture}[scale=1.2]
  \coordinate (A) at (0.0,0.0);
  \coordinate (B) at (5.0,0.0);
  \coordinate (C) at (3.5,3.0);
  \draw [line width = 0.8pt](A) -- (B) -- (C) -- cycle;
  \coordinate (A2) at ($(A)!0.5!(B)$);
  \coordinate (B2) at ($(B)!0.5!(C)$);
  \coordinate (C2) at ($(C)!0.5!(A)$);
  \draw [line width = 0.8pt] (A2) -- (B2) -- (C2) -- cycle;
  \draw [fill] (A) circle [radius=0.07];
  \draw [fill] (B) circle [radius=0.07];
  \draw [fill] (C) circle [radius=0.07];
  \draw [fill] (A2) circle [radius=0.07];
  \draw [fill] (B2) circle [radius=0.07];
  \draw [fill] (C2) circle [radius=0.07];
  \coordinate (A1) at ($(A)!0.25!(B)$);
  \coordinate (A3) at ($(A)!0.75!(B)$);
  \coordinate (B1) at ($(B)!0.25!(C)$);
  \coordinate (B3) at ($(B)!0.75!(C)$);
  \coordinate (C1) at ($(C)!0.25!(A)$);
  \coordinate (C3) at ($(C)!0.75!(A)$);
  \coordinate (D1) at ($(A2)!0.5!(B2)$);
  \coordinate (D2) at ($(B2)!0.5!(C2)$);
  \coordinate (D3) at ($(A2)!0.5!(C2)$);
  \draw (A) node[below=1.7mm,left=-0.2mm]{$u_4$};
  \draw (B) node[below=1.7mm,right=-0.2mm]{$u_5$};
  \draw (C) node[above=0.7mm]{$u_6$};
  \draw (A2) node[below=0.7mm]{$u_3$};
  \draw (B2) node[right=0.7mm]{$u_1$};
  \draw (C2) node[left=4mm,above=-1mm]{$u_2$};
 \node [draw, inner sep=0.5mm, single arrow, minimum height=5mm, 
         single arrow head extend=1mm, 
         rotate=-90] at (A1) {};
  \draw (A1) node[above=2mm,left=-0.2mm]{$w_3$};
 \node [draw, inner sep=0.5mm, single arrow, minimum height=5mm, 
         single arrow head extend=1mm, 
         rotate=-90] at (A3) {};
  \draw (A3) node[above=2mm,left=-0.2mm]{$w_6$};
 \node [draw, inner sep=0.5mm, single arrow, minimum height=5mm, 
         single arrow head extend=1mm, 
         rotate=-90] at (D2) {};
  \draw (D2) node[above=2mm,left=-0.2mm]{$w_9$};
 \node [draw, inner sep=0.5mm, single arrow, minimum height=5mm, 
         single arrow head extend=1mm, 
         rotate=27] at (B1) {};
  \draw ($(B1)-(0.1,0.3)$) node {$w_1$};
 \node [draw, inner sep=0.5mm, single arrow, minimum height=5mm, 
         single arrow head extend=1mm, 
         rotate=27] at (B3) {};
  \draw ($(B3)-(0.1,0.3)$) node {$w_4$};
 \node [draw, inner sep=0.5mm, single arrow, minimum height=5mm, 
         single arrow head extend=1mm, 
         rotate=27] at (D3) {};
  \draw ($(D3)-(0.1,0.3)$) node {$w_7$};
 \node [draw, inner sep=0.5mm, single arrow, minimum height=5mm, 
         single arrow head extend=1mm, 
         rotate=131] at (C1) {};
  \draw (C1) node[right=0.95mm]{$w_2$};
 \node [draw, inner sep=0.5mm, single arrow, minimum height=5mm, 
         single arrow head extend=1mm, 
         rotate=131] at (C3) {};
  \draw (C3) node[right=0.95mm]{$w_5$};
 \node [draw, inner sep=0.5mm, single arrow, minimum height=5mm, 
         single arrow head extend=1mm, 
         rotate=131] at (D1) {};
  \draw (D1) node[right=0.95mm]{$w_8$};
  \coordinate [label=above:{\Large $T$}] (E) at (1.4,1.9);
\end{tikzpicture}
	\caption{Variable specification for $C_3^{(n)}(T)$ and $C_4^{(n)}(T)$ when $n=2$.}
	\label{Fig4}
\end{figure}

Furthermore, for $u\in V^2$, the problem of finding $C_3^{(n)}(T)$ and $C_4^{(n)}(T)$ becomes a generalized eigenvalue problem
 in $(n+2)(2n-1)$ dimensions.
We show an example for $n=2$.
Take $u_1,u_2,\dots,u_6$ and $w_1,w_2,\dots,w_9$, as in Fig.~\ref{Fig4}.
With the constraint
$$
	u_4=u_5=u_6=0
$$
taken into account, we obtain
\begin{align*}
	\|\Pi^{(\beta)}u&\|_{L^2(T')}^2=F^{(\beta)}_0(S/4, 0, u_3, u_2, w_7, w_5, w_3)
		+ F^{(\beta)}_0(S/4, u_3, 0, u_1, w_1, w_8, w_6) \\
	&\qquad + F^{(\beta)}_0(S/4, u_2, u_1, 0, w_4, w_2, w_9)
		+ F^{(\beta)}_0(S/4, u_1, u_2, u_3, -w_7, -w_8, -w_9), \\
	\|\nabla \Pi^{(\beta)}&u\|_{L^2(T')}^2=F^{(\beta)}_1(S/4, 0, u_3, u_2, w_7, w_5, w_3)
		+ F^{(\beta)}_1(S/4, u_3, 0, u_1, w_1, w_8, w_6) \\
	&\qquad + F^{(\beta)}_1(S/4, u_2, u_1, 0, w_4, w_2, w_9)
		+ F^{(\beta)}_1(S/4, u_1, u_2, u_3, -w_7, -w_8, -w_9), \\
	|\Pi^{(\beta)}u&|_{H^2(T')}^2=F^{(\beta)}_2(S/4, 0, u_3, u_2, w_7, w_5, w_3)
		+ F^{(\beta)}_2(S/4, u_3, 0, u_1, w_1, w_8, w_6) \\
	&\qquad + F^{(\beta)}_2(S/4, u_2, u_1, 0, w_4, w_2, w_9)
		+ F^{(\beta)}_2(S/4, u_1, u_2, u_3, -w_7, -w_8, -w_9).
\end{align*}

For example, if we take
\begin{align*}
	G_1&=F^{(\beta)}_1(S/4, 0, u_3, u_2, w_7, w_5, w_3)
		+ F^{(\beta)}_1(S/4, u_3, 0, u_1, w_1, w_8, w_6) \\
	&\qquad\qquad + F^{(\beta)}_1(S/4, u_2, u_1, 0, w_4, w_2, w_9)
		+ F^{(\beta)}_1(S/4, u_1, u_2, u_3, -w_7, -w_8, -w_9), \\
	G_2&=F^{(\beta)}_2(S/4, 0, u_3, u_2, w_7, w_5, w_3)
		+ F^{(\beta)}_2(S/4, u_3, 0, u_1, w_1, w_8, w_6) \\
	&\qquad\qquad + F^{(\beta)}_2(S/4, u_2, u_1, 0, w_4, w_2, w_9)
		+ F^{(\beta)}_2(S/4, u_1, u_2, u_3, -w_7, -w_8, -w_9),
\end{align*}
we have
\begin{equation}
	C_4^{(2)}(T)^2=\sup_{\substack{x\in\mathbb{R}^{12} \\ x\not=0\;\;\;}}\frac{G_1}{G_2},
	\quad x=(u_1,u_2,u_3,w_1,w_2,w_3,w_4,w_5,w_6,w_7,w_8,w_9). \label{C_4uw}
\end{equation}
Similarly, every $C_j^{(n)}(T)$ can be expressed as a solution to a finite-dimensional eigenvalue problem.

From a mathematical point of view, we are interested in whether $C_j^{(n)}(T)$ converges to
 $C_j(T)$, respectively, as $n$ increases; however, we cannot provide a proof yet.
Nevertheless, from the numerical results in Section~\ref{Numerical_Results}, they seem to converge.

\section{Numerical verification using INTLAB} \label{Positive_Definiteness}
By using the results of Lemma~\ref{F1} and Lemma~\ref{F2} in the previous section,
 we were able to construct a concrete finite-dimensional generalized eigenvalue problem.
Now, we evaluate the upper bounds of these eigenvalues.
For this purpose, it is sufficient to know how to evaluate the upper bound of
$$
	\lambda=\sup_{x\not=0}\frac{x^T Ax}{x^T Bx}
$$
strictly for the real symmetric matrix $A$ and the real symmetric positive definite matrix $B$.
Now, since
\begin{equation}
	\lambda'>\lambda \qquad \Longleftrightarrow \qquad \lambda'B-A \quad \text{is positive definite}, \label{positive_definite}
\end{equation}
to show that a given real number $\lambda'$ is an upper bound of the eigenvalues of a finite-dimensional
 generalized eigenvalue problem, it is sufficient to show the positive definiteness of the real symmetric matrix $\lambda'B-A$.

There are various ways to perform numerical verification on a computer.
For portability and ease of programming, we utilized INTLAB, which is a toolbox for MATLAB.
For more information on INTLAB, see \cite{Rump2,MooreKearfottCloud}.
In the numerical verification method, variables are given as intervals.
In INTLAB, the function {\tt isspd()} is used to verify the positive definiteness of a symmetric matrix
whose components are given as intervals.

Suppose that the set of symmetric matrices is given using intervals as
$$
	A=\left(\left[\overline{a_{ij}},\;\underline{a_{ij}}\right]\right)
		=\left\{Y=(y_{ij})\;\Big|\;y_{ij}\in\left[\overline{a_{ij}},\;\underline{a_{ij}}\right],\; y_{ij}\in\mathbb{R}\right\}.
$$
Then, if the returned value of {\tt isspd(A)} is 1, all real symmetric matrices contained in $A$ are strictly verified to be positive definite.
If they cannot be verified to be positive definite, then {\tt isspd(A)} returns 0.
See {\cite{Rump1} for the algorithm used inside the function {\tt isspd()}.

Using the method described above, we can obtain mathematically rigorous proofs of the following two theorems.
\begin{theorem} \label{Verified_Result1}
Define $\{x_k\}$ and $\{y_k\}$ as follows:
$$
	y_1=1000, \qquad y_{k+1}=\left\lfloor \frac{51}{50}y_k \right\rfloor,
	\qquad x_{k}= \left\lceil \frac{y_k}{40} \right\rceil,
$$
where $\lfloor\cdot\rfloor$ is the floor function and $\lceil\cdot\rceil$ is the ceiling function.
Then, for
$$
	n=20, \qquad b_k = \frac{1000}{y_k},\qquad a_{kl} = \frac{l}{2x_k},
$$
the following inequalities hold for all $k=1,2,\dots,119,\quad l=0,1,\dots,x_k$.
\begin{align*}
	\frac{n^2}{n^2-1}C_j^{(n)}(T_{a_{kl},b_k})^2&\left(1+5\cdot\frac{1}{50^2}\right)\left(1+3\cdot\frac{1}{50^2}\right)< L_j(a_{kl},b_k), \qquad j=1,2, \\
	\frac{n^4}{n^4-1}C_3^{(n)}(T_{a_{kl},b_k})^2&\left(1+6\cdot\frac{1}{50^2}\right)\left(1+8\cdot\frac{1}{50^2}\right)< L_3(a_{kl},b_k), \\
	\left(C_4^{(n)}(T_{a_{kl},b_k})^2+\frac{L_2(a_{kl},b_k)}{n^2}\right)&\left(1+9\cdot\frac{1}{50^2}\right)\left(1+9\cdot\frac{1}{50^2}\right)< L_4(a_{kl},b_k).
\end{align*}
\end{theorem}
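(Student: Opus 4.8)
The plan is to read the displayed inequalities as a finite list of strict upper bounds on Rayleigh quotients and to certify each one by a single verified positive-definiteness check, exactly along the lines set out in Section~\ref{Positive_Definiteness}. Throughout, fix an index pair $(k,l)$ and abbreviate $T=T_{a_{kl},b_k}$; since $a_{kl}=l/(2x_k)$ and $b_k=1000/y_k$ are rational, every geometric quantity attached to $T$ and to its $n^2$ congruent subtriangles is rational as well.

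First I would turn each $C_j^{(n)}(T)^2$ into a concrete generalized eigenvalue. By the construction of Section~\ref{Construction_FDGEP}, which assembles the functionals $F^{(\alpha)}_0,F^{(\alpha)}_1$ of Lemma~\ref{F1} and $F^{(\beta)}_0,F^{(\beta)}_1,F^{(\beta)}_2$ of Lemma~\ref{F2} over the subtriangles and eliminates the linear constraints defining $V^{1,1},V^{1,2},V^2$, one obtains explicit real symmetric matrices $A_j,B_j$ with $B_j$ positive definite such that
$$
	C_j^{(n)}(T)^2=\sup_{x\neq0}\frac{x^TA_jx}{x^TB_jx},
$$
the template being \eqref{C_4uw}. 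For $n=20$ these matrices have sizes $(n+1)(5n-2)/2$, $(5n^2+3n-6)/2$ and $(n+2)(2n-1)$ for $j=1$, $j=2$ and $j=3,4$, i.e.\ roughly $10^3$, and all their entries are rational in $a_{kl},b_k$.

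Next I would rewrite each asserted inequality as a positive-definiteness statement. Put $\phi_1=\phi_2=(1+5/50^2)(1+3/50^2)$, $\phi_3=(1+6/50^2)(1+8/50^2)$ and $\phi_4=(1+9/50^2)^2$, and read $L_j=L_j(a_{kl},b_k)$ as the rational number given by \eqref{Ljab} and Theorem~\ref{formula}. Then for $j=1,2,3$ the inequality is equivalent to $C_j^{(n)}(T)^2<\lambda_j'$ with
$$
	\lambda_1'=\frac{(n^2-1)L_1}{n^2\phi_1},\quad
	\lambda_2'=\frac{(n^2-1)L_2}{n^2\phi_2},\quad
	\lambda_3'=\frac{(n^4-1)L_3}{n^4\phi_3},
$$
while for $j=4$ it is $C_4^{(n)}(T)^2<\lambda_4'$ with $\lambda_4'=L_4/\phi_4-L_2/n^2$. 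By the equivalence \eqref{positive_definite}, each of these four statements holds if and only if the real symmetric matrix $\lambda_j'B_j-A_j$ is positive definite.

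Finally I would discharge all of these checks on the computer. Since every entry of $A_j,B_j$ and every threshold $\lambda_j'$ is a rational function of $a_{kl},b_k$, I would enclose them in tight intervals and call the INTLAB routine {\tt isspd} on $\lambda_j'B_j-A_j$; a returned value of $1$ certifies positive definiteness of all matrices in the enclosure, hence the strict inequality. Looping over $k=1,\dots,119$, $l=0,\dots,x_k$ and $j=1,2,3,4$ finishes the proof. The main obstacle is not conceptual but one of scale and conditioning: there are on the order of $12{,}168$ triangles with four matrices each, of dimension near $10^3$, and for the nearly degenerate shapes (small $b_k$) the coefficients $c_1,c_2,c_3$ blow up and $\lambda_j'B_j-A_j$ becomes badly conditioned, so the genuine difficulty is to keep the outward interval rounding small enough that {\tt isspd} still succeeds. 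The safety factors $\phi_j$ are inserted precisely to leave room for this rounding and for the later continuification argument of Section~\ref{Continuification}.
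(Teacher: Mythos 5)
Your proposal is correct and matches the paper's own proof: the paper likewise assembles the rational matrices from Lemma~\ref{F1} and Lemma~\ref{F2}, folds the safety factors and the $L_2/n^2$ shift into thresholds $\lambda_j'$ exactly as you do, and certifies $\lambda_j'B_j-A_j\succ0$ via \eqref{positive_definite} by calling {\tt isspd} on interval enclosures (Codes~\ref{C1verify}--\ref{C4verify}). The only cosmetic difference is that the paper performs the constraint elimination inside the verification loop rather than beforehand, which does not change the argument.
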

Figure~\ref{Points} shows the distribution of $(a_{kl}, b_k)$.

\begin{figure}[t]
	\centering
	\includegraphics[width=0.3\hsize]{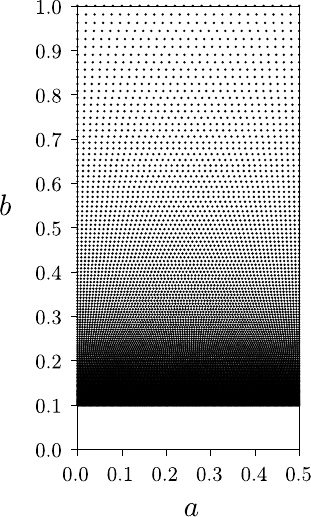}
	\caption{Distribution of $(a_{kl}, b_k)$.}
	\label{Points}
\end{figure}

\begin{theorem} \label{Verified_Result2}
For
$$
	n=20, \qquad b = \frac{1}{10},\qquad a_l = \frac{l}{500},
$$
the following inequalities hold for all $l=0,1,\dots, 250$.
\begin{align*}
	\frac{n^2}{n^2-1}C_j^{(n)}(T_{a_l,b})^2&\left(1+5\cdot\frac{1}{50^2}\right)< \lim_{y\downarrow0}L_j(a_l,y), \quad j=1,2, \\
	\frac{n^4}{n^4-1}C_3^{(n)}(T_{a_l,b})^2&\left(1+6\cdot\frac{1}{50^2}\right)< \lim_{y\downarrow0}L_3(a_l,y).
\end{align*}
Here, we remark that $\lim_{y\downarrow0}L_j(a,y), \; j=1,2,3$ coincide with the expression
obtained by formally substituting $b=0$ into $L_j(a,b)$.
\end{theorem}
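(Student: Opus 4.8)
The plan is to treat this statement exactly as a computer-assisted verification in the spirit of Theorem~\ref{Verified_Result1}, the only new feature being that the right-hand sides are the degenerate limits $\lim_{y\downarrow 0}L_j(a_l,y)$ rather than values attained on genuine triangles. First I would recast each $C_j^{(n)}(T_{a_l,b})^2$, $j=1,2,3$, as the supremum of a Rayleigh quotient $x^TA_jx/x^TB_jx$. Using the quadratic-form identities of Lemma~\ref{F1} and Lemma~\ref{F2} together with the assembly procedure of Section~\ref{Construction_FDGEP}, the numerator and denominator of each constant are explicit quadratic forms in the nodal unknowns, yielding symmetric matrices $A_j,B_j$ with $B_j$ positive definite; these depend on $a_l$, $b=1/10$ and $n=20$ through rational arithmetic alone.

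Second, I would make the right-hand sides explicit. By the remark, $\lim_{y\downarrow 0}L_j(a_l,y)$ for $j=1,2,3$ equals the value obtained by formally setting $b=0$ in $K_j(T_{a_l,b})^2$; this is justified because each $S$-dependent term of $K_1,K_2,K_3$ tends to $0$ as $b\to0$ (with $S=b/2$, the power of $b$ in the numerator strictly exceeds that of any edge-length factor in the denominator, even at $a_l=0$ where one edge degenerates), while the $S$-free symmetric-polynomial terms pass to their $b=0$ values. Note that $K_4$ is excluded here precisely because its leading term $A^2B^2C^2/(16S^2)$ diverges as $S\to 0$, so no finite limit exists.

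Third, I would rearrange each target inequality into an equivalent positive-definiteness statement. For $j=1,2$ the inequality $\frac{n^2}{n^2-1}C_j^{(n)}(T_{a_l,b})^2\left(1+\frac{5}{50^2}\right)<\lim_{y\downarrow0}L_j(a_l,y)$ is equivalent to $C_j^{(n)}(T_{a_l,b})^2<\lambda'_{j,l}$ with $\lambda'_{j,l}=\lim_{y\downarrow0}L_j(a_l,y)\big/\big[\frac{n^2}{n^2-1}(1+\tfrac{5}{50^2})\big]$, and likewise for $j=3$ with the factor $\frac{n^4}{n^4-1}(1+\tfrac{6}{50^2})$. By the equivalence~\eqref{positive_definite}, each such inequality holds iff $\lambda'_{j,l}B_j-A_j$ is positive definite. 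I would then certify all of these — the three constants for each of the $251$ values of $l$ — by applying INTLAB's \texttt{isspd()} routine to interval matrices built from the exact rationals $a_l=l/500$ and $b=1/10$, exactly as in the code listed in Section~\ref{Appendix2}.

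The hard part will be the conditioning of the assembled generalized eigenvalue problems: because $b=1/10$ already makes $T_{a_l,b}$ rather thin, the matrices are ill-conditioned, and the outward-rounded interval enclosures must still be tight enough for \texttt{isspd()} to certify positive definiteness at every one of the $251$ abscissae. Keeping $a_l,b$ rational so that rounding enters only through the matrix entries, and preconditioning before the \texttt{isspd()} call where necessary, is what makes the verification succeed uniformly across $l$.
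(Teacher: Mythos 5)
Your proposal matches the paper's own proof in all essentials: the paper likewise assembles the generalized eigenvalue problems from Lemma~\ref{F1} and Lemma~\ref{F2}, replaces the right-hand sides by the formal $b=0$ substitution $L_j(a_l,0)$ scaled by the factors $\frac{n^2}{n^2-1}(1+\tfrac{5}{50^2})$ (resp.\ $\frac{n^4}{n^4-1}(1+\tfrac{6}{50^2})$), reduces each inequality via \eqref{positive_definite} to positive definiteness of $\lambda' B - A$, and certifies this with INTLAB's {\tt isspd()} at the exact rationals $a_l=l/500$, $b=1/10$ (Step~4 of Codes~\ref{C1verify}--\ref{C3verify}). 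The only cosmetic difference is that your justification of the limit remark is folded into the argument, whereas the paper relegates it to Lemma~\ref{Lj-limit} in the Appendix.
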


Theorem~\ref{Verified_Result1} and Theorem~\ref{Verified_Result2} were confirmed on
both MATLAB R2019b with INTLAB version 12 and MATLAB R2024b with INTLAB version 12.
The codes used for verification are listed as Code~\ref{C1verify}, Code~\ref{C2verify}, Code~\ref{C3verify}, and Code~\ref{C4verify}
 in Appendix~\ref{Appendix2}.

Again, we want to emphasize that in the proofs of Theorem~\ref{Verified_Result1} and Theorem~\ref{Verified_Result2},
we do not compute the eigenvalues directly, but rather transform the inequality about $C_j^{(n)}$
into the form of equation \eqref{positive_definite} and show the positive definiteness of the matrix.

These results are only valid for a finite number of triangles. We extend them to results for arbitrary triangles
 in the next section.

\section{Extension to continuously distributed triangles} \label{Continuification}
Here, we prove the formulas in Theorem~\ref{formula} for an arbitrary triangle $T$.
Since the formulas have scale invariance, we can set $p_1=(0,0),\;p_2=(1,0),\;p_3=(a,b)$.
Under the assumption $\overline{p_1p_2}\ge\overline{p_2p_3}\ge\overline{p_3p_1}$,
as shown in Fig.~\ref{Range_ab}, it is sufficient to prove the formulas for
$T_{a,b}$ satisfying $0\le a\le \dfrac{1}{2},\; 0<b\le1$.

\begin{figure}[t]
	\centering
\begin{tikzpicture}[scale=1.0]
 \fill[fill=black!15!white](0,0)arc[start angle=180, end angle=120, x radius=4, y radius=4] -- (2,0);
 \draw[semithick,->,>=stealth](-0.5,0)--(4.5,0);
 \draw[semithick,->,>=stealth](0,-0.5)--(0,4.5);
 \draw[thick](0,0)arc[start angle=180, end angle=120, x radius=4, y radius=4];
 \draw[thick, dashed](4,4)arc[start angle=90, end angle=120, x radius=4, y radius=4];
 \draw[thick](2,0)--(2,3.4641);
 \draw[thick, dashed](2,3.4641)--(2,4);
 \draw[thick, dashed](4,0)--(4,4)--(0,4);
 \coordinate (A) at (0,0);
 \coordinate (B) at (4,0);
 \coordinate (C) at (1.4,1.8);
 \draw [line width = 0.8pt](A) -- (B) -- (C) -- cycle;
 \draw[fill] (A) circle [radius=0.07] node[right=3mm,below=0.7mm]{$p_1$} node[left=3mm,below=0.7mm]{$0$};
 \draw[fill] (B) circle [radius=0.07] node[below=0.7mm]{$p_2$};
 \draw[fill] (C) circle [radius=0.07] node[right=0.8mm,below=1.6mm]{$p_3$} node[above=0.2mm]{$(a,b)$};
 \draw(2,0) node[below]{$\frac{1}{2}$};
 \draw(1.4,0.6) node{$T_{a,b}$};
 \draw(0,4) node[left]{$1$};
\end{tikzpicture}
	\caption{Range of $(a,b)$ for $\overline{p_1p_2}\ge\overline{p_2p_3}\ge\overline{p_3p_1}$.}
	\label{Range_ab}
\end{figure}

In this section, we assume
\begin{align*}
	0\le a_1\le &a\le a_2\le\frac{1}{2}, & h_a &= \frac{a_2-a_1}{b},& 0&<h_a\le\frac{1}{50}, \\
	0< b_1\le &b\le b_2\le 1, & h_b &= \frac{b_2-b_1}{b_1},& 0&<h_b\le\frac{1}{50}.
\end{align*}
Note that $a_1,a_2,b_1,b_2$ are different from $a_k$ and $b_k$ used in Theorem~\ref{Verified_Result1}
and Theorem~\ref{Verified_Result2}.

In the following, we show that when evaluations of $C_j(T_{a,b})$
have been obtained for two triangles with slightly different shapes, we can also obtain a slightly milder evaluation
for triangles with intermediate shapes.

Specifically, this section proves the following two theorems.
\begin{theorem} \label{a-interpolation}
It holds that
\begin{align*}
	\frac{C_j(T_{a,b})^2}{L_j(a,b)}&\le (1+5h_a^2)\max_{k=1,2}\frac{C_j(T_{a_k,b})^2}{L_j(a_k,b)}, \qquad j=1,2, \\
	\frac{C_3(T_{a,b})^2}{L_3(a,b)}&\le (1+6h_a^2)\max_{k=1,2}\frac{C_3(T_{a_k,b})^2}{L_3(a_k,b)}, \\
	\frac{C_4(T_{a,b})^2}{L_4(a,b)}&\le (1+9h_a^2)\max_{k=1,2}\frac{C_4(T_{a_k,b})^2}{L_4(a_k,b)}.
\end{align*}
\end{theorem}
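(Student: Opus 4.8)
The plan is to connect the three triangles $T_{a_1,b}$, $T_{a,b}$, $T_{a_2,b}$ (which share the same base and height $b$) by the horizontal shear. Writing $a=(1-\theta)a_1+\theta a_2$ with $\theta\in[0,1]$, let $\Phi_k:T_{a_k,b}\to T_{a,b}$ be the shear $\Phi_k(x,y)=(x+t_ky,\,y)$ with $t_k=(a-a_k)/b$, so that $|t_1|,|t_2|\le h_a$ and $D\Phi_k$ has determinant $1$. Since $\Phi_k$ sends vertices to vertices and edges to edges, the transplantation $v_k:=u\circ\Phi_k$ carries an admissible function $u$ on $T_{a,b}$ to an admissible function $v_k$ on $T_{a_k,b}$ for each of $V^{1,1}$, $V^{1,2}$, $V^2$: the vertex conditions, the zero mean, and the zero edge means are all preserved, the last because the induced arclength factor on each edge is a nonzero constant. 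Since each transplanted Rayleigh quotient $R_j(v_k)$ is bounded by $C_j(T_{a_k,b})^2$, it suffices to establish, for every admissible $u$ on $T_{a,b}$, the pointwise inequality $R_j(u)/L_j(a,b)\le(1+c_jh_a^2)\max_kR_j(v_k)/L_j(a_k,b)$ with $c_1=c_2=5$, $c_3=6$, $c_4=9$; taking the supremum over $u$ then yields the theorem. This reduction avoids any appeal to the existence of extremal functions.

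Next I record how the norms transform under $\Phi_k$. As $\det D\Phi_k=1$, one has $\|v_k\|_{L^2}^2=\|u\|_{L^2}^2$. For the gradient, $\nabla v_k=(D\Phi_k)^{\mathsf T}(\nabla u)\circ\Phi_k$, so writing $P=\|u_x\|^2$, $Q=\langle u_x,u_y\rangle$, $R=\|u_y\|^2$ (integrals over $T_{a,b}$),
\[
\|\nabla v_k\|_{L^2}^2=(1+t_k^2)P+2t_kQ+R ,
\]
a quadratic in $t_k$. For the Hessian, $H_{v_k}=(D\Phi_k)^{\mathsf T}(H_u\circ\Phi_k)(D\Phi_k)$, so $|v_k|_{H^2}^2=\int_{T_{a,b}}\|(D\Phi_k)^{\mathsf T}H_u(D\Phi_k)\|_F^2$ is a quartic in $t_k$ equal to $|u|_{H^2}^2$ at $t_k=0$. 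Thus for $C_1,C_2,C_3$ the numerator of $R_j$ is independent of $t_k$ and only the denominator moves.

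For $j=1,2,3$ the pointwise inequality reduces, after cancelling the common numerator, to
\[
\min_{k}L_j(a_k,b)\,D_k\ \le\ (1+c_jh_a^2)\,L_j(a,b)\,D_0 ,
\]
where $D_k$ is the transplanted denominator and $D_0$ its value at $a$. The key device is to bound this minimum by the convex combination with the \emph{barycentric} weights $\mu_1=1-\theta$, $\mu_2=\theta$, which satisfy the two crucial identities $\sum_k\mu_k a_k=a$ and $\sum_k\mu_k t_k=0$, together with $\sum_k\mu_k t_k^2=\theta(1-\theta)h_a^2\le h_a^2/4$. The first identity makes the first-order variation of $L_j(a_k,b)$ in $a_k$ cancel, and the second kills the term linear in the shear (the one carrying the uncontrolled cross integral $Q$), so that upon expanding $\sum_k\mu_kL_j(a_k,b)D_k$ only genuinely second-order remainders in $h_a$ survive (for $C_3$ also a $t_k^4$ remainder, even smaller since $|t_k|\le 1/50$). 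Bounding these against $L_j(a,b)D_0$ reduces to explicit estimates on $\partial_a^2L_j(a,b)$ and on the ratio $P/D_0$, which are elementary for the closed-form $L_j$ and yield the constants $5$ and $6$.

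The genuinely harder case is $C_4$, and this is where I expect the main obstacle. There both the numerator ($\|\nabla v_k\|^2$, quadratic in $t_k$) and the denominator ($|v_k|_{H^2}^2$, quartic in $t_k$) vary, so no common factor cancels and the single ``minimum $\le$ weighted average'' step cannot be applied to one ratio. The plan is instead to control the numerator growth and the denominator variation separately---estimating the increase of $(1+t_k^2)P+2t_kQ+R$ over $P+R$ and the deviation of the quartic $|v_k|_{H^2}^2$ from $|u|_{H^2}^2$, again using the barycentric weights to remove first-order contributions---and then to combine them; the accumulation of these two second-order effects is exactly what forces the larger constant $9$. The technical heart of the whole theorem is thus the uniform control, valid down to nearly degenerate triangles (small $b$), of the second-order $a$-variation of $L_j$ and of the second-derivative integral ratios appearing in the quartic expansion; these bounds rely on the explicit formulas for $L_j$ and are naturally isolated in the supporting lemmas.
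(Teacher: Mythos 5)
Your proposal is correct and follows essentially the same route as the paper's proof: the shear transplantation $u^{(k)}(x,y)=u\bigl(x+\tfrac{a-a_k}{b}y,\,y\bigr)$, the barycentric weights $\tfrac{a_2-a}{a_2-a_1},\tfrac{a-a_1}{a_2-a_1}$ that annihilate the first-order shear and Taylor terms, and the second-order control of $L_j$ via the derivative bounds $|\partial_a L_j|<\tfrac{\alpha_{j1}}{b}L_j$, $\partial_a^2 L_j<\tfrac{\alpha_{j2}}{b^2}L_j$ of Lemma~\ref{BoundConstants} are exactly the paper's ingredients, and your reduction for $j=1,2,3$ matches the paper's computation.

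The only place where your account deviates is $C_4$, which you flag as the main obstacle; in the paper it closes by the very same averaging device with no additional difficulty. Rather than comparing ratios, one applies the defining inequality $\|\nabla u^{(k)}\|_{L^2(T_{a_k,b})}^2\le C_4(T_{a_k,b})^2\,|u^{(k)}|_{H^2(T_{a_k,b})}^2$ on each $T_{a_k,b}$ and takes the barycentric average of these two inequalities: on the numerator side the exact identity \eqref{a-H1-1}, namely $\sum_k\mu_k\|\nabla u^{(k)}\|^2=\|\nabla u\|^2+\tfrac{(a-a_1)(a_2-a)}{b^2}\|u_x\|^2\ge\|\nabla u\|^2$, has a remainder of \emph{favorable} sign and is simply dropped, while the denominator side $\sum_k\mu_k L_4(a_k,b)\,|u^{(k)}|_{H^2}^2$ is estimated exactly as in your $j=3$ case, using \eqref{a-H2-2} and \eqref{a-Lj}. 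Consequently the numerator contributes nothing to the constant: the value $9$ is forced solely by the larger derivative bounds for $L_4$ ($\alpha_{41}=3$, $\alpha_{42}=9$, reflecting the stiff leading term $a^2(1-a)^2/(4b^2)$ of $L_4$) together with the $H^2$ transplantation factor $1+\tfrac{5}{2}h_a^2$, not by an accumulation of separate numerator and denominator effects as you anticipate. With this combination step made explicit, your outline is the paper's proof.
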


\begin{theorem} \label{b-interpolation}
It holds that
\begin{align*}
	\frac{C_j(T_{a,b})^2}{L_j(a,b)}&\le (1+3h_b^2)\max_{k=1,2}\frac{C_j(T_{a,b_k})^2}{L_j(a,b_k)}, \qquad j=1,2, \\
	\frac{C_3(T_{a,b})^2}{L_3(a,b)}&\le (1+8h_b^2)\max_{k=1,2}\frac{C_3(T_{a,b_k})^2}{L_3(a,b_k)}, \\
	\frac{C_4(T_{a,b})^2}{L_4(a,b)}&\le (1+9h_b^2)\max_{k=1,2}\frac{C_4(T_{a,b_k})^2}{L_4(a,b_k)}.
\end{align*}
\end{theorem}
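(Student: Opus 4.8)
The plan is to reduce the entire statement to the behaviour of a single scalar parameter. Fix $a$ and pull every admissible $u$ on $T_{a,b}$ back to the reference triangle $T_{a,1}$ via the anisotropic vertical scaling $\Phi_b(x,y)=(x,by)$, writing $\hat u=u\circ\Phi_b$. A direct change of variables gives $\|u\|_{L^2}^2=b\|\hat u\|^2$, $\|\nabla u\|_{L^2}^2=b(\|\hat u_x\|^2+s\|\hat u_y\|^2)$ and $|u|_{H^2}^2=b(\|\hat u_{xx}\|^2+2s\|\hat u_{xy}\|^2+s^2\|\hat u_{yy}\|^2)$ with $s:=b^{-2}$, and the defining constraints of $V^{1,1},V^{1,2},V^2$ (mean value, edge integrals, vertex values) are all invariant under $\Phi_b$ up to nonzero constant factors, so the admissible space is the same for every $b$. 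Hence $C_j(T_{a,b})^2=\sup_{\hat u}R_j(\hat u;s)$, where each $R_j(\hat u;\cdot)$ is a ratio of polynomials of degree $\le 2$ in $s$, and the whole $b$-dependence is concentrated in $s=b^{-2}$.

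Writing $s_k=b_k^{-2}$ and $s=(1-\lambda)s_2+\lambda s_1$, I would first establish an approximate chord bound $C_j(T_{a,b})^2\le(1+\kappa_j h_b^2)\,[(1-\lambda)C_j(T_{a,b_2})^2+\lambda C_j(T_{a,b_1})^2]$. For $j=1,2$ the denominator is affine in $s$, so every $R_j(\hat u;\cdot)=N/(P+sQ)$ is convex, the supremum of convex functions is convex, and the chord bound holds exactly with $\kappa_j=0$. For $j=3,4$ the denominator is genuinely quadratic and individual quotients need not be convex, so I would instead control the relative second variation: a uniform estimate $\sup_{\hat u,\,s\in[s_2,s_1]}s^2|R_j''(\hat u;s)|/R_j(\hat u;s)\le\kappa$, which is meaningful because $R_j$ is scale invariant in $\hat u$ (numerator and denominator have the same degree in the seminorms of $\hat u$), turns the chord gap into a relative defect of size $O((s_1-s_2)^2/s^2)=O(h_b^2)$ uniform in $\hat u$. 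Thus for every $\hat u$, $R_j(\hat u;s)\le(1+\kappa_j h_b^2)[(1-\lambda)R_j(\hat u;s_2)+\lambda R_j(\hat u;s_1)]$; taking $\sup_{\hat u}$ and using sublinearity of the supremum, together with $(s_1-s_2)/s_1=h_b(2+h_b)/(1+h_b)^2$, yields the displayed bound.

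Next I would insert the endpoint hypotheses. Setting $M=\max_{k}C_j(T_{a,b_k})^2/L_j(a,b_k)$, so that $C_j(T_{a,b_k})^2\le M\,L_j(a,b_k)$, the approximate chord bound gives $C_j(T_{a,b})^2\le(1+\kappa_j h_b^2)\,M\,[(1-\lambda)L_j(a,b_2)+\lambda L_j(a,b_1)]$. It then remains to compare the chord of $L_j$, in the variable $s$, with its value at the interior point, i.e.\ to prove the purely explicit inequality $(1-\lambda)L_j(a,b_2)+\lambda L_j(a,b_1)\le(1+\kappa_j' h_b^2)\,L_j(a,b)$. Since $L_j(a,b)=K_j(T_{a,b})^2$ is an explicit rational function of $s=b^{-2}$ (through $A^2=(1-a)^2+b^2$, $B^2=a^2+b^2$, $C^2=1$, $S^2=b^2/4$), this reduces again to a uniform bound on $s^2\,\partial_s^2 L_j/L_j$ over $a\in[0,\tfrac12]$ and $b\in(0,1]$, which I would relegate to the appendix deformation lemmas. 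Dividing by $L_j(a,b)$ and absorbing the product $(1+\kappa_jh_b^2)(1+\kappa_j'h_b^2)$ into $1+(\text{stated constant})h_b^2$ using $h_b\le 1/50$ finishes each case with the constants $3,3,8,9$.

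The main obstacle is the pair of cases $j=3,4$. For these the clean convexity of the Rayleigh quotient in $s$ is genuinely lost: for example $s\mapsto s/(1+s^2)$, which arises as a special $R_4$, is concave on part of the admissible range, so the chord bound must be replaced by the quantitative approximate-convexity estimate, and the delicate point is to bound the normalized second variation $s^2|R_j''|/R_j$ by a constant uniformly over the infinite-dimensional set of admissible $\hat u$ and over all $s\ge 1$. Coupled with the equally technical uniform estimate of $s^2\partial_s^2 L_j/L_j$ for the explicit formulas, and the requirement that every loss stay at second order in $h_b$ (a first-order loss would be fatal, since $b$ ranges over an interval of relative length $h_b$), this is where the real work lies; the case $j=4$, in which the numerator also varies with $s$, is the most demanding and accounts for the largest constant.
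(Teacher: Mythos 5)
Your proposal is correct in its overall strategy, but it takes a genuinely different route from the paper, whose proof never isolates any convexity statement. The paper uses the same vertical scaling $u^{(k)}(x,y)=u\left(x,\tfrac{b}{b_k}y\right)$, but interpolates with the convex weights $\tfrac{b_2-b}{b_2-b_1},\tfrac{b-b_1}{b_2-b_1}$ in the variable $b$ itself, for which the $L^2$ norms obey an exact convex-combination identity; it then expands $L_j(a,b_k)$ by Taylor's theorem in $b$, invokes Lemma~\ref{BoundConstants} (i.e.\ $|\partial_bL_j|\le\beta_{j1}L_j/b$ and $\partial_b^2L_j\le\beta_{j2}L_j/b^2$), and multiplies everything out; the first-order terms cancel because $\sum_kw_k(b_k-b)=0$, so only products of two $O(h_b)$ factors (formula derivative times norm discrepancy, e.g.\ the $\|u_y\|^2$ and $\|u_{yy}\|^2$ terms) survive, and these are estimated term by term to produce the stated constants. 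You instead interpolate in $s=b^{-2}$ and decouple the two error sources: exact convexity of $s\mapsto C_j(T_{a,b})^2$ for $j=1,2$ (a supremum of functions $N/(P+sQ)$), approximate convexity for $j=3,4$, and a separate, purely scalar chord-versus-value bound for $L_j$ in $s$. This decoupling buys a cleaner argument: for $j=1,2$ the function side contributes no error at all, and the uniform one-sided bounds you identify as the main obstacle are in fact elementary, because for $D=P+2sQ+s^2R$ with nonnegative coefficients one has $sD'/D\le2$, $s^2D''/D\le2$ and $sN'/N\le1$, whence $s^2R_3''/R_3=2(sD'/D)^2-s^2D''/D\ge-2$ and $s^2R_4''/R_4=2(sD'/D)^2-2(sN'/N)(sD'/D)-s^2D''/D\ge-\tfrac52$; moreover the needed upper bounds on $\partial_s^2L_j$ follow from the paper's own Lemma~\ref{BoundConstants} through the identity $s^2\partial_s^2L_j=\tfrac{3b}{4}\partial_bL_j+\tfrac{b^2}{4}\partial_b^2L_j$. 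What the paper's route buys in exchange is that it stays entirely at the level of explicit norm identities (no second derivatives of Rayleigh quotients, no Green's-function chord estimates, no uniformity-over-$\hat u$ discussion) and yields the numerical constants directly. The one genuine debt in your plan is quantitative: you assert, but do not verify, that the accumulated defects stay below $3,3,8,9$; carrying out the estimates sketched above, with $(s_1-s_2)\le h_b(2+h_b)s_2$ and the $(1+h_b)^{O(1)}$ corrections from comparing points inside $[s_2,s_1]$, gives total defects of roughly $1.3h_b^2$, $3.3h_b^2$ and $3.6h_b^2$ for $j\in\{1,2\}$, $j=3$ and $j=4$ respectively, so the stated constants are met with room to spare, but that verification must be written out for your argument to be a complete proof.
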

These two theorems enable us to extend the results presented for a finite number of triangles to continuously distributed triangles.
The following lemma is used to prove Theorem~\ref{a-interpolation} and Theorem~\ref{b-interpolation}.
\begin{lemma} \label{BoundConstants}
For $j=1,2,3,4$, it holds that
\begin{align*}
	\left|\frac{\partial L_j}{\partial a}(a,b)\right|&< \frac{\alpha_{j1}}{b}L_j(a,b),
		& \frac{\partial^2 L_j}{\partial a^2}(\widetilde{a},b)&<\frac{\alpha_{j2}}{b^2}L_j(a,b), \\
	\left|\frac{\partial L_j}{\partial b}(a,b)\right|&< \frac{\beta_{j1}}{b}L_j(a,b),
		& \frac{\partial^2 L_j}{\partial b^2}(a,\widetilde{b})&<\frac{\beta_{j2}}{b^2}L_j(a,b),
\end{align*}
where
$$
	|\widetilde{a}-a|\le \frac{b}{50},\qquad |\widetilde{b}-b|\le \frac{b}{50}
$$
and
\begin{align*}
	\alpha_{j1}&=2, & \alpha_{j2}&=5,  & \beta_{j1}&=2, & \beta_{j2}&=4,  & j=1,2, \\
	\alpha_{31}&=2, & \alpha_{32}&=4, & \beta_{31}&=3, & \beta_{32}&=8, \\
	\alpha_{41}&=3, & \alpha_{42}&=9, & \beta_{41}&=3, & \beta_{42}&=9.
\end{align*}
\end{lemma}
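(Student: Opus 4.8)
The plan is to reduce everything to explicit functions of $(a,b)$ and then to a finite list of polynomial positivity statements. First I would substitute the geometry of $T_{a,b}$ into the formulas of Theorem~\ref{formula}: with $p_1=(0,0)$, $p_2=(1,0)$, $p_3=(a,b)$ one has $A^2=(1-a)^2+b^2$, $B^2=a^2+b^2$, $C^2=1$ and $S=b/2$, so each $L_j=K_j(T_{a,b})^2$ becomes an explicit rational function of $a,b$ whose only denominators are positive powers of $X:=(1-a)^2+b^2$, $Y:=a^2+b^2$, $P:=X+Y+1$ and $b$. Differentiating through the chain rule with $\partial_a X=-2(1-a)$, $\partial_a Y=2a$, $\partial_b X=\partial_b Y=2b$, $\partial_b S^2=b/2$ gives closed forms for $\partial_a L_j,\partial_a^2L_j,\partial_b L_j,\partial_b^2 L_j$. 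A useful bookkeeping observation is that each first-order $b$-derivative carries an explicit factor $b$ (since $\partial_b X=\partial_b Y=2b$ and $\partial_b S^2=b/2$), which meshes exactly with the $1/b$ weight on the right-hand side; likewise $\partial_a P=-2(1-2a)\le 0$ on $0\le a\le\frac12$, which pins down signs in the $j=1,2$ cases.

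Second, since each claimed bound has the shape $|\partial L_j|<(c/b^{m})\,L_j$, I would first secure a strictly positive lower bound for $L_j$ on the parameter domain, because the estimates are really bounds on the relative growth $|\partial L_j|/L_j$ and require $L_j>0$. Then each inequality is rewritten, after clearing the positive denominators, as a polynomial inequality $G(a,b)>0$: two one-sided inequalities $\alpha_{j1}L_j\pm b\,\partial_aL_j>0$ for the two-sided first-derivative bound, and similarly in $b$. For the second-derivative bounds the argument is shifted, so I must control $\partial_a^2L_j$ at $\widetilde a$ with $|\widetilde a-a|\le b/50$ against $L_j$ at $a$; I would bound $\partial_a^2L_j$ uniformly over $\widetilde a\in[a-\tfrac{b}{50},\,a+\tfrac{b}{50}]$ and combine with the lower bound for $L_j(a,b)$, reducing again to polynomial inequalities over a slightly widened box in the extra variable $\widetilde a$ (respectively $\widetilde b\in[\tfrac{49}{50}b,\tfrac{51}{50}b]$).

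The last step, and the genuinely delicate one, is to certify these polynomial positivity statements on $0\le a\le\frac12$, $0<b\le1$, enlarged to accommodate $\widetilde a,\widetilde b$. Crude term-by-term estimates fail: the defining cancellation $L_1=P/28-S^4/(A^2B^2C^2)$ means that lower-bounding the two pieces separately already over-subtracts, so the constants $\alpha_{ji},\beta_{ji}$ are close to sharp and positivity is a property of the combined polynomial, not of its parts. I would therefore organize each $G(a,b)$ by powers of $b$, treat the degenerate regime $b\to0$ (where the exact power of $b$ in the weight is precisely what keeps $G$ positive) separately from the bounded regime, and discharge the remaining finitely many checks over an explicit box either by a hand decomposition into manifestly nonnegative pieces using $a\in[0,\frac12]$, $b\in(0,1]$, or by computer-assisted verification. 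I expect $j=3$ and especially $j=4$ to be the main obstacle: $K_4$ carries the singular leading term $A^2B^2C^2/(16S^2)=XY/(4b^2)$, so $L_4\to\infty$ as $b\to0$ and the derivative estimates must track this blow-up precisely, which makes the polynomial certificates the bulk of the work and is exactly the material I would relegate to auxiliary deformation-and-inequality lemmas rather than the main text.
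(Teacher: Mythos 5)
Your plan is strategically the same as the paper's proof: Lemma~\ref{BoundConstants} is proved there by exactly the reduction you describe --- the two-sided first-derivative bounds become one-sided positivity statements such as $\alpha_{j1}L_j\pm b\,\partial_a L_j>0$, the shifted arguments are removed using $|\widetilde a-a|\le b/50$ and $\widetilde b\ge\frac{49}{50}b$, positivity of $L_j$ is secured by an explicit lower bound, and everything ends in positivity of multivariate polynomials certified by sums of manifestly nonnegative terms, with the algebra checked by a symbolic computation system (Lemmas~\ref{L1-bound}--\ref{L4-bound}). The genuine difference is tactical, and it matters for feasibility. You would clear denominators in the raw rational functions, organize by powers of $b$, split off a $b\to0$ regime, and carry $\widetilde a,\widetilde b$ as extra box variables; the paper instead first rewrites each $L_j$ through three auxiliary functions $\varphi=\frac{b^2}{a^2+b^2}$, $\psi=\frac{b^2(1+2a)}{1+4b^2}\varphi$, $\omega=\frac{a(1-a)}{1-a+a^2+b^2}$, exploiting the symmetry $a\leftrightarrow 1-a$ (e.g.\ $L_1=\frac{1-a+a^2+b^2}{14}-\frac{\psi(a,b)+\psi(1-a,b)}{16}$), proves one- and two-sided bounds for these functions and their first and second derivatives once and for all (Lemmas~\ref{appendix-phi}--\ref{appendix-omega}), and substitutes those simple bounds \emph{before} any positivity check. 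That intermediate layer is what keeps the final certificates low-degree, valid uniformly on $0\le a\le\frac12$, $0<b\le1$ with no regime splitting, and free of $\widetilde a,\widetilde b$: their contribution is absorbed numerically at the bounding stage (e.g.\ $30(\widetilde a-a)(1-\widetilde a-a)\le\frac{30}{50}b$, and $1/\widetilde b^2\le(50/49)^2/b^2$). Your direct route is viable in principle but pushes all the difficulty into much larger positivity problems in three or four variables; what it buys is conceptual simplicity, while the paper's route buys certificates short enough to display and verify.

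One side remark in your proposal is wrong: the claim that each first-order $b$-derivative ``carries an explicit factor $b$'' fails, in any useful sense, for $j=4$, because $S^2=b^2/4$ sits in the denominator of the leading term of $L_4$; indeed $\partial_b L_4$ contains the term $-\frac{a^2(1-a)^2}{2b^3}$ and diverges as $b\downarrow0$. The relative bound $\bigl|\partial_b L_4\bigr|<\frac{3}{b}L_4$ survives only because $L_4\sim\frac{a^2(1-a)^2}{4b^2}$ diverges at the matching rate --- a blow-up you do acknowledge later for $j=4$ --- but the factor-of-$b$ heuristic cannot be used as stated.
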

The proof of this lemma comes down to showing the positivity of the multivariate polynomials;
since there is no intrinsic difficulty, it is presented in Appendix~\ref{Appendix1} separately for
 Lemma~\ref{L1-bound} to Lemma~\ref{L4-bound}.

\begin{proof}[Proof of Theorem~\ref{a-interpolation}]\quad
In this proof, for $k=1,2$, we take
$$
	u^{(k)}(x,y)=u\left(x+\frac{a-a_k}{b}y,y\right).
$$

For $u\in L^2(T_{a,b})$, from
$$
	\|u^{(k)}\|_{L^2(T_{a_k,b})}^2=\|u\|_{L^2(T_{a,b})}^2,
$$
we have
\begin{equation}
\frac{a_2-a}{a_2-a_1}\|u^{(1)}\|_{L^2(T_{a_1,b})}^2+\frac{a-a_1}{a_2-a_1}\|u^{(2)}\|_{L^2(T_{a_2,b})}^2
		=\|u\|_{L^2(T_{a,b})}^2 \label{a-L2-1}
\end{equation}
and for $u\in H^1(T_{a,b})$, from
\begin{align*}
	\|\nabla u^{(k)}\|_{L^2(T_{a_k,b})}^2&=\|u_x\|_{L^2(T_{a,b})}^2+\left\|\frac{a-a_k}{b}u_x+u_y\right\|_{L^2(T_{a,b})}^2 \\
	&=\|\nabla u\|_{L^2(T_{a,b})}^2+\frac{2(a-a_k)}{b}(u_x,u_y)_{L^2(T_{a,b})} + \frac{(a-a_k)^2}{b^2}\|u_x\|_{L^2(T_{a,b})}^2,
\end{align*}
we have
\begin{align}
	&\frac{a_2-a}{a_2-a_1}\|\nabla u^{(1)}\|_{L^2(T_{a_1,b})}^2+\frac{a-a_1}{a_2-a_1}\|\nabla u^{(2)}\|_{L^2(T_{a_2,b})}^2 \notag\\
	&\qquad =\|\nabla u\|_{L^2(T_{a,b})}^2+\frac{(a-a_1)(a_2-a)}{b^2}\|u_x\|_{L^2(T_{a,b})}^2 \notag\\
	&\qquad \ge\|\nabla u\|_{L^2(T_{a,b})}^2. \label{a-H1-1}
\end{align}

For $u\in H^1(T_{a,b})$,
\begin{align}
	\|\nabla u^{(k)}\|_{L^2(T_{a_k,b})}^2&=\|u_x\|_{L^2(T_{a,b})}^2+\left\|\frac{a-a_k}{b}u_x+u_y\right\|_{L^2(T_{a,b})}^2 \notag\\
		&=\|\nabla u\|_{L^2(T_{a,b})}^2+\frac{2(a-a_k)}{b}(u_x,u_y)_{L^2(T_{a,b})} + \frac{(a-a_k)^2}{b^2}\|u_x\|_{L^2(T_{a,b})}^2 \notag\\
		&\le\|\nabla u\|_{L^2(T_{a,b})}^2+\frac{2(a-a_k)}{b}(u_x,u_y)_{L^2(T_{a,b})} + \frac{(a-a_k)^2}{b^2}\|\nabla u\|_{L^2(T_{a,b})}^2 \notag\\
		&\le(1+h_a^2)\|\nabla u\|_{L^2(T_{a,b})}^2+\frac{2(a-a_k)}{b}(u_x,u_y)_{L^2(T_{a,b})}, \label{a-H1-2}
\end{align}
and for $u\in H^2(T_{a,b})$,
\begin{align}
	|u^{(k)}|_{H^2(T_{a_k,b})}^2&=\|u_{xx}\|_{L^2(T_{a,b})}^2+2\left\|\frac{a-a_k}{b}u_{xx}+u_{xy}\right\|_{L^2(T_{a,b})}^2 \notag\\
			&\qquad\qquad\qquad\qquad + \left\|\frac{(a-a_k)^2}{b^2}u_{xx}+\frac{2(a-a_k)}{b}u_{xy}+u_{yy}\right\|_{L^2(T_{a,b})}^2 \notag\\
		&=|u|_{H^2(T_{a,b})}^2+\frac{4(a-a_k)}{b}(\Delta u,u_{xy})_{L^2(T_{a,b})} \notag\\
			&\qquad + \frac{2(a-a_k)^2}{b^2}\Big(\|u_{xx}\|_{L^2(T_{a,b})}^2 + 2\|u_{xy}\|_{L^2(T_{a,b})}^2 + (u_{xx},u_{yy})_{L^2(T_{a,b})}\Big) \notag\\
			&\qquad + \frac{4(a-a_k)^3}{b^3}(u_{xx},u_{xy})_{L^2(T_{a,b})} + \frac{(a-a_k)^4}{b^4}\|u_{xx}\|_{L^2(T_{a,b})}^2 \notag\\
		&\le |u|_{H^2(T_{a,b})}^2+\frac{4(a-a_k)}{b}(\Delta u,u_{xy})_{L^2(T_{a,b})} \notag\\
			&\qquad + \frac{2(a-a_k)^2}{b^2}\left(\frac{6}{5}\|u_{xx}\|_{L^2(T_{a,b})}^2 + 2\|u_{xy}\|_{L^2(T_{a,b})}^2 + \frac{5}{4}\|u_{yy}\|_{L^2(T_{a,b})}^2\right) \notag\\
			&\qquad + \frac{4|a-a_k|^3}{b^3}\left(\frac{1}{4}\|u_{xx}\|_{L^2(T_{a,b})}^2 + \|u_{xy}\|_{L^2(T_{a,b})}^2\right) \notag\\
			&\qquad + \frac{(a-a_k)^4}{b^4}\|u_{xx}\|_{L^2(T_{a,b})}^2 \notag\\
		&\le |u|_{H^2(T_{a,b})}^2+\frac{4(a-a_k)}{b}(\Delta u,u_{xy})_{L^2(T_{a,b})} \notag\\
			&\qquad + \frac{2(a-a_k)^2}{b^2}\left(\frac{6}{5}\|u_{xx}\|_{L^2(T_{a,b})}^2 + 2\|u_{xy}\|_{L^2(T_{a,b})}^2 + \frac{5}{4}\|u_{yy}\|_{L^2(T_{a,b})}^2\right) \notag\\
			&\qquad + \frac{4(a-a_k)^2}{50b^2}\left(\frac{1}{4}\|u_{xx}\|_{L^2(T_{a,b})}^2 + \|u_{xy}\|_{L^2(T_{a,b})}^2\right) \notag\\
			&\qquad + \frac{(a-a_k)^2}{50^2b^2}\|u_{xx}\|_{L^2(T_{a,b})}^2 \notag\\
		&=|u|_{H^2(T_{a,b})}^2+\frac{4(a-a_k)}{b}(\Delta u,u_{xy})_{L^2(T_{a,b})} \notag\\
			&\qquad + \frac{(a-a_k)^2}{b^2}\left(\frac{6051}{2500}\|u_{xx}\|_{L^2(T_{a,b})}^2 + \frac{102}{25}\|u_{xy}\|_{L^2(T_{a,b})}^2 + \frac{5}{2}\|u_{yy}\|_{L^2(T_{a,b})}^2\right) \notag\\
		&\le |u|_{H^2(T_{a,b})}^2+\frac{4(a-a_k)}{b}(\Delta u,u_{xy})_{L^2(T_{a,b})} + \frac{5(a-a_k)^2}{2b^2}|u|_{H^2(T_{a,b})}^2 \notag\\
		&\le \left(1+\frac{5}{2}h_a^2\right)|u|_{H^2(T_{a,b})}^2+\frac{4(a-a_k)}{b}(\Delta u,u_{xy})_{L^2(T_{a,b})}. \label{a-H2-2}
\end{align}

Here, by Taylor's theorem, for each $j=1,2,3,4$ and $k=1,2$, there exist $a_1\le \widetilde{a}\le a_2$ such that
$$
	L_j(a_k,b)=L_j(a,b)+(a_k-a)\frac{\partial L_j}{\partial a}(a,b)+\frac{(a_k-a)^2}{2}\cdot\frac{\partial^2 L_j}{\partial a^2}(\widetilde{a},b)
$$
holds.
Then, using
$$
	|\widetilde{a}-a|\le|a_2-a_1|\le\frac{b}{50}
$$
and Lemma~\ref{BoundConstants}, we have
\begin{align}
	L_j(a_k,b)&\le L_j(a,b)+(a_k-a)\frac{\partial L_j}{\partial a}(a,b)+\frac{\alpha_{j2}(a_k-a)^2}{2b^2}L_j(a,b) \notag\\
	&\le \left(1+\frac{\alpha_{j2}}{2}h_a^2\right)L_j(a,b)-(a-a_k)\frac{\partial L_j}{\partial a}(a,b). \label{a-Lj}
\end{align}

For $u\in V^{1,j},\;j=1,2$, \eqref{a-H1-2}, \eqref{a-Lj}, and Lemma~\ref{BoundConstants} yield
\begin{align*}
	&\frac{a_2-a}{a_2-a_1}L_j(a_1,b)\|\nabla u^{(1)}\|_{L^2(T_{a_1,b})}^2
		+\frac{a-a_1}{a_2-a_1}L_j(a_2,b)\|\nabla u^{(2)}\|_{L^2(T_{a_2,b})}^2 \\
	&\le\frac{a_2-a}{a_2-a_1}\left\{\left(1+\frac{\alpha_{j2}}{2}h_a^2\right)L_j(a,b)-(a-a_1)\frac{\partial L_j}{\partial a}(a,b)\right\} \\
		&\qquad\qquad\qquad \times\left((1+h_a^2)\|\nabla u\|_{L^2(T_{a,b})}^2+\frac{2(a-a_1)}{b}(u_x,u_y)_{L^2(T_{a,b})}\right) \\
		&\qquad +\frac{a-a_1}{a_2-a_1}\left\{\left(1+\frac{\alpha_{j2}}{2}h_a^2\right)L_j(a,b)-(a-a_2)\frac{\partial L_j}{\partial a}(a,b)\right\}\ \\
		&\qquad\qquad\qquad \times\left((1+h_a^2)\|\nabla u\|_{L^2(T_{a,b})}^2+\frac{2(a-a_2)}{b}(u_x,u_y)_{L^2(T_{a,b})}\right) \\
	&=(1+h_a^2)\left(1+\frac{\alpha_{j2}}{2}h_a^2\right)L_j(a,b)\|\nabla u\|_{L^2(T_{a,b})}^2 \\
		&\qquad\qquad - \frac{2(a-a_1)(a_2-a)}{b}\cdot\frac{\partial L_j}{\partial a}(a,b)\cdot(u_x,u_y)_{L^2(T_{a,b})} \\
	&\le(1+h_a^2)\left(1+\frac{\alpha_{j2}}{2}h_a^2\right)L_j(a,b)\|\nabla u\|_{L^2(T_{a,b})}^2 \\
		&\qquad\qquad + \frac{(a_2-a_1)^2}{2b^2}\cdot\alpha_{j1}L_j(a,b)\cdot\frac{\|\nabla u\|_{L^2(T_{a,b})}^2}{2} \\
	&\le(1+h_a^2)\left(1+\frac{\alpha_{j2}}{2}h_a^2\right)L_j(a,b)\|\nabla u\|_{L^2(T_{a,b})}^2
		+ \frac{\alpha_{j1}h_a^2}{4}L_j(a,b)\|\nabla u\|_{L^2(T_{a,b})}^2 \\
	&=\left\{1+\frac{h_a^2}{4}\Big(4+\alpha_{j1}+2\alpha_{j2}(1+h_a^2)\Big)\right\}L_j(a,b)\|\nabla u\|_{L^2(T_{a,b})}^2 \\
	&\le\left\{1+\frac{h_a^2}{4}\left(4+2+2\cdot5\left(1+\frac{1}{50^2}\right)\right)\right\}L_j(a,b)\|\nabla u\|_{L^2(T_{a,b})}^2 \\
	&=\left(1+\frac{4001}{1000}h_a^2\right)L_j(a,b)\|\nabla u\|_{L^2(T_{a,b})}^2 \\
	&\le(1+5h_a^2)L_j(a,b)\|\nabla u\|_{L^2(T_{a,b})}^2.
\end{align*}
By this result and \eqref{a-L2-1}, we have
\begin{align*}
	\|u\|_{L^2(T_{a,b})}^2&=\frac{a_2-a}{a_2-a_1}\|u^{(1)}\|_{L^2(T_{a_1,b})}^2+\frac{a-a_1}{a_2-a_1}\|u^{(2)}\|_{L^2(T_{a_2,b})}^2 \\
	&\le\frac{a_2-a}{a_2-a_1}C_j(T_{a_1,b})^2\|\nabla u^{(1)}\|_{L^2(T_{a_1,b})}^2+\frac{a-a_1}{a_2-a_1}C_j(T_{a_2,b})^2\|\nabla u^{(2)}\|_{L^2(T_{a_2,b})}^2 \\
	&\le\max_{k=1,2}\frac{C_j(T_{a_k,b})^2}{L_j(a_k,b)}\left(\frac{a_2-a}{a_2-a_1}L_j(a_1,b)\|\nabla u^{(1)}\|_{L^2(T_{a_1,b})}^2+\frac{a-a_1}{a_2-a_1}L_j(a_2,b)\|\nabla u^{(2)}\|_{L^2(T_{a_2,b})}^2\right) \\
	&\le\max_{k=1,2}\frac{C_j(T_{a_k,b})^2}{L_j(a_k,b)}(1+5h_a^2)L_j(a,b)\|\nabla u\|_{L^2(T_{a,b})}^2.
\end{align*}
Then, considering the definitions of $C_j(T),\; j=1,2$, we have
$$
	\frac{C_j(T_{a,b})^2}{L_j(a,b)}\le(1+5h_a^2)\max_{k=1,2}\frac{C_j(T_{a_k,b})^2}{L_j(a_k,b)}.
$$

For $u\in V^2,\;j=3,4$, \eqref{a-H2-2}, \eqref{a-Lj}, and Lemma~\ref{BoundConstants} yield
\begin{align*}
	&\frac{a_2-a}{a_2-a_1}L_j(a_1,b)|u^{(1)}|_{H^2(T_{a_1,b})}^2
		+\frac{a-a_1}{a_2-a_1}L_j(a_2,b)|u^{(2)}|_{H^2(T_{a_2,b})}^2 \\
	&\le\frac{a_2-a}{a_2-a_1}\left\{\left(1+\frac{\alpha_{j2}}{2}h_a^2\right)L_j(a,b)-(a-a_1)\frac{\partial L_j}{\partial a}(a,b)\right\} \\
		&\qquad\qquad\qquad \times\left\{\left(1+\frac{5}{2}h_a^2\right)|u|_{H^2(T_{a,b})}^2+\frac{4(a-a_1)}{b}(\Delta u,u_{xy})_{L^2(T_{a,b})}\right\} \\
		&\qquad +\frac{a-a_1}{a_2-a_1}\left\{\left(1+\frac{\alpha_{j2}}{2}h_a^2\right)L_j(a,b)-(a-a_2)\frac{\partial L_j}{\partial a}(a,b)\right\}\ \\
		&\qquad\qquad\qquad \times\left\{\left(1+\frac{5}{2}h_a^2\right)|u|_{H^2(T_{a,b})}^2+\frac{4(a-a_2)}{b}(\Delta u,u_{xy})_{L^2(T_{a,b})}\right\} \\
	&=\left(1+\frac{5}{2}h_a^2\right)\left(1+\frac{\alpha_{j2}}{2}h_a^2\right)L_j(a,b)|u|_{H^2(T_{a,b})}^2 \\
		&\qquad\qquad - \frac{4(a-a_1)(a_2-a)}{b}\cdot\frac{\partial L_j}{\partial a}(a,b)\cdot(\Delta u,u_{xy})_{L^2(T_{a,b})} \\
	&\le\left(1+\frac{5}{2}h_a^2\right)\left(1+\frac{\alpha_{j2}}{2}h_a^2\right)L_j(a,b)|u|_{H^2(T_{a,b})}^2
		+ \frac{(a_2-a_1)^2}{b^2}\alpha_{j1}L_j(a,b)\cdot\frac{|u|_{H^2(T_{a,b})}^2}{2} \\
	&\le\left(1+\frac{5}{2}h_a^2\right)\left(1+\frac{\alpha_{j2}}{2}h_a^2\right)L_j(a,b)|u|_{H^2(T_{a,b})}^2
		+ \frac{\alpha_{j1}h_a^2}{2}L_j(a,b)|u|_{H^2(T_{a,b})}^2 \\
	&=\left\{1+\frac{h_a^2}{4}\Big(10+2\alpha_{j1}+\alpha_{j2}(2+5h_a^2)\Big)\right\}L_j(a,b)|u|_{H^2(T_{a,b})}^2.
\end{align*}
Then, for $j=3$, we have
\begin{align*}
	&\frac{a_2-a}{a_2-a_1}L_3(a_1,b)|u^{(1)}|_{H^2(T_{a_1,b})}^2
		+\frac{a-a_1}{a_2-a_1}L_3(a_2,b)|u^{(2)}|_{H^2(T_{a_2,b})}^2 \\
	&\le\left\{1+\frac{h_a^2}{4}\left(10+2\cdot2+4\left(2+\frac{5}{50^2}\right)\right)\right\}L_3(a,b)|u|_{H^2(T_{a,b})}^2 \\
	&= \left(1+\frac{2751}{500}h_a^2\right)L_3(a,b)|u|_{H^2(T_{a,b})}^2 \\
	&\le(1+6h_a^2)L_3(a,b)|u|_{H^2(T_{a,b})}^2,
\end{align*}
and consequently, by \eqref{a-L2-1},
\begin{align*}
	\|u\|_{L^2(T_{a,b})}^2&=\frac{a_2-a}{a_2-a_1}\|u^{(1)}\|_{L^2(T_{a_1,b})}^2+\frac{a-a_1}{a_2-a_1}\|u^{(2)}\|_{L^2(T_{a_2,b})}^2 \\
	&\le\frac{a_2-a}{a_2-a_1}C_3(T_{a_1,b})^2|u^{(1)}|_{H^2(T_{a_1,b})}^2+\frac{a-a_1}{a_2-a_1}C_3(T_{a_2,b})^2|u^{(2)}|_{H^2(T_{a_2,b})}^2 \\
	&\le\max_{k=1,2}\frac{C_3(T_{a_k,b})^2}{L_3(a_k,b)}\left(\frac{a_2-a}{a_2-a_1}L_3(a_1,b)|u^{(1)}|_{H^2(T_{a_1,b})}^2+\frac{a-a_1}{a_2-a_1}L_3(a_2,b)|u^{(2)}|_{H^2(T_{a_2,b})}^2\right) \\
	&\le\max_{k=1,2}\frac{C_3(T_{a_k,b})^2}{L_3(a_k,b)}(1+6h_a^2)L_3(a,b)|u|_{H^2(T_{a,b})}^2
\end{align*}
holds.
Then, considering the definitions of $C_3(T)$, we have
$$
	\frac{C_3(T_{a,b})^2}{L_3(a,b)}\le (1+8h_b^2)\max_{k=1,2}\frac{C_3(T_{a,b_k})^2}{L_3(a,b_k)}.
$$

For $j=4$, we have
\begin{align*}
	&\frac{a_2-a}{a_2-a_1}L_4(a_1,b)|u^{(1)}|_{H^2(T_{a_1,b})}^2
		+\frac{a-a_1}{a_2-a_1}L_4(a_2,b)|u^{(2)}|_{H^2(T_{a_2,b})}^2 \\
	&\le \left\{1+\frac{h_a^2}{4}\left(10+2\cdot3+9\left(2+\frac{5}{50^2}\right)\right)\right\}L_4(a,b)|u|_{H^2(T_{a,b})}^2 \\
	&\le \left(1+\frac{17009}{2000}h_a^2\right)L_4(a,b)|u|_{H^2(T_{a,b})}^2 \\
	&\le(1+9h_a^2)L_4(a,b)|u|_{H^2(T_{a,b})}^2,
\end{align*}
and consequently, by \eqref{a-H1-1},
\begin{align*}
	\|\nabla u\|_{L^2(T_{a,b})}^2&\le\frac{a_2-a}{a_2-a_1}\|\nabla u^{(1)}\|_{L^2(T_{a_1,b})}^2+\frac{a-a_1}{a_2-a_1}\|\nabla u^{(2)}\|_{L^2(T_{a_2,b})}^2 \\
	&\le\frac{a_2-a}{a_2-a_1}C_4(T_{a_1,b})^2|u^{(1)}|_{H^2(T_{a_1,b})}^2+\frac{a-a_1}{a_2-a_1}C_4(T_{a_2,b})^2|u^{(2)}|_{H^2(T_{a_2,b})}^2 \\
	&\le\max_{k=1,2}\frac{C_4(T_{a_k,b})^2}{L_4(a_k,b)}\left(\frac{a_2-a}{a_2-a_1}L_4(a_1,b)|u^{(1)}|_{H^2(T_{a_1,b})}^2+\frac{a-a_1}{a_2-a_1}L_4(a_2,b)|u^{(2)}|_{H^2(T_{a_2,b})}^2\right) \\
	&\le\max_{k=1,2}\frac{C_4(T_{a_k,b})^2}{L_4(a_k,b)}(1+9h_a^2)L_4(a,b)|u|_{H^2(T_{a,b})}^2
\end{align*}
holds.
Then, considering the definitions of $C_4(T)$, we have
$$
	\frac{C_4(T_{a,b})^2}{L_4(a,b)}\le (1+9h_b^2)\max_{k=1,2}\frac{C_4(T_{a,b_k})^2}{L_4(a,b_k)}.
$$
\end{proof}

\begin{proof}[Proof of theorem~\ref{b-interpolation}]\quad
In this proof, for $k=1,2$, we take
$$
	u^{(k)}(x,y)=u\left(x,\frac{b}{b_k}y\right).
$$

For $u\in L^2(T_{a,b})$, from
$$
	\|u^{(k)}\|_{L^2(T_{a,b_k})}^2=\frac{b_k}{b}\|u\|_{L^2(T_{a,b})}^2
$$
we have
\begin{equation}
	\frac{b_2-b}{b_2-b_1}\|u^{(1)}\|_{L^2(T_{a,b_1})}^2+\frac{b-b_1}{b_2-b_1}\|u^{(2)}\|_{L^2(T_{a,b_2})}^2
		=\|u\|_{L^2(T_{a,b})}^2 \label{b-L2-1}
\end{equation}
and for $u\in H^1(T_{a,b})$, from
\begin{equation}
	\|\nabla u^{(k)}\|_{L^2(T_{a,b_k})}^2=\frac{b_k}{b}\|u_x\|_{L^2(T_{a,b})}^2+\frac{b}{b_k}\|u_y\|_{L^2(T_{a,b})}^2 \label{b-H1-2}
\end{equation}
we have
\begin{align}
	&\frac{b_2-b}{b_2-b_1}\|\nabla u^{(1)}\|_{L^2(T_{a,b_1})}^2+\frac{b-b_1}{b_2-b_1}\|\nabla u^{(2)}\|_{L^2(T_{a,b_2})}^2 \notag\\
	&\qquad =\|\nabla u\|_{L^2(T_{a,b})}^2+\frac{(b-b_1)(b_2-b)}{b_1b_2}\|u_y\|_{L^2(T_{a,b})}^2 \notag\\
	&\qquad \ge\|\nabla u\|_{L^2(T_{a,b})}^2. \label{b-H1-1}
\end{align}

For $u\in H^2(T_{a,b})$, we have
\begin{equation}
	|u^{(k)}|_{H^2(T_{a,b_k})}^2=\frac{b_k}{b}\|u_{xx}\|_{L^2(T_{a,b})}^2
		+ \frac{2b}{b_k}\|u_{xy}\|_{L^2(T_{a,b})}^2 + \frac{b^3}{b_k^3}\|u_{yy}\|_{L^2(T_{a,b})}^2. \label{b-H2-2}
\end{equation}

Here, by Taylor's theorem, for each $j=1,2,3,4$ and $k=1,2$, there exist $b_1\le \widetilde{b}\le b_2$ such that
$$
	L_j(a,b_k)=L_j(a,b)+(b_k-b)\frac{\partial L_j}{\partial b}(a,b)+\frac{(b_k-b)^2}{2}\cdot\frac{\partial^2 L_j}{\partial b^2}(a,\widetilde{b})
$$
holds.
Then, using
$$
	|\widetilde{b}-b|\le|b_2-b_1|\le\frac{b_1}{50}\le\frac{b}{50}
$$
and Lemma~\ref{BoundConstants}, we have
\begin{align}
	L_j(a,b_k)&\le L_j(a,b)+(b_k-b)\frac{\partial L_j}{\partial b}(a,b)+\frac{\beta_{j2}(b_k-b)^2}{2b^2}L_j(a,b) \notag\\
	&\le \left(1+\frac{\beta_{j2}}{2}h_b^2\right)L_j(a,b)+(b_k-b)\frac{\partial L_j}{\partial b}(a,b). \label{b-Lj}
\end{align}

For $u\in V^{1,j},\;j=1,2$, \eqref{b-H1-2}, \eqref{b-Lj}, and Lemma~\ref{BoundConstants} yield
\begin{align*}
	&\frac{b_2-b}{b_2-b_1}L_j(a,b_1)\|\nabla u^{(1)}\|_{L^2(T_{a,b_1})}^2
		+\frac{b-b_1}{b_2-b_1}L_j(a,b_2)\|\nabla u^{(2)}\|_{L^2(T_{a,b_2})}^2 \\
	&\le\frac{b_2-b}{b_2-b_1}\left\{\left(1+\frac{\beta_{j2}}{2}h_b^2\right)L_j(a,b)+(b_1-b)\frac{\partial L_j}{\partial b}(a,b)\right\} \\
		&\qquad\qquad\qquad\qquad \times\left(\frac{b_1}{b}\|u_x\|_{L^2(T_{a,b})}^2+\frac{b}{b_1}\|u_y\|_{L^2(T_{a,b})}^2\right) \\
		&\qquad +\frac{b-b_1}{b_2-b_1}\left\{\left(1+\frac{\beta_{j2}}{2}h_b^2\right)L_j(a,b)+(b_2-b)\frac{\partial L_j}{\partial b}(a,b)\right\} \\
		&\qquad\qquad\qquad\qquad \times\left(\frac{b_2}{b}\|u_x\|_{L^2(T_{a,b})}^2+\frac{b}{b_2}\|u_y\|_{L^2(T_{a,b})}^2\right) \\
	&=\left(1+\frac{\beta_{j2}}{2}h_b^2\right)L_j(a,b)\|\nabla u\|_{L^2(T_{a,b})}^2
		+ (b-b_1)(b_2-b)\left\{\frac{2+\beta_{j2}h_b^2}{2b_1b_2}L_j(a,b)\|u_y\|_{L^2(T_{a,b})}^2\right. \\
		&\qquad\qquad\left. +b\cdot\frac{\partial L_j}{\partial b}(a,b)\left(	\frac{1}{b^2}\|u_x\|_{L^2(T_{a,b})}^2
		-\frac{1}{b_1b_2}\|u_y\|_{L^2(T_{a,b})}^2\right) \right\} \\
	&\le\left(1+\frac{\beta_{j2}}{2}h_b^2\right)L_j(a,b)\|\nabla u\|_{L^2(T_{a,b})}^2 \\
		&\qquad\qquad +\frac{(b_2-b_1)^2}{4}L_j(a,b)\left\{\frac{\beta_{j1}}{b_1^2}\|u_x\|_{L^2(T_{a,b})}^2
		+\frac{2+2\beta_{j1}+\beta_{j2}h_b^2}{2b_1^2}\|u_y\|_{L^2(T_{a,b})}^2 \right\} \\
	&\le\left(1+\frac{\beta_{j2}}{2}h_b^2\right)L_j(a,b)\|\nabla u\|_{L^2(T_{a,b})}^2
		+\frac{h_b^2}{8}\Big(2+2\beta_{j1}+\beta_{j2}h_b^2\Big)L_j(a,b)\|\nabla u\|_{L^2(T_{a,b})}^2 \\
	&\le\left\{1+\frac{h_b^2}{8}\Big(2+2\beta_{j1}+\beta_{j2}(4+h_b^2)\Big)\right\}L_j(a,b)\|\nabla u\|_{L^2(T_{a,b})}^2 \\
	&\le \left\{1+\frac{h_b^2}{8}\left(2+2\cdot2+4\left(4+\frac{1}{50^2}\right)\right)\right\}L_j(a,b)\|\nabla u\|_{L^2(T_{a,b})}^2 \\
	&=\left(1+\frac{13751}{5000}h_b^2\right)L_j(a,b)\|\nabla u\|_{L^2(T_{a,b})}^2 \\
	&\le(1+3h_b^2)L_j(a,b)\|\nabla u\|_{L^2(T_{a,b})}^2.
\end{align*}
By this result and \eqref{b-L2-1}, we have
\begin{align*}
	\|u\|_{L^2(T_{a,b})}^2&=\frac{b_2-b}{b_2-b_1}\|u^{(1)}\|_{L^2(T_{a,b_1})}^2+\frac{b-b_1}{b_2-b_1}\|u^{(2)}\|_{L^2(T_{a,b_2})}^2 \\
	&\le\frac{b_2-b}{b_2-b_1}C_j(T_{a,b_1})^2\|\nabla u^{(1)}\|_{L^2(T_{a,b_1})}^2+\frac{b-b_1}{b_2-b_1}C_j(T_{a,b_2})^2\|\nabla u^{(2)}\|_{L^2(T_{a,b_2})}^2 \\
	&\le\max_{k=1,2}\frac{C_j(T_{a,b_k})^2}{L_j(a,b_k)}\left(\frac{b_2-b}{b_2-b_1}L_j(a,b_1)\|\nabla u^{(1)}\|_{L^2(T_{a,b_1})}^2+\frac{b-b_1}{b_2-b_1}L_j(a,b_2)\|\nabla u^{(2)}\|_{L^2(T_{a,b_2})}^2\right) \\
	&\le\max_{k=1,2}\frac{C_j(T_{a,b_k})^2}{L_j(a,b_k)}(1+3h_b^2)L_j(a,b)\|\nabla u\|_{L^2(T_{a,b})}^2.
\end{align*}
Then, considering the definitions of $C_j(T),\; j=1,2$, we have
$$
	\frac{C_j(T_{a,b})^2}{L_j(a,b)}\le(1+3h_b^2)\max_{k=1,2}\frac{C_j(T_{a,b_k})^2}{L_j(a,b_k)}.
$$

For $u\in V^2,\;j=3,4$, \eqref{b-H2-2}, \eqref{b-Lj}, and Lemma~\ref{BoundConstants} yield
\begin{align*}
	&\frac{b_2-b}{b_2-b_1}L_j(a,b_1)|u^{(1)}|_{H^2(T_{a,b_1})}^2
		+\frac{b-b_1}{b_2-b_1}L_j(a,b_2)|u^{(2)}|_{H^2(T_{a,b_2})}^2 \\
	&\le\frac{b_2-b}{b_2-b_1}\left\{\left(1+\frac{\beta_{j2}}{2}h_b^2\right)L_j(a,b)+(b_1-b)\frac{\partial L_j}{\partial b}(a,b)\right\} \\
		&\qquad\qquad\qquad\qquad \times\left(\frac{b_1}{b}\|u_{xx}\|_{L^2(T_{a,b})}^2 + \frac{2b}{b_1}\|u_{xy}\|_{L^2(T_{a,b})}^2 + \frac{b^3}{b_1^3}\|u_{yy}\|_{L^2(T_{a,b})}^2\right) \\
		&\qquad +\frac{b-b_1}{b_2-b_1}\left\{\left(1+\frac{\beta_{j2}}{2}h_b^2\right)L_j(a,b)+(b_2-b)\frac{\partial L_j}{\partial b}(a,b)\right\} \\
		&\qquad\qquad\qquad\qquad \times\left(\frac{b_2}{b}\|u_{xx}\|_{L^2(T_{a,b})}^2 + \frac{2b}{b_2}\|u_{xy}\|_{L^2(T_{a,b})}^2 + \frac{b^3}{b_2^3}\|u_{yy}\|_{L^2(T_{a,b})}^2\right) \\
	&=\left(1+\frac{\beta_{j2}}{2}h_b^2\right)L_j(a,b)|u|_{H^2(T_{a,b})}^2 + (b-b_1)(b_2-b)\left\{\phantom{\left(\frac{1}{b^2}\right)}\right. \\
		&\qquad (2+\beta_{j2}h_b^2)L_j(a,b)\left(\frac{1}{b_1b_2}\|u_{xy}\|_{L^2(T_{a,b})}^2 \right. \\
		&\qquad\qquad\qquad\left. +\frac{b_1^2b_2^2 + b_1b_2(b_1+b_2)b + (b_1^2+b_1b_2+b_2^2)b^2}{2b_1^3b_2^3}\|u_{yy}\|_{L^2(T_{a,b})}^2\right) \\
		&\qquad\left. +b\cdot\frac{\partial L_j}{\partial b}(a,b)\left(\frac{1}{b^2}\|u_{xx}\|_{L^2(T_{a,b})}^2 - \frac{2}{b_1b_2}\|u_{xy}\|_{L^2(T_{a,b})}^2 - \frac{(b_1^2+b_1b_2+b_2^2)b^2}{b_1^3b_2^3}\|u_{yy}\|_{L^2(T_{a,b})}^2 \right)\right\} \\
	&\le \left(1+\frac{\beta_{j2}}{2}h_b^2\right)L_j(a,b)|u|_{H^2(T_{a,b})}^2 + \frac{(b_2-b_1)^2}{4}\left\{\phantom{\left(\frac{1}{b^2}\right)}\right. \\
		&\qquad (2+\beta_{j2}h_b^2)L_j(a,b)\left(\frac{1}{b_1^2}\|u_{xy}\|_{L^2(T_{a,b})}^2
		+\frac{3b_2}{b_1^3}\|u_{yy}\|_{L^2(T_{a,b})}^2\right) \\
		&\qquad\left. +\beta_{j1}L_j(a,b)\left(\frac{1}{b_1^2}\|u_{xx}\|_{L^2(T_{a,b})}^2 + \frac{2}{b_1^2}\|u_{xy}\|_{L^2(T_{a,b})}^2
		+ \frac{3b_2}{b_1^3}\|u_{yy}\|_{L^2(T_{a,b})}^2 \right)\right\} \\
	&\le \left(1+\frac{\beta_{j2}}{2}h_b^2\right)L_j(a,b)|u|_{H^2(T_{a,b})}^2 \\
	&\qquad\qquad + \frac{h_b^2}{4}\left\{
		(2+\beta_{j2}h_b^2)L_j(a,b)\left(\|u_{xy}\|_{L^2(T_{a,b})}^2+3(1+h_b)\|u_{yy}\|_{L^2(T_{a,b})}^2\right)\right. \\
		&\qquad\left. +\beta_{j1}L_j(a,b)\left(\|u_{xx}\|_{L^2(T_{a,b})}^2 + 2\|u_{xy}\|_{L^2(T_{a,b})}^2 + 3(1+h_b)\|u_{yy}\|_{L^2(T_{a,b})}^2 \right)\right\} \\
	&\le \left(1+\frac{\beta_{j2}}{2}h_b^2\right)L_j(a,b)|u|_{H^2(T_{a,b})}^2 + \frac{3h_b^2}{4} 
		(1+h_b)\Big(2+\beta_{j1}+\beta_{j2}h_b^2\Big)L_j(a,b)|u|_{H^2(T_{a,b})}^2 \\
	&=\left\{1+\frac{h_b^2}{4}\Big(3(2+\beta_{j1})(1+h_b)+\beta_{j2}(2+3h_b^2+3h_b^3)\Big)\right\}
		L_j(a,b)|u|_{H^2(T_{a,b})}^2.
\end{align*}
Then, for $j=3$, we have
\begin{align*}
	&\frac{b_2-b}{b_2-b_1}L_3(a,b_1)|u^{(1)}|_{H^2(T_{a,b_1})}^2
		+\frac{b-b_1}{b_2-b_1}L_3(a,b_2)|u^{(2)}|_{H^2(T_{a,b_2})}^2 \\
	&\le \left\{1+\frac{h_b^2}{4}\left(3(2+3)\left(1+\frac{1}{50}\right)+8\left(2+\frac{3}{50^2}+\frac{3}{50^3}\right)\right)\right\}L_3(a,b)|u|_{H^2(T_{a,b})}^2 \\
	&=\left(1+\frac{978431}{125000}h_b^2\right)L_3(a,b)|u|_{H^2(T_{a,b})}^2 \\
	&\le(1+8h_b^2)L_3(a,b)|u|_{H^2(T_{a,b})}^2,
\end{align*}
and consequently, by \eqref{b-L2-1},
\begin{align*}
	\|u\|_{L^2(T_{a,b})}^2&=\frac{b_2-b}{b_2-b_1}\|u^{(1)}\|_{L^2(T_{a,b_1})}^2+\frac{b-b_1}{b_2-b_1}\|u^{(2)}\|_{L^2(T_{a,b_2})}^2 \\
	&\le\frac{b_2-b}{b_2-b_1}C_3(T_{a,b_1})^2|u^{(1)}|_{H^2(T_{a,b_1})}^2+\frac{b-b_1}{b_2-b_1}C_3(T_{a,b_2})^2|u^{(2)}|_{H^2(T_{a,b_2})}^2 \\
	&\le\max_{k=1,2}\frac{C_3(T_{a,b_k})^2}{L_3(a,b_k)}\left(\frac{b_2-b}{b_2-b_1}L_3(a,b_1)|u^{(1)}|_{H^2(T_{a,b_1})}^2+\frac{b-b_1}{b_2-b_1}L_3(a,b_2)|u^{(2)}|_{H^2(T_{a,b_2})}^2\right) \\
	&\le\max_{k=1,2}\frac{C_3(T_{a,b_k})^2}{L_3(a,b_k)}(1+8h_b^2)L_3(a,b)|u|_{H^2(T_{a,b})}^2
\end{align*}
holds.
Then, considering the definitions of $C_3(T)$, we have
$$
	\frac{C_3(T_{a,b})^2}{L_3(a,b)}\le (1+8h_b^2)\max_{k=1,2}\frac{C_3(T_{a,b_k})^2}{L_3(a,b_k)}.
$$

For $j=4$, we have
\begin{align*}
	&\frac{b_2-b}{b_2-b_1}L_4(a,b_1)|u^{(1)}|_{H^2(T_{a,b_1})}^2
		+\frac{b-b_1}{b_2-b_1}L_4(a,b_2)|u^{(2)}|_{H^2(T_{a,b_2})}^2 \\
	&\le \left\{1+\frac{h_b^2}{4}\left(3(2+3)\left(1+\frac{1}{50}\right)+9\left(2+\frac{3}{50^2}+\frac{3}{50^3}\right)\right)\right\}L_3(a,b)|u|_{H^2(T_{a,b})}^2 \\
	&=\left(1+\frac{4163877}{500000}h_b^2\right)L_4(a,b)|u|_{H^2(T_{a,b})}^2 \\
	&\le(1+9h_b^2)L_4(a,b)|u|_{H^2(T_{a,b})}^2
\end{align*}
and consequently, by \eqref{b-H1-1},
\begin{align*}
	\|\nabla u\|_{L^2(T_{a,b})}^2&\le\frac{b_2-b}{b_2-b_1}\|\nabla u^{(1)}\|_{L^2(T_{a,b_1})}^2+\frac{b-b_1}{b_2-b_1}\|\nabla u^{(2)}\|_{L^2(T_{a,b_2})}^2 \\
	&\le\frac{b_2-b}{b_2-b_1}C_4(T_{a,b_1})^2|u^{(1)}|_{H^2(T_{a,b_1})}^2+\frac{b-b_1}{b_2-b_1}C_4(T_{a,b_2})^2|u^{(2)}|_{H^2(T_{a,b_2})}^2 \\
	&\le\max_{k=1,2}\frac{C_4(T_{a,b_k})^2}{L_4(a,b_k)}\left(\frac{b_2-b}{b_2-b_1}L_4(a,b_1)|u^{(1)}|_{H^2(T_{a,b_1})}^2+\frac{b-b_1}{b_2-b_1}L_4(a,b_2)|u^{(2)}|_{H^2(T_{a,b_2})}^2\right) \\
	&\le\max_{k=1,2}\frac{C_4(T_{a,b_k})^2}{L_4(a,b_k)}(1+9h_b^2)L_4(a,b)|u|_{H^2(T_{a,b})}^2
\end{align*}
holds.
Then, considering the definitions of $C_4(T)$, we have
$$
	\frac{C_4(T_{a,b})^2}{L_4(a,b)}\le (1+9h_b^2)\max_{k=1,2}\frac{C_4(T_{a,b_k})^2}{L_4(a,b_k)}.
$$
\end{proof}

\section{Proof for triangles that are not significantly degenerate} \label{Non_Degenerate}
In this section, we discuss the proof of Theorem~\ref{formula} when the triangle is not significantly degenerate;
namely, the following theorem holds.

\begin{theorem}
For $0\le a\le\dfrac{1}{2},\;\dfrac{1}{10}\le b\le 1$, it holds that
$$
	C_j(T_{a,b}) < K_j(T_{a,b}), \qquad j=1,2,3,4.
$$
\end{theorem}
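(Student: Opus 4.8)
The plan is to fuse the machine-verified inequalities at the finitely many sampled triangles (Theorem~\ref{Verified_Result1}) with the two continuity estimates (Theorem~\ref{a-interpolation} and Theorem~\ref{b-interpolation}), using the observation that the surplus factors deliberately inserted into Theorem~\ref{Verified_Result1} are exactly what the two interpolation steps will consume. Write $\mu_j$ and $\nu_j$ for the coefficients occurring in the $a$- and $b$-interpolation factors of Theorem~\ref{a-interpolation} and Theorem~\ref{b-interpolation}, i.e.\ $(\mu_1,\mu_2,\mu_3,\mu_4)=(5,5,6,9)$ and $(\nu_1,\nu_2,\nu_3,\nu_4)=(3,3,8,9)$; then the verification factors in Theorem~\ref{Verified_Result1} are precisely $(1+\mu_j/50^2)(1+\nu_j/50^2)$.

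First I would extract a clean strict bound at the grid nodes. Combining Theorem~\ref{BoundbyFiniteResult} with Theorem~\ref{Verified_Result1}, every sampled point $(a_{kl},b_k)$ satisfies
$$
  \frac{C_j(T_{a_{kl},b_k})^2}{L_j(a_{kl},b_k)}
  < \frac{1}{\left(1+\mu_j/50^2\right)\left(1+\nu_j/50^2\right)},
  \qquad j=1,2,3,4.
$$
For $j=1,2,3$ this is immediate from the scaling bounds $C_j(T)^2\le\tfrac{n^2}{n^2-1}C_j^{(n)}(T)^2$ (with $\tfrac{n^4}{n^4-1}$ in place of $\tfrac{n^2}{n^2-1}$ for $j=3$). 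For $j=4$, writing $T=T_{a_{kl},b_k}$, I would first invoke the $j=2$ line to get $C_2(T)^2\le L_2(a_{kl},b_k)$, so that Theorem~\ref{BoundbyFiniteResult} gives $C_4(T)^2\le C_4^{(n)}(T)^2+C_2(T)^2/n^2\le C_4^{(n)}(T)^2+L_2(a_{kl},b_k)/n^2$, after which the factor $(1+9/50^2)^2$ in Theorem~\ref{Verified_Result1} yields the same strict bound.

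Second, for an arbitrary $(a,b)$ with $0\le a\le\tfrac12$ and $\tfrac1{10}\le b\le1$, I would locate the surrounding grid. Since $b_1=1$, $b_k/b_{k+1}=y_{k+1}/y_k\le\tfrac{51}{50}$, and $b_{119}<\tfrac1{10}$, there are consecutive levels with $b_{k+1}\le b\le b_k$ and $h_b=b_k/b_{k+1}-1\le\tfrac1{50}$; at each such level the nodes $a_{k'l}=l/(2x_{k'})$ cover $[0,\tfrac12]$ with $h_a=1/(2x_{k'}b_{k'})\le\tfrac1{50}$, because $x_{k'}\ge y_{k'}/40$ and $b_{k'}=1000/y_{k'}$. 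One application of Theorem~\ref{b-interpolation} reduces $b$ to these two grid levels, and one application of Theorem~\ref{a-interpolation} at each level reduces $a$ to the neighbouring nodes, giving
$$
  \frac{C_j(T_{a,b})^2}{L_j(a,b)}
  \le \left(1+\nu_j h_b^2\right)\left(1+\mu_j h_a^2\right)
  \max_{k',l}\frac{C_j(T_{a_{k'l},b_{k'}})^2}{L_j(a_{k'l},b_{k'})},
$$
with each $h_a,h_b\le\tfrac1{50}$.

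Finally, since $h_a,h_b\le\tfrac1{50}$ the overhead $(1+\nu_j h_b^2)(1+\mu_j h_a^2)$ is at most $(1+\nu_j/50^2)(1+\mu_j/50^2)$, which cancels against the reciprocal factor in the node bound, so $C_j(T_{a,b})^2/L_j(a,b)<1$, i.e.\ $C_j(T_{a,b})<K_j(T_{a,b})$. The difficulty here is organizational rather than conceptual. The two points to get right are: (i) that the recursively defined sample set genuinely tiles the strip $[0,\tfrac12]\times[\tfrac1{10},1]$ with $h_a,h_b\le\tfrac1{50}$ everywhere---in particular that the $119$ geometric $b$-levels descend below $\tfrac1{10}$ and that the ceiling in $x_k$ never pushes $h_a$ past $\tfrac1{50}$; and (ii) the dependency order, namely that $b$ must be brought to a grid level before $a$ can be interpolated (since nodes in $a$ exist only on the lines $b=b_k$), and that the $j=2$ conclusion must be in hand before treating $j=4$.
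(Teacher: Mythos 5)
Your proposal is correct and takes essentially the same route as the paper's proof: both establish the clean node bound $C_j^2/L_j < \left((1+\mu_j/50^2)(1+\nu_j/50^2)\right)^{-1}$ by combining Theorem~\ref{BoundbyFiniteResult} with Theorem~\ref{Verified_Result1} (bootstrapping $j=4$ through the $j=2$ bound), and then spend exactly those factors via Theorem~\ref{a-interpolation} and Theorem~\ref{b-interpolation} using $h_a,h_b\le 1/50$. The only difference is immaterial bookkeeping order — the paper interpolates in $a$ first (nodes to the lines $b=b_k$) and then in $b$, while you interpolate in $b$ first — and the resulting factor accounting is identical.
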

\begin{proof}\quad
From the fact that
$$
	0\le\frac{a_{k\;l+1}-a_{kl}}{b_k}=\frac{1}{2x_kb_k}\le\frac{20}{y_kb_k}=\frac{1}{50}
$$
holds for $\{x_k\},\{y_k\},\{b_k\},\{a_{kl}\}$ that appear in Theorem~\ref{Verified_Result1}
and from Theorem~\ref{BoundbyFiniteResult} and Theorem~\ref{Verified_Result1},
\begin{align*}
	C_j(T_{a_{kl},b_k})^2&\left(1+5\cdot\frac{1}{50^2}\right)\left(1+3\cdot\frac{1}{50^2}\right)< L_j(a_{kl},b_k), \qquad j=1,2, \\
	C_3(T_{a_{kl},b_k})^2&\left(1+6\cdot\frac{1}{50^2}\right)\left(1+8\cdot\frac{1}{50^2}\right)< L_3(a_{kl},b_k)
\end{align*}
holds for $k=1,2,\dots,119,\; l=0,1,\dots x_k$.
Using this result and Theorem~\ref{a-interpolation},
\begin{align*}
	C_j(T_{a,b_k})^2&\left(1+3\cdot\frac{1}{50^2}\right)< L_j(a,b_k), \qquad j=1,2, \\
	C_3(T_{a,b_k})^2&\left(1+8\cdot\frac{1}{50^2}\right)< L_3(a,b_k)
\end{align*}
holds for $k=1,2,\dots,119,\; 0\le a\le\dfrac{1}{2}$.
Furthermore, from
$$
	0\le\frac{b_k-b_{k+1}}{b_{k+1}}=\frac{y_{k+1}}{y_k}-1\le\frac{51}{50}-1=\frac{1}{50},\qquad b_{119}=\frac{1000}{10133}<\frac{1}{10}
$$
and Theorem~\ref{b-interpolation},
$$
	C_j(T_{a,b})^2 < L_j(a,b), \qquad j=1,2,3
$$
holds for $0\le a\le\dfrac{1}{2},\; \dfrac{1}{10}\le b\le 1$.

From the results above, especially for $C_2(T_{a,b})$, and Theorem~\ref{BoundbyFiniteResult} and Theorem~\ref{Verified_Result1},
 it follows that 
$$
	C_4(T_{a_{kl},b_k})^2\left(1+9\cdot\frac{1}{50^2}\right)\left(1+9\cdot\frac{1}{50^2}\right)\le L_4(a_{kl},b_k)
$$
holds for $k=1,2,\dots,119,\; l=0,1,\dots x_k$, and similarly using Theorem~\ref{a-interpolation} and Theorem~\ref{b-interpolation},
$$
	C_4(T_{a,b})^2 < L_4(a,b)
$$
holds for $0\le a\le\dfrac{1}{2},\; \dfrac{1}{10}\le b\le 1$.
\end{proof}

Figure~\ref{PointsLines} shows the flow of the proof.
The inequalities, which are shown at discrete points on the $(a,b)$ plane (left), are extended to lines
 by Theorem~\ref{a-interpolation} (middle) and then to a rectangular domain by Theorem~\ref{b-interpolation} (right).

\begin{figure}[!t]
	\centering
	\includegraphics[width=0.95\hsize]{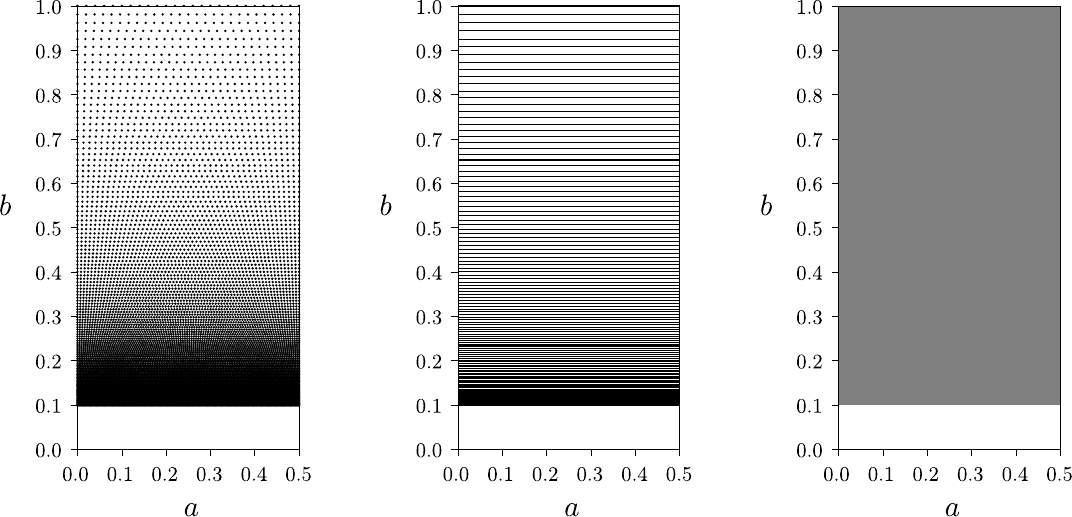}
	\caption{The region where the inequalities hold; (from left) discrete points, extended to lines,
and then to a rectangular domain.
}
	\label{PointsLines}
\end{figure}

\section{Proof for $C_j(T),\ j=1,2,3$ when triangle is degenerate} \label{Degenerate}
This section proves Theorem~\ref{formula} for $C_j(T),\; j=1,2,3$ when the triangle is degenerate.
That is, we show that the following theorem holds.

\begin{theorem} \label{small_b}
For $0\le a\le\dfrac{1}{2},\; 0<b\le\dfrac{1}{10}$,
$$
	C_j(T_{a,b}) < K_j(T_{a,b}), \qquad j=1,2,3
$$
holds.
\end{theorem}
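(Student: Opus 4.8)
The plan is to collapse the whole degenerate strip $0<b\le\tfrac1{10}$ onto the single reference level $b=\tfrac1{10}$, where the numerical verification of Theorem~\ref{Verified_Result2} is available, and then to combine three ingredients: a monotonicity of $C_j(T_{a,b})$ in the height $b$, the verification at $b=\tfrac1{10}$ against the limiting ($b\downarrow0$) formula, and a monotonicity of the explicit bound $L_j(a,b)$ in $b$. For fixed $a\in[0,\tfrac12]$ and $0<b\le\tfrac1{10}$ I aim to chain
$$
	C_j(T_{a,b})^2\;\le\;C_j(T_{a,1/10})^2\;<\;\lim_{y\downarrow0}L_j(a,y)\;\le\;L_j(a,b)\;=\;K_j(T_{a,b})^2,
$$
where the first inequality is monotonicity of the constant, the strict middle inequality comes from Theorem~\ref{BoundbyFiniteResult}, Theorem~\ref{Verified_Result2} and an $a$-interpolation, and the last is monotonicity of the formula.

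First I would establish $C_j(T_{a,b_1})\le C_j(T_{a,b_2})$ for $0<b_1\le b_2$ and $j=1,2,3$. The idea is the anisotropic stretch $\tilde u(x,y)=u(x,(b_2/b_1)y)$, which is a bijection of $H^1(T_{a,b_2})$ onto $H^1(T_{a,b_1})$ (and of $H^2$ onto $H^2$). The computations already appearing in the proof of Theorem~\ref{b-interpolation} give $\|\tilde u\|_{L^2(T_{a,b_1})}^2=(b_1/b_2)\|u\|_{L^2(T_{a,b_2})}^2$ and $\|\nabla\tilde u\|_{L^2(T_{a,b_1})}^2=(b_1/b_2)\|u_x\|_{L^2}^2+(b_2/b_1)\|u_y\|_{L^2}^2$; since $b_2/b_1\ge b_1/b_2$, the denominator dominates $(b_1/b_2)\|\nabla u\|_{L^2}^2$, so the Rayleigh quotient can only decrease under the map. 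The constraint spaces are preserved: the mean value and each edge integral merely acquire a nonzero constant factor under the stretch, and the vertices are fixed, so $V^{1,1}$, $V^{1,2}$ and $V^2$ are each carried bijectively onto their counterparts. Taking suprema then yields the monotonicity for $C_1,C_2,C_3$. For $C_4$ both numerator and denominator scale and this argument fails, which is exactly why $C_4$ is handled by asymptotic analysis in Section~\ref{Asymptotic_Analysis}.

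Next I would treat the middle inequality. Combining Theorem~\ref{BoundbyFiniteResult} with Theorem~\ref{Verified_Result2} gives, at the nodes $a_l=l/500$ and $b=\tfrac1{10}$,
$$
	C_j(T_{a_l,1/10})^2\Bigl(1+\tfrac{5}{50^2}\Bigr)<\lim_{y\downarrow0}L_j(a_l,y)\ \ (j=1,2),\qquad C_3(T_{a_l,1/10})^2\Bigl(1+\tfrac{6}{50^2}\Bigr)<\lim_{y\downarrow0}L_3(a_l,y).
$$
I then extend this from the nodes to every $a\in[0,\tfrac12]$ by re-running the $a$-interpolation of Theorem~\ref{a-interpolation} at $b=\tfrac1{10}$, but comparing against the limiting formula $L_j(\cdot,0):=\lim_{y\downarrow0}L_j(\cdot,y)$ in place of $L_j(\cdot,b)$. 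This is legitimate because the nodal spacing gives $h_a=(1/500)/(1/10)=1/50$, so the interpolation factor is exactly $1+5h_a^2=1+5/50^2$ (resp.\ $1+6/50^2$), matching the margins above; the only new input is the analogue of Lemma~\ref{BoundConstants} for the $b=0$ formula, i.e.\ a bound on $\partial_a^2 L_j(a,0)$ by a constant multiple of $L_j(a,0)$, a routine polynomial estimate. The two margins then cancel and deliver $C_j(T_{a,1/10})^2<L_j(a,0)$ for all $a\in[0,\tfrac12]$, $j=1,2,3$. Finally, the last inequality $L_j(a,0)\le L_j(a,b)$ for $0<b\le\tfrac1{10}$, $0\le a\le\tfrac12$, reduces after clearing denominators to the positivity of an explicit polynomial in $a$ and $b$, which I would verify by reducing to polynomial positivity and deferring the algebra to the appendix, as was done for Lemma~\ref{BoundConstants}.

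I expect the main obstacle to be the bookkeeping in the middle step: making the $a$-interpolation compare cleanly against the degenerate formula $L_j(\cdot,0)$ rather than $L_j(\cdot,\tfrac1{10})$, while keeping the interpolation factor no larger than the verified margin so that the two cancel exactly. This is precisely why Theorem~\ref{Verified_Result2} is stated against the $b\downarrow0$ limit with a single factor $(1+5/50^2)$ rather than the product of two factors used in Theorem~\ref{Verified_Result1}. Secondary care is needed to confirm that the edge-integral constraints defining $V^{1,2}$ really are preserved under the stretch (they are, since each edge integral scales by a fixed positive constant) and that the formula monotonicity $L_j(a,0)\le L_j(a,b)$ holds uniformly over the whole parameter rectangle, not merely in the limit $b\to0$.
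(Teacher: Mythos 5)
Your proof is correct in substance and rests on the same three pillars as the paper's: the verified nodal inequalities (Theorem~\ref{BoundbyFiniteResult} combined with Theorem~\ref{Verified_Result2}), the monotonicity $C_j(T_{a,\eta b})\le C_j(T_{a,b})$ for $0<\eta\le1$, $j=1,2,3$ (the paper's Lemma~\ref{Cj-monotonicity}, proved by exactly your anisotropic-stretch Rayleigh-quotient argument, and failing for $C_4$ for the reason you give), and the comparison $\lim_{y\downarrow0}L_j(a,y)<L_j(a,b)$ (the paper's Lemma~\ref{Lj-monotonicity}, reduced to polynomial positivity in Lemma~\ref{Lj-limit}). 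Where you genuinely differ is the placement of the $a$-interpolation. The paper applies the two monotonicity lemmas first, obtaining $C_j(T_{a_l,b})^2(1+5/50^2)<L_j(a_l,b)$ (with $6$ in place of $5$ for $j=3$) at the nodes $a_l=l/500$ for every $0<b\le1/10$, and then invokes Theorem~\ref{a-interpolation} verbatim at each level $b$; this needs no new lemma, but taken literally it uses Theorem~\ref{a-interpolation} with $h_a=1/(500b)$, which satisfies the standing hypothesis $h_a\le1/50$ (and keeps the interpolation factor $1+5h_a^2$ within the verified margin) only at $b=1/10$. Your ordering --- interpolate once, at $b=1/10$, against the degenerate formula $L_j(\cdot,0)$, where $h_a=1/50$ exactly, and only then descend in $b$ pointwise in $a$ via the chain $C_j(T_{a,b})^2\le C_j(T_{a,1/10})^2<L_j(a,0)<L_j(a,b)$ --- is precisely the bookkeeping that makes the single margin factor of Theorem~\ref{Verified_Result2} suffice, and it explains, as you observe, why that theorem carries one factor rather than the product of two factors appearing in Theorem~\ref{Verified_Result1}. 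The price is the one ingredient the paper does not supply and you correctly flag: a variant of Theorem~\ref{a-interpolation} with $L_j(\cdot,0)$ as comparison function, which needs the analogue of Lemma~\ref{BoundConstants} for $L_j(\cdot,0)$ at scale $b=1/10$, namely both $|\partial_a L_j(a,0)|\le 10\,\alpha_{j1}L_j(a,0)$ and $\partial_a^2 L_j(\widetilde a,0)\le 100\,\alpha_{j2}L_j(a,0)$ on $[0,1/2]$ (you mention only the second derivative, but the first-derivative bound is also used, to absorb the cross term in the interpolation proof). This really is routine: each $L_j(\cdot,0)$ is an explicit rational function bounded away from zero on $[0,1/2]$ (e.g.\ $L_1(a,0)=(1-a+a^2)/14$), and the rest of the proof of Theorem~\ref{a-interpolation} goes through unchanged because the comparison function enters only through those two bounds. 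So your argument is complete modulo that routine verification, and at the one delicate step it is the tighter of the two write-ups.
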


To prove this theorem, we use the following two lemmas concerning the monotonicity of $C_j(T)$ and $L_j(a,b)$.
\begin{lemma} \label{Cj-monotonicity}
Assume that $0<\eta\le 1$. Then,
$$
	C_j(T_{a,\eta b})\le C_j(T_{a,b}), \qquad j=1,2,3
$$
holds.
\end{lemma}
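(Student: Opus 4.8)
The plan is to compare the two triangles through the vertical compression $\Phi\colon T_{a,b}\to T_{a,\eta b}$, $\Phi(x,y)=(x,\eta y)$. This map is affine with constant Jacobian $\eta$ and sends the vertices $(0,0),(1,0),(a,b)$ to $(0,0),(1,0),(a,\eta b)$; hence it maps $T_{a,b}$ onto $T_{a,\eta b}$ and each edge $\gamma_k(T_{a,b})$ onto the corresponding edge $\gamma_k(T_{a,\eta b})$. Given an arbitrary competitor $v$ in the supremum defining $C_j(T_{a,\eta b})$, I would form its pullback $u=v\circ\Phi$, i.e.\ $u(x,y)=v(x,\eta y)$, on $T_{a,b}$, and prove that $u$ is admissible for $C_j(T_{a,b})$ with Rayleigh quotient at least as large as that of $v$. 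Taking the supremum over all $v$ then gives $C_j(T_{a,\eta b})\le C_j(T_{a,b})$.

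The first step is to check that the defining constraints are inherited by $u$. For $j=1$ the condition $\iint_{T_{a,\eta b}}v=0$ transfers to $\iint_{T_{a,b}}u=0$ immediately, since the change of variables only multiplies the integral by $\eta^{-1}$. For $j=3$ the vertex conditions $v(p_k')=0$ give $u(p_k)=v(\Phi(p_k))=v(p_k')=0$ because $\Phi$ maps vertices to vertices. The one point that needs a short argument is the edge-average constraint for $j=2$: since $\Phi$ restricted to the straight edge $\gamma_k(T_{a,b})$ is affine onto $\gamma_k(T_{a,\eta b})$, it scales arc length by the constant factor $|\gamma_k(T_{a,\eta b})|/|\gamma_k(T_{a,b})|$, so $\int_{\gamma_k(T_{a,b})}u\,ds$ is a positive multiple of $\int_{\gamma_k(T_{a,\eta b})}v\,ds$ and therefore vanishes edge by edge.

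The second step is the norm comparison, which is a direct computation. From $u_x=v_\xi$, $u_y=\eta v_\zeta$, $u_{xx}=v_{\xi\xi}$, $u_{xy}=\eta v_{\xi\zeta}$, $u_{yy}=\eta^2 v_{\zeta\zeta}$ (derivatives of $v$ evaluated at $(x,\eta y)$) together with the global Jacobian factor, I get $\|u\|_{L^2(T_{a,b})}^2=\eta^{-1}\|v\|_{L^2(T_{a,\eta b})}^2$ exactly, while each $y$-derivative in the denominator acquires a power of $\eta\le 1$. Concretely,
\begin{align*}
\|\nabla u\|_{L^2(T_{a,b})}^2&=\eta^{-1}\!\!\int_{T_{a,\eta b}}\!\!\big(v_\xi^2+\eta^2v_\zeta^2\big)\le\eta^{-1}\|\nabla v\|_{L^2(T_{a,\eta b})}^2,\\
|u|_{H^2(T_{a,b})}^2&=\eta^{-1}\!\!\int_{T_{a,\eta b}}\!\!\big(v_{\xi\xi}^2+2\eta^2v_{\xi\zeta}^2+\eta^4v_{\zeta\zeta}^2\big)\le\eta^{-1}|v|_{H^2(T_{a,\eta b})}^2,
\end{align*}
using $\eta^2\le1$ and $\eta^4\le1$. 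Dividing the numerator identity by these inequalities yields exactly the required inequality of Rayleigh quotients for $j=1,2$ (first line) and $j=3$ (second line), and the supremum over $v$ finishes the proof.

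The only genuinely delicate point is the edge-average verification for $C_2$, which I would isolate as the per-edge constant-scaling observation above; the rest is bookkeeping. I would also explain why the argument stops at $j=3$: for $C_4$ the numerator is $\|\nabla u\|$, which shrinks under the compression in the same way as the $H^2$ denominator, so the quotient is no longer monotone in $\eta$, and the degenerate case of $C_4$ must instead be handled by the asymptotic analysis of Section~\ref{Asymptotic_Analysis}.
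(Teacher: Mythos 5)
Your proposal is correct and takes essentially the same route as the paper: the paper also pulls a competitor on $T_{a,\eta b}$ back to $T_{a,b}$ via the map $(x,y)\mapsto(x,\eta y)$, notes that it lands in $V^{1,j}(T_{a,b})$ (resp.\ $V^2(T_{a,b})$), and uses the factors $\eta^2\le1$, $\eta^4\le1$ in the scaling of the gradient and Hessian norms to compare the Rayleigh quotients. Your explicit check of the edge-average constraint for $j=2$ (and of the vertex/mean constraints) is a detail the paper leaves implicit, but the argument is the same.
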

\begin{proof}\quad
For $u(x,y)\in V^{1,j}(T_{a,\eta b}),\; j=1,2$, let $v(x,y)=u(x,\eta y)\in V^{1,j}(T_{a,b})$.
Then, it holds that
\begin{align*}
	\|v\|_{L^2(T_{a,b})}^2&=\frac{1}{\eta}\|u\|_{L^2(T_{a,\eta b})}^2 \\
	\|v_x\|_{L^2(T_{a,b})}^2&=\frac{1}{\eta}\|u_x\|_{L^2(T_{a,\eta b})}^2 \\
	\|v_y\|_{L^2(T_{a,b})}^2&=\eta\|u_y\|_{L^2(T_{a,\eta h})}^2
\end{align*}
and
\begin{align*}
	\|u\|_{L^2(T_{a,\eta b})}^2&=\eta\|v\|_{L^2(T_{a,b})}^2 \\
	&\le \eta C_j(T_{a,b})^2\|\nabla v\|_{L^2(T_{a,b})}^2=C_j(T_{a,b})^2\left(\|u_x\|_{L^2(T_{a,\eta b})}^2+\eta^2\|u_y\|_{L^2(T_{a,\eta b})}^2\right) \\
	&\le C_j(T_{a,b})^2\|\nabla u\|_{L^2(T_{a,\eta b})}^2.
\end{align*}
From this, we have
$$
	C_j(T_{a,\eta b}) \le C_j(T_{a,b}).
$$
Next, we examine the relation between $C_3(T_{a,\eta b})$ and $C_3(T_{a,b})$.
For $u(x,y)\in V^2(T_{a,\eta b}),\; j=1,2$, let $v(x,y)=u(x,\eta y)\in V^2(T_{a,b})$.
Then, it holds that
\begin{align*}
	\|v\|_{L^2(T_{a,b})}^2&=\frac{1}{\eta}\|u\|_{L^2(T_{a,\eta b})}^2 \\
	\|v_{xx}\|_{L^2(T_{a,b})}^2&=\frac{1}{\eta}\|u_{xx}\|_{L^2(T_{a,\eta b})}^2 \\
	\|v_{xy}\|_{L^2(T_{a,b})}^2&=\eta\|u_{xy}\|_{L^2(T_{a,\eta b})}^2 \\
	\|v_{yy}\|_{L^2(T_{a,b})}^2&=\eta^3\|u_{yy}\|_{L^2(T_{a,\eta b})}^2
\end{align*}
and
\begin{align*}
	\|u\|_{L^2(T_{a,\eta b})}^2&=\eta\|v\|_{L^2(T_{a,b})}^2 \\
	&\le \eta C_3(T_{a,b})^2|v|_{H^2(T_{a,b})}^2 \\
	&=C_3(T_{a,b})^2
		\left(\|u_{xx}\|_{L^2(T_{a,\eta b})}^2+2\eta^2\|u_{xy}\|_{L^2(T_{a,\eta b})}^2+\eta^4\|u_{yy}\|_{L^2(T_{a,\eta b})}^2\right) \\
	&\le C_3(T_{a,b})^2|u|_{H^2(T_{a,\eta b})}^2.
\end{align*}
From this, we have
$$
	C_3(T_{a,\eta b}) \le C_3(T_{a,b}).
$$
\end{proof}

\begin{lemma} \label{Lj-monotonicity}
For $0\le a\le1/2,\;0<b$, it holds that
$$
	\lim_{y\downarrow0}L_j(a,y) < L_j(a,b), \qquad j=1,2,3.
$$
\end{lemma}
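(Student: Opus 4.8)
The plan is to place $T_{a,b}$ with vertices $(0,0),(1,0),(a,b)$, so that one may take $C^2=1$, $A^2=(1-a)^2+b^2$, $B^2=a^2+b^2$ and $S=b/2$; since each $K_j$ is symmetric in $A,B,C$ this labeling is harmless. By the remark following Theorem~\ref{Verified_Result2}, $\lim_{y\downarrow0}L_j(a,y)$ equals the value obtained by formally setting $b=0$, hence is finite for $j=1,2,3$. I would therefore form the difference $D_j(a,b):=L_j(a,b)-\lim_{y\downarrow0}L_j(a,y)$ and prove $D_j(a,b)>0$ for $0\le a\le\frac12$ and $b>0$. In every case the sum-of-squares and area terms are polynomial in $b^2$ while the term $S^4/(A^2B^2C^2)$ (resp.\ $A^2B^2C^2/(A^2+B^2+C^2)$) is rational in $b^2$, and one checks that $D_j$ carries an overall factor $b^2$; dividing by $b^2>0$ reduces each claim to the positivity of a single rational function of $a$ and $t:=b^2$.

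For $j=1$ a direct computation gives $D_1(a,b)=\frac{b^2}{14}-\frac{b^4}{16A^2B^2}$, so $D_1>0$ is equivalent to $A^2B^2>\frac78 b^2$. Writing $t=b^2$, $p=(1-a)^2$, $q=a^2$, this says that $g(t):=t^2+\bigl(p+q-\tfrac78\bigr)t+pq$ is positive for $t>0$. I would treat $g$ as a quadratic in $t$: its discriminant equals $\tfrac1{64}(32a^2-32a+1)$, which is strictly negative on the whole range $[\tfrac{2-\sqrt3}{4},\tfrac12]$ (since $(\tfrac{4-\sqrt{14}}{8},\tfrac{4+\sqrt{14}}{8})\supset[\tfrac{2-\sqrt3}{4},\tfrac12]$), forcing $g>0$ there; while for $a\in[0,\tfrac{2-\sqrt3}{4}]$ one has $p+q=2a^2-2a+1\ge\frac78$, so $g$ is increasing on $t\ge0$ with $g(0)=pq\ge0$, whence $g(t)>0$. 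For $j=2$ the identical reduction yields $D_2>0\iff A^2B^2>\frac{27}{32}b^2$, and since $\frac{27}{32}<\frac78$ this follows at once from the bound just established.

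For $j=3$ the structure is the same but the algebra is heavier. Setting $Q=A^2+B^2$, $Q_0=\lim_{b\to0}Q=2a^2-2a+1$, $P=A^2B^2$ and $P_0=(1-a)^2a^2$, one computes $P-P_0=b^2(Q_0+b^2)$ and $Q-Q_0=2b^2$, and after cancelling the common factor $b^2$ the inequality $D_3>0$ becomes
\begin{equation*}
\frac{Q_0+b^2+2}{83}-\frac{1}{96}>\frac{(Q_0+b^2)(Q_0+1)-2P_0}{24\,(Q+1)(Q_0+1)}.
\end{equation*}
Clearing the strictly positive denominator $24(Q+1)(Q_0+1)$ turns this into a polynomial inequality in the two variables $a$ and $b^2$, and it remains to prove its positivity on the box $0\le a\le\frac12$, $0<b$.

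This final polynomial positivity is the main obstacle: unlike the $j=1,2$ cases it does not collapse to a single quadratic in $b^2$, so I expect to establish it by the same style of elementary-but-lengthy estimation used for Lemma~\ref{L1-bound}--Lemma~\ref{L4-bound}, namely grouping terms by powers of $b^2$ and bounding each coefficient on $0\le a\le\frac12$, with the routine verification relegated to Appendix~\ref{Appendix1} and cross-checked with the formula-manipulation system. I would also record at the outset why $j=4$ is excluded: the leading term $A^2B^2C^2/(16S^2)$ of $K_4$ blows up as $S\to0$, so $\lim_{y\downarrow0}L_4(a,y)$ is not finite and the monotonicity statement has no counterpart there.
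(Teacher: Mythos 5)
Your overall route is the same as the paper's: substitute $A^2=(1-a)^2+b^2$, $B^2=a^2+b^2$, $C^2=1$, $S^2=b^2/4$, subtract the formal $b=0$ value, factor out $b^2$, and reduce each case to a polynomial positivity after clearing positive denominators. The paper finishes all three cases at once in Lemma~\ref{Lj-limit} by exhibiting explicit identities whose right-hand sides are visibly positive, e.g.\ $\frac{112(a^2+b^2)((1-a)^2+b^2)}{b^2}\bigl(L_1(a,b)-\lim_{y\downarrow0}L_1(a,y)\bigr)=8\bigl(a(1-a)-b^2\bigr)^2+b^2$, the identities being checked by symbolic computation. Your handling of $j=1,2$ is complete, correct, and a genuine alternative: the discriminant/monotonicity analysis of $g(t)=t^2+\bigl(p+q-\tfrac78\bigr)t+pq$ requires no guessed identity, and the observation that $j=2$ follows from $j=1$ because $\tfrac{27}{32}<\tfrac78$ is an economy the paper does not exploit.

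The gap is $j=3$: after a correct reduction you stop at ``it remains to prove its positivity'' and only forecast an estimation in the style of Lemmas~\ref{L1-bound}--\ref{L4-bound}. Since $j=1,2$ are easy, this is exactly where the lemma's content sits, so as written the statement is not proved. The deferred step does hold, and your own plan (group by powers of $b^2$) closes it in one computation: with $s=a(1-a)\in[0,\tfrac14]$ and $t=b^2>0$, multiplying your displayed inequality by $7968\,(1-s+t)(1-s)>0$ (the same normalizing factor the paper uses) turns it into
\[
96(1-s)\,t^2+\bigl(288s^2-423s+135\bigr)t+\bigl(-192s^3+423s^2-104s+39\bigr)>0,
\]
and each coefficient is positive on $0\le s\le\tfrac14$: $96(1-s)>0$; $288s^2-423s+135=9(32s-15)(s-1)>0$ since both factors are negative there; and $-192s^3+423s^2-104s+39\ge 375s^2-104s+39>0$, using $192s^3\le 48s^2$ and a negative discriminant. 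Up to the substitution $s=a(1-a)$, $t=b^2$, this is precisely the manifestly positive polynomial the paper writes down for $j=3$ in Lemma~\ref{Lj-limit}; to turn your proposal into a proof you must actually include such a computation (or an equivalent positivity certificate) rather than leave it as an expectation. (Also, your closing aside on $j=4$ is correct for $0<a\le\tfrac12$, but not at $a=0$, where $\lim_{y\downarrow0}L_4(0,y)=\tfrac{2}{15}$ is finite.)
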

The proof of this lemma comes down to showing the positivity of the multivariate polynomials;
since there is no intrinsic difficulty, it is presented in Lemma~\ref{Lj-limit} in Appendix~\ref{Appendix1}.

\begin{proof}[Proof of Theorem~\ref{small_b}]\quad
From Theorem~\ref{BoundbyFiniteResult} and Theorem~\ref{Verified_Result2},
\begin{align*}
	C_j(T_{a_l,\frac{1}{10}})^2&\left(1+5\cdot\frac{1}{50^2}\right) < \lim_{y\downarrow0}L_j(a_l,y), \qquad j=1,2, \\
	C_3(T_{a_l,\frac{1}{10}})^2&\left(1+6\cdot\frac{1}{50^2}\right) < \lim_{y\downarrow0}L_3(a_l,y)
\end{align*}
hold for $a_l = \dfrac{l}{500},\; l=0,1,\dots, 250$.
Therefore, Lemma~\ref{Cj-monotonicity} and Lemma~\ref{Lj-monotonicity} imply
\begin{align*}
	C_j(T_{a_l,b})^2&\left(1+5\cdot\frac{1}{50^2}\right) <  L_j(a_l,b), \qquad j=1,2, \\
	C_3(T_{a_l,b})^2&\left(1+6\cdot\frac{1}{50^2}\right) < L_3(a_l,b)
\end{align*}
for $0<b\le\dfrac{1}{10}$ and $l=0,1,\dots, 250$.

Here, using the result of Theorem~\ref{a-interpolation}, we have
$$
	C_j(T_{a,b})^2 < L_j(a,b), \qquad j=1,2,3
$$
for $0\le a\le\dfrac{1}{2},\;0<b\le\dfrac{1}{10}$.
\end{proof}

\section{Asymptotic analysis for $C_4(T)$} \label{Asymptotic_Analysis}
In this section, we deal with $C_4(T_{a,b})$ for $0\le a\le\dfrac{1}{2},\; 0<b\le\dfrac{1}{10}$.
Namely, we prove the following theorem.

\begin{theorem} \label{C4-asymptotic}
For $0\le a\le\dfrac{1}{2},\; 0<b\le\dfrac{1}{10}$,
$$
	C_4(T_{a,b}) < K_4(T_{a,b})
$$
holds.
\end{theorem}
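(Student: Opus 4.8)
The plan is to handle the thin-triangle regime by a genuine asymptotic analysis, since the monotonicity device of Lemma~\ref{Cj-monotonicity} that settled $C_1,C_2,C_3$ is unavailable here: as $b\downarrow 0$ with $a$ bounded away from $0$, the apex angle of $T_{a,b}$ tends to $\pi$ and $C_4(T_{a,b})$ diverges like $1/b$. The organizing observation is that this divergence is governed exactly by the squared circumradius $R^2=\dfrac{A^2B^2C^2}{16S^2}$, which is precisely the leading term of $L_4(a,b)=K_4(T_{a,b})^2$. Accordingly I would never expand $R^2$ itself (it also correctly encodes the non-divergent behaviour near $a=0$, where $R^2=\tfrac{1+b^2}{4}$ stays bounded); instead I would show that the bounded remainder of $L_4$ strictly dominates the corresponding remainder of $C_4(T_{a,b})^2$ for every $0<b\le\tfrac1{10}$.

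Concretely, I would first fix $a$ and map $T_{a,b}$ onto the reference triangle $T_{0,1}$ by the affine change of variables $\xi=x-\tfrac{a}{b}y,\ \eta=\tfrac{y}{b}$, which carries $V^2(T_{a,b})$ onto $V^2(T_{0,1})$. Under this map $\|\nabla u\|_{L^2}^2$ and $|u|_{H^2}^2$ become quadratic forms whose coefficients are explicit polynomials in $b$, so that $C_4(T_{a,b})^2$ is the top eigenvalue of a generalized eigenvalue problem depending smoothly on $b$. The decisive feature is that the $H^2$-form degenerates as $b\to0$: its most singular part ($\sim b^{-3}$) is the square of $u_{yy}$, and maximizers force $u_{yy}\approx 0$, i.e.\ $u$ nearly affine in $y$ with the apex constraint pinning the $y$-slope at size $1/b$. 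This reduces the leading problem to a one-dimensional variational problem on $[0,1]$ with the tent weight $\phi(x)$ (the height profile of $T_{a,b}$), whose value reproduces $R^2$; the two lemmas of this section are designed to furnish, respectively, this leading identification and an explicit upper bound on the next-order remainder that is uniform down to $b=0$.

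With such an upper bound in hand, the inequality $C_4(T_{a,b})^2<L_4(a,b)$ reduces, after subtracting the common $R^2$, to the positivity of an explicit rational function of $(a,b)$ on the rectangle $[0,\tfrac12]\times(0,\tfrac1{10}]$. As in Lemma~\ref{BoundConstants}, this is a polynomial-positivity statement to be discharged by the formula-manipulation and interval arithmetic already used in the appendix. Finally, to cover the whole interval $0<b\le\tfrac1{10}$ rather than only $b$ near $0$, I would splice the small-$b$ asymptotic estimate to the non-degenerate regime: the continuification Theorems~\ref{a-interpolation} and~\ref{b-interpolation}, together with Theorem~\ref{BoundbyFiniteResult} and the already-established bound $C_2(T_{a,b})^2<L_2(a,b)$, let one bridge any intermediate band of $b$ left uncovered by the pure asymptotics.

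The main obstacle, and the reason a naive argument fails, is producing an \emph{upper} bound for $C_4(T_{a,b})^2$ that is simultaneously sharp enough to beat $L_4$ and uniform in $b$ on all of $(0,\tfrac1{10}]$: test functions only give lower bounds, so the upper bound must come from the full variational structure of the degenerating eigenvalue problem, and the estimate must be controlled as $b\to0$ while remaining valid up to $b=\tfrac1{10}$. A secondary difficulty is uniformity in $a$ across the transition at $a=0$, where the divergence coefficient $\tfrac{(1-a)^2a^2}{4b^2}$ vanishes and the character of the optimizer changes; keeping $R^2$ in closed form throughout is what allows a single argument to cover both the divergent and the bounded sub-regimes.
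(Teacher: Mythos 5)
Your plan diverges fundamentally from the paper's actual proof, and the divergence exposes a genuine gap. The paper never performs a continuum asymptotic analysis of the degenerating eigenvalue problem. Instead it invokes Theorem~\ref{BoundbyFiniteResult} with $n=2$ to get $C_4(T_{a,b})^2\le C_4^{(2)}(T_{a,b})^2+\tfrac14 C_2(T_{a,b})^2$, uses the already-proven degenerate bound $C_2(T_{a,b})^2<L_2(a,b)$ from Theorem~\ref{small_b}, and thereby reduces everything to the single implication: if $C_4^{(2)}(T_{a,b})^2\le L_4(a,b)-\tfrac14 L_2(a,b)$ then the theorem holds. Since $C_4^{(2)}$ is the supremum of an explicit \emph{finite-dimensional} Rayleigh quotient $G_1/G_2$ in twelve variables, this becomes the nonnegativity of one polynomial expression, $S\bigl(\tfrac{c_1c_2c_3}{16}-\tfrac{41(c_1+c_2+c_3)}{1080}-\tfrac15(\tfrac1{c_1}+\tfrac1{c_2}+\tfrac1{c_3})\bigr)G_2-G_1\ge0$, which Lemma~\ref{quodoratic-form} proves by an exact sum-of-squares-type decomposition valid on all of $0\le a\le\tfrac12$, $0<b\le\tfrac1{10}$. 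The finite-dimensional projection is precisely what converts the intractable infinite-dimensional supremum into something provable.

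Your route, by contrast, requires a rigorous \emph{upper} bound on the infinite-dimensional supremum $C_4(T_{a,b})^2$ of the form $R^2+(\text{explicit remainder})$, uniform on $(0,\tfrac1{10}]$, and this is exactly the step you do not supply. The heuristic that maximizers force $u_{yy}\approx0$ and the problem collapses to a one-dimensional variational problem reproducing $R^2$ is a lower-bound-type argument (it identifies near-optimal test functions); turning it into a quantitative upper bound demands a dimension-reduction estimate with controlled error for a degenerating domain, and you only assert that ``the two lemmas of this section are designed to furnish'' it, without constructing any such mechanism. You correctly name this as the main obstacle, but naming the obstacle is not overcoming it; the paper's whole architecture (the $\Pi^{(\beta)}$ projection lemmas, the $n=2$ subdivision, the $C_2$ correction term) exists to avoid ever facing it. A secondary problem: your final reduction ``to be discharged by formula manipulation and interval arithmetic'' cannot work as stated near $b=0$, since interval arithmetic verifies positivity only on finitely many compact boxes, while your region is a half-open strip accumulating at the degenerate limit; the paper handles this with exact symbolic identities (Lemmas~\ref{quodoratic-form} and~\ref{positive-definiteness2}), not numerical verification, precisely because the degenerate regime is out of reach of pointwise methods.
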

\begin{proof}\quad
Considering Theorem~\ref{BoundbyFiniteResult}, in particular the case $n=2$,
$$
	C_4(T_{a,b})\le\sqrt{C_4^{(2)}(T_{a,b})^2+\frac{C_2(T_{a,b})^2}{4}}
$$
holds.
Therefore, from Theorem~\ref{small_b},
$$
	C_4(T_{a,b})^2<C_4^{(2)}(T_{a,b})^2+\frac{L_2(a,b)}{4}
$$
holds. Consequently, it holds that
\begin{equation}
	C_4^{(2)}(T_{a,b})^2\le L_4(a,b)-\frac{L_2(a,b)}{4}\quad \Longrightarrow\quad C_4(T_{a,b})<K_4(T_{a,b}).\label{C4K4}
\end{equation}

Now, let
$$
	c_1 = \frac{2((1-a)^2+b^2)}{b},
	\qquad c_2 = \frac{2(a^2+b^2)}{b},
	\qquad c_3 = \frac{2}{b},
	\qquad S=\frac{b}{2}.
$$
Then, we have
\begin{align*}
	L_2(a,b)&=S\left(\frac{c_1+c_2+c_3}{54}-\frac{1}{2c_1c_2c_3}\right),\\
	L_4(a,b)&=S\left(\frac{c_1c_2c_3}{16}-\frac{c_1+c_2+c_3}{30}-\frac{1}{5}\left(\frac{1}{c_1}+\frac{1}{c_2}+\frac{1}{c_3}\right)\right).
\end{align*}
Therefore, it holds that
\begin{align}
	L_4(a,b)-\frac{L_2(a,b)}{4}&=S\left(\frac{c_1c_2c_3}{16}-\frac{41(c_1+c_2+c_3)}{1080}-\frac{1}{5}\left(\frac{1}{c_1}+\frac{1}{c_2}+\frac{1}{c_3}\right)+\frac{1}{2c_1c_2c_3}\right) \notag \\
	&>S\left(\frac{c_1c_2c_3}{16}-\frac{41(c_1+c_2+c_3)}{1080}-\frac{1}{5}\left(\frac{1}{c_1}+\frac{1}{c_2}+\frac{1}{c_3}\right)\right). \label{L4L2}
\end{align}
Here, let
\begin{align*}
	G_1&=F^{(\beta)}_1(S/4, 0, u_3, u_2, w_7, w_5, w_3)
		+ F^{(\beta)}_1(S/4, u_3, 0, u_1, w_1, w_8, w_6) \\
	&\qquad\qquad + F^{(\beta)}_1(S/4, u_2, u_1, 0, w_4, w_2, w_9)
		+ F^{(\beta)}_1(S/4, u_1, u_2, u_3, -w_7, -w_8, -w_9), \\
	G_2&=F^{(\beta)}_2(S/4, 0, u_3, u_2, w_7, w_5, w_3)
		+ F^{(\beta)}_2(S/4, u_3, 0, u_1, w_1, w_8, w_6) \\
	&\qquad\qquad + F^{(\beta)}_2(S/4, u_2, u_1, 0, w_4, w_2, w_9)
		+ F^{(\beta)}_2(S/4, u_1, u_2, u_3, -w_7, -w_8, -w_9),
\end{align*}
(see Lemma~\ref{F2} for the definition of $F^{(\beta)}_1$ and $F^{(\beta)}_2$).
Then, from \eqref{C_4uw},\;\eqref{positive_definite},\;\eqref{C4K4} and \eqref{L4L2}, if we can prove that the following
inequality holds,
$$
	S\left(\frac{c_1c_2c_3}{16}-\frac{41(c_1+c_2+c_3)}{1080}-\frac{1}{5}\left(\frac{1}{c_1}+\frac{1}{c_2}+\frac{1}{c_3}\right)\right)G_2 - G_1
		\ge 0
$$
then $C_4(T_{a,b})<K_4(T_{a,b})$ will be shown.

The problem comes down to that of showing the non-negativeness of a multivariate polynomial in 14 variables,
which is not intrinsically difficult. However, since the formula transformation is very complicated, it is presented
in Lemma~\ref{quodoratic-form} in Appendix~\ref{Appendix1}.
This allows us to prove the theorem.
\end{proof}

\section{Numerical results} \label{Numerical_Results}
In this section, we present numerical results to show the validity of the formulas $K_j(T)$.
We note that the values obtained in this section are the results of approximate calculations and have not been explicitly verified,
as the goal here is to demonstrate that our formulas fit well.
We show the values of the upper bounds for $C_j(T)$ obtained using Theorem~\ref{BoundbyFiniteResult},
those of $K_j(T)$ in Theorem~\ref{formula}, and those of $C_j(T)$ themselves.
The upper bounds for $C_j(T)$ are obtained as follows: 
\begin{align*}
	\overline{C}_1^{(n)}(T)&=\sqrt{\frac{n^2}{n^2-1}}\;C_1^{(n)}(T), &
	\overline{C}_2^{(n)}(T)&=\sqrt{\frac{n^2}{n^2-1}}\;C_2^{(n)}(T), \\
	\overline{C}_3^{(n)}(T)&=\sqrt{\frac{n^4}{n^4-1}}\;C_3^{(n)}(T), &
	\overline{C}_4^{(n)}(T)&=\sqrt{C_4^{(n)}(T)^2+\frac{\overline{C}_2^{(n)}(T)^2}{n^2}}.
\end{align*}
We compute $C^{(n)}_j(T),\;j=1,2,3,4,\; n=10, 20$ via MATLAB R2024b.
For $C_j(T)$ themselves, we cannot determine their values analytically.
Therefore, we compute the following values.
\begin{align*}
	\widetilde{C}_1(T)&=\sup_{u\in V^{1,1}(T)\cap\mathcal{P}_{10}\setminus 0}\frac{\|u\|_{L^2(T)}}{\|\nabla u\|_{L^2(T)}}, &
	\widetilde{C}_2(T)&=\sup_{u\in V^{1,2}(T)\cap\mathcal{P}_{10}\setminus 0}\frac{\|u\|_{L^2(T)}}{\|\nabla u\|_{L^2(T)}}, \\
	\widetilde{C}_3(T)&=\sup_{u\in V^2(T)\cap\mathcal{P}_{10}\setminus 0}\frac{\|u\|_{L^2(T)}}{|u|_{H^2(T)}}, &
	\widetilde{C}_4(T)&=\sup_{u\in V^2(T)\cap\mathcal{P}_{10}\setminus 0}\frac{\|\nabla u\|_{L^2(T)}}{|u|_{H^2(T)}},
\end{align*}
where $\mathcal{P}_{10}$ denotes the space of polynomials in two variables of degree at most $10$.

We present the numerical results in Table~\ref{Table:C_1} to Table~\ref{Table:C_4};
all numerical results are rounded up to seven decimal places.
Note that $T_{a,b},\;0\le a\le\dfrac{1}{2},\;0<b\le1$ provides all shapes of triangles and, due to the scaling property,
the relative error between the upper bound
and the optimal value depends only on the shape of the triangle.
We show the graphs of $\widetilde{C}_j(T_{a,b})$ and $K_j(T_{a,b})-\widetilde{C}_j(T_{a,b})$
in Fig.~\ref{FigC1} to Fig.~\ref{FigK4}.

These numerical results demonstrate that sharp and explicit upper bounds are obtained
using the formulas introduced in Theorem~\ref{formula}.

\begin{table}[!t]
	\centering
\begin{tabular}{p{40pt}p{60pt}p{50pt}p{50pt}p{50pt}p{50pt}}
\hline
 $T$ & Shape & $K_1(T)$ & $\overline{C}_1^{(10)}(T)$ & $\overline{C}_1^{(20)}(T)$ & $\widetilde{C}_1(T)$ \\
\hline
 $T_{0,\,1}$ & \begin{tikzpicture}[scale=0.36]\draw (0,0)--(1,0)--(0,1)--cycle;\end{tikzpicture}\rule{0pt}{5mm}\;{\tiny \shortstack[l]{Isosceles right \\[-2pt] triangle}}
	& $0.3340766$ & $0.3212289$ & $0.3190362$ & $0.3183099$ \\
 $T_{0,\,1/2}$ & \begin{tikzpicture}[scale=0.36]\draw (0,0)--(1.6,0)--(0,0.8)--cycle;\end{tikzpicture}\rule{0pt}{4mm}
	& $0.2771024$ & $0.2740806$ & $0.2723722$ & $0.2718063$ \\
 $T_{0,\,1/5}$ & \begin{tikzpicture}[scale=0.36]\draw (0,0)--(3,0)--(0,0.6)--cycle;\end{tikzpicture}\rule{0pt}{4mm}
	& $0.2681079$ & $0.2648395$ & $0.2632357$ & $0.2627046$ \\
 $T_{0,\,1/10}$ & \begin{tikzpicture}[scale=0.36]\draw (0,0)--(4,0)--(0,0.4)--cycle;\end{tikzpicture}\rule{0pt}{4mm}
	& $0.2674398$ & $0.2635352$ & $0.2619417$ & $0.2614141$ \\
 $T_{1/4,\,1}$ & \begin{tikzpicture}[scale=0.36]\draw (0,0)--(1,0)--(0.25,1)--cycle;\end{tikzpicture}\rule{0pt}{4mm}
	& $0.3030136$ & $0.2911751$ & $0.2892957$ & $0.2886729$ \\
 $T_{1/4,\,1/2}$ & \begin{tikzpicture}[scale=0.36]\draw (0,0)--(1.6,0)--(0.4,0.8)--cycle;\end{tikzpicture}\rule{0pt}{4mm}
	& $0.2459842$ & $0.2436089$ & $0.2420929$ & $0.2415907$ \\
 $T_{1/4,\,1/5}$ & \begin{tikzpicture}[scale=0.36]\draw (0,0)--(3,0)--(0.75,0.6)--cycle;\end{tikzpicture}\rule{0pt}{4mm}
	& $0.2434617$ & $0.2329771$ & $0.2312898$ & $0.2307191$ \\
 $T_{1/4,\,1/10}$ & \begin{tikzpicture}[scale=0.36]\draw (0,0)--(4,0)--(1,0.4)--cycle;\end{tikzpicture}\rule{0pt}{4mm}
	& $0.2420732$ & $0.2310302$ & $0.2292243$ & $0.2285776$ \\
 $T_{1/2,\sqrt{3}/2}$ & \begin{tikzpicture}[scale=0.36]\draw (0,0)--(1.154,0)--(0.577,1)--cycle;\end{tikzpicture}\rule{0pt}{4mm}\;{\tiny \shortstack[l]{Equilateral \\[-2pt] triangle}}
	& $0.2683032$ & $0.2408093$ & $0.2392497$ & $0.2387324$ \\
 $T_{1/2,\,1/2}$ & \begin{tikzpicture}[scale=0.36]\draw (0,0)--(1.6,0)--(0.8,0.8)--cycle;\end{tikzpicture}\rule{0pt}{4mm}\;{\tiny \shortstack[l]{Isosceles right \\[-2pt] triangle}}\hspace*{-10pt}
	& $0.2362278$ & $0.2271431$ & $0.2255926$ & $0.2250791$ \\
 $T_{1/2,\,1/5}$ & \begin{tikzpicture}[scale=0.36]\draw (0,0)--(3,0)--(1.5,0.6)--cycle;\end{tikzpicture}\rule{0pt}{4mm}
	& $0.2350309$ & $0.2150884$ & $0.2129925$ & $0.2122504$ \\
 $T_{1/2,\,1/10}$ & \begin{tikzpicture}[scale=0.36]\draw (0,0)--(4,0)--(2,0.4)--cycle;\end{tikzpicture}\rule[-2mm]{0pt}{6mm}
	& $0.2327945$ & $0.2124694$ & $0.2100806$ & $0.2091369$ \\
\hline
\end{tabular}
\caption{Numerical results for $C_1(T)$}
\label{Table:C_1}
\end{table}

\begin{table}[!t]
	\centering
\begin{tabular}{p{40pt}p{60pt}p{50pt}p{50pt}p{50pt}p{50pt}}
\hline
 $T$ & Shape & $K_2(T)$ & $\overline{C}_2^{(10)}(T)$ & $\overline{C}_2^{(20)}(T)$ & $\widetilde{C}_2(T)$ \\
\hline
 $T_{0,\,1}$ & \begin{tikzpicture}[scale=0.36]\draw (0,0)--(1,0)--(0,1)--cycle;\end{tikzpicture}\rule{0pt}{5mm}\;{\tiny \shortstack[l]{Isosceles right \\[-2pt] triangle}}
	& $0.2417624$ & $0.2396038$ & $0.2381772$ & $0.2377024$ \\
 $T_{0,\,1/2}$ & \begin{tikzpicture}[scale=0.36]\draw (0,0)--(1.6,0)--(0,0.8)--cycle;\end{tikzpicture}\rule{0pt}{4mm}
	& $0.2001157$ & $0.1998408$ & $0.1985657$ & $0.1981417$ \\
 $T_{0,\,1/5}$ & \begin{tikzpicture}[scale=0.36]\draw (0,0)--(3,0)--(0,0.6)--cycle;\end{tikzpicture}\rule{0pt}{4mm}
	& $0.1931750$ & $0.1916920$ & $0.1904436$ & $0.1900287$ \\
 $T_{0,\,1/10}$ & \begin{tikzpicture}[scale=0.36]\draw (0,0)--(4,0)--(0,0.4)--cycle;\end{tikzpicture}\rule{0pt}{4mm}
	& $0.1926084$ & $0.1906411$ & $0.1893971$ & $0.1889838$ \\
 $T_{1/4,\,1}$ & \begin{tikzpicture}[scale=0.36]\draw (0,0)--(1,0)--(0.25,1)--cycle;\end{tikzpicture}\rule{0pt}{4mm}
	& $0.2197865$ & $0.2177021$ & $0.2164123$ & $0.2159829$ \\
 $T_{1/4,\,1/2}$ & \begin{tikzpicture}[scale=0.36]\draw (0,0)--(1.6,0)--(0.4,0.8)--cycle;\end{tikzpicture}\rule{0pt}{4mm}
	& $0.1779313$ & $0.1782024$ & $0.1770818$ & $0.1767091$ \\
 $T_{1/4,\,1/5}$ & \begin{tikzpicture}[scale=0.36]\draw (0,0)--(3,0)--(0.75,0.6)--cycle;\end{tikzpicture}\rule{0pt}{4mm}
	& $0.1753979$ & $0.1720157$ & $0.1709010$ & $0.1705285$ \\
 $T_{1/4,\,1/10}$ & \begin{tikzpicture}[scale=0.36]\draw (0,0)--(4,0)--(1,0.4)--cycle;\end{tikzpicture}\rule{0pt}{4mm}
	& $0.1743206$ & $0.1711857$ & $0.1700506$ & $0.1696650$ \\
 $T_{1/2,\sqrt{3}/2}$ & \begin{tikzpicture}[scale=0.36]\draw (0,0)--(1.154,0)--(0.577,1)--cycle;\end{tikzpicture}\rule{0pt}{4mm}\;{\tiny \shortstack[l]{Equilateral \\[-2pt] triangle}}
	& $0.1948780$ & $0.1906371$ & $0.1895418$ & $0.1891769$ \\
 $T_{1/2,\,1/2}$ & \begin{tikzpicture}[scale=0.36]\draw (0,0)--(1.6,0)--(0.8,0.8)--cycle;\end{tikzpicture}\rule{0pt}{4mm}\;{\tiny \shortstack[l]{Isosceles right \\[-2pt] triangle}}\hspace*{-10pt}
	& $0.1709519$ & $0.1694255$ & $0.1684167$ & $0.1680810$ \\
 $T_{1/2,\,1/5}$ & \begin{tikzpicture}[scale=0.36]\draw (0,0)--(3,0)--(1.5,0.6)--cycle;\end{tikzpicture}\rule{0pt}{4mm}
	& $0.1693066$ & $0.1645692$ & $0.1635627$ & $0.1632275$ \\
 $T_{1/2,\,1/10}$ & \begin{tikzpicture}[scale=0.36]\draw (0,0)--(4,0)--(2,0.4)--cycle;\end{tikzpicture}\rule[-2mm]{0pt}{6mm}
	& $0.1676363$ & $0.1638829$ & $0.1628606$ & $0.1625185$ \\
\hline
\end{tabular}
\caption{Numerical results for $C_2(T)$}
\label{Table:C_2}
\end{table}

\begin{table}[!t]
	\centering
\begin{tabular}{p{40pt}p{60pt}p{50pt}p{50pt}p{50pt}p{50pt}}
\hline
 $T$ & Shape & $K_3(T)$ & $\overline{C}_3^{(10)}(T)$ & $\overline{C}_3^{(20)}(T)$ & $\widetilde{C}_3(T)$ \\
\hline
 $T_{0,\,1}$ & \begin{tikzpicture}[scale=0.36]\draw (0,0)--(1,0)--(0,1)--cycle;\end{tikzpicture}\rule{0pt}{5mm}\;{\tiny \shortstack[l]{Isosceles right \\[-2pt] triangle}}
	& $0.1702673$ & $0.1684445$ & $0.1675538$ & $0.1672535$ \\
 $T_{0,\,1/2}$ & \begin{tikzpicture}[scale=0.36]\draw (0,0)--(1.6,0)--(0,0.8)--cycle;\end{tikzpicture}\rule{0pt}{4mm}
	& $0.1184266$ & $0.1180689$ & $0.1175454$ & $0.1173697$ \\
 $T_{0,\,1/5}$ & \begin{tikzpicture}[scale=0.36]\draw (0,0)--(3,0)--(0,0.6)--cycle;\end{tikzpicture}\rule{0pt}{4mm}
	& $0.1107396$ & $0.1096648$ & $0.1092457$ & $0.1091055$ \\
 $T_{0,\,1/10}$ & \begin{tikzpicture}[scale=0.36]\draw (0,0)--(4,0)--(0,0.4)--cycle;\end{tikzpicture}\rule{0pt}{4mm}
	& $0.1099925$ & $0.1087203$ & $0.1083185$ & $0.1081842$ \\
 $T_{1/4,\,1}$ & \begin{tikzpicture}[scale=0.36]\draw (0,0)--(1,0)--(0.25,1)--cycle;\end{tikzpicture}\rule{0pt}{4mm}
	& $0.1487598$ & $0.1464850$ & $0.1458511$ & $0.1456392$ \\
 $T_{1/4,\,1/2}$ & \begin{tikzpicture}[scale=0.36]\draw (0,0)--(1.6,0)--(0.4,0.8)--cycle;\end{tikzpicture}\rule{0pt}{4mm}
	& $0.0950295$ & $0.0946780$ & $0.0942615$ & $0.0941222$ \\
 $T_{1/4,\,1/5}$ & \begin{tikzpicture}[scale=0.36]\draw (0,0)--(3,0)--(0.75,0.6)--cycle;\end{tikzpicture}\rule{0pt}{4mm}
	& $0.0855112$ & $0.0849795$ & $0.0844706$ & $0.0842822$ \\
 $T_{1/4,\,1/10}$ & \begin{tikzpicture}[scale=0.36]\draw (0,0)--(4,0)--(1,0.4)--cycle;\end{tikzpicture}\rule{0pt}{4mm}
	& $0.0843544$ & $0.0837110$ & $0.0831604$ & $0.0829349$ \\
 $T_{1/2,\sqrt{3}/2}$ & \begin{tikzpicture}[scale=0.36]\draw (0,0)--(1.154,0)--(0.577,1)--cycle;\end{tikzpicture}\rule{0pt}{4mm}\;{\tiny \shortstack[l]{Equilateral \\[-2pt] triangle}}
	& $0.1201798$ & $0.1177043$ & $0.1172419$ & $0.1170871$ \\
 $T_{1/2,\,1/2}$ & \begin{tikzpicture}[scale=0.36]\draw (0,0)--(1.6,0)--(0.8,0.8)--cycle;\end{tikzpicture}\rule{0pt}{4mm}\;{\tiny \shortstack[l]{Isosceles right \\[-2pt] triangle}}\hspace*{-10pt}
	& $0.0851337$ & $0.0842223$ & $0.0837769$ & $0.0836268$ \\
 $T_{1/2,\,1/5}$ & \begin{tikzpicture}[scale=0.36]\draw (0,0)--(3,0)--(1.5,0.6)--cycle;\end{tikzpicture}\rule{0pt}{4mm}
	& $0.0732578$ & $0.0727067$ & $0.0719785$ & $0.0716838$ \\
 $T_{1/2,\,1/10}$ & \begin{tikzpicture}[scale=0.36]\draw (0,0)--(4,0)--(2,0.4)--cycle;\end{tikzpicture}\rule[-2mm]{0pt}{6mm}
	& $0.0715701$ & $0.0710650$ & $0.0702397$ & $0.0698653$ \\
\hline
\end{tabular}
\caption{Numerical results for $C_3(T)$}
\label{Table:C_3}
\end{table}

\begin{table}[t!]
	\centering
\begin{tabular}{p{40pt}p{60pt}p{50pt}p{50pt}p{50pt}p{50pt}p{50pt}}
\hline
 $T$ & Shape & $K_4(T)$ & $\overline{C}_4^{(10)}(T)$ & $\overline{C}_4^{(20)}(T)$ & $\widetilde{C}_4(T)$ \\
\hline
 $T_{0,\,1}$ & \begin{tikzpicture}[scale=0.36]\draw (0,0)--(1,0)--(0,1)--cycle;\end{tikzpicture}\rule{0pt}{5mm}\;{\tiny \shortstack[l]{Isosceles right \\[-2pt] triangle}}
	& $0.4915960$ & $0.4894003$ & $0.4888905$ & $0.4887224$ \\
 $T_{0,\,1/2}$ & \begin{tikzpicture}[scale=0.36]\draw (0,0)--(1.6,0)--(0,0.8)--cycle;\end{tikzpicture}\rule{0pt}{4mm}
	& $0.3958114$ & $0.3813624$ & $0.3809003$ & $0.3807481$ \\
 $T_{0,\,1/5}$ & \begin{tikzpicture}[scale=0.36]\draw (0,0)--(3,0)--(0,0.6)--cycle;\end{tikzpicture}\rule{0pt}{4mm}
	& $0.3697886$ & $0.3372741$ & $0.3367581$ & $0.3365882$ \\
 $T_{0,\,1/10}$ & \begin{tikzpicture}[scale=0.36]\draw (0,0)--(4,0)--(0,0.4)--cycle;\end{tikzpicture}\rule{0pt}{4mm}
	& $0.3662944$ & $0.3286113$ & $0.3280651$ & $0.3278853$ \\
 $T_{1/4,\,1}$ & \begin{tikzpicture}[scale=0.36]\draw (0,0)--(1,0)--(0.25,1)--cycle;\end{tikzpicture}\rule{0pt}{4mm}
	& $0.4063827$ & $0.3969773$ & $0.3964682$ & $0.3963006$ \\
 $T_{1/4,\,1/2}$ & \begin{tikzpicture}[scale=0.36]\draw (0,0)--(1.6,0)--(0.4,0.8)--cycle;\end{tikzpicture}\rule{0pt}{4mm}
	& $0.3393940$ & $0.3262145$ & $0.3257825$ & $0.3256403$ \\
 $T_{1/4,\,1/5}$ & \begin{tikzpicture}[scale=0.36]\draw (0,0)--(3,0)--(0.75,0.6)--cycle;\end{tikzpicture}\rule{0pt}{4mm}
	& $0.5516444$ & $0.5391173$ & $0.5389130$ & $0.5388449$ \\
 $T_{1/4,\,1/10}$ & \begin{tikzpicture}[scale=0.36]\draw (0,0)--(4,0)--(1,0.4)--cycle;\end{tikzpicture}\rule{0pt}{4mm}
	& $0.9871945$ & $0.9749195$ & $0.9748213$ & $0.9747887$ \\
 $T_{1/2,\sqrt{3}/2}$ & \begin{tikzpicture}[scale=0.36]\draw (0,0)--(1.154,0)--(0.577,1)--cycle;\end{tikzpicture}\rule{0pt}{4mm}\;{\tiny \shortstack[l]{Equilateral \\[-2pt] triangle}}
	& $0.3476109$ & $0.3189929$ & $0.3185476$ & $0.3184012$ \\
 $T_{1/2,\,1/2}$ & \begin{tikzpicture}[scale=0.36]\draw (0,0)--(1.6,0)--(0.8,0.8)--cycle;\end{tikzpicture}\rule{0pt}{4mm}\;{\tiny \shortstack[l]{Isosceles right \\[-2pt] triangle}}\hspace*{-10pt}
	& $0.3476109$ & $0.3460583$ & $0.3456978$ & $0.3455789$ \\
 $T_{1/2,\,1/5}$ & \begin{tikzpicture}[scale=0.36]\draw (0,0)--(3,0)--(1.5,0.6)--cycle;\end{tikzpicture}\rule{0pt}{4mm}
	& $0.6761399$ & $0.6631990$ & $0.6630530$ & $0.6630039$ \\
 $T_{1/2,\,1/10}$ & \begin{tikzpicture}[scale=0.36]\draw (0,0)--(4,0)--(2,0.4)--cycle;\end{tikzpicture}\rule[-2mm]{0pt}{6mm}
	& $1.2786662$ & $1.2689186$ & $1.2688509$ & $1.2688285$ \\
\hline
\end{tabular}
\caption{Numerical results for $C_4(T)$}
\label{Table:C_4}
\end{table}

\begin{figure}[t!]
\begin{minipage}[b]{0.49\hsize}
	\centering
	\includegraphics[width=0.8\hsize]{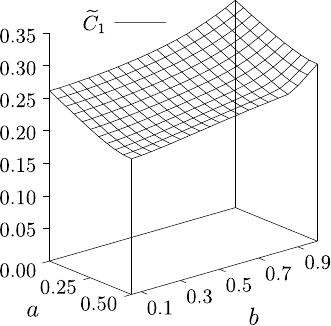}
	\caption{Graph of $\widetilde{C}_1$.}
	\label{FigC1}
\end{minipage}
\begin{minipage}[b]{0.49\hsize}
	\centering
	\includegraphics[width=0.8\hsize]{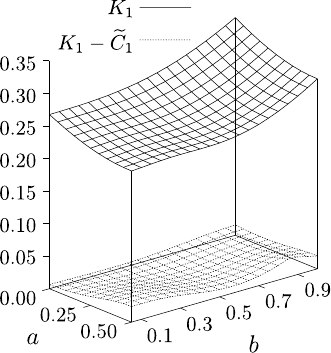}
	\caption{Graph of $K_1$ and $K_1-\widetilde{C}_1$.}
	\label{FigK1}
\end{minipage}
\bigskip
\medskip
\end{figure}

\begin{figure}[t!]
\begin{minipage}[b]{0.49\hsize}	
	\centering
	\includegraphics[width=0.8\hsize]{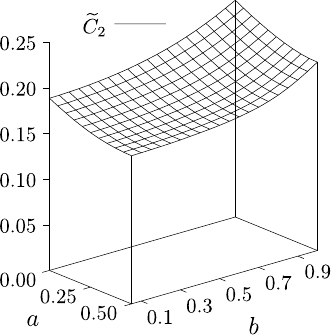}
	\caption{Graph of $\widetilde{C}_2$.}
	\label{FigC2}
\end{minipage}
\begin{minipage}[b]{0.49\hsize}
	\centering
	\includegraphics[width=0.8\hsize]{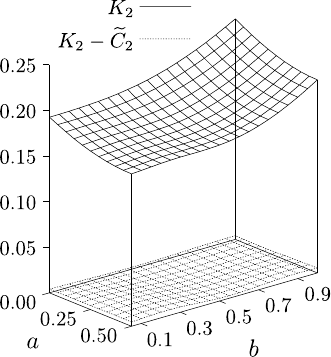}
	\caption{Graph of $K_2$ and $K_2-\widetilde{C}_2$.}
	\label{FigK2}
\end{minipage}
\bigskip
\medskip
\end{figure}

\begin{figure}[t!]
\begin{minipage}[b]{0.49\hsize}
	\centering
	\includegraphics[width=0.8\hsize]{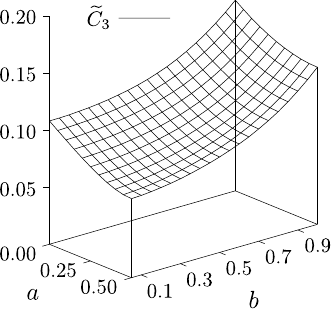}
	\caption{Graph of $\widetilde{C}_3$.}
	\label{FigC3}
\end{minipage}
\begin{minipage}[b]{0.49\hsize}
	\centering
	\includegraphics[width=0.8\hsize]{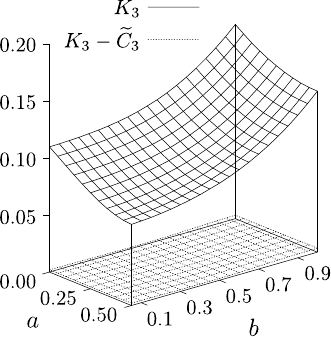}
	\caption{Graph of $K_3$ and $K_3-\widetilde{C}_3$.}
	\label{FigK3}
\end{minipage}
\bigskip
\medskip
\end{figure}

\begin{figure}[t!]
\begin{minipage}[b]{0.49\hsize}
	\centering
	\includegraphics[width=0.8\hsize]{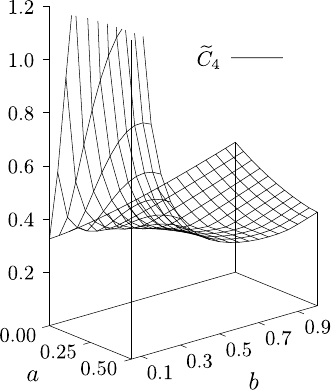}
	\caption{Graph of $\widetilde{C}_4$.}
	\label{FigC4}
\end{minipage}
\begin{minipage}[b]{0.49\hsize}
	\centering
	\includegraphics[width=0.8\hsize]{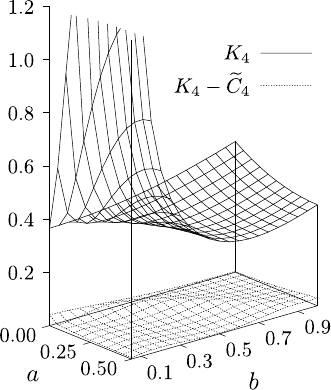}
	\caption{Graph of $K_4$ and $K_4-\widetilde{C}_4$.}
	\label{FigK4}
\end{minipage}
\bigskip
\medskip
\end{figure}

\newpage

\quad \\

\newpage

\quad \\

\newpage

\quad \\

\newpage

\section{Circumradius and $C_4(T)$} \label{Circumradius_Condition}
Theorem~\ref{formula} claims that the following estimate holds
for the interpolation constant $C_4(T)$.
$$
	C_4(T)<K_4(T)=\sqrt{\frac{A^2B^2C^2}{16S^2}-\frac{A^2+B^2+C^2}{30}-\frac{S^2}{5}\left(\frac{1}{A^2}+\frac{1}{B^2}+\frac{1}{C^2}\right)},
$$
where $A,B,C$ are the edge lengths of triangle $T$ and $S$ is the area of $T$.
Since the circumradius of $T$ is given by
$$
	R(T)=\frac{ABC}{4S},
$$
we have the estimation
$$
	C_4(T)<R(T).
$$
This fact has many interesting implications for error analysis in the finite element method.
 See~\cite{KobayashiTsuchiya,KobayashiTsuchiya2} for details.

\section{Conclusion} \label{Conclusion}
Interpolation error constants are essential for analyzing interpolation error, particularly
in the context of error analysis in the finite element method.
We derived remarkable formulas that give sharp upper bounds
for the interpolation error constants over the given triangular elements.
These formulas were obtained via a numerical verification method and asymptotic analysis.
This study is significant in that it presents a series of procedures for obtaining mathematically rigorous formulas
through a combination of a numerical verification method and mathematical analysis.
Several issues remain to be addressed.
In this study, the upper bounds of $C_j(T)$ were obtained using $C_j^{(n)}(T)$;
however, we could not prove the convergence of $C_j^{(n)}(T)$ themselves. We hope to provide this proof in the future.
Obtaining similar sharp formulas for tetrahedra is another topic for future work.
However, unlike a triangle, a tetrahedron cannot be divided into tetrahedra that are similar to its original shape,
so the method used in this study cannot be applied.

\section*{Acknowledgements}
This work was supported by a Japan Society for the Promotion of Science Grant-in-Aid for Scientific Research (B)
(grant number 24K00538).
The author would like to express his deepest gratitude to Prof.~Takuya Tuchiya of Osaka University,
without whose encouragement and careful checking of the manuscript, this paper would not have been completed.

\appendix

\section{Lemmas} \label{Appendix1}
In this appendix, we provide proofs of the inequalities that are too lengthy to include in the main text.
Note once again that $d_k=1-ka$ and particularly that $d_1\ge0$ when $0\le a\le1$ and $d_2\ge0$
when $0\le a\le\dfrac{1}{2}$.

For subsequent lemmas, except for Lemma~\ref{positive-definiteness}, we provide
codes developed in MATLAB with Symbolic Math Toolbox for checking the correctness of the formula transformations.
See Code~\ref{LemmaA-1} to Code~\ref{LemmaA-11} in Appendix~\ref{Appendix2}.

Hereafter, let
$$
	\varphi(a,b)=\frac{b^2}{a^2+b^2},\qquad
	\psi(a,b)=\frac{b^2(1+2a)}{1+4b^2}\cdot\varphi(a,b),\qquad
	\omega(a,b)=\frac{a(1-a)}{1-a+a^2+b^2}.
$$

We first prove a number of lemmas.
\begin{lemma} \label{appendix-phi}
For $0\le a,\; 0<b$, it holds that
\begin{align*}
	0&\le\varphi(a,b)\le 1, \\
	-\frac{2}{3b}&\le\varphi_a(a,b)\le 0, & -\frac{2}{b^2}&\le\varphi_{aa}(a,b), \\
	0&\le\varphi_b(a,b)\le\frac{1}{2b}, &	-\frac{7}{9b^2}&\le\varphi_{bb}(a,b).
\end{align*}
\end{lemma}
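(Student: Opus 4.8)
The plan is to compute the four relevant partial derivatives in closed form and then exploit the scale invariance of $\varphi$. Direct differentiation gives
$$
\varphi_a=\frac{-2ab^2}{(a^2+b^2)^2},\qquad
\varphi_b=\frac{2a^2b}{(a^2+b^2)^2},
$$
$$
\varphi_{aa}=\frac{2b^2(3a^2-b^2)}{(a^2+b^2)^3},\qquad
\varphi_{bb}=\frac{2a^2(a^2-3b^2)}{(a^2+b^2)^3}.
$$
From these the three sign statements are immediate: $0\le\varphi\le1$ because $0\le b^2\le a^2+b^2$, while $\varphi_a\le0$ and $\varphi_b\ge0$ because $a\ge0$ and $b>0$ force the displayed numerators to have the claimed signs.

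Next I would use that $\varphi$ is homogeneous of degree $0$, so that $b\varphi_a$, $b\varphi_b$, $b^2\varphi_{aa}$, $b^2\varphi_{bb}$ each depend only on the single ratio $t=a/b\ge0$. Writing $a=tb$ and cancelling powers of $b$ yields
$$
b\,\varphi_a=\frac{-2t}{(t^2+1)^2},\qquad
b\,\varphi_b=\frac{2t^2}{(t^2+1)^2},
$$
$$
b^2\varphi_{aa}=\frac{2(3t^2-1)}{(t^2+1)^3},\qquad
b^2\varphi_{bb}=\frac{2t^2(t^2-3)}{(t^2+1)^3}.
$$
The upper bound $\varphi_b\le\frac1{2b}$ is then exactly $\frac{2t^2}{(t^2+1)^2}\le\frac12$, i.e.\ $(t^2-1)^2\ge0$, which is the AM--GM inequality $a^2+b^2\ge2ab$. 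The lower bound $\varphi_{aa}\ge-\frac2{b^2}$ reduces to $(t^2+1)^3+3t^2-1\ge0$, and after expansion this is $t^2(t^4+3t^2+6)\ge0$, which holds termwise; note that the minimum value $-2/b^2$ is attained at $t=0$, that is, at $a=0$.

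The two remaining bounds, $\varphi_a\ge-\frac2{3b}$ and $\varphi_{bb}\ge-\frac7{9b^2}$, are the only nonelementary parts and are where the real work lies. After clearing the positive denominator $(t^2+1)^3$, each becomes the nonnegativity of a single-variable polynomial: $\varphi_a\ge-\frac2{3b}$ is equivalent to $t^4+2t^2-3t+1\ge0$, and $\varphi_{bb}\ge-\frac7{9b^2}$ is equivalent, with $s=t^2$, to $7s^3+39s^2-33s+7\ge0$. Neither bound is sharp: the true extrema occur at $t=1/\sqrt3$ and at $s=4-\sqrt{13}$ and involve surds, so I have deliberately chosen the slightly weaker rational constants $\frac23$ and $\frac79$ to keep the statement clean, at the price of verifying these two polynomial inequalities. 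I would establish each by the standard argument used throughout Appendix~1: locate the critical points of the single-variable function, check positivity there and at the endpoints $t=0$ and $t\to\infty$, and conclude nonnegativity, with the accompanying symbolic computation certifying the positivity rigorously. The anticipated obstacle is precisely confirming that the rational constants $\frac23$ and $\frac79$ are large enough to dominate the irrational true extrema, which is exactly what these polynomial positivity checks accomplish.
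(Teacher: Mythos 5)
Your reductions are all correct, and your route differs genuinely from the paper's in how positivity is certified. The paper computes the same four derivatives but stays in the two variables $(a,b)$ and writes explicit identities, e.g.
\[
\varphi_a(a,b)=-\frac{2}{3b}+\frac{2(a-b)^4+2ab(2a-b)^2}{3b(a^2+b^2)^2},
\]
together with an analogous identity expressing $\varphi_{bb}+\frac{7}{9b^2}$ as a sum of manifestly nonnegative terms over $36b^2(a^2+b^2)^3$; these identities are checked by the symbolic code \texttt{Lemma14\_1.m}, so no calculus or case analysis is needed. You instead dehomogenize (using that $\varphi$ has degree $0$), reducing each bound to a one-variable polynomial inequality; your equivalences $\varphi_b\le\frac{1}{2b}\Leftrightarrow(t^2-1)^2\ge0$, $\varphi_{aa}\ge-\frac{2}{b^2}\Leftrightarrow t^2(t^4+3t^2+6)\ge0$, $\varphi_a\ge-\frac{2}{3b}\Leftrightarrow t^4+2t^2-3t+1\ge0$, and $\varphi_{bb}\ge-\frac{7}{9b^2}\Leftrightarrow 7s^3+39s^2-33s+7\ge0$ (with $s=t^2$) are all algebraically right, as are your remarks about where the true irrational extrema sit. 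The only unfinished business is the positivity of the last two polynomials, which you defer to a critical-point analysis; this does close, and in fact more cheaply than you anticipate: the quadratic part $39s^2-33s+7$ has discriminant $33^2-4\cdot39\cdot7=-3<0$, so it is positive and adding $7s^3\ge0$ settles the cubic, while the quartic admits the explicit decomposition $t^4+2t^2-3t+1=\bigl(t^2-\tfrac{1}{3}\bigr)^2+\tfrac{8}{3}\bigl(t-\tfrac{9}{16}\bigr)^2+\tfrac{13}{288}$. What your approach buys is transparency (one variable, and the sharpness structure is visible); what the paper's approach buys is a purely mechanical certificate in the original variables, requiring neither calculus nor case distinctions, which matches the computer-verified style of the rest of Appendix~1.
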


\begin{proof}\quad
$0\le\varphi(a,b)\le1$ is obvious.
\begin{align*}
	\varphi_a(a,b)\; &= \frac{-2ab^2}{(a^2+b^2)^2}
		\le 0, \\
	\varphi_a(a,b)\; &= \frac{-2ab^2}{(a^2+b^2)^2}= -\frac{2}{3b}+\frac{2(a-b)^4+2ab(2a-b)^2}{3b(a^2+b^2)^2}
		\ge -\frac{2}{3b}, \\
	\varphi_{aa}(a,b) &= \frac{2b^2(3a^2-b^2)}{(a^2 + b^2)^3}
		\ge \frac{-2b^4}{b^6}=-\frac{2}{b^2}, \\
	\varphi_b(a,b)\; &= \frac{2a^2b}{(a^2+b^2)^2}
		\le \frac{2a^2b}{4a^2b^2}=\frac{1}{2b}, \\
	\varphi_b(a,b)\; &= \frac{2a^2b}{(a^2+b^2)^2}
		\ge 0, \\
	\varphi_{bb}(a,b) &= \frac{2a^2(a^2-3b^2)}{(a^2 + b^2)^3} \\
		&= -\frac{7}{9b^2}+\frac{3a^2b^2(a^2+b^2)+(7a^2+3b^2)(2a^2-b^2)^2+b^2(13a^2-5b^2)^2}{36b^2(a^2 + b^2)^3} \\
		&\ge -\frac{7}{9b^2}.
\end{align*}
\end{proof}

\begin{lemma} \label{appendix-psi}
For $0\le a,\; 0<b$, it holds that
\begin{align*}
	0&\le\psi(a,b)\le\frac{b^2(1+2a)}{1+4b^2}, \\
	-\frac{2b(1+2a)}{3(1+4b^2)}&\le\psi_a(a,b)\le\frac{2b^2}{1+4b^2},
		& -\frac{2(3+6a+4b)}{3(1+4b^2)}&\le\psi_{aa}(a,b), \\
	0&\le\psi_b(a,b)\le\frac{(1+2a)(2+16b-9b^2)}{10(1+4b^2)},
		& -\frac{21(1+2a)}{11(1+4b^2)}&\le\psi_{bb}(a,b).
\end{align*}
\end{lemma}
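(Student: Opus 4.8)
The plan is to exploit the product structure $\psi = g\,\varphi$, where $g(a,b)=\dfrac{b^2(1+2a)}{1+4b^2}$ is an elementary rational function whose derivatives are available in closed form, while $\varphi$ is controlled entirely by Lemma~\ref{appendix-phi}. First I record the auxiliary facts $g\ge 0$, $g_a=\dfrac{2b^2}{1+4b^2}\ge 0$, $g_{aa}=0$, $g_b=\dfrac{2b(1+2a)}{(1+4b^2)^2}\ge 0$, and $g_{bb}=\dfrac{2(1-12b^2)(1+2a)}{(1+4b^2)^3}$ (these elementary computations are exactly the kind checked by the symbolic code in Appendix~2). The zeroth-order claim $0\le\psi\le g$ is then immediate from $0\le\varphi\le 1$.

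For the $a$-derivatives I differentiate the product, $\psi_a=g_a\varphi+g\varphi_a$ and, using $g_{aa}=0$, $\psi_{aa}=2g_a\varphi_a+g\varphi_{aa}$. Since $g$ and $g_a$ are nonnegative, the one-sided bounds $0\le\varphi\le1$, $-\tfrac{2}{3b}\le\varphi_a\le0$, $-\tfrac{2}{b^2}\le\varphi_{aa}$ of Lemma~\ref{appendix-phi} may be inserted termwise and the three claimed inequalities telescope directly. Concretely, $\psi_a\le g_a\cdot1+g\cdot0=\tfrac{2b^2}{1+4b^2}$; $\psi_a\ge 0+g\cdot(-\tfrac{2}{3b})=-\tfrac{2b(1+2a)}{3(1+4b^2)}$; and $\psi_{aa}\ge 2g_a(-\tfrac{2}{3b})+g(-\tfrac{2}{b^2})=-\tfrac{2(3+6a+4b)}{3(1+4b^2)}$, each reproducing the target after collecting fractions, with no polynomial work required.

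For the $b$-derivatives I use $\psi_b=g_b\varphi+g\varphi_b$ and $\psi_{bb}=g_{bb}\varphi+2g_b\varphi_b+g\varphi_{bb}$. The lower bound $\psi_b\ge0$ is immediate as all four factors are nonnegative. For the remaining three bounds I substitute the $\varphi$-estimates and clear denominators; after cancelling the common factor $(1+2a)$ each claim collapses to the nonnegativity of a one-variable polynomial in $b$, which I verify by an explicit sum-of-squares-type rearrangement in the style of Lemma~\ref{appendix-phi}, on the range $0<b\le 1$ relevant to the subsequent application.

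The main obstacle is the second $b$-derivative, where the sign of $g_{bb}$ reverses at $b=1/\sqrt{12}$, so a single termwise estimate will not do. I would split on that sign: where $g_{bb}\ge0$ the crude $g_{bb}\varphi\ge0$ combined with $2g_b\varphi_b\ge0$ and $g\varphi_{bb}\ge-\tfrac{7(1+2a)}{9(1+4b^2)}$ already beats the target since $\tfrac{7}{9}<\tfrac{21}{11}$; where $g_{bb}<0$ I instead use $g_{bb}\varphi\ge g_{bb}$ (from $\varphi\le1$), whereupon the estimate reduces to checking $\tfrac{2(1-12b^2)}{(1+4b^2)^2}\ge-\tfrac{112}{99}$, and the left-hand side attains its minimum $-\tfrac{9}{8}$ at $b^2=\tfrac{5}{12}$, which suffices. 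The upper bound on $\psi_b$ likewise reduces, via $\varphi\le1$ and $\varphi_b\le\tfrac{1}{2b}$, to a quartic inequality in $b$; this is where the genuine estimation lies, the $a$-dependence having already been stripped out by the factorization $\psi=g\varphi$.
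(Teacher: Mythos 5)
Your proof is correct and its skeleton is the paper's own: the paper also factors $\psi=g\varphi$ with $g=b^2(1+2a)/(1+4b^2)$, expands by the product rule, and inserts the one-sided bounds of Lemma~\ref{appendix-phi} termwise, so your handling of $0\le\psi\le g$, of $\psi_a$, $\psi_{aa}$, and of $\psi_b\ge0$ coincides with it line by line. Two points of comparison are worth recording. First, for the upper bound on $\psi_b$, the quartic you defer is $2-9b-b^2+44b^3-36b^4$, which the paper certifies as $\tfrac{1}{5}(1-b)\bigl(11b^3+10(1-3b)^2+b(5-13b)^2\bigr)$; your restriction to $0<b\le1$ here is not a convenience but a necessity, since the factor $1-b$ is precisely where the paper silently uses $b\le1$, and this inequality of the lemma genuinely fails for large $b$ (at $a=0$, $b=2$ one has $\psi_b=4/289>0$ while the claimed bound equals $-1/85<0$); on this point you are more explicit than the source, whose hypothesis ``$0<b$'' is wider than what its proof, and all later uses (which have $b\le1$), actually support. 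Second, for $\psi_{bb}$ you split at $b^2=1/12$ and minimize $2(1-12b^2)/(1+4b^2)^2$ by calculus (minimum $-9/8$ at $b^2=5/12$); the paper avoids the case split by subtracting the square $(5/4-3b^2)^2$ inside the numerator, which collapses the $g_{bb}\varphi$ term to $-\tfrac{9}{8(1+4b^2)}\varphi$ and then invokes $\varphi\le1$ --- the same constant $-9/8$ obtained algebraically rather than by optimization, after which both routes conclude via $-\tfrac{9}{8}-\tfrac{7}{9}=-\tfrac{137}{72}\ge-\tfrac{21}{11}$; your check $-9/8\ge-112/99$ carries exactly the same margin $5/792$, so the two arguments are equivalent and equally tight.
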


\begin{proof}\quad
From Lemma~\ref{appendix-phi}, we can immediately obtain
$$
	0\le\psi(a,b)\le\frac{b^2(1+2a)}{1+4b^2}.
$$
Furthermore, we have
\begin{align*}
	\psi_a(a,b)\; &= \frac{2b^2}{1+4b^2}\cdot\varphi(a,b)+\frac{b^2(1+2a)}{1+4b^2}\cdot\varphi_a(a,b)
		\le \frac{2b^2}{1+4b^2}, \\
	\psi_a(a,b)\; &= \frac{2b^2}{1+4b^2}\cdot\varphi(a,b)+\frac{b^2(1+2a)}{1+4b^2}\cdot\varphi_a(a,b)
		\ge \frac{b^2(1+2a)}{1+4b^2}\cdot\frac{-2}{3b}
		= -\frac{2b(1+2a)}{3(1+4b^2)}, \\
	\psi_{aa}(a,b) &= \frac{4b^2}{1+4b^2}\cdot\varphi_a(a,b)+\frac{b^2(1+2a)}{1+4b^2}\cdot\varphi_{aa}(a,b)\\
		&\ge \frac{4b^2}{1+4b^2}\cdot\frac{-2}{3b}+\frac{b^2(1+2a)}{1+4b^2}\cdot\frac{-2}{b^2}
		=-\frac{2(3+6a+4b)}{3(1+4b^2)}, \\
	\psi_b(a,b)\;&=\frac{2b(1+2a)}{(1+4b^2)^2}\cdot\varphi(a,b)+\frac{b^2(1+2a)}{1+4b^2}\cdot\varphi_b(a,b)\le \frac{2b(1+2a)}{(1+4b^2)^2}+\frac{b^2(1+2a)}{1+4b^2}\cdot\frac{1}{2b} \\
		&= \frac{(1+2a)(2+16b-9b^2)}{10(1+4b^2)} \\
			&\qquad\qquad - \frac{(1+2a)(1-b)}{50(1+4b^2)^2}\Big(11b^3+10(1-3b)^2+b(5-13b)^2\Big) \\
		&\le \frac{(1+2a)(2+16b-9b^2)}{10(1+4b^2)}, \\
	\psi_b(a,b)\;&=\frac{2b(1+2a)}{(1+4b^2)^2}\cdot\varphi(a,b)+\frac{b^2(1+2a)}{1+4b^2}\cdot\varphi_b(a,b)
		\ge 0, \\
	\psi_{bb}(a,b)&=\frac{2(1+2a)(1-12b^2)}{(1+4b^2)^3}\cdot\varphi(a,b)
			+\frac{4b(1+2a)}{(1+4b^2)^2}\cdot\varphi_b(a,b) \\
			&\qquad\qquad +\frac{b^2(1+2a)}{1+4b^2}\cdot\varphi_{bb}(a,b) \\
		&\ge\frac{2(1+2a)(1-12b^2-(5/4-3b^2)^2)}{(1+4b^2)^3}\cdot\varphi(a,b)
			-\frac{b^2(1+2a)}{1+4b^2}\cdot\frac{7}{9b^2} \\
		&=-\frac{9(1+2a)}{8(1+4b^2)}\cdot\varphi(a,b)-\frac{7(1+2a)}{9(1+4b^2)} \\
		&\ge -\frac{9(1+2a)}{8(1+4b^2)}-\frac{7(1+2a)}{9(1+4b^2)}
			=-\frac{137(1+2a)}{72(1+4b^2)} \\
		&\ge -\frac{21(1+2a)}{11(1+4b^2)}.
\end{align*}
\end{proof}

\begin{lemma} \label{appendix-omega}
For $0\le a\le\dfrac{1}{2},\; 0<b\le 1$, it holds that
\begin{align*}
	&\omega(a,b)\le\frac{4a(1-a)(7-4b^2)}{21}, \\
	\frac{1-2a}{2}\le\;&\omega_a(a,b)\le\frac{4(1-2a)(2-b^2+5a(1-a))}{7}, \\
	-\frac{2(11a(1-a)+2b^2)}{7b^2}\le\;&\omega_{aa}(a,b), \\
	-\frac{1}{6b}\le\;&\omega_b(a,b)\le-\frac{a(1-a)b}{2}, \\
	-\frac{24-64a(1-a)-17b^2}{44b^2}\le\;&\omega_{bb}(a,b).
\end{align*}
\end{lemma}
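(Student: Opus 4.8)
The plan is to follow the same template as in the proofs of Lemma~\ref{appendix-phi} and Lemma~\ref{appendix-psi}: compute each derivative of $\omega$ in closed form, simplify using the special algebra of the denominator, and then reduce every one of the five bounds to the positivity of a polynomial on the rectangle $0\le a\le\frac12,\;0<b\le1$. Write $N=a(1-a)$ for the numerator and $D=1-a+a^2+b^2$ for the denominator of $\omega$. The two identities that make the computation collapse are $D+N=1+b^2$ and $D-b^2=1-N$. Using the quotient rule together with these, I would first establish the closed forms
\begin{align*}
	\omega_a &= \frac{(1-2a)(1+b^2)}{D^2}, &
	\omega_{aa} &= \frac{-2(1+b^2)\bigl(3a(1-a)+b^2\bigr)}{D^3}, \\
	\omega_b &= \frac{-2a(1-a)b}{D^2}, &
	\omega_{bb} &= \frac{2a(1-a)\bigl(a(1-a)+3b^2-1\bigr)}{D^3},
\end{align*}
where for $\omega_{aa}$ I use $(1-2a)^2-D=-(3N+b^2)$ and for $\omega_{bb}$ I use $D-4b^2=1-N-3b^2$.

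Several bounds then follow from elementary estimates on $D$. On the domain $1-a+a^2=1-N\le1$, hence $D\le1+b^2\le2$, and $0\le N\le\frac14$. The bound $\omega_a\ge\frac{1-2a}{2}$ reduces, since $d_2=1-2a\ge0$, to $D^2\le2(1+b^2)$, which holds because $D\le1+b^2\le2$; the upper bound $\omega_b\le-\frac{a(1-a)b}{2}$ reduces to $D^2\le4$; and the lower bound $\omega_b\ge-\frac{1}{6b}$ reduces to $12b^2N\le D^2$. Viewing $D^2-12b^2N=(1-N+b^2)^2-12b^2N$ as a quadratic in $b^2$ with discriminant $48N(4N-1)\le0$ on $N\in[0,\tfrac14]$ shows it is nonnegative, settling that bound. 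Finally $\omega\le\frac{4a(1-a)(7-4b^2)}{21}$ reduces to $21\le4(7-4b^2)(1-N+b^2)$, whose two-sided difference is linear and decreasing in $N$ and, at the extreme $N=\frac14$, equals $16b^2(1-b^2)\ge0$.

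The three remaining estimates—the upper bound on $\omega_a$ and the lower bounds on $\omega_{aa}$ and $\omega_{bb}$—are genuine bivariate polynomial inequalities. After clearing the positive denominators $D^2$ or $D^3$, each becomes a statement that a polynomial in $(a,b)$ is nonnegative on the rectangle: the $\omega_a$ upper bound becomes $4(2-b^2+5N)D^2-7(1+b^2)\ge0$, the $\omega_{aa}$ bound becomes $(11N+2b^2)D^3-7b^2(1+b^2)(3N+b^2)\ge0$, and the $\omega_{bb}$ bound becomes $(24-64N-17b^2)D^3+88b^2N(N+3b^2-1)\ge0$. I expect these to be the main obstacle: although each holds with room to spare, verifying them by hand is laborious, and the intended route—matching the style of the other appendix lemmas—is to write each polynomial as a sum of products of the manifestly nonnegative quantities available on the domain (namely $a$, $1-a$, $d_2=1-2a$, $b^2$, and $1-b^2$) together with squares, a decomposition most reliably produced and checked with the symbolic-computation codes referenced for the companion lemmas. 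Once these three positivity statements are in hand, all five displayed inequalities follow at once by undoing the denominator clearing.
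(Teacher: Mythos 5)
Your closed forms for $\omega_a$, $\omega_{aa}$, $\omega_b$, $\omega_{bb}$ are all correct, and four of the lemma's seven one-sided bounds are genuinely proved by your arguments: the lower bound on $\omega_a$ and the upper bound on $\omega_b$ via $D\le 1+b^2\le 2$ (exactly the paper's argument); the bound $\omega_b\ge-1/(6b)$ via your discriminant argument in $t=b^2$ (the paper certifies the same polynomial inequality $D^2-12ad_1b^2\ge0$ through the identity $D^2-12ad_1b^2=(1-7ad_1+b^2)^2+12ad_1d_2^2$, so your route is an acceptable, arguably cleaner, alternative); and the bound on $\omega$ itself via linearity and monotonicity in $N=a(1-a)$ together with the evaluation $16b^2(1-b^2)\ge0$ at $N=\tfrac14$ (the paper instead subtracts an explicitly nonnegative remainder). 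Up to this point there is nothing to object to.

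The gap is that the three remaining bounds---the upper bound on $\omega_a$ and the lower bounds on $\omega_{aa}$ and $\omega_{bb}$---are not proved but only reduced to polynomial positivity on the rectangle and then deferred to a computer search for a decomposition. Those decompositions are precisely the content of the paper's proof of this lemma; for instance the paper exhibits
\begin{align*}
\omega_a&=\frac{4(1-2a)(2-b^2+5a(1-a))}{7}\\
&\qquad-\frac{d_2}{7D^2}\Big(d_2^2+ad_1\big(8d_2^2(1+4b^2)+28b^4\big)+4a^2d_1^2(5ad_1+21b^2)+b^2(5-4b^4)\Big),
\end{align*}
with $D=1-ad_1+b^2$, and analogous expansions with prefactors $2(1+b^2)/(7b^2D^3)$ and $1/(44b^2D^3)$ for $\omega_{aa}$ and $\omega_{bb}$, each bracket being a sum of terms manifestly nonnegative for $0\le a\le\tfrac12$, $0<b\le1$, the identities themselves being checked by Code~\ref{Lemma14-3}. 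Until you produce such certificates (or some equivalent argument), your proposal is a correct reduction plus a plan, not a proof. Moreover, your remark that these inequalities hold ``with room to spare'' is misleading and hides why the certificates are long: at $(a,b)=(\tfrac12,1)$ your $\omega_{bb}$ polynomial $(24-64N-17b^2)D^3+88b^2N(N+3b^2-1)$ equals $-9\cdot(7/4)^3+49.5\approx1.27$, a margin of only a few percent of the cancelling terms, and your $\omega_a$ polynomial $4(2-b^2+5N)D^2-7(1+b^2)$ is similarly tight near $a=\tfrac12$, $b\downarrow0$ (value $0.3125$ against terms of size $7$). So these three positivity statements are delicate, and exhibiting the explicit decompositions is the essential, missing step.
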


\begin{proof}\quad
These inequalities are obtained as follows:
\begin{align*}
	\omega(a,b)\;\;&=\frac{4a(1-a)(7-4b^2)}{21}-\frac{ad_1}{21(1-ad_1+b^2)}\Big(3d_2^2+4(1-b^2)(d_2^2+4b^2)\Big) \\
		&\le\frac{4a(1-a)(7-4b^2)}{21}, \\
	\omega_a(a,b)\;&= \frac{4(1-2a)(2-b^2+5a(1-a))}{7} \\
			&\qquad\qquad -\frac{d_2}{7(1-ad_1+b^2)^2}\Big(d_2^2+ ad_1(8d_2^2(1+4b^2)+28b^4)  \\
			&\qquad\qquad  + 4a^2d_1^2(5ad_1+21b^2) + b^2(5-4b^4)\Big) \\
		&\le \frac{4(1-2a)(2-b^2+5a(1-a))}{7}, \\
	\omega_a(a,b)\;&= \frac{(1-2a)(1+b^2)}{(1-ad_1+b^2)^2}
		\ge\frac{(1-2a)(1+b^2)}{(1+b^2)^2}
		=\frac{1-2a}{1+b^2}
		\ge\frac{1-2a}{2}, \\
	\omega_{aa}(a,b) &=-\frac{2(11a(1-a)+2b^2)}{7b^2}+\frac{2(1+b^2)}{7b^2(1-ad_1+b^2)^3}\Big\{ \\
			&\qquad\qquad \frac{a^3d_1^3(ad_1 + 3d_2^2 + b^2)}{1+b^2}
				+ a^2d_1^2(10ad_1 + 19d_2^2 + b^2) \\
			&\qquad\qquad + ad_1d_2^2(1 + 6d_2^2 + 7b^2) + ad_1(2-3b^2)^2 + d_2^2b^4 + 2b^2(1-b^2)^2\Big\} \\
		&\ge-\frac{2(11a(1-a)+2b^2)}{7b^2}, \\
	\omega_b(a,b)\;&=-\frac{2a(1-a)b}{(1-ad_1+b^2)^2}
		\le-\frac{2a(1-a)b}{(1+1)^2}
		=-\frac{a(1-a)b}{2}, \\
	\omega_b(a,b)\;&=-\frac{1}{6b}\left(1-\frac{(1-7ad_1+b^2)^2+12ad_1d_2^2}{(1-ad_1+b^2)^2}\right)
		\ge-\frac{1}{6b}, \\
	\omega_{bb}(a,b) &=-\frac{24-64a(1-a)-17b^2}{44b^2}
			+\frac{1}{44b^2(1-ad_1+b^2)^3}\Big\{ \\
			&\qquad\qquad 13d_2^2b^2 + a^3d_1^3(8+17b^2) + d_2^4\Big(1+6b^2 + (3-2ad_1+3b^2)^2\Big) \\
			&\qquad\qquad + a^2d_1^2\Big(4+10b^2 + 3b^2(1-b^2)\Big) + 2(6ad_1+7d_2^2)(1-b^2)(1-3b^2)^2 \\
			&\qquad\qquad + b^2(1-b^2)\Big(34d_2^2+3ad_1(13+27b^2)+82d_2^2(1-b^2)+17b^4\Big)\Big\} \\
		&\ge-\frac{24-64a(1-a)-17b^2}{44b^2}.
\end{align*}
\end{proof}

In the following Lemma~\ref{L1-bound} to Lemma~\ref{L4-bound},
\begin{align*}
	0\le &a\le\frac{1}{2},&\quad 0\le&\widetilde{a}\le\frac{1}{2},&\quad |\widetilde{a}-a|&\le \frac{b}{50}, \\
	0<&b\le 1, &\quad 0<&\widetilde{b}\le 1, &\quad |\widetilde{b}-b|&\le \frac{b}{50},
\end{align*}
are assumed. Note that in this case,
$$
	\frac{49}{50}b\le\widetilde{b}\le\frac{51}{50}b
$$
holds.

\begin{lemma} \label{L1-bound}
It holds that
\begin{align*}
	\left|\frac{\partial L_1}{\partial a}(a,b)\right|&< \frac{2}{b}L_1(a,b),
		& \frac{\partial^2 L_1}{\partial a^2}(\widetilde{a},b)&< \frac{5}{b^2}L_1(a,b), \\
	\left|\frac{\partial L_1}{\partial b}(a,b)\right|&< \frac{2}{b}L_1(a,b),
		& \frac{\partial^2 L_1}{\partial b^2}(a,\widetilde{b})&< \frac{4}{b^2}L_1(a,b).
\end{align*}
\end{lemma}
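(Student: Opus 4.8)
The plan is to reduce each of the four inequalities to a statement about the sign of an explicit rational function of $a$ and $b$, and ultimately to the positivity of a polynomial over the compact region $0\le a\le\frac12,\ 0<b\le 1$. First I would substitute the edge data of $T_{a,b}$, namely $A^2=(1-a)^2+b^2$, $B^2=a^2+b^2$, $C^2=1$ and $S=b/2$, into $L_1=K_1(T_{a,b})^2$ to obtain the explicit form
\[
  L_1(a,b)=\frac{1-a+a^2+b^2}{14}-\frac{b^4}{16\,\big((1-a)^2+b^2\big)\big(a^2+b^2\big)}
  =\frac{1-a+a^2+b^2}{14}-\frac{1}{16}\,\varphi(a,b)\,\varphi(1-a,b),
\]
using $\varphi(a,b)=b^2/(a^2+b^2)$ and the symmetry $a\mapsto 1-a$. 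In this form the polynomial part differentiates trivially, and every derivative of the rational part is a sum of products of $\varphi,\varphi_a,\varphi_{aa},\varphi_b,\varphi_{bb}$ evaluated at $(a,b)$ and at $(1-a,b)$, for which Lemma~\ref{appendix-phi} already supplies two-sided bounds valid for all $a\ge 0$ (hence also at $\widetilde a$, at $1-a$, and at $\widetilde b$ after using $\frac{49}{50}b\le\widetilde b\le\frac{51}{50}b$).

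Next I would dispose of the three easier bounds. For $\partial_a L_1$, $\partial_b L_1$ and $\partial_b^2 L_1(a,\widetilde b)$, the product and chain rules together with the $\varphi$-estimates of Lemma~\ref{appendix-phi} yield an elementary upper bound on the absolute value of the derivative of the shape $\mathrm{const}\cdot b^{-1}$ (resp.\ $\mathrm{const}\cdot b^{-2}$) plus a constant. Combining this with the lower bound $1-a+a^2\ge\frac34$ for $L_1(a,b)$, each inequality reduces to one of the form $L_1(a,b)>(\text{explicit expression in }a,b)$, which after multiplying through by the positive denominators $b^2,\,A^2,\,B^2$ becomes the positivity of a polynomial on the box. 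These polynomials I would certify by the same device used throughout Lemma~\ref{appendix-phi}: an explicit decomposition into manifestly nonnegative terms built from $a$, $d_1=1-a$, $d_2=1-2a$, $b$ and $1-b$, which is mechanically checkable by computer algebra.

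The hard part will be the second $a$-derivative bound $\partial_a^2 L_1(\widetilde a,b)<\frac{5}{b^2}L_1(a,b)$. Here a term-by-term estimate is too lossy: the extremal case $\varphi_{aa}(\widetilde a,b)\approx-2/b^2$ of Lemma~\ref{appendix-phi} is attained only near $\widetilde a=0$, where the companion factor $\varphi(1-\widetilde a,b)$ is of order $b^2$, so the relevant product is \emph{not} of order $b^{-2}$; adding the worst cases of the three summands of $\partial_a^2\big(\varphi(a,b)\varphi(1-a,b)\big)$ independently overestimates $|\partial_a^2 L_1|$ by a factor that the right-hand side $\frac{5}{b^2}L_1$ cannot absorb near $b=1$. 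I would therefore not bound the summands separately but clear all denominators directly, turning the inequality into the positivity of a polynomial in the three variables $a,\widetilde a,b$ over the region $0\le a,\widetilde a\le\frac12$, $0<b\le1$, $|\widetilde a-a|\le b/50$ (or, writing $\widetilde a=a+t$ with $|t|\le b/50$, a single-variable Taylor remainder). Producing the explicit nonnegative decomposition of this higher-degree, many-term polynomial is the only genuinely laborious step, and it is precisely the computation delegated to the symbolic verification code; the same scheme, now fed by the $\psi$- and $\omega$-estimates of Lemma~\ref{appendix-psi} and Lemma~\ref{appendix-omega}, will handle the analogous bounds for $L_2,L_3,L_4$.
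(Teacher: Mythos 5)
Your reduction strategy is sound, and your diagnosis of where the naive argument breaks is exactly right: with the product form $\varphi(a,b)\varphi(1-a,b)$, the global estimate $\varphi_{aa}\ge -2/b^2$ is attained only near $a=0$, where the partner factor is of size $b^2$, so summing worst cases of the three terms of $\partial_a^2\bigl(\varphi(a,b)\varphi(1-a,b)\bigr)$ gives an $\frac{11}{36b^2}$-type bound that $\frac{5}{b^2}L_1$ indeed cannot absorb (already at $a=\widetilde a=\tfrac12$, where $\varphi_{aa}$ is in fact positive). However, the paper resolves this by a different and much lighter device than your proposed trivariate positivity certificate: it replaces the product decomposition by the exact partial-fraction identity
\begin{equation*}
	\frac{b^4}{16(a^2+b^2)\bigl((1-a)^2+b^2\bigr)}
	=\frac{\psi(a,b)+\psi(1-a,b)}{16},
	\qquad
	\psi(a,b)=\frac{b^2(1+2a)}{1+4b^2}\,\varphi(a,b),
\end{equation*}
so that $L_1(a,b)=\frac{1-a+a^2+b^2}{14}-\frac{\psi(a,b)+\psi(1-a,b)}{16}$ is a \emph{sum} of two one-center terms and no product-rule cross terms ever appear. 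The factor $b^2$ baked into $\psi$ encodes precisely the cancellation you observed: Lemma~\ref{appendix-psi} gives $\psi_{aa}(a,b)\ge-\frac{2(3+6a+4b)}{3(1+4b^2)}$, with no $1/b^2$ singularity, and in the sum $\psi_{aa}(\widetilde a,b)+\psi_{aa}(1-\widetilde a,b)$ the $\widetilde a$-dependence cancels outright, so the same global termwise bounding that you correctly reject for $\varphi$ works uniformly for all four inequalities when done with $\psi$. Everything then reduces to six \emph{two-variable} polynomial positivity statements (e.g.\ $84(1+4b^2)\bigl(5L_1-b^2\partial_a^2L_1\bigr)\ge 2ad_1+2d_2^2(4+15b^2)+22(1+2b)(1-b)^2+9b^2\bigl(1+2(1-2b)^2\bigr)$), each with an explicit nonnegative decomposition. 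By contrast, your route for the hard case requires certifying positivity of a high-degree polynomial in the three variables $a,\widetilde a,b$ over a semialgebraic region with the coupling constraint $|\widetilde a-a|\le b/50$ — true in principle, but far heavier than anything the paper actually computes, and not obviously amenable to the simple hand-checkable decompositions used throughout Appendix 1. One further caution on your "easier" cases: the crude bounds $\varphi\le 1$ or $\varphi\varphi\le b^2/(\tfrac14+b^2)$ on the subtracted term are too lossy near $b=1$, so, as you implicitly propose, you must keep $L_1$ exact on the right-hand side before clearing denominators.
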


\begin{proof}\quad
From the definition of $L_1(a,b)$ and Lemma~\ref{appendix-psi}, we have
\begin{align*}
	L_1(a,b)\;\; &= \frac{1+a^2+(1-a)^2+2b^2}{28}
			- \frac{b^4}{16(a^2+b^2)((1-a)^2+b^2)} \\
		&= \frac{1-a+a^2+b^2}{14} - \frac{b^4(1+2a)}{16(a^2+b^2)(1+4b^2)} - \frac{b^4(3-2a)}{16((1-a)^2+b^2)(1+4b^2)} \\
		&= \frac{1-a+a^2+b^2}{14} - \frac{\psi(a,b)+\psi(1-a,b)}{16} \\
		&\ge \frac{1-a+a^2+b^2}{14} - \frac{1}{16}\left(\frac{b^2(1+2a)}{1+4b^2}+\frac{b^2(1+2(1-a))}{1+4b^2}\right) \\
		&=\frac{1-a+a^2+b^2}{14} - \frac{b^2}{4(1+4b^2)}, \\
	\frac{\partial L_1}{\partial a}(a,b)\; &= -\frac{1-2a}{14} - \frac{\psi_a(a,b)-\psi_a(1-a,b)}{16} \\
		&\ge -\frac{1-2a}{14} - \frac{1}{16}\left(\frac{2b^2}{1+4b^2}+\frac{2b(1+2(1-a))}{3(1+4b^2)}\right) \\
		&= -\frac{1-2a}{14} - \frac{b(3-2a+3b)}{24(1+4b^2)}, \\
	\frac{\partial L_1}{\partial a}(a,b)\; &= -\frac{1-2a}{14} - \frac{\psi_a(a,b)-\psi_a(1-a,b)}{16} \\
		&\le -\frac{1-2a}{14} - \frac{1}{16}\left(-\frac{2b(1+2a)}{3(1+4b^2)}-\frac{2b^2}{1+4b^2}\right) \\
		&= -\frac{1-2a}{14} + \frac{b(1+2a+3b)}{24(1+4b^2)}, \\
	\frac{\partial^2 L_1}{\partial a^2}(\widetilde{a},b) &= \frac{1}{7} - \frac{\psi_{aa}(\widetilde{a},b)+\psi_{aa}(1-\widetilde{a},b)}{16} \\
		&\le \frac{1}{7} - \frac{1}{16}\left(-\frac{2(3+6\widetilde{a}+4b)}{3(1+4b^2)}-\frac{2(3+6(1-\widetilde{a})+4b)}{3(1+4b^2)}\right) \\
		&= \frac{1}{7} + \frac{3+2b}{6(1+4b^2)}, \\
	\frac{\partial L_1}{\partial b}(a,b)\; &= \frac{b}{7} - \frac{\psi_b(a,b)+\psi_b(1-a,b)}{16} \\
		&\ge\frac{b}{7} - \frac{1}{16}\left(\frac{(1+2a)(2+16b-9b^2)}{10(1+4b^2)}+\frac{(1+2(1-a))(2+16b-9b^2)}{10(1+4b^2)}\right) \\
		&= \frac{b}{7}-\frac{2+16b-9b^2}{40(1+4b^2)}, \\
	\frac{\partial L_1}{\partial b}(a,b)\; &= \frac{b}{7} - \frac{\psi_b(a,b)+\psi_b(1-a,b)}{16}
		\le\frac{b}{7}, \\
	\frac{\partial^2 L_1}{\partial b^2}(a,\widetilde{b}) &= \frac{1}{7} - \frac{\psi_{bb}(a,\widetilde{b})+\psi_{bb}(1-a,\widetilde{b})}{16} \\
		&\le\frac{1}{7} - \frac{1}{16}\left(-\frac{21(1+2a)}{11(1+4\widetilde{b}^2)}-\frac{21(1+2(1-a))}{11(1+4\widetilde{b}^2)}\right) \\
		&= \frac{1}{7}+\frac{21}{44(1+4\widetilde{b}^2)}
		\le\frac{1}{7}+\frac{21}{44(49/50)^2(1+4b^2)} \\
		&\le\frac{1}{7}+\frac{1}{2(1+4b^2)}.
\end{align*}
Then,
\begin{align*}
	168(1+4b^2)&\left(2L_1(a,b)+b\cdot\frac{\partial L_1}{\partial a}(a,b)\right) \\
		&\ge 168(1+4b^2)\left\{2\left(\frac{1-a+a^2+b^2}{14} - \frac{b^2}{4(1+4b^2)}\right)\right. \\
			&\qquad\qquad\qquad\qquad\qquad\qquad\qquad
			\left. +b\left(-\frac{1-2a}{14} - \frac{b(3-2a+3b)}{24(1+4b^2)}\right)\right\} \\
		&= 6(d_2^2+4ab)(1+4b^2) + b^2\Big(3+14a+3b+46(1-b)^2\Big) + 2(3-b-5b^2)^2 \\
		&> 0, \\
	168(1+4b^2)&\left(2L_1(a,b)-b\cdot\frac{\partial L_1}{\partial a}(a,b)\right) \\
		&\ge 168(1+4b^2)\left\{2\left(\frac{1-a+a^2+b^2}{14} - \frac{b^2}{4(1+4b^2)}\right)\right. \\
			&\qquad\qquad\qquad\qquad\qquad\qquad\qquad
			\left. -b\left(-\frac{1-2a}{14} + \frac{b(1+2a+3b)}{24(1+4b^2)}\right)\right\} \\
		&= 6(d_2^2+4d_1b)(1+4b^2) + b^2\Big(3+14d_1+3b+46(1-b)^2\Big) + 2(3-b-5b^2)^2 \\
		&> 0, \\
	84(1+4b^2)&\left(5L_1(a,b)-b^2\cdot\frac{\partial^2 L_1}{\partial a^2}(\widetilde{a},b)\right) \\
		&\ge 84(1+4b^2)\left\{5\left(\frac{1-a+a^2+b^2}{14} - \frac{b^2}{4(1+4b^2)}\right)
			- b^2\left(\frac{1}{7} + \frac{3+2b}{6(1+4b^2)}\right)\right\} \\
		&= 2ad_1 + 2d_2^2(4+15b^2) + 22(1+2b)(1-b)^2 + 9b^2\Big(1+2(1-2b)^2\Big) \\
		&> 0, \\
	280(1+4b^2)&\left(2L_1(a,b)+b\cdot\frac{\partial L_1}{\partial b}(a,b)\right) \\
		&\ge 280(1+4b^2)\left\{2\left(\frac{1-a+a^2+b^2}{14} - \frac{b^2}{4(1+4b^2)}\right)\right. \\
			&\qquad\qquad\qquad\qquad\qquad\qquad\qquad
			\left. +b\left(\frac{b}{7}-\frac{2+16b-9b^2}{40(1+4b^2)}\right)\right\} \\
		&= 64ad_1b(1-2b)^2 + 2\Big(2ad_1(30+b^2) + d_2^2(20+13b)\Big)(1-b) \\
			&\qquad\qquad + b^2(11 + 3d_2^2 + 320b^2 + 63d_2^2b) \\
		&> 0, \\
	28(1+4b^2)&\left(2L_1(a,b)-b\cdot\frac{\partial L_1}{\partial b}(a,b)\right) \\
		&\ge 28(1+4b^2)\left\{2\left(\frac{1-a+a^2+b^2}{14} - \frac{b^2}{4(1+4b^2)}\right)-b\cdot\frac{b}{7}\right\} \\
		&=1+2(1-b^2) + d_2^2(1+4b^2) \\
		&> 0, \\
	14(1+4b^2)&\left(4L_1(a,b)-b^2\cdot\frac{\partial^2 L_1}{\partial b^2}(a,\widetilde{b})\right) \\
		&\ge 14(1+4b^2)\left\{4\left(\frac{1-a+a^2+b^2}{14} - \frac{b^2}{4(1+4b^2)}\right)
			-b^2\left(\frac{1}{7}+\frac{1}{2(1+4b^2)}\right)\right\} \\
		&=1 + b^2 + d_2^2(1+4b^2) + 2(1-2b^2)^2 \\
		&> 0,
\end{align*}
hold and the desired inequalities are shown.
\end{proof}

\begin{lemma} \label{L2-bound}
It holds that
\begin{align*}
	\left|\frac{\partial L_2}{\partial a}(a,b)\right|&< \frac{2}{b}L_2(a,b),
		& \frac{\partial^2 L_2}{\partial a^2}(\widetilde{a},b)&<\frac{5}{b^2}L_2(a,b), \\
	\left|\frac{\partial L_2}{\partial b}(a,b)\right|&< \frac{2}{b}L_2(a,b),
		& \frac{\partial^2 L_2}{\partial b^2}(a,\widetilde{b})&<\frac{4}{b^2}L_2(a,b).
\end{align*}
\end{lemma}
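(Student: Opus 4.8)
The plan is to mirror the proof of Lemma~\ref{L1-bound} step for step, since $L_2$ has exactly the same analytic form as $L_1$ up to two rational coefficients. Using the functions $\varphi,\psi$ introduced before Lemma~\ref{appendix-phi}, the first step is to record the identity
$$
	L_2(a,b)=\frac{1-a+a^2+b^2}{27}-\frac{\psi(a,b)+\psi(1-a,b)}{32},
$$
which follows from $A^2=(1-a)^2+b^2$, $B^2=a^2+b^2$, $C^2=1$, $S=b/2$ (so $A^2+B^2+C^2=2(1-a+a^2+b^2)$ and $S^4/(2A^2B^2C^2)=b^4/\bigl(32(a^2+b^2)((1-a)^2+b^2)\bigr)$), together with the partial-fraction identity $(1+2a)((1-a)^2+b^2)+(3-2a)(a^2+b^2)=1+4b^2$ that underlies the analogous decomposition for $L_1$. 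Compared with $L_1$, the only changes are the coefficient $1/27$ in place of $1/14$ and the subtracted coefficient $1/32$ in place of $1/16$.

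Next I would differentiate this expression termwise. Using the $a\mapsto 1-a$ symmetry one gets
$$
	\frac{\partial L_2}{\partial a}(a,b)=-\frac{1-2a}{27}-\frac{\psi_a(a,b)-\psi_a(1-a,b)}{32},
$$
and correspondingly $\partial^2 L_2/\partial a^2=\tfrac{2}{27}-\tfrac{1}{32}(\psi_{aa}(a,b)+\psi_{aa}(1-a,b))$, $\partial L_2/\partial b=\tfrac{2b}{27}-\tfrac{1}{32}(\psi_b(a,b)+\psi_b(1-a,b))$, and $\partial^2 L_2/\partial b^2=\tfrac{2}{27}-\tfrac{1}{32}(\psi_{bb}(a,b)+\psi_{bb}(1-a,b))$. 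Feeding the two-sided bounds on $\psi,\psi_a,\psi_{aa},\psi_b,\psi_{bb}$ from Lemma~\ref{appendix-psi} into these formulas then produces explicit one-sided rational bounds on $L_2$ and on each of its first and second derivatives, exactly parallel to the displayed chain in the proof of Lemma~\ref{L1-bound}.

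With those bounds in hand, the final step is to establish the six target inequalities by forming the combinations $2L_2\pm b\,\partial_a L_2$, $5L_2-b^2\,\partial^2 L_2/\partial a^2$, $2L_2\pm b\,\partial_b L_2$, and $4L_2-b^2\,\partial^2 L_2/\partial b^2$, clearing the common denominator $1+4b^2$ (and invoking $\tfrac{49}{50}b\le\widetilde b\le\tfrac{51}{50}b$ in the $\partial^2/\partial b^2$ case, as is done for $L_1$), and exhibiting each resulting numerator as a sum of manifestly nonnegative terms in $b$ and in $d_1=1-a\ge 0$, $d_2=1-2a\ge 0$. This reduces everything to the positivity of a short list of explicit polynomials on $0\le a\le\tfrac12$, $0<b\le 1$, which I would certify with the Symbolic Math Toolbox computation of the Appendix~2 codes.

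The obstacle I anticipate is quantitative rather than conceptual: the coefficient ratio changes from $8/7$ for $L_1$ to $32/27$ for $L_2$, so it is not automatic that the \emph{same} bounding constants $2,5,2,4$ still suffice, and one must re-verify that each of the six polynomial numerators remains strictly positive under the new coefficients. If a margin were lost in some case, I would fall back on a sharper use of the estimates (for instance bounding $\psi$ via $\varphi$ directly rather than through the coarser inequalities of Lemma~\ref{appendix-psi}) to recover it. I nonetheless expect the margins to be comfortable, because $L_2$ is structurally a milder version of $L_1$ with a slightly larger polynomial-to-remainder ratio, so the same sum-of-nonnegative-terms certificates should survive with only minor adjustments.
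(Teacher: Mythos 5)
Your proposal follows essentially the same route as the paper's proof: the identical decomposition $L_2(a,b)=\frac{1-a+a^2+b^2}{27}-\frac{\psi(a,b)+\psi(1-a,b)}{32}$, termwise differentiation bounded via Lemma~\ref{appendix-psi} (including the $\frac{49}{50}b\le\widetilde{b}$ adjustment in the $\partial^2/\partial b^2$ case), and positivity of the cleared-denominator combinations $2L_2\pm b\,\partial_a L_2$, $5L_2-b^2\partial_a^2L_2$, $2L_2\pm b\,\partial_b L_2$, $4L_2-b^2\partial_b^2L_2$ exhibited as sums of nonnegative terms in $b$, $d_1$, $d_2$ and checked symbolically. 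Your anticipated concern about the constants $2,5,2,4$ surviving the coefficient change from $(1/14,1/16)$ to $(1/27,1/32)$ is exactly what the paper resolves by its explicit polynomial certificates, so the plan is sound as stated.
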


\begin{proof}\quad
From the definition of $L_2(a,b)$ and Lemma~\ref{appendix-psi}, we have
\begin{align*}
	L_2(a,b)\;\; &= \frac{1+a^2+(1-a)^2+2b^2}{54}
			- \frac{b^4}{32(a^2+b^2)((1-a)^2+b^2)} \\
		&= \frac{1-a+a^2+b^2}{27} - \frac{b^4(1+2a)}{32(a^2+b^2)(1+4b^2)} - \frac{b^4(3-2a)}{32((1-a)^2+b^2)(1+4b^2)} \\
		&= \frac{1-a+a^2+b^2}{27} - \frac{\psi(a,b)+\psi(1-a,b)}{32} \\
		&\ge \frac{1-a+a^2+b^2}{27} - \frac{1}{32}\left(\frac{b^2(1+2a)}{1+4b^2}+\frac{b^2(1+2(1-a))}{1+4b^2}\right) \\
		&=\frac{1-a+a^2+b^2}{27} - \frac{b^2}{8(1+4b^2)}, \\
	\frac{\partial L_2}{\partial a}(a,b)\; &= -\frac{1-2a}{27} - \frac{\psi_a(a,b)-\psi_a(1-a,b)}{32} \\
		&\ge -\frac{1-2a}{27} - \frac{1}{32}\left(\frac{2b^2}{1+4b^2}+\frac{2b(1+2(1-a))}{3(1+4b^2)}\right) \\
		&= -\frac{1-2a}{27} - \frac{b(3-2a+3b)}{48(1+4b^2)}, \\
	\frac{\partial L_2}{\partial a}(a,b)\; &= -\frac{1-2a}{27} - \frac{\psi_a(a,b)-\psi_a(1-a,b)}{32} \\
		&\le -\frac{1-2a}{27} - \frac{1}{32}\left(-\frac{2b(1+2a)}{3(1+4b^2)}-\frac{2b^2}{1+4b^2}\right) \\
		&= -\frac{1-2a}{27} + \frac{b(1+2a+3b)}{48(1+4b^2)}, \\
	\frac{\partial^2 L_2}{\partial a^2}(\widetilde{a},b) &= \frac{2}{27} - \frac{\psi_{aa}(\widetilde{a},b)+\psi_{aa}(1-\widetilde{a},b)}{32} \\
		&\le \frac{2}{27} - \frac{1}{32}\left(-\frac{2(3+6\widetilde{a}+4b)}{3(1+4b^2)}-\frac{2(3+6(1-\widetilde{a})+4b)}{3(1+4b^2)}\right) \\
		&= \frac{2}{27} + \frac{3+2b}{12(1+4b^2)}, \\
	\frac{\partial L_2}{\partial b}(a,b)\; &= \frac{2b}{27} - \frac{\psi_b(a,b)+\psi_b(1-a,b)}{32} \\
		&\ge\frac{2b}{27} - \frac{1}{32}\left(\frac{(1+2a)(2+16b-9b^2)}{10(1+4b^2)}+\frac{(1+2(1-a))(2+16b-9b^2)}{10(1+4b^2)}\right) \\
		&= \frac{2b}{27}-\frac{2+16b-9b^2}{80(1+4b^2)}, \\
	\frac{\partial L_2}{\partial b}(a,b)\; &= \frac{2b}{27} - \frac{\psi_b(a,b)+\psi_b(1-a,b)}{32}
		\le\frac{2b}{27}, \\
	\frac{\partial^2 L_2}{\partial b^2}(a,\widetilde{b}) &= \frac{2}{27} - \frac{\psi_{bb}(a,\widetilde{b})+\psi_{bb}(1-a,\widetilde{b})}{32} \\
		&\le\frac{2}{27} - \frac{1}{32}\left(-\frac{21(1+2a)}{11(1+4\widetilde{b}^2)}-\frac{21(1+2(1-a))}{11(1+4\widetilde{b}^2)}\right) \\
		&= \frac{2}{27}+\frac{21}{88(1+4\widetilde{b}^2)} \le\frac{2}{27}+\frac{21}{88(49/50)^2(1+4b^2)} \\
		&\le\frac{2}{27}+\frac{1}{4(1+4b^2)}.
\end{align*}
Then,
\begin{align*}
	432(1+4b^2)&\left(2L_2(a,b)+b\cdot\frac{\partial L_2}{\partial a}(a,b)\right) \\
		&\ge 432(1+4b^2)\left\{2\left(\frac{1-a+a^2+b^2}{27} - \frac{b^2}{8(1+4b^2)}\right)\right. \\
			&\qquad\qquad\qquad\qquad\qquad\qquad\qquad
			\left. +b\left(-\frac{1-2a}{27} - \frac{b(3-2a+3b)}{48(1+4b^2)}\right)\right\} \\
		&=8 + 8d_2^2 + 16(1-b) + b^2(18a+128a^2+9b+28b^2) \\
			&\qquad\qquad + b(32a+25b)(1-2b)^2 \\
		&> 0, \\
	432(1+4b^2)&\left(2L_2(a,b)-b\cdot\frac{\partial L_2}{\partial a}(a,b)\right) \\
		&\ge 432(1+4b^2)\left\{2\left(\frac{1-a+a^2+b^2}{27} - \frac{b^2}{8(1+4b^2)}\right)\right. \\
			&\qquad\qquad\qquad\qquad\qquad\qquad\qquad
			\left. -b\left(-\frac{1-2a}{27} + \frac{b(1+2a+3b)}{48(1+4b^2)}\right)\right\} \\
		&=8 + 8d_2^2 + 16(1-b) + b^2(18d_1+128d_1^2+9b+28b^2) \\
			&\qquad\qquad + b(32d_1+25b)(1-2b)^2 \\
		&> 0, \\
	216(1+4b^2)&\left(5L_2(a,b)-b^2\cdot\frac{\partial^2 L_2}{\partial a^2}(\widetilde{a},b)\right) \\
		&\ge 216(1+4b^2)\left\{5\left(\frac{1-a+a^2+b^2}{27} - \frac{b^2}{8(1+4b^2)}\right)
			- b^2\left(\frac{2}{27} + \frac{3+2b}{12(1+4b^2)}\right)\right\} \\
		&= 9b(1-b) + 15(2+b)(1-2b)^2 + 10d_2^2(1+4b^2) + 96b(1+b)(1-b)^2 \\
		&> 0, \\
	2160(1+4b^2)&\left(2L_2(a,b)+b\cdot\frac{\partial L_2}{\partial b}(a,b)\right) \\
		&\ge 2160(1+4b^2)\left\{2\left(\frac{1-a+a^2+b^2}{27} - \frac{b^2}{8(1+4b^2)}\right)\right. \\
			&\qquad\qquad\qquad\qquad\qquad\qquad\qquad
			\left. +b\left(\frac{2b}{27}-\frac{2+16b-9b^2}{80(1+4b^2)}\right)\right\} \\
		&= 337b^3 + 40d_2^2(1+4b^2) + 30(2-b-7b^2)^2 + 2b(1-b)(33+352b+95b^2) \\
		&> 0, \\
	108(1+4b^2)&\left(2L_2(a,b)-b\cdot\frac{\partial L_2}{\partial b}(a,b)\right) \\
		&\ge 108(1+4b^2)\left\{2\left(\frac{1-a+a^2+b^2}{27} - \frac{b^2}{8(1+4b^2)}\right)-b\cdot\frac{2b}{27}\right\} \\
		&= 3 + 3(1-b^2) + 2d_2^2(1+4b^2) \\
		&> 0, \\
	108(1+4b^2)&\left(4L_2(a,b)-b^2\cdot\frac{\partial^2 L_2}{\partial b^2}(a,\widetilde{b})\right) \\
		&\ge 108(1+4b^2)\left\{4\left(\frac{1-a+a^2+b^2}{27} - \frac{b^2}{8(1+4b^2)}\right)
			-b^2\left(\frac{2}{27}+\frac{1}{4(1+4b^2)}\right)\right\} \\
		&= 4 + 7b^2 + 4d_2^2(1+4b^2) + 8(1-2b^2)^2 \\
		&> 0,
\end{align*}
hold and the desired inequalities are shown.
\end{proof}

\begin{lemma} \label{L3-bound}
It holds that
\begin{align*}
	\left|\frac{\partial L_3}{\partial a}(a,b)\right|&< \frac{2}{b}L_3(a,b),
		& \frac{\partial^2 L_3}{\partial a^2}(\widetilde{a},b)&<\frac{4}{b^2}L_3(a,b), \\
	\left|\frac{\partial L_3}{\partial b}(a,b)\right|&< \frac{3}{b}L_3(a,b),
		& \frac{\partial^2 L_3}{\partial b^2}(a,\widetilde{b})&<\frac{8}{b^2}L_3(a,b).
\end{align*}
\end{lemma}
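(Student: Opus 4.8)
The plan is to reproduce the strategy of Lemma~\ref{L1-bound} and Lemma~\ref{L2-bound}: rewrite $L_3(a,b)$ so that its only non-polynomial ingredient is a single auxiliary function whose derivatives are controlled in the Appendix, and then reduce each of the four estimates to the positivity of an explicit polynomial on $0\le a\le\frac12,\ 0<b\le1$. Substituting the side data of $T_{a,b}$, namely $A^2=(1-a)^2+b^2$, $B^2=a^2+b^2$, $C^2=1$ and $S^2=b^2/4$, and using $a^2+(1-a)^2=1-2ad_1$ together with $1-a+a^2=1-ad_1$, one gets $A^2+B^2+C^2=2(1-ad_1+b^2)$. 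A short manipulation then yields the clean decomposition
\begin{equation*}
	\frac{A^2B^2C^2}{A^2+B^2+C^2}=\frac{(ad_1-b^2)\,\omega(a,b)+b^2}{2},
\end{equation*}
with $\omega$ the function of Lemma~\ref{appendix-omega}, so that
\begin{equation*}
	L_3(a,b)=\frac{(d_1^2+b^2)(a^2+b^2)+(a^2+b^2)+(d_1^2+b^2)}{83}-\frac{b^2}{32}-\frac{ad_1-b^2}{48}\,\omega(a,b).
\end{equation*}
Thus $L_3$ is a polynomial minus a single scalar multiple of $\omega$.

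With this in hand I would differentiate directly. Writing $g(a,b)=ad_1-b^2$, so that $g_a=1-2a=d_2$, $g_b=-2b$, $g_{aa}=g_{bb}=-2$, the product rule expresses $\partial_aL_3,\ \partial_a^2L_3,\ \partial_bL_3,\ \partial_b^2L_3$ as polynomials plus fixed linear combinations of $\omega,\omega_a,\omega_{aa},\omega_b,\omega_{bb}$. I would then insert the two-sided bounds on $\omega,\omega_a,\omega_b$ and the one-sided bounds on $\omega_{aa},\omega_{bb}$ supplied by Lemma~\ref{appendix-omega}, choosing for each term whichever bound points in the right direction (a lower bound for $L_3$ itself, an upper bound for the quantity being estimated). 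For the two second-order statements I would in addition use $|\widetilde a-a|\le b/50$ and $\frac{49}{50}b\le\widetilde b\le\frac{51}{50}b$ to trade the shifted evaluation point for $(a,b)$ at the cost of an explicitly bounded factor, exactly as in the $\widetilde b$-steps of Lemma~\ref{L1-bound}.

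Each of the four claims then splits into one-sided inequalities --- $2L_3\pm b\,\partial_aL_3>0$, $4L_3-b^2\,\partial_a^2L_3>0$, $3L_3\pm b\,\partial_bL_3>0$, and $8L_3-b^2\,\partial_b^2L_3>0$ --- and after clearing the common positive denominator $1-ad_1+b^2$ (to a suitable power) and the factor $b^2$, each reduces to a polynomial in $a,b$ that I would exhibit as a sum of manifestly nonnegative terms built from $a$, $d_1$, $d_2$, $b$, $(1-b)$ and squares, just as in the two preceding lemmas. The main obstacle is that, unlike the single-signed coefficient $\tfrac1{16}$ multiplying $\psi$ in Lemma~\ref{L1-bound}, the weight $g=ad_1-b^2$ changes sign on the admissible region (it is nonnegative only where $a(1-a)\ge b^2$). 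This is harmless for the first derivatives, but in the second-order estimates the coefficient $-g/48$ of $\omega_{aa}$ (resp.\ $\omega_{bb}$) then has the ``wrong'' sign on the part of the region where $a(1-a)<b^2$, so the lower bounds of Lemma~\ref{appendix-omega} no longer immediately give the required upper bound. I would resolve this by splitting the analysis according to the sign of $ad_1-b^2$ --- or, equivalently, by supplementing the lower bounds with the matching upper bounds on $\omega_{aa},\omega_{bb}$ read off from their explicit rational expressions in the proof of Lemma~\ref{appendix-omega} --- after which each resulting polynomial positivity is routine and can be confirmed with the symbolic code of Appendix~2 (Section~\ref{Appendix2}).
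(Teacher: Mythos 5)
Your proposal is correct in substance and follows the same overall strategy as the paper's proof: isolate the non-polynomial part of $L_3$ in the function $\omega$ of Lemma~\ref{appendix-omega}, differentiate, insert the appendix bounds (using $|\widetilde a-a|\le b/50$, $\tfrac{49}{50}b\le\widetilde b\le\tfrac{51}{50}b$ for the second-order statements), and reduce the six one-sided inequalities $2L_3\pm b\,\partial_aL_3>0$, $4L_3-b^2\partial_a^2L_3>0$, $3L_3\pm b\,\partial_bL_3>0$, $8L_3-b^2\partial_b^2L_3>0$ to explicit polynomial positivity. Your identity $\frac{A^2B^2C^2}{A^2+B^2+C^2}=\frac{(ad_1-b^2)\,\omega(a,b)+b^2}{2}$ is correct, and your resulting decomposition of $L_3$ agrees with the paper's as a function. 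The one genuine difference is that the paper writes the $\omega$-term with a \emph{constant} coefficient: since $(ad_1-b^2-1)\,\omega=-ad_1$, one has $(ad_1-b^2)\,\omega=\omega-ad_1$, and your formula collapses to
\[
	L_3(a,b)=\frac{b^2+(1-a+a^2+b^2)^2}{83}+\frac{2a(1-a)-3b^2}{96}-\frac{\omega(a,b)}{48},
\]
which is exactly the paper's starting point. In this form the weight on $\omega$ never changes sign, so the obstacle you flag --- needing upper bounds on $\omega_{aa},\omega_{bb}$ where $a(1-a)<b^2$, bounds that Lemma~\ref{appendix-omega} does not supply --- simply disappears: the one-sided lower bounds on $\omega_{aa},\omega_{bb}$ and the two-sided bounds on $\omega,\omega_a,\omega_b$ are precisely sufficient, and the product-rule terms $d_2\omega+(ad_1-b^2)\omega_a$, $-2\omega+2d_2\omega_a+(ad_1-b^2)\omega_{aa}$ never arise. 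Your proposed patch is feasible (case-splitting, or noting e.g.\ $\omega_{aa}=-2(1+b^2)(3ad_1+b^2)/(1-a+a^2+b^2)^3\le0$), but it adds bounding steps whose accumulated losses you would still have to check against the target constants $2,4,3,8$; the paper's rewriting removes that risk at no cost. Everything else in your plan --- the Taylor-point handling and the final sum-of-nonnegative-terms certificates checked symbolically --- matches what the paper actually does.
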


\begin{proof}\quad
From the definition of $L_3(a,b)$ and Lemma~\ref{appendix-omega}, we have
\begin{align*}
	L_3(a,b)\;\;&=\frac{b^2+(1-a+a^2+b^2)^2}{83} + \frac{2a(1-a)-3b^2}{96}-\frac{\omega(a,b)}{48} \\
		&\ge\frac{b^2+(1-a+a^2+b^2)^2}{84} + \frac{2a(1-a)-3b^2}{96}-\frac{a(1-a)(7-4b^2)}{252} \\
		&= \frac{24(1-a+a^2+b^2)^2 - 39b^2 - 2a(1-a)(7-16b^2)}{2016}, \\
	\frac{\partial L_3}{\partial a}(a,b)\;&=-\frac{2(1-2a)(1-a+a^2+b^2)}{83}+\frac{1-2a}{48}-\frac{\omega_a(a,b)}{48} \\
		&\ge-\frac{25(1-2a)(1-a+a^2+b^2)}{1008}+\frac{1-2a}{48} \\
			&\qquad\qquad -\frac{(1-2a)(2-b^2+5a(1-a))}{84}  \\
		&=-\frac{(1-2a)(28+13b^2+35a(1-a))}{1008}, \\
	\frac{\partial L_3}{\partial a}(a,b)\;&=-\frac{2(1-2a)(1-a+a^2+b^2)}{83}+\frac{1-2a}{48}-\frac{\omega_a(a,b)}{48} \\
		&\le-\frac{2(1-2a)(1-a+a^2+b^2)}{144}+\frac{1-2a}{48}-\frac{1-2a}{96} \\
		&=-\frac{d_2(d_2^2+4b^2)}{288}
		\le0, \\
	\frac{\partial^2L_3}{\partial a^2}(\widetilde{a},b)
		&=\frac{3+3(1-2\widetilde{a})^2+4b^2}{83}-\frac{1}{24}-\frac{\omega_{aa}(\widetilde{a},b)}{48} \\
		&\le\frac{25(3+3(1-2\widetilde{a})^2+4b^2)}{2016}-\frac{1}{24}+\frac{11\widetilde{a}(1-\widetilde{a})+2b^2}{168b^2} \\
		&=\frac{(11-25b^2)(\widetilde{a}-a)(1-\widetilde{a}-a)}{168b^2} \\
			&\qquad\qquad +\frac{2a(1-a)(33-75b^2) + 5b^2(9+10b^2)}{1008b^2} \\
		&\le\frac{14(b/50)}{168b^2}+\frac{2a(1-a)(33-75b^2) + 5b^2(9+10b^2)}{1008b^2} \\
		&\le\frac{1}{504b^2}+\frac{2a(1-a)(33-75b^2) + 5b^2(9+10b^2)}{1008b^2} \\
		&=\frac{2 + 2a(1-a)(33-75b^2) + 5b^2(9+10b^2)}{1008b^2}, \\
	\frac{\partial L_3}{\partial b}(a,b)\;&=\frac{b(5+(1-2a)^2+4b^2)}{83}-\frac{b}{16}-\frac{\omega_b(a,b)}{48} \\
		&\ge\frac{23b(5+(1-2a)^2+4b^2)}{1920}-\frac{b}{16}+\frac{a(1-a)b}{96} \\
		&=\frac{b(9d_2^2+46b^2)}{960}
		\ge0, \\
	\frac{\partial L_3}{\partial b}(a,b)\;&=\frac{b(5+(1-2a)^2+4b^2)}{83}-\frac{b}{16}-\frac{\omega_b(a,b)}{48} \\
		&\le\frac{b(5+(1-2a)^2+4b^2)}{83}-\frac{b}{16}+\frac{1}{288b} \\
		&\le\frac{17b(5+(1-2a)^2+4b^2)}{1344}-\frac{b}{16}+\frac{1}{168b} \\
		&=\frac{4+9b^2-34a(1-a)b^2+34b^4}{672b}, \\
	\frac{\partial^2L_3}{\partial b^2}(a,\widetilde{b})
		&=\frac{5+(1-2a)^2+12\widetilde{b}^2}{83}-\frac{1}{16}-\frac{\omega_{bb}(a,\widetilde{b})}{48} \\
		&\le\frac{5+(1-2a)^2+12\widetilde{b}^2}{83}-\frac{1}{16}+\frac{24-64a(1-a)-17\widetilde{b}^2}{2112\widetilde{b}^2} \\
		&=\frac{(1-2a)^2+12\widetilde{b}^2}{83}+\frac{4a(1-a)+3(1-2a)^2}{264\widetilde{b}^2} -\frac{1807}{175296} \\
		&\le\frac{(1-2a)^2+12(51/50)^2b^2}{83}+\frac{4a(1-a)+3(1-2a)^2}{264(49/50)^2b^2}-\frac{1807}{175296} \\
		&\le\frac{2(1-2a)^2+19b^2}{126}+\frac{4a(1-a)+3(1-2a)^2}{252b^2}-\frac{1}{126} \\
		&=\frac{3+2b^2+38b^4 -8a(1-a)(1+2b^2)}{252b^2}.
\end{align*}
Then,
\begin{align*}
	1008&\left(2L_3(a,b)+b\cdot\frac{\partial L_3}{\partial a}(a,b)\right) \\
		&\ge 1008\left\{2\left(\frac{24(1-a+a^2+b^2)^2 - 39b^2 - 2a(1-a)(7-16b^2)}{2016}\right)\right. \\
			&\qquad\qquad\qquad\qquad
			\left. +b\left(-\frac{(1-2a)(28+13b^2+35a(1-a))}{1008}\right)\right\} \\
		&= 6d_2^4 + 6ad_1(4-b^2) + d_2b(ad_1+b^2) + 4(1-2b^2)^2 + b^2(4+b^2) \\
			&\qquad\qquad  + (d_2-b)^2(14+18ad_1+7b^2) \\
		&> 0, \\
	1008&\left(2L_3(a,b)-b\cdot\frac{\partial L_3}{\partial a}(a,b)\right) \\
		&\ge 1008\left\{2\left(\frac{24(1-a+a^2+b^2)^2 - 39b^2 - 2a(1-a)(7-16b^2)}{2016}\right) - b\cdot 0\right\} \\
		&= 34ad_1 + 5b^2 + 24a^2d_1^2 + 4d_2^2(6+b^2) + 24b^4 \\
		&> 0, \\
	1008&\left(4L_3(a,b)-b^2\cdot\frac{\partial^2 L_3}{\partial a^2}(\widetilde{a},b)\right) \\
		&\ge 1008\left\{4\left(\frac{24(1-a+a^2+b^2)^2 - 39b^2 - 2a(1-a)(7-16b^2)}{2016}\right)\right. \\
			&\qquad\qquad\qquad\qquad\qquad
			\left. -b^2\left(\frac{2 + 2a(1-a)(33-75b^2) + 5b^2(9+10b^2)}{1008b^2}\right)\right\} \\
		&=17d_2^4 + 2ad_1(12ad_1+31d_2^2+b^2) + (1-b^2)(29d_2^2+2b^2) \\
		&> 0, \\
	672&\left(3L_3(a,b)+b\cdot\frac{\partial L_3}{\partial b}(a,b)\right) \\
		&\ge 672\left\{3\left(\frac{24(1-a+a^2+b^2)^2 - 39b^2 - 2a(1-a)(7-16b^2)}{2016}\right) + b\cdot0\right\} \\
		&= 34ad_1 + 5b^2 + 24a^2d_1^2 + 4d_2^2(6+b^2) + 24b^4 \\
		&> 0, \\
	336&\left(3L_3(a,b)-b\cdot\frac{\partial L_3}{\partial b}(a,b)\right) \\
		&\ge 336\left\{3\left(\frac{24(1-a+a^2+b^2)^2 - 39b^2 - 2a(1-a)(7-16b^2)}{2016}\right)\right. \\
			&\qquad\qquad\qquad\qquad
			\left. -b\left(\frac{4+9b^2-34a(1-a)b^2+34b^4}{672b}\right)\right\} \\
		&=ad_1(1+17d_2^2) + 5d_2^4 + (1-b^2)(11ad_1+5d_2^2+5b^2) \\
		&> 0, \\
	252&\left(8L_3(a,b)-b^2\cdot\frac{\partial^2 L_3}{\partial b^2}(a,\widetilde{b})\right) \\
		&\ge 252\left\{8\left(\frac{24(1-a+a^2+b^2)^2 - 39b^2 - 2a(1-a)(7-16b^2)}{2016}\right)\right. \\
			&\qquad\qquad\qquad\qquad\qquad\qquad
			\left. -b^2\left(\frac{3+2b^2+38b^4 -8a(1-a)(1+2b^2)}{252b^2}\right)\right\} \\
		&= 14d_2^2 + 2ad_1(1+12ad_1) + 7(1-b^2)(1+2b^2) \\
		&> 0,
\end{align*}
hold and the desired inequalities are shown.
\end{proof}

\begin{lemma} \label{L4-bound}
It holds that
\begin{align*}
	\left|\frac{\partial L_4}{\partial a}(a,b)\right|&< \frac{3}{b}L_4(a,b),
		& \frac{\partial^2 L_4}{\partial a^2}(\widetilde{a},b)&<\frac{9}{b^2}L_4(a,b), \\
	\left|\frac{\partial L_4}{\partial b}(a,b)\right|&< \frac{3}{b}L_4(a,b),
		& \frac{\partial^2 L_4}{\partial b^2}(a,\widetilde{b})&<\frac{9}{b^2}L_4(a,b).
\end{align*}
\end{lemma}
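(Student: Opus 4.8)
The plan is to reproduce, for $L_4$, the template already used in the proofs of Lemma~\ref{L1-bound} through Lemma~\ref{L3-bound}: display the singular part of $L_4$ explicitly, bound each derivative by combining the exact leading term with the one-sided estimates of Lemma~\ref{appendix-phi}, and finally reduce every one of the six required inequalities to the positivity of an explicit polynomial on $0\le a\le\frac12$, $0<b\le1$. Starting from the expressions for $c_1,c_2,c_3,S$ recorded in the proof of Theorem~\ref{C4-asymptotic}, I would first rewrite
\begin{equation*}
	L_4(a,b)=\frac{((1-a)^2+b^2)(a^2+b^2)}{4b^2}-\frac{1-a+a^2+b^2}{15}-\frac{b^2}{20}-\frac{\varphi(a,b)+\varphi(1-a,b)}{20},
\end{equation*}
so that the only term that diverges as $b\downarrow0$ is the rational leading term $M(a,b)=((1-a)^2+b^2)(a^2+b^2)/(4b^2)$, while the remaining corrections are a polynomial together with two copies of $\varphi$ whose derivatives are already controlled by Lemma~\ref{appendix-phi}.

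The derivatives of $M$ would be computed directly; since $M$ is an explicit rational function, $\partial_a M$, $\partial_a^2 M$, $\partial_b M$, $\partial_b^2 M$ are again explicit rational functions whose singular parts are of order $b^{-2}$ for the first derivatives and $b^{-4}$ for $\partial_b^2 M$. For instance $\partial_a^2 M=(6a^2-6a+1)/(2b^2)+1$, which already carries the dominant $b^{-2}$ contribution to $\partial_a^2 L_4$. For the correction terms I would use Lemma~\ref{appendix-phi} to bound $\varphi_a,\varphi_{aa},\varphi_b,\varphi_{bb}$ from the appropriate side, exactly as $\psi$ and $\omega$ were used before. In the two second-derivative statements, where the argument is the perturbed $\widetilde{a}$ or $\widetilde{b}$, I would remove the perturbation using $|\widetilde{a}-a|\le b/50$ and $\frac{49}{50}b\le\widetilde{b}\le\frac{51}{50}b$, replacing $\widetilde{a},\widetilde{b}$ by $a,b$ at the cost of the explicit rational constants $(50/49)^2$ and $(51/50)^2$, precisely as in Lemma~\ref{L3-bound}.

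With a lower bound for $L_4$ and one-sided bounds for the four derivatives in hand, each target inequality --- for instance $3L_4-b\,\partial_a L_4>0$, or $9L_4-b^2\,\partial_a^2 L_4(\widetilde{a},b)>0$ --- would be cleared of denominators by multiplying through by a single positive factor, namely a power of $b$ times $(a^2+b^2)((1-a)^2+b^2)$ (and the square of that product for the second-derivative cases). I would then exhibit the resulting numerator as a sum of manifestly nonnegative terms in $a$, $b$, $d_1=1-a$ and $d_2=1-2a$, each nonnegative on the range in question, since $d_1\ge0$ and $d_2\ge0$ there.

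I anticipate the main obstacle to be twofold. First, the genuine $b^{-2}$ singularity of $M$ means that clearing denominators produces substantially higher-degree two-variable polynomials than in the $L_1,L_2,L_3$ cases, so finding the nonnegative decompositions is correspondingly heavier; as the text notes for these appendix lemmas, this is best carried out with a computer algebra system rather than by hand. Second, the constants $3$ and $9$ are close to the smallest admissible values, so the positivity certificates have very little slack: as $a\to0$ and $b\downarrow0$ one has $L_4\to\frac{2}{15}$ while $\partial_a^2 L_4$ grows like a fixed positive constant times $b^{-2}$, and the inequality $\partial_a^2 L_4<\frac{9}{b^2}L_4$ survives only by a narrow margin. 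Consequently I expect that the $\varphi$-terms must be retained fairly sharply in the lower estimate of $L_4$ near the endpoints $a=0$ and $a=\tfrac12$ (rather than crudely bounded by $\varphi\le1$) in order to absorb the additional perturbation losses and keep every numerator strictly positive.
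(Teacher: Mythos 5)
Your proposal is correct and is essentially the paper's own proof: the paper likewise isolates the $b^{-2}$-singular part (writing $L_4(a,b)=\frac{a^2(1-a)^2}{4b^2}+\frac{11-26a(1-a)}{60}+\frac{2b^2}{15}-\frac{\varphi(a,b)+\varphi(1-a,b)}{20}$, which differs from your $M$-based splitting only by a polynomial), bounds the four derivatives one-sidedly via Lemma~\ref{appendix-phi}, removes the $\widetilde{a},\widetilde{b}$ perturbations with the $|\widetilde{a}-a|\le b/50$ and $(49/50)$, $(51/50)$ factors exactly as you describe, and certifies each cleared inequality as a sum of manifestly nonnegative terms in $a$, $b$, $d_1$, $d_2$. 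The only discrepancy is your closing worry: the crude bound $\varphi\le1$ (giving $L_4\ge\frac{a^2(1-a)^2}{4b^2}+\frac{5-26a(1-a)+8b^2}{60}$) does suffice in the paper, so each inequality need only be multiplied by $60b^2$ rather than by $(a^2+b^2)((1-a)^2+b^2)$ times powers of $b$, and the resulting certificates, e.g.\ $86a^2d_1^2+(7ad_1-4b^2)^2+b^2\bigl(2+2ad_1+(1-2b)^2\bigr)$ for the $\partial^2_a$ case, stay strictly positive despite the narrow margin you correctly identified near $a=0$, $b\downarrow0$.
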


\begin{proof}\quad
From the definition of $L_4(a,b)$ and Lemma~\ref{appendix-phi}, we have
\begin{align*}
	L_4(a,b)\;\;&= \frac{(a^2+b^2)((1-a)^2+b^2)}{4b^2}-\frac{1+a^2+(1-a)^2+2b^2}{30} \\
			&\qquad\qquad -\frac{b^2}{20}\left(1+\frac{1}{a^2+b^2}+\frac{1}{(1-a)^2+b^2}\right) \\
		&= \frac{a^2(1-a)^2}{4b^2} + \frac{11-26a(1-a)}{60} + \frac{2b^2}{15}
			-\frac{\varphi(a,b)+\varphi(1-a,b)}{20} \\
		&\ge \frac{a^2(1-a)^2}{4b^2} + \frac{11-26a(1-a)}{60} + \frac{2b^2}{15}-\frac{1+1}{20} \\
		&=\frac{a^2(1-a)^2}{4b^2} + \frac{5 - 26a(1-a)+8b^2}{60}, \\
	\frac{\partial L_4}{\partial a}(a,b)\; &= (1-2a)\left(\frac{a(1-a)}{2b^2}-\frac{13}{30}\right)
			-\frac{\varphi_a(a,b)-\varphi_a(1-a,b)}{20} \\
		&\ge (1-2a)\left(\frac{a(1-a)}{2b^2}-\frac{13}{30}\right)
			-\frac{1}{20}\cdot\frac{2}{3b} \\
		&=(1-2a)\left(\frac{a(1-a)}{2b^2}-\frac{13}{30}\right)-\frac{1}{30b}, \\
	\frac{\partial L_4}{\partial a}(a,b)\; &= (1-2a)\left(\frac{a(1-a)}{2b^2}-\frac{13}{30}\right)
			-\frac{\varphi_a(a,b)-\varphi_a(1-a,b)}{20} \\
		&\le (1-2a)\left(\frac{a(1-a)}{2b^2}-\frac{13}{30}\right)+\frac{1}{20}\cdot\frac{2}{3b} \\
		&=(1-2a)\left(\frac{a(1-a)}{2b^2}-\frac{13}{30}\right)+\frac{1}{30b}, \\
	\frac{\partial^2 L_4}{\partial a^2}(\widetilde{a},b) &= \frac{1-6\widetilde{a}(1-\widetilde{a})}{2b^2} + \frac{13}{15}
			-\frac{\varphi_{aa}(\widetilde{a},b)+\varphi_{aa}(1-\widetilde{a},b)}{20} \\
		&\le \frac{1-6\widetilde{a}(1-\widetilde{a})}{2b^2} + \frac{13}{15}
			+ \frac{1}{20}\left( \frac{2}{b^2} + \frac{2}{b^2} \right) \\
		&=\frac{7-30a(1-a)-30(\widetilde{a}-a)(1-\widetilde{a}-a)}{10b^2}+\frac{13}{15} \\
		&\le\frac{7-30a(1-a)+30(b/45)}{10b^2}+\frac{13}{15} \\
		&=\frac{21-90a(1-a)+2b}{30b^2}+\frac{13}{15}, \\
	\frac{\partial L_4}{\partial b}(a,b)\; &= - \frac{a^2(1-a)^2}{2b^3} + \frac{4b}{15} 
			-\frac{\varphi_b(a,b)+\varphi_b(1-a,b)}{20} \\
		&\ge - \frac{a^2(1-a)^2}{2b^3} + \frac{4b}{15} 
			- \frac{1}{20}\left(\frac{1}{2b}+\frac{1}{2b}\right) \\
		&= - \frac{a^2(1-a)^2}{2b^3} - \frac{1}{20b} + \frac{4b}{15}, \\
	\frac{\partial L_4}{\partial b}(a,b)\; &= - \frac{a^2(1-a)^2}{2b^3} + \frac{4b}{15} 
			-\frac{\varphi_b(a,b)+\varphi_b(1-a,b)}{20} \\
		&\le - \frac{a^2(1-a)^2}{2b^3} + \frac{4b}{15}, \\
	\frac{\partial^2 L_4}{\partial b^2}(a,\widetilde{b})&= \frac{3a^2(1-a)^2}{2\widetilde{b}^4} + \frac{4}{15}
			-\frac{\varphi_{bb}(a,\widetilde{b})+\varphi_{bb}(1-a,\widetilde{b})}{20} \\
		&\le \frac{3a^2(1-a)^2}{2\widetilde{b}^4} + \frac{4}{15}
			+ \frac{1}{20}\left(\frac{7}{9\widetilde{b}^2}+\frac{7}{9\widetilde{b}^2} \right) \\
		&= \frac{3a^2(1-a)^2}{2\widetilde{b}^4} + \frac{7}{90\widetilde{b}^2} + \frac{4}{15} \\
		&\le \frac{3a^2(1-a)^2}{2(49/50)^4b^4} + \frac{7}{90(49/50)^2b^2} + \frac{4}{15} \\
		&\le \frac{49a^2(1-a)^2}{30b^4} + \frac{1}{12b^2} + \frac{4}{15}.
\end{align*}
Then,
\begin{align*}
	60b^2&\left(3L_4(a,b)+b\cdot\frac{\partial L_4}{\partial a}(a,b)\right) \ge 60b^2\left\{3\left(\frac{a^2(1-a)^2}{4b^2} + \frac{5 - 26a(1-a)}{60} + \frac{2b^2}{15}\right)\right. \\
			&\qquad\qquad\qquad\qquad\qquad\qquad\qquad
			\left. +b\left((1-2a)\left(\frac{a(1-a)}{2b^2}-\frac{13}{30}\right)-\frac{1}{30b}\right)\right\} \\
		&= 11ad_1^2b + 5(3ad_1-b)^2 + ab(7d_1-5b)^2 \\
			&\qquad\qquad + b^2\Big(2(1-b) + a(4+2a+3b) + 6(d_1-2b)^2\Big) \\
		&> 0, \\
	60b^2&\left(3L_4(a,b)-b\cdot\frac{\partial L_4}{\partial a}(a,b)\right) \ge 60b^2\left\{3\left(\frac{a^2(1-a)^2}{4b^2} + \frac{5 - 26a(1-a)}{60} + \frac{2b^2}{15}\right)\right. \\
			&\qquad\qquad\qquad\qquad\qquad\qquad\qquad
			\left. -b\left((1-2a)\left(\frac{a(1-a)}{2b^2}-\frac{13}{30}\right)+\frac{1}{30b}\right)\right\} \\
		&= 11a^2d_1b + 5(3ad_1-b)^2 + d_1b(7a-5b)^2 \\
			&\qquad\qquad + b^2\Big(2(1-b) + d_1(4+2d_1+3b) + 6(a-2b)^2\Big) \\
		&> 0, \\
	60b^2&\left(9L_4(a,b)-b^2\cdot\frac{\partial^2 L_4}{\partial a^2}(\widetilde{a},b)\right) \\
		&\ge 60b^2\left\{9\left(\frac{a^2(1-a)^2}{4b^2} + \frac{5 - 26a(1-a)}{60} + \frac{2b^2}{15}\right)\right. \\
			&\qquad\qquad\qquad\qquad\qquad\qquad\qquad
			\left. -b^2\left(\frac{21-90a(1-a)+2b}{30b^2}+\frac{13}{15}\right)\right\} \\
 		&= 86a^2d_1^2 + (7ad_1-4b^2)^2 + b^2\Big(2 + 2ad_1 + (1-2b)^2\Big) \\
		&> 0, \\
	60b^2&\left(3L_4(a,b)+b\cdot\frac{\partial L_4}{\partial b}(a,b)\right) \ge 60b^2\left\{3\left(\frac{a^2(1-a)^2}{4b^2} + \frac{5 - 26a(1-a)}{60} + \frac{2b^2}{15}\right)\right. \\
			&\qquad\qquad\qquad\qquad\qquad\qquad\qquad
			\left. +b\left(- \frac{a^2(1-a)^2}{2b^3} - \frac{1}{20b} + \frac{4b}{15}\right)\right\} \\
		&=b^2(12d_2^2 + 25b^2) + 15(ad_1-b^2)^2 \\
		&> 0, \\
	60b^2&\left(3L_4(a,b)-b\cdot\frac{\partial L_4}{\partial b}(a,b)\right)
		\ge 60b^2\left\{3\left(\frac{a^2(1-a)^2}{4b^2} + \frac{5 - 26a(1-a)}{60} + \frac{2b^2}{15}\right)\right. \\
			&\qquad\qquad\qquad\qquad\qquad\qquad\qquad
			\left. -b\left(- \frac{a^2(1-a)^2}{2b^3} + \frac{4b}{15}\right)\right\} \\
		&=b^2(3+12d_2^2+5b^2) + 3(5ad_1-b^2)^2  \\
		&> 0, \\
	60b^2&\left(9L_4(a,b)-b^2\cdot\frac{\partial^2}{\partial b^2}L_4(a,\widetilde{b})\right) \\
		&\ge 60b^2\left\{9\left(\frac{a^2(1-a)^2}{4b^2} + \frac{5 - 26a(1-a)}{60} + \frac{2b^2}{15}\right)\right. \\
			&\qquad\qquad\qquad\qquad\qquad\qquad
			\left. -b^2\left(\frac{49a^2(1-a)^2}{30b^4} + \frac{1}{12b^2} + \frac{4}{15}\right)\right\} \\
		&=b^2(40d_2^2+19b^2) + 37(ad_1-b^2)^2 \\
		&> 0,
\end{align*}
hold and the desired inequalities are shown.
\end{proof}

\begin{lemma} \label{Lj-limit}
For $0\le a\le1,\; 0<b$, it holds that
$$
	\lim_{y\downarrow0} L_j(a,y) < L_j(a,b), \qquad j=1,2,3.
$$
\end{lemma}
\begin{proof}\quad
Although we cannot substitute $a=0$ and $b=0$ into $L_j(a,b),\; j=1,2$ simultaneously,
we can easily confirm that $\lim_{y\downarrow0}L_j(a,y), \; j=1,2,3$ coincide with the expression
obtained by formally substituting $b=0$ into $L_j(a,b)$.
Then, we have
\begin{align*}
	&\frac{112(a^2 + b^2)(d_1^2+b^2)}{b^2}\left(L_1(a,b)-\lim_{y\downarrow0} L_1(a,y)\right)
		= 8(ad_1-b^2)^2 + b^2 > 0, \\
	&\frac{864(a^2 + b^2)(d_1^2+b^2)}{b^2}\left(L_2(a,b)-\lim_{y\downarrow0} L_2(a,y)\right)
		= 32(ad_1-b^2)^2 + 5b^2 > 0, \\
	&\frac{7968(d_1+a^2+b^2)(d_1+a^2)}{b^2}\left(L_3(a,b)-\lim_{y\downarrow0} L_3(a,y)\right) \\
	&\qquad = 52ad_1 + 39d_2^2 + 3a^2d_1^2(125 + 16d_2^2) + 3b^2(d_1 + a^2)(21 + 32b^2 + 24d_2^2) \\
	&\qquad > 0.
\end{align*}
\end{proof}

\begin{lemma} \label{quodoratic-form}
Assume that $0\le a\le \dfrac{1}{2},\; 0<b\le \dfrac{1}{10}$ and let
$c_1, c_2, c_3, S, G_1, G_2$ be the notation that appears in Theorem~\ref{C4-asymptotic}. Then,
$$
	S\left(\frac{c_1c_2c_3}{16}-\frac{41(c_1+c_2+c_3)}{1080}-\frac{1}{5}\left(\frac{1}{c_1}+\frac{1}{c_2}+\frac{1}{c_3}\right)\right)G_2 - G_1
		\ge 0
$$
holds true.
\end{lemma}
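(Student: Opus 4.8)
The plan is to exploit the fact that, for each fixed admissible $(a,b)$, the quantities $G_1$ and $G_2$ are \emph{quadratic forms} in the twelve coordinates $x=(u_1,u_2,u_3,w_1,\dots,w_9)$, so that the object to be shown non-negative is a single quadratic form
$$
Q_{a,b}(x)=\mu(a,b)\,G_2-G_1,\qquad
\mu(a,b)=S\left(\frac{c_1c_2c_3}{16}-\frac{41(c_1+c_2+c_3)}{1080}-\frac{1}{5}\left(\frac{1}{c_1}+\frac{1}{c_2}+\frac{1}{c_3}\right)\right).
$$
Writing $G_1=x^{T}Ax$ and $G_2=x^{T}Bx$ with symmetric $12\times12$ matrices $A,B$ whose entries are rational in $c_1,c_2,c_3,S$, the claim is exactly that $M:=\mu(a,b)B-A$ is positive semidefinite for every $(a,b)$ in the degenerate regime. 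I would first substitute $c_k=2(\cdots)/b$ and $S=b/2$, then clear denominators: since $b>0$ and each $c_k>0$ on the domain, multiplying $Q_{a,b}$ by a suitable positive monomial $\Phi(a,b)$ in $b$ and $c_1c_2c_3$ leaves the sign unchanged and renders every coefficient polynomial in $a,b$, reducing the assertion to the non-negativity of one polynomial that is homogeneous of degree two in $x$ (the ``polynomial in $14$ variables'' of the proof of Theorem~\ref{C4-asymptotic}).

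Next I would produce an explicit congruence (square-root-free Cholesky) factorization $M=L\,\mathrm{diag}(\sigma_k)\,L^{T}$, computed symbolically, which is equivalent to a sum-of-squares certificate
$$
\Phi(a,b)\,Q_{a,b}(x)=\sum_{k}\sigma_k(a,b)\,\ell_k(x)^2,
$$
where each $\ell_k$ is a linear form in $x$ with coefficients depending on $a,b$ and each $\sigma_k$ is a polynomial in $a,b$. Only positive \emph{semi}definiteness is claimed and is what suffices, because $G_2$ may be singular; its kernel is contained in that of $G_1$ (otherwise $C_4^{(2)}(T_{a,b})$ would be infinite, contradicting its existence), and it is absorbed into the vanishing pivots $\sigma_k=0$ of the factorization. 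The algebraic identity itself is to be confirmed by the formula-manipulation system, with the verification code placed alongside the other appendix listings; via \eqref{C_4uw}, \eqref{positive_definite}, \eqref{C4K4} and \eqref{L4L2} this certificate is precisely what forces $C_4(T_{a,b})<K_4(T_{a,b})$.

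It then remains to show $\sigma_k(a,b)\ge0$ throughout $0\le a\le\frac12,\ 0<b\le\frac1{10}$. Following the pattern of Lemma~\ref{L1-bound}--Lemma~\ref{L4-bound}, I would rewrite each $\sigma_k$ in terms of the manifestly non-negative quantities $a$, $d_1=1-a$, $d_2=1-2a$, $b$, together with the slack $\frac1{10}-b\ge0$, and display it as a combination of these with non-negative coefficients. The restriction $b\le\frac1{10}$ is essential: the bound $\mu$ is only asymptotically sharp as the triangle degenerates, so some pivots acquire the correct sign only after the small-$b$ hypothesis is used to dominate their negative contributions.

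The main obstacle is computational rather than conceptual: obtaining a congruence factorization of a $12\times12$ matrix with rational-function entries in $a,b$ and arranging the pivot order so that \emph{every} $\sigma_k$ admits a transparently non-negative expression on the closed degenerate regime. A careless pivot ordering yields $\sigma_k$ whose positivity is genuine but not visibly certifiable; the real work is to reorder the coordinates--exploiting the $c_1\leftrightarrow c_2\leftrightarrow c_3$ symmetry of the four-subtriangle construction--and to apportion the slack $\frac1{10}-b$ correctly, so that each resulting polynomial coefficient is an evident positive combination of $a,d_1,d_2,b$. Once such a decomposition is exhibited and the underlying identity is checked symbolically, the lemma follows.
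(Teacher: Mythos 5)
Your plan is essentially the paper's proof: after multiplying $S\,D\,G_2-G_1$ by the positive factor $48c_1^2c_2^2c_3^2$, the paper exhibits a computer-verified sum-of-squares identity for the resulting quadratic form, whose coefficients ($D$, $E$, $q_i$, $r_i$, \dots) are shown positive by rewriting them in $a$, $d_1=1-a$, $d_2=1-2a$, $b$ with factors $(1-100b^2)$ carrying the hypothesis $b\le\frac{1}{10}$ --- exactly the certificate-plus-pivot-positivity scheme you describe, including the exploitation of the $c_1,c_2,c_3$ symmetry through aggregated variables $v_1,\dots,v_9$. The only cosmetic difference is that instead of a complete $LDL^{T}$ factorization the paper stops at a residual $3\times3$ quadratic form in $(v_1,v_2,v_3)$ and certifies its positive semidefiniteness by a principal-minors criterion (Lemma~\ref{positive-definiteness} applied in Lemma~\ref{positive-definiteness2}), which sidesteps the pivot-ordering difficulty you flag.
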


\begin{proof}\quad
Let
$$
	D = \frac{c_1c_2c_3}{16}-\frac{41(c_1+c_2+c_3)}{1080}-\frac{1}{5}\left(\frac{1}{c_1}+\frac{1}{c_2}+\frac{1}{c_3}\right)
$$
and
$$
	H=48c_1^2c_2^2c_3^2(SDG_2 - G_1).
$$
Moreover, let
\begin{align*}
	E &= 48D-c_1-c_2-c_3, \\
	q_1&=2E + 2c_1 + c_2 + c_3, \\
	q_2&=2E + 2c_2 + c_3 + c_1, \\
	q_3&=2E + 2c_3 + c_1 + c_2, \\
	r_1&=\frac{c_1(12D(E+c_1) + 1)}{(2E + c_1 + 2c_2 + 2c_3)^2}, \\
	r_2&=\frac{c_2(12D(E+c_2) + 1)}{(2E + c_2 + 2c_3 + 2c_1)^2}, \\
	r_3&=\frac{c_3(12D(E+c_3) + 1)}{(2E + c_3 + 2c_1 + 2c_2)^2}, \\
	v_1&=c_2c_3(w_1+w_4), \\
	v_2&=c_3c_1(w_2+w_5), \\
	v_3&=c_1c_2(w_3+w_6), \\
	v_4&=w_1+w_2-w_3+w_4-w_5+w_6+2(w_8+w_9), \\
	v_5&=w_2+w_3-w_1+w_5-w_6+w_4+2(w_9+w_7), \\
	v_6&=w_3+w_1-w_2+w_6-w_4+w_5+2(w_7+w_8), \\
	v_7&=c_1c_2c_3(4q_1(w_4-w_8+w_9)-q_1(v_5-v_6)-(c_2-c_3)v_4), \\
	v_8&=c_1c_2c_3(4q_2(w_5-w_9+w_7)-q_2(v_6-v_4)-(c_3-c_1)v_5), \\
	v_9&=c_1c_2c_3(4q_3(w_6-w_7+w_8)-q_3(v_4-v_5)-(c_1-c_2)v_6).
\end{align*}
Then, from
\begin{align*}
	\frac{45b^3E}{4}&=\frac{108(ad_1-b^2)^2(1 + d_2^2 + 4b^2)}{c_1c_2} \\
	&\qquad + 270a^2d_1^2 + b^2\Big(47ad_1 + 88d_2^2 + (1-100b^2)\Big) + 81b^4 \\
	&>0,
\end{align*}
$D,E,q_1,q_2,q_3,r_1,r_2$, and $r_3$ are all positive and the following equality holds.
\begin{align*}
	H=& 16\left\{
		\frac{1}{c_2c_3q_1}\Big( c_1c_2c_3\Big( q_1(u_1-u_2-u_3) - 6D(c_1-c_2-c_3)v_4 \Big) - 96Dv_1 \Big)^2 \right. \\
	&\qquad\qquad + \frac{1}{c_3c_1q_2}\Big( c_1c_2c_3\Big( q_2(u_2-u_3-u_1) - 6D(c_2-c_3-c_1)v_5 \Big) - 96Dv_2 \Big)^2 \\
	&\qquad\qquad\qquad\left. + \frac{1}{c_1c_2q_3}\Big( c_1c_2c_3\Big( q_3(u_3-u_1-u_2) - 6D(c_3-c_1-c_2)v_6 \Big) - 96Dv_3 \Big)^2 \right\} \\
	& + \left\{
		\frac{1}{c_2c_3q_1}\Big( q_1(2c_1v_1 - c_2v_2 + c_3v_3) + (c_2-c_3)(96D-c_1)v_1 - v_7 \Big)^2 \right. \\
	&\qquad\qquad + \frac{1}{c_3c_1q_2}\Big( q_2(2c_2v_2 - c_3v_3 + c_1v_1) + (c_3-c_1)(96D-c_2)v_2 - v_8 \Big)^2 \\
	&\qquad\qquad\qquad\left. + \frac{1}{c_1c_2q_3}\Big( q_3(2c_3v_3 - c_1v_1 + c_2v_2) + (c_1-c_2)(96D-c_3)v_3 - v_9 \Big)^2 \right\} \\
	& + 4\left\{
		\frac{c_1}{q_1r_1}\Big( v_1 + r_1c_2c_3(96D-c_1)v_4 \Big)^2
		+ \frac{c_2}{q_2r_2}\Big( v_2 + r_2c_3c_1(96D-c_2)v_5 \Big)^2 \right. \\
	&\qquad\qquad\qquad\qquad\qquad\qquad\qquad\qquad\qquad
		\left. + \frac{c_3}{q_3r_3}\Big( v_3 + r_3c_1c_2(96D-c_3)v_6 \Big)^2 \right\} \\
	&+ 8\left\{
		\frac{q_1c_1 + 3D(c_2+c_3)(c_2-c_3)^2}{q_1(12D(E+c_1)+1)}v_1^2
			+ \frac{q_2c_2 + 3D(c_3+c_1)(c_3-c_1)^2}{q_2(12D(E+c_2)+1)}v_2^2 \right. \\
	&\qquad\qquad\qquad\left. + \frac{q_3c_3 + 3D(c_1+c_2)(c_1-c_2)^2}{q_3(12D(E+c_3)+1)}v_3^2 \right\} \\
	&+ \frac{2}{E}\left\{\frac{(c_2-c_3)^2 + 32}{E+c_1}\left(c_1 + \frac{E}{12D(E+c_1)+1}\right)v_1^2 \right. \\
	&\qquad+ \frac{(c_3-c_1)^2 + 32}{E+c_2}\left(c_2 + \frac{E}{12D(E+c_2)+1}\right)v_2^2 \\
	&\qquad\qquad \left. + \frac{(c_1-c_2)^2 + 32}{E+c_3}\left(c_3 + \frac{E}{12D(E+c_3)+1}\right)v_3^2\right\} \\
	&+ \frac{2}{E}\left\{\Big( 8E(3Dc_1-1)c_1 - (c_2-c_3)^2 - 32 \Big)v_1^2 \right. \\
	&\qquad\qquad + \Big( 8E(3Dc_2-1)c_2 - (c_3-c_1)^2 - 32 \Big)v_2^2 \\
	&\qquad\qquad\qquad + \Big( 8E(3Dc_3-1)c_3 - (c_1-c_2)^2 - 32 \Big)v_3^2 \\
	&\qquad\qquad \left. + 48DE\Big( (c_1c_2-4)v_1v_2 + (c_2c_3-4)v_2v_3 + (c_3c_1-4)v_3v_1 \Big)\right\}.
\end{align*}
Therefore,
\begin{align*}
	\frac{EH}{2}&\ge\Big( 8E(3Dc_1-1)c_1 - (c_2-c_3)^2 - 32 \Big)v_1^2 \\
	&\qquad + \Big( 8E(3Dc_2-1)c_2 - (c_3-c_1)^2 - 32 \Big)v_2^2 \\
	&\qquad\qquad + \Big( 8E(3Dc_3-1)c_3 - (c_1-c_2)^2 - 32 \Big)v_3^2 \\
	&\qquad\qquad\qquad+ 48DE\Big( (c_1c_2-4)v_1v_2 + (c_2c_3-4)v_2v_3 + (c_3c_1-4)v_3v_1 \Big)
\end{align*}
holds, and by Lemma~\ref{positive-definiteness2}, which will be discussed later,
we can confirm that this quadratic form regarding $v_1,v_2,v_3$ is positive definite, and $H\ge0$ is shown.
\end{proof}

Here, as a preliminary step, we present a lemma on the positive definiteness of quadratic forms.
\begin{lemma} \label{positive-definiteness}
Assume that $A_1,A_2,A_3,B_1,B_2,B_3\in\mathbb{R}$ satisfy
$$
	\left\{\begin{aligned}
		&B_1B_2B_3 \ge 0, \\
		&A_3 > 0, \\
		&A_1B_1^2+A_2B_2^2-2B_1B_2B_3 > 0, \\
		&A_1A_2A_3 - A_1B_1^2 - A_2B_2^2 - A_3B_3^2 + 2B_1B_2B_3 > 0.
	\end{aligned}\right.
$$
Then, for arbitrary $v_1,v_2,v_3\in\mathbb{R}$,
$$
	A_1v_1^2 + A_2v_2^2 + A_3v_3^2 + 2B_1v_2v_3 + 2B_2v_3v_1 + 2B_3v_1v_2\ge0
$$
holds.
\end{lemma}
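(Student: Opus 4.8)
The plan is to read the quadratic form as $v^{T}Mv$ for the symmetric matrix
\[
	M=\begin{pmatrix} A_1 & B_3 & B_2 \\ B_3 & A_2 & B_1 \\ B_2 & B_1 & A_3 \end{pmatrix},
\]
and to notice at the outset that the fourth hypothesis is exactly $\det M>0$, since expanding the determinant gives $A_1A_2A_3-A_1B_1^2-A_2B_2^2-A_3B_3^2+2B_1B_2B_3$. Since $A_3>0$ by the second hypothesis, the first step I would take is to eliminate $v_3$ by completing the square (equivalently, by taking the Schur complement of the $(3,3)$ entry): the form equals
\[
	A_3\Bigl(v_3+\tfrac{B_2v_1+B_1v_2}{A_3}\Bigr)^2+\frac{1}{A_3}\,Q(v_1,v_2),
\]
where the squared term is nonnegative, so everything reduces to showing that the $2\times 2$ form
\[
	Q(v_1,v_2)=\alpha v_1^2+\beta v_2^2+2\gamma v_1v_2,\qquad
	\alpha=A_1A_3-B_2^2,\ \ \beta=A_2A_3-B_1^2,\ \ \gamma=A_3B_3-B_1B_2,
\]
is nonnegative for all $v_1,v_2$.

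Next I would settle the sign of $Q$ by two observations. First, a direct expansion gives the identity $\alpha\beta-\gamma^2=A_3\det M$, which is strictly positive by the second and fourth hypotheses; hence the two eigenvalues of the matrix of $Q$ share the same nonzero sign, so $Q$ is either positive definite or negative definite. Second, to rule out the negative case I would test $Q$ at the particular vector $(v_1,v_2)=(B_1,-B_2)$: after the $B_1^2B_2^2$ contributions cancel, this collapses to
\[
	Q(B_1,-B_2)=A_3\bigl(A_1B_1^2+A_2B_2^2-2B_1B_2B_3\bigr),
\]
which is strictly positive by the second and third hypotheses. The third hypothesis also forces $(B_1,B_2)\neq(0,0)$, so $(B_1,-B_2)$ is a genuine nonzero argument; therefore $Q$ takes a positive value and cannot be negative definite, leaving $Q$ positive definite.

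Combining the two reductions yields $Q\ge 0$ everywhere, and hence $v^{T}Mv\ge 0$ for all $(v_1,v_2,v_3)$, which is the claim. I do not expect a real obstacle: the whole argument is Sylvester's criterion executed after a Schur-complement reduction, and the only genuinely non-mechanical choice is the test vector $(B_1,-B_2)$, which is precisely what converts the third hypothesis into ``$Q$ is not negative definite''; once it is spotted, both the determinant identity $\alpha\beta-\gamma^2=A_3\det M$ and the evaluation of $Q(B_1,-B_2)$ are routine expansions. Only hypotheses two through four enter this route; the first hypothesis $B_1B_2B_3\ge 0$ is not needed here, and is presumably retained because it is readily available in the intended application.
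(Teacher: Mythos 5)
Your proof is correct, and it takes a genuinely different route from the paper's. The paper verifies Sylvester's criterion on the leading principal minors in the standard order: it first shows $A_1B_1^2+A_2B_2^2>0$ (this is the one place hypothesis $B_1B_2B_3\ge0$ is used), then $A_1A_2-B_3^2>0$ via the identity $A_3(A_1A_2-B_3^2)=(A_1B_1^2+A_2B_2^2-2B_1B_2B_3)+\det M$, and finally $A_1>0$ by a case split on whether $B_2=0$, using the identity $A_1(A_1B_1^2+A_2B_2^2)=(A_1A_2-B_3^2)B_2^2+A_1^2B_1^2+B_2^2B_3^2$. You instead pivot on the $(3,3)$ entry: the Schur-complement reduction, the identity $\alpha\beta-\gamma^2=A_3\det M$, and the evaluation $Q(B_1,-B_2)=A_3\bigl(A_1B_1^2+A_2B_2^2-2B_1B_2B_3\bigr)$ all check out, and the logic (determinant positive $\Rightarrow$ $Q$ definite; a positive value at a nonzero vector $\Rightarrow$ not negative definite) is sound. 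What your route buys is twofold: it eliminates the case analysis on $B_2$, and, as you correctly observe, it never invokes the hypothesis $B_1B_2B_3\ge0$, so you have in fact proved the lemma under strictly weaker assumptions than stated; the paper's argument, by contrast, genuinely relies on that hypothesis to get $A_1B_1^2+A_2B_2^2>0$ started. What the paper's route buys is that it lands directly on the textbook form of Sylvester's criterion (positivity of the three leading minors), which is why it can close by citing the general theory with no further argument, whereas your proof carries a small extra logical step (definite plus one positive value implies positive definite) that a reader must accept.
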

\begin{proof}\quad
Under the assumptions, it holds that
\begin{align*}
	A_1B_1^2+A_2B_2^2&\ge A_1B_1^2+A_2B_2^2-2B_1B_2B_3 \\
	&> 0, \\
	A_1A_2-B_3^2&=\frac{1}{A_3}\Big((A_1B_1^2+A_2B_2^2-2B_1B_2B_3) \\
	&\qquad\qquad + (A_1A_2A_3 - A_1B_1^2 - A_2B_2^2 - A_3B_3^2 + 2B_1B_2B_3)\Big) \\
	&> \frac{1}{A_3}(A_1B_1^2+A_2B_2^2-2B_1B_2B_3) \\
	&> 0.
\end{align*}
When $B_2=0$, we have $A_1>0$ by
$$
	A_1B_1^2 = A_1B_1^2+A_2B_2^2 > 0
$$
and when $B_2\not=0$, we also have $A_1>0$ by
\begin{align*}
	A_1&=\frac{(A_1A_2 - B_3^2)B_2^2 + A_1^2B_1^2 + B_2^2B_3^2}{A_1B_1^2 + A_2B_2^2} \\
	&\ge\frac{ (A_1A_2 - B_3^2)B_2^2}{A_1B_1^2 + A_2B_2^2} \\
	&> 0.
\end{align*}
Therefore,
\begin{align*}
	&A_1>0,\qquad \begin{vmatrix} A_1 & B_3 \\ B_3 & A_2 \end{vmatrix}=A_1A_2-B_3^2>0, \\
	&\begin{vmatrix} A_1 & B_3 & B_2 \\ B_3 & A_2 & B_1 \\ B_2 & B_1 & A_3\\\end{vmatrix}
		=A_1A_2A_3 + 2B_1B_2B_3 - A_1B_1^2 - A_2B_2^2 - A_3B_3^2 > 0
\end{align*}
hold. Then, the general theory of quadratic forms and its positive definiteness (see \cite{HornJohnson})
lead us to the conclusion.
\end{proof}

\begin{lemma} \label{positive-definiteness2}
Assume that $0\le a\le \dfrac{1}{2},\; 0<b\le \dfrac{1}{10}$ and let
$c_1, c_2, c_3, D$, and $E$ be the notation that appears in Theorem~\ref{C4-asymptotic} and Lemma~\ref{quodoratic-form}.
Furthermore, let
\begin{align*}
	A_1&=8E(3Dc_1-1)c_1 - (c_2-c_3)^2 - 32, \\
	A_2&=8E(3Dc_2-1)c_2 - (c_3-c_1)^2 - 32, \\
	A_3&=8E(3Dc_3-1)c_3 - (c_1-c_2)^2 - 32, \\
	B_1&=24DE(c_2c_3-4), \\
	B_2&=24DE(c_3c_1-4), \\
	B_3&=24DE(c_1c_2-4).
\end{align*}
Then, for arbitrary $v_1,v_2,v_3\in\mathbb{R}$,
$$
	A_1v_1^2 + A_2v_2^2 + A_3v_3^2 + 2B_1v_2v_3 + 2B_2v_3v_1 + 2B_3v_1v_2\ge0
$$
holds true.
\end{lemma}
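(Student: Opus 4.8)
The plan is to deduce the claim from the abstract criterion of Lemma~\ref{positive-definiteness}: that lemma guarantees nonnegativity of the quadratic form once the four scalar conditions
$$B_1B_2B_3\ge 0,\qquad A_3>0,\qquad A_1B_1^2+A_2B_2^2-2B_1B_2B_3>0,$$
$$A_1A_2A_3-A_1B_1^2-A_2B_2^2-A_3B_3^2+2B_1B_2B_3>0$$
are verified for the specific $A_i,B_i$ defined here. Throughout I would substitute $c_1=2((1-a)^2+b^2)/b$, $c_2=2(a^2+b^2)/b$, $c_3=2/b$ and use that $D>0$ and $E>0$ on the whole parameter range; the positivity of $E$ is exactly the identity for $45b^3E/4$ already exhibited in the proof of Lemma~\ref{quodoratic-form}, where the term $1-100b^2\ge 0$ uses the hypothesis $b\le\tfrac{1}{10}$.

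For the first condition, since $B_i=24DE(c_jc_k-4)$ with $D,E>0$, it suffices to show each pairwise product $c_jc_k\ge 4$. Direct computation gives
$$c_2c_3-4=\frac{4a^2}{b^2}\ge 0,\qquad c_3c_1-4=\frac{4(1-a)^2}{b^2}\ge 0,$$
and, after expanding the numerator as a perfect square,
$$c_1c_2-4=\frac{4\left(a(1-a)-b^2\right)^2}{b^2}\ge 0,$$
so that $B_1B_2B_3=(24DE)^3(c_2c_3-4)(c_3c_1-4)(c_1c_2-4)\ge 0$.

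For the remaining three conditions I would clear denominators, turning each into the assertion that a polynomial in $a,b$ is positive on $0\le a\le\tfrac{1}{2}$, $0<b\le\tfrac{1}{10}$. The idea is to rewrite these polynomials, exactly in the style of Lemmas~\ref{L1-bound}--\ref{L4-bound}, as sums of terms manifestly nonnegative on the domain, built from the nonnegative quantities $d_1=1-a$, $d_2=1-2a$, $b$, and $1-100b^2$, together with evident squares. The choice of $A_3$ as pivot is convenient because $c_1-c_2=2d_2/b$ is simple, which keeps $A_3$ and the third condition (mixing $A_1,A_2$ with $B_1^2,B_2^2$) tractable; the correctness of the resulting algebraic identities would be confirmed with the Symbolic Math Toolbox, as done elsewhere in the Appendix.

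The main obstacle is the fourth (determinant) condition: it contains the triple product $A_1A_2A_3$ and hence a polynomial of high degree in $a,b$ with a very large number of monomials, so producing a transparent nonnegative decomposition is by far the most delicate bookkeeping of the argument. Once that identity is secured, all four hypotheses of Lemma~\ref{positive-definiteness} hold, and the asserted nonnegativity of $A_1v_1^2+A_2v_2^2+A_3v_3^2+2B_1v_2v_3+2B_2v_3v_1+2B_3v_1v_2$ follows at once.
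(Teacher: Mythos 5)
Your overall strategy coincides with the paper's: reduce the claim to Lemma~\ref{positive-definiteness} and verify its four hypotheses as polynomial positivity statements on $0\le a\le\frac{1}{2}$, $0<b\le\frac{1}{10}$, using $D>0$, $E>0$ from Lemma~\ref{quodoratic-form}. The one hypothesis you verify in full, $B_1B_2B_3\ge0$, is handled correctly: your factorizations $c_2c_3-4=4a^2/b^2$, $c_3c_1-4=4(1-a)^2/b^2$ and $c_1c_2-4=4\bigl(a(1-a)-b^2\bigr)^2/b^2$ recombine exactly into the paper's single identity $b^6B_1B_2B_3=884736\,D^3E^3a^2d_1^2(ad_1-b^2)^2\ge0$, and your appeal to $1-100b^2\ge0$ for $E>0$ matches the paper's use of the hypothesis $b\le\frac{1}{10}$.

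The gap is that for the other three hypotheses --- $A_3>0$, $A_1B_1^2+A_2B_2^2-2B_1B_2B_3>0$, and the determinant condition --- you only announce the intention to find sum-of-nonnegative-terms decompositions; you do not produce them, and producing them is essentially the entire content of the paper's proof. The paper exhibits explicit certificates: identities expressing $\frac{2025b^6}{c_1c_2}A_3$, $\frac{225b^{10}}{4096E^2D^2}\bigl(A_1B_1^2+A_2B_2^2-2B_1B_2B_3\bigr)$, and $\frac{8303765625\,b^{24}c_1^4c_2^4}{16384}\bigl(A_1A_2A_3+2B_1B_2B_3-A_1B_1^2-A_2B_2^2-A_3B_3^2\bigr)$ as sums of terms built from $ad_1$, $d_2^2$, $(ad_1-b^2)^2$, $1-100b^2$ and even powers of $b$, each manifestly nonnegative on the stated range; the last identity involves terms up to $b^{38}$ with integer coefficients of roughly twenty digits. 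Asserting that such decompositions ``would be confirmed with the Symbolic Math Toolbox'' is not a proof: a computer algebra system can check a proposed identity, but it does not by itself find the decomposition, and positivity of a polynomial on a box does not automatically guarantee a certificate of the particular form you describe. Until those three identities (or a substitute rigorous argument covering the whole parameter box) are actually supplied, the lemma is not established; what you have is a correct plan whose hardest step --- the step you yourself flag as the main obstacle --- remains unexecuted.
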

\begin{proof}\quad
It is sufficient to confirm that $A_1,A_2,A_3,B_1,B_2,B_3$ satisfy the assumptions of Lemma~\ref{positive-definiteness}.
Since $D>0, E>0$ is shown in Lemma~\ref{quodoratic-form}, $B_1B_2B_3\ge 0$ is shown by
$$
	b^6B_1B_2B_3=884736D^3E^3a^2d_1^2(ad_1-b^2)^2\ge 0.
$$
Other assumptions of Lemma~\ref{positive-definiteness} can be shown by the following transformations.
\begin{align*}
	\qquad&\!\!\!\!\!\!\!\!\!\!\!\!
		\frac{2025b^6}{c_1c_2}A_3
	=\frac{1492992d_2^2b^2}{c_1^3c_2^3} \\
	& + \frac{6912(ad_1-b^2)^4}{b^2c_1^2c_2^2}\Big(209ad_1 + 524b^2 + d_2^2(209 + 99b^2) + 396b^4\Big) \\
	& + \frac{4b^2}{c_1c_2}\Big(625080b^2 + 1047320b^4 + d_2^2(768 + 29840d_2^2 + 367663b^2)\Big) \\
	& + 722304ad_1b^2 + 583200(ad_1-b^2)^2 + (2603105 + 171072ad_1)b^4 \\
	& + b^2(1-100b^2)(41472 + 1711b^2) + 28b^6 \\
	&> 0,
\end{align*}
\begin{align*}
	\qquad&\!\!\!\!\!\!\!\!\!\!\!\!
		\frac{225b^{10}}{4096E^2D^2}(A_1B_1^2+A_2B_2^2-2B_1B_2B_3) \\
	&=\frac{432b^4(ad_1-b^2)^2}{c_1c_2}\left\{
		\frac{864d_2^2(1+2b^2)}{c_1c_2}
			+ 209 + 2334b^2 + 4696b^4 + 1728b^6 \right. \\
	&\qquad \left.\phantom{\frac{()^2}{c_1}}
		+ d_2^2(411 + 1542b^2 + 432b^4)\right\} \\
	& + b^2(1-100b^2)(300b^4 + 3225b^6 + 1867b^8 + 392040a^4d_1^4) \\
	& + 4b^4(1-10ad_1)^2(93271a^2d_1^2 + 3707d_2^2b^2) + 1166400a^6d_1^6 \\
	& + 1426680a^4d_1^4d_2^2b^2 + 54a^3d_1^3b^4(49418 + 230061ad_1) \\
	& + ad_1b^6\Big(12564d_2^2 + ad_1(14187+713783d_2^2)\Big) \\
	& + b^8( 93d_2^2 + 3075550a^2d_1^2 + 3649d_2^4) + b^{10}(89 + 48752d_2^2) + 76b^{12} \\
	&> 0,
\end{align*}
\begin{align*}
	\qquad&\!\!\!\!\!\!\!\!\!\!\!\!
		\frac{8303765625b^{24}c_1^4c_2^4}{16384}(A_1A_2A_3 + 2B_1B_2B_3 - A_1B_1^2 - A_2B_2^2 - A_3B_3^2) \\
	&=\frac{406239826673664d_2^6b^{18}}{c_1^2c_2^2} \\
	& + \frac{940369969152d_2^4b^{14}}{c_1c_2}\Big(6b^2(5ad_1-18b^2)^2 + a^2d_1^2b^2(2576b^2 + 29(1-100b^2)) \\
	&\qquad + 1253a^2d_1^2(ad_1-b^2)^2 + 6b^4(ad_1+54(ad_1-2b^2)^2)\Big) \\
	&+ b^4(1-100b^2)\Big(12121552738440000a^{14}d_1^{14} + 38379608724728421a^{12}d_1^{12}b^2 \\
	&\qquad + 18971342095206083a^{10}d_1^{10}b^4 + 2090743154936166a^8d_1^8b^6 \\
	&\qquad + 279697348741361a^6d_1^6b^8 + 3822652869396a^4d_1^4b^{10} + 756253238998a^2d_1^2b^{12} \\
	&\qquad + 4091161727505b^{14} + 12150003729052b^{16} + 50821685560135b^{18} \\
	&\qquad + 98411555030530b^{20} + 122686738307665b^{22} + 97941102999784b^{24} \\
	&\qquad + 50251546065961b^{26} + 92177583947338b^{28} + 7977409739887423b^{30}\Big) \\
	&+ a^3d_1^3b^8(1-8ad_1)^2\Big(a^7d_1^7(201474903892777042 + 90643664222052075d_2^4) \\
	&\qquad + 87413325433574170a^5d_1^5d_2^4b^2 + a^3d_1^3d_2^4b^4(26280162838987044ad_1 \\
	&\qquad\qquad + 18393062792105615d_2^2 + 263384776326212272a^2d_1^2) \\
	&\qquad  + 12ad_1d_2^2b^6(921487390368760ad_1 + 349514116301673d_2^2) \\
	&\qquad  + 1983138862306248d_2^{10}b^8\Big) \\
	&+ 47569496486400000a^{18}d_1^{18}(3ad_1 + d_2^2) \\
	&+ 6802444800000b^2a^{16}d_1^{16}(13631ad_1 + 38284d_2^2 + 108214a^2d_1^2) \\
	&+ 24603750b^4a^{14}d_1^{14}(900363843 + 7650287352d_2^2 + 10182035079d_2^4 + 3366302686d_2^6) \\
	&+ 9b^6a^{12}d_1^{12}\Big(60274379589129731d_2^6 + 647576875800691136a^4d_1^4 \\
	&\qquad + ad_1d_2^2(16761821 + 32817780980698951d_2^2 + 123184084009465804ad_1d_2^2)\Big) \\
	&+ 4b^8a^{11}d_1^{11}d_2^2\Big(89625675600532618d_2^4 \\
	&\qquad + ad_1(1232319731556914497 + 172d_2^2 + 282706757366186700d_2^4)\Big) \\
	&+ b^{10}a^9d_1^9\Big(252a^2d_1^2d_2^2 + 8d_2^4(680730978067758 + 176303141792417999ad_1) \\
	&\qquad + a^3d_1^3(28350046403323666988 + 48947181654129068587d_2^2 \\
	&\qquad\qquad + 10169754750431798896d_2^4)\Big) \\
	&+ 2b^{12}a^9d_1^9\Big(738d_2^4 + a^2d_1^2(6476622057673320575 + 16775086553269092583d_2^2) \\
	&\qquad + ad_1d_2^4(3205280764781424730 + 2163731183135420737d_2^2)\Big) \\
	&+ b^{14}a^6d_1^6\Big(7a^3d_1^3(267606134502121283 + 6725d_2^2) \\
	&\qquad + 3ad_1d_2^6(41287243358734152 + 1035751844957493491ad_1) \\
	&\qquad + 102133612824349884d_2^{10} \\
	&\qquad + 2a^4d_1^4d_2^2(4444298389065673601 + 11374900226453699636d_2^2)\Big) \\
	&+ 2b^{16}a^2d_1^2\Big(38a^7d_1^7(1217 + 119048568159419288ad_1d_2^2 + 150837118919724652d_2^4) \\
	&\qquad + 2834407261786822999a^6d_1^6d_2^6 + 1114301757844765956a^5d_1^5d_2^8 \\
	&\qquad + 3612664492939608a^2d_1^2d_2^{14} + d_2^{10}(485477278694229 \\
	&\qquad\qquad + 11576717008928092a^3d_1^3 + 297856692052747442a^4d_1^4)\Big) \\
	&+ b^{18}\Big(a^7d_1^7(80161 + 613280621514985073ad_1d_2^2 + 2168086120174861319d_2^4) \\
	&\qquad + 3a^5d_1^5d_2^6(201266834139916751 + 140164857867870021d_2^2) \\
	&\qquad + 134092514706817166a^4d_1^4d_2^{10} + a^2d_1^2d_2^{12}(958467215549044 \\
	&\qquad\qquad + 23606915947923884ad_1 + 113721778713257259a^2d_1^2) \\
	&\qquad + 850988675449304ad_1d_2^{18} + 55106777425135d_2^{20}\Big) \\
	&+ ad_1b^{20}\Big(5005973317536822ad_1 + 6878225893133844d_2^{10} \\
	&\qquad + 581926730554137620a^2d_1^2d_2^8 + 6500684793939788360a^7d_1^7 \\
	&\qquad + 19917465388756768960a^6d_1^6d_2^2 + 22114487883990056318a^5d_1^5d_2^4 \\
	&\qquad + 2a^3d_1^3d_2^6(697437936152400250 + 2785841237498890944ad_1 \\
	&\qquad\qquad + 1482271309993033463a^2d_1^2)\Big) \\
	&+ 2ad_1b^{22}\Big(23130096864387570d_2^8 + 2315937521698808738a^5d_1^5 \\
	&\qquad + 840904065784953540a^2d_1^2d_2^6 \\
	&\qquad + ad_1d_2^2(14037263072295615 + 2867348900259792053a^3d_1^3) \\
	&\qquad + 3a^3d_1^3d_2^4(332465786752201665 + 682141502084226293ad_1 \\
	&\qquad\qquad + 977253515068504924a^2d_1^2)\Big) \\
	&+ ad_1b^{24}\Big(1278193172773325750a^3d_1^3 + 577141788962864220ad_1d_2^4 \\
	&\qquad + a^2d_1^2d_2^2(794463032340756283 + 1790154667867305990ad_1) \\
	&\qquad + 136816263813704700d_2^8 \\
	&\qquad + 3a^2d_1^2d_2^6(295254266683834159 + 189245075752900744ad_1)\Big) \\
	&+ ad_1b^{26}(7342158644044464784a^4d_1^4 + 13195017945764608985a^3d_1^3d_2^2 \\
	&\qquad + 6273782071117534828a^2d_1^2d_2^4 + 1505967078540509160ad_1d_2^6 \\
	&\qquad + 222276557026854540d_2^8 + 11142172580460000000a^5d_1^5) \\
	&+ 2ad_1b^{28}\Big(371333521783157905a^2d_1^2 + 110859693910065990d_2^4 \\
	&\qquad + ad_1d_2^2(113043479305733967 + 681897555397501901ad_1 \\
	&\qquad\qquad + 350076133701617264a^2d_1^2)\Big) \\
	&+ ad_1b^{30}\Big(43083707928314887 + 94188165135443957d_2^4 \\
	&\qquad + 2ad_1d_2^2(38270182911979807 + 19065464961694444d_2^2)\Big) \\
	&+ 4ad_1b^{32}\Big(59079847399876688a^2d_1^2 \\
	&\qquad + 3d_2^2(3833975366004819 + 7362306726062380ad_1)\Big) \\
	&+ 4ad_1b^{34}(1097149099950981 + 4490808002806144ad_1) \\
	&+ 4b^{36}(199324506182122989 + 113055729475640d_2^2) + 84587928319744b^{38} \\
	&> 0.
\end{align*}

\end{proof}

\section{MATLAB code for verification} \label{Appendix2}
In this appendix, we present MATLAB codes used in this study.
To run these codes, Symbolic Math Toolbox, an optional add-on to MATLAB, is also required.
All programs listed here are available for download on GitHub~\cite{GitHub}.

{\tt C1\_verify.m}, {\tt C2\_verify.m}, and {\tt C3\_verify.m} are codes that verify Theorem~\ref{Verified_Result1}
and Theorem~\ref{Verified_Result2} for $C_1, C_2, C_3$, and {\tt C4\_verify.m} is a code that verifies
Theorem~\ref{Verified_Result1} for $C_4$.
INTALB \cite{Rump2,MooreKearfottCloud} is required to execute these codes.

Codes {\tt C1\_verify.m} to {\tt C4\_verify.m} include many subprograms;
the relationship among the inclusions is shown in Table~\ref{inclusion}.

\begin{table}[h!]
	\centering
{\footnotesize
\begin{tabular}{p{120pt}cccc}
\hline
 Subprograms & {\tt C1\_verify.m} & {\tt C2\_verify.m} & {\tt C3\_verify.m} & {\tt C4\_verify.m} \\
\hline
 {\tt create\_vertex\_list.m} & -- & -- & $\bigcirc$ & $\bigcirc$ \\
 {\tt create\_edge\_list.m} & $\bigcirc$ & $\bigcirc$ & $\bigcirc$ & $\bigcirc$ \\
 {\tt create\_face\_list.m} & $\bigcirc$ & $\bigcirc$ & -- & -- \\
 {\tt set\_triangle\_alpha.m} & $\bigcirc$ & $\bigcirc$ & -- & -- \\
 {\tt set\_triangle\_beta.m} & -- & -- & $\bigcirc$ & $\bigcirc$ \\
 {\tt F\_alpha\_0.m} & $\bigcirc$ & $\bigcirc$ & -- & -- \\
 {\tt F\_alpha\_1.m} & $\bigcirc$ & $\bigcirc$ & -- & -- \\
 {\tt F\_beta\_0.m} & -- & -- & $\bigcirc$ & -- \\
 {\tt F\_beta\_1.m} & -- & -- & -- & $\bigcirc$ \\
 {\tt F\_beta\_2.m} & -- & -- & $\bigcirc$ & $\bigcirc$ \\
 {\tt create\_coefficient\_matrix.m} & $\bigcirc$ & $\bigcirc$ & $\bigcirc$ & $\bigcirc$ \\
 {\tt L1ab.m} & $\bigcirc$ & -- & -- & -- \\
 {\tt L2ab.m} & -- & $\bigcirc$ & -- & $\bigcirc$ \\
 {\tt L3ab.m} & -- & -- & $\bigcirc$ & -- \\
 {\tt L4ab.m} & -- & -- & -- & $\bigcirc$ \\
\hline
\end{tabular}
}
\caption{Relationship among inclusions}
\label{inclusion}
\end{table}

Below are the codes  {\tt C1\_verify.m} to {\tt C4\_verify.m} for verification regarding $C_1$ to $C_4$.

\newpage

\begin{lstlisting}[caption={\tt C1\_verify.m}, label=C1verify]
function C1_verify
    format long g;

    %%  Parameters  %%
    n = 20;
    ha = 1 / sym(50); hb = 1 / sym(50);
    ca = sym(5); cb = sym(3);

    disp('Step 1: Create index');

    [edge_list, max_index] = create_edge_list(n, 1);
    [face_list, max_index] = create_face_list(n, max_index + 1);

    %%  "constraints" is an array of the index  %%
    %%  that constraints are imposed.           %%
    constraints = face_list(face_list > 0);

    %%  "triangle(i,k), k=1,2,3,4" denotes indexes    %%
    %%  of w1,w2,w3,u0 associate with i-th triangle.  %%
    triangle = set_triangle_alpha(n, edge_list, face_list);

    disp('Step 2: Create coefficient matrix');

    a = sym('a'); b = sym('b'); h = sym('h');
    w1 = sym('w1'); w2 = sym('w2'); w3 = sym('w3');
    u0 = sym('u0');
    variables = [w1 w2 w3 u0];

    s = b/2*h^2;
    c1 = 2*((1-a)^2+b^2)/b; c2 = 2*(a^2+b^2)/b; c3 = 2/b;

    z = F_alpha_0(s, c1, c2, c3, w1, w2, w3, u0);
    A = create_coefficient_matrix(z, variables);

    z = F_alpha_1(s, c1, c2, c3, w1, w2, w3, u0);
    B = create_coefficient_matrix(z, variables);

	lambda = L1ab(a, b) / sym(n^2) * sym(n^2-1) ...
	        / (1+ca*ha^2) / (1+cb*hb^2);
    Mab = lambda * B - A;

	lambda = L1ab(a, 0) / sym(n^2) * sym(n^2-1) / (1+ca*ha^2);
    Ma0 = lambda * B - A;

    disp('Step 3: Verify the inequality for 1/10 <= b <= 1');

    h = 1 / intval(n);

    yk = 1000;
    for k = 1:119
        b = 1000 / intval(yk);
        xk = ceil(yk/40);
        for l = 0:xk
            a = l / intval(2*xk);
            M = eval(Mab);
            verify();
        end
        yk = floor(yk*51/50);
    end

    disp('Step 4: Verify the inequality for 0 < b <= 1/10');

    b = 1 / intval(10);
    for l = 0:250
        a = l / intval(500);
        M = eval(Ma0);
  	    verify();
    end

    disp('Verification completed.');

    %%  Nested function for the verification  %%
    function verify()
        disp(['a=', num2str(mid(a)), ', b=', num2str(mid(b))]);
        CM = intval(zeros(max_index));
        for i = 1:n^2
            c = triangle(i, :);
            CM(c, c) = CM(c, c) + M;
        end

        %%  Impose constraints  %%
        s = constraints(1);
        t = constraints(2:end);
        len = length(t);
        p = CM(:, s); q = CM(s, :); r = CM(s, s);
        CM(:, t) = CM(:, t) - repmat(p, 1, len);
        CM(t, :) = CM(t, :) - repmat(q, len, 1);
        CM(t, t) = CM(t, t) + repmat(r, len, len);
        CM(:, s) = []; CM(s, :) = [];

        if isspd(CM) == 1
            disp('Verified');
        else
            error('Verification failed.');
        end
    end
end
\end{lstlisting}

\begin{lstlisting}[caption={\tt C2\_verify.m}, label=C2verify]
function C2_verify
    format long g;

    %%  Parameters  %%
    n = 20;
    ha = 1 / sym(50); hb = 1 / sym(50);
    ca = sym(5); cb = sym(3);

    disp('Step 1: Create index');

    [edge_list, max_index] = create_edge_list(n, 1);
    [face_list, max_index] = create_face_list(n, max_index + 1);

    %%  "constraints" is an array of the index  %%
    %%  that constraints are imposed.           %%
    constraints = zeros(3, n);

    t = zeros(3, 2*n);
    for i = 2:2*n
        t(1, i) = edge_list(i, 2*(n+1) - i);
    end
    constraints(1, :) = t(t > 0);

    t = edge_list(1, :);
    constraints(2, :) = t(t > 0);

    t = edge_list(:, 1);
    constraints(3, :) = t(t > 0);

    %%  "triangle(i,k), k=1,2,3,4" denotes indexes    %%
    %%  of w1,w2,w3,u0 associate with i-th triangle.  %%
    triangle = set_triangle_alpha(n, edge_list, face_list);

    disp('Step 2: Create coefficient matrix');

    a = sym('a'); b = sym('b'); h = sym('h');
    w1 = sym('w1'); w2 = sym('w2'); w3 = sym('w3');
    u0 = sym('u0');
    variables = [w1 w2 w3 u0];

    s = b/2*h^2;
    c1 = 2*((1-a)^2+b^2)/b; c2 = 2*(a^2+b^2)/b; c3 = 2/b;

    z = F_alpha_0(s, c1, c2, c3, w1, w2, w3, u0);
    A = create_coefficient_matrix(z, variables);

    z = F_alpha_1(s, c1, c2, c3, w1, w2, w3, u0);
    B = create_coefficient_matrix(z, variables);

	lambda = L2ab(a, b) / sym(n^2) * sym(n^2-1) ...
	        / (1+ca*ha^2) / (1+cb*hb^2);
    Mab = lambda * B - A;

	lambda = L2ab(a, 0) / sym(n^2) * sym(n^2-1) / (1+ca*ha^2);
    Ma0 = lambda * B - A;

    disp('Step 3: Verify the inequality for 1/10 <= b <= 1');

    h = 1 / intval(n);

    yk = 1000;
    for k = 1:119
        b = 1000 / intval(yk);
        xk = ceil(yk/40);
        for l = 0:xk
            a = l / intval(2*xk);
            M = eval(Mab);
            verify();
        end
        yk = floor(yk*51/50);
    end

    disp('Step 4: Verify the inequality for 0 < b <= 1/10');

    b = 1 / intval(10);
    for l = 0:250
        a = l / intval(500);
        M = eval(Ma0);
  	    verify();
    end

    disp('Verification completed.');

    %%  Nested function for verification  %%
    function z = verify()
        disp(['a=', num2str(mid(a)), ', b=', num2str(mid(b))]);
        CM = intval(zeros(max_index));
        for i = 1:n^2
            c = triangle(i, :);
            CM(c, c) = CM(c, c) + M;
        end

        %%  Impose constraints  %%
        s = zeros(1, 3);
        for j = 1:3
            s(j) = constraints(j, 1);
            t = constraints(j, 2:end);
            len = length(t);
            p = CM(:, s(j)); q = CM(s(j), :); r = CM(s(j), s(j));
            CM(:, t) = CM(:, t) - repmat(p, 1, len);
            CM(t, :) = CM(t, :) - repmat(q, len, 1);
            CM(t, t) = CM(t, t) + repmat(r, len, len);
        end
        CM(:, s) = [];
        CM(s, :) = [];
        
        if isspd(CM) == 1
            disp('Verified');
        else
            error('Verification failed.');
        end
    end
end
\end{lstlisting}

\begin{lstlisting}[caption={\tt C3\_verify.m}, label=C3verify]
function C3_verify
    format long g;

    %%  Parameters  %%
    n = 20;
    ha = 1 / sym(50); hb = 1 / sym(50);
    ca = sym(6); cb = sym(8);

    disp('Step 1: Create index');

    [vertex_list, max_index] = create_vertex_list(n, 1);
    [edge_list, max_index] = create_edge_list(n, max_index + 1);

    %%  Variable "constraints" is an array of the index  %%
    %%  that constraints are imposed.                    %%
    constraints ...
    = [vertex_list(1, 1) vertex_list(n+1, 1) vertex_list(1, n+1)];

    %%  "triangle(i,k), k=1,2,3,4,5,6" denotes indexes      %%
    %%  of u1,u2,u3,w1,w2,w3 associate with i-th triangle.  %%
    [triangle, orientation] ...
    = set_triangle_beta(n, vertex_list, edge_list);

    disp('Step 2: Create coefficient matrix');

    a = sym('a'); b = sym('b'); h = sym('h');
    u1 = sym('u1'); u2 = sym('u2'); u3 = sym('u3');
    w1 = sym('w1'); w2 = sym('w2'); w3 = sym('w3');
    variables = [u1 u2 u3 w1 w2 w3];
    e = sym(ones(3, 3));

    s = b/2*h^2;
    c1 = 2*((1-a)^2+b^2)/b; c2 = 2*(a^2+b^2)/b; c3 = 2/b;

    z = F_beta_0(s, c1, c2, c3, u1, u2, u3, w1, w2, w3);
    A1 = create_coefficient_matrix(z, variables);
    A2 = A1.*[e -e; -e e];

    z = F_beta_2(s, c1, c2, c3, u1, u2, u3, w1, w2, w3);
    B1 = create_coefficient_matrix(z, variables);
    B2 = B1.*[e -e; -e e];

	lambda = L3ab(a, b) / sym(n^4) * sym(n^4-1) ...
	        / (1+ca*ha^2) / (1+cb*hb^2);
	M1ab = lambda * B1 - A1;
    M2ab = lambda * B2 - A2;

	lambda = L3ab(a, 0) / sym(n^4) * sym(n^4-1) / (1+ca*ha^2);
    M1a0 = lambda * B1 - A1;
    M2a0 = lambda * B2 - A2;

    disp('Step 3: Verify the inequality for 1/10 <= b <= 1');

    h = 1 / intval(n);

    yk = 1000;
    for k = 1:119
        b = 1000 / intval(yk);
        xk = ceil(yk/40);
        for l = 0:xk
            a = l / intval(2*xk);
	        M1 = eval(M1ab);
    	    M2 = eval(M2ab);
            verify();
        end
        yk = floor(yk*51/50);
    end

    disp('Step 4: Verify the inequality for 0 < b <= 1/10');

    b = 1 / intval(10);
    for l = 0:250
        a = l / intval(500);
        M1 = eval(M1a0);
   	    M2 = eval(M2a0);
  	    verify();
    end

    disp('Verification completed.');

    %%  Nested function for the verification  %%
    function verify()
        disp(['a=', num2str(mid(a)), ', b=', num2str(mid(b))]);
        CM = intval(zeros(max_index));
        for i = 1:n^2
            c = triangle(i, :);
            if orientation(i) == 1
                CM(c, c) = CM(c, c) + M1;
            else
                CM(c, c) = CM(c, c) + M2;
            end
        end

        %%  Impose constraints  %%
        CM(constraints, :) = [];
        CM(:, constraints) = [];

        if isspd(CM) == 1
            disp('Verified');
        else
            error('Verification failed.');
        end
    end
end
\end{lstlisting}

\begin{lstlisting}[caption={\tt C4\_verify.m}, label=C4verify]
function C4_verify
    format long g;

    %%  Parameters  %%
    n = 20;
    ha = 1 / sym(50); hb = 1 / sym(50);
    ca = sym(9); cb = sym(9);

    disp('Step 1: Create index');

    [vertex_list, max_index] = create_vertex_list(n, 1);
    [edge_list, max_index] = create_edge_list(n, max_index + 1);

    %%  "constraints" is an array of the index  %%
    %%  that constraints are imposed.           %%
    constraints ...
    = [vertex_list(1, 1) vertex_list(n+1, 1) vertex_list(1, n+1)];

    %%  "triangle(i,k), k=1,2,3,4,5,6" denotes indexes      %%
    %%  of u1,u2,u3,w1,w2,w3 associate with i-th triangle.  %%
    [triangle, orientation] ...
    = set_triangle_beta(n, vertex_list, edge_list);

    disp('Step 2: Create coefficient matrix');

    a = sym('a'); b = sym('b'); h = sym('h');
    u1 = sym('u1'); u2 = sym('u2'); u3 = sym('u3');
    w1 = sym('w1'); w2 = sym('w2'); w3 = sym('w3');
    variables = [u1 u2 u3 w1 w2 w3];
    e = sym(ones(3, 3));

    s = b/2*h^2;
    c1 = 2*((1-a)^2+b^2)/b; c2 = 2*(a^2+b^2)/b; c3 = 2/b;

    z = F_beta_1(s, c1, c2, c3, u1, u2, u3, w1, w2, w3);
    A1 = create_coefficient_matrix(z, variables);
    A2 = A1.*[e -e; -e e];

    z = F_beta_2(s, c1, c2, c3, u1, u2, u3, w1, w2, w3);
    B1 = create_coefficient_matrix(z, variables);
    B2 = B1.*[e -e; -e e];

    lambda = L4ab(a, b) / (1+ca*ha^2) / (1+cb*hb^2) ...
            - L2ab(a, b) / sym(n^2);
    M1ab = lambda * B1 - A1;
    M2ab = lambda * B2 - A2;

    disp('Step 3: Verify the inequality for 1/10 <= b <= 1');

    h = 1 / intval(n);
    yk = 1000;
    for k = 1:119
        b = 1000 / intval(yk);
        xk = ceil(yk/40);
        for l = 0:xk
            a = l / intval(2*xk);
            M1 = eval(M1ab);
            M2 = eval(M2ab);
            verify();
        end
        yk = floor(yk*51/50);
    end

    disp('Verification completed.');

    %%  Nested function for the verification  %%
    function verify()
        disp(['a=', num2str(mid(a)), ', b=', num2str(mid(b))]);
        CM = intval(zeros(max_index));
        for i = 1:n^2
            c = triangle(i, :);
            if orientation(i) == 1
                CM(c, c) = CM(c, c) + M1;
            else
                CM(c, c) = CM(c, c) + M2;
            end
        end

        %%  Impose constraints  %%
        CM(constraints, :) = [];
        CM(:, constraints) = [];

        if isspd(CM) == 1
            disp('Verified');
        else
            error('Verification failed.');
        end
    end
end
\end{lstlisting}

\bigskip

The function {\tt create\_vertex\_list(n, start\_index)} gives the mapping from the oblique coordinates of the nodal points
to the edge number.
For {\tt n=2, start\_index=1}, the mapping is given as shown in Fig.~\ref{vertexlist}.

\begin{figure}[!h]
	\centering
{\footnotesize
\begin{tikzpicture}[scale=1.2]
  \coordinate (A) at (0.0,0.0);
  \coordinate (B) at (5.0,0.0);
  \coordinate (C) at (3.5,3.0);
  \draw [line width = 0.8pt](A) -- (B) -- (C) -- cycle;
  \coordinate (C2) at ($(A)!0.5!(B)$);
  \coordinate (A2) at ($(B)!0.5!(C)$);
  \coordinate (B2) at ($(C)!0.5!(A)$);
  \draw [line width = 0.8pt] (A2) -- (B2) -- (C2) -- cycle;
  \draw(A)  circle [radius=0.1];
  \draw(B)  circle [radius=0.1];
  \draw(C)  circle [radius=0.1];
  \draw(C2) circle [radius=0.1];
  \draw(A2) circle [radius=0.1];
  \draw(B2) circle [radius=0.1];
  \draw (A)  node[below=0.7mm]{$(1,1)\rightarrow 1$};
  \draw (C2) node[below=0.7mm]{$(2,1)\rightarrow 2$};
  \draw (B)  node[below=0.7mm]{$(3,1)\rightarrow 3$};
  \draw (B2) node[left=1.3mm]{$(1,2)\rightarrow 4$};
  \draw (A2) node[right=0.7mm]{$(2,2)\rightarrow 5$};
  \draw (C)  node[above=0.7mm]{$(1,3)\rightarrow 6$};
\end{tikzpicture}
}
	\caption{Mapping obtained using {\tt create\_vertex\_list}.}
	\label{vertexlist}
\end{figure}

\newpage

\begin{lstlisting}[caption={\tt create\_vertex\_list.m}, label=CreateVertexList]
%%  "vertex_list(p+1, q+1)" denotes the index of vertex  %%
%%  whose location is  p/n*[1 0] + q/n*[a b] .           %%
function [vertex_list, end_index] ...
        = create_vertex_list(n, start_index)
    vertex_list = zeros(n+1);
    i = start_index;
    for q = 0:n
        for p = 0:n-q
            vertex_list(p+1, q+1) = i;
            i = i + 1;
        end
    end
    end_index = i - 1;
end
\end{lstlisting}

\bigskip

The function {\tt create\_edge\_list(n, start\_index)} gives the mapping from the oblique coordinates of the edge's midpoint
to the edge number.
For {\tt n=2, start\_index=1}, the mapping is given as shown in Fig.~\ref{edgelist}.

\begin{figure}[h]
	\centering
{\footnotesize
\begin{tikzpicture}[scale=1.2]
  \coordinate (A) at (0.0,0.0);
  \coordinate (B) at (5.0,0.0);
  \coordinate (C) at (3.5,3.0);
  \draw [line width = 0.8pt](A) -- (B) -- (C) -- cycle;
  \coordinate (C2) at ($(A)!0.5!(B)$);
  \coordinate (A2) at ($(B)!0.5!(C)$);
  \coordinate (B2) at ($(C)!0.5!(A)$);
  \draw [line width = 0.8pt] (A2) -- (B2) -- (C2) -- cycle;
  \draw($(C2)!0.5!(B2)$)  circle [radius=0.1];
  \draw($(B2)!0.5!(A)$)  circle [radius=0.1];
  \draw($(A)!0.5!(C2)$)  circle [radius=0.1];
  \draw($(B)!0.5!(A2)$)  circle [radius=0.1];
  \draw($(A2)!0.5!(C2)$)  circle [radius=0.1];
  \draw($(C2)!0.5!(B)$)  circle [radius=0.1];
  \draw($(A2)!0.5!(C)$)  circle [radius=0.1];
  \draw($(C)!0.5!(B2)$)  circle [radius=0.1];
  \draw($(B2)!0.5!(A2)$)  circle [radius=0.1];
  \draw($(C2)!0.5!(B2)$)  node[below=0.7mm]{$(2,2)\rightarrow 1$};
  \draw($(B2)!0.5!(A)$)  node[left=1.3mm]{$(1,2)\rightarrow 2$};
  \draw($(A)!0.5!(C2)$)  node[below=0.7mm]{$(2,1)\rightarrow 3$};
  \draw($(B)!0.5!(A2)$)  node[right=0.7mm]{$(4,2)\rightarrow 4$};
  \draw($(A2)!0.5!(C2)$)  node[right=5mm, below=0.7mm]{$(3,2)\rightarrow 5$};
  \draw($(C2)!0.5!(B)$)  node[below=0.7mm]{$(4,1)\rightarrow 6$};
  \draw($(A2)!0.5!(C)$)  node[right=0.7mm]{$(2,4)\rightarrow 7$};
  \draw($(C)!0.5!(B2)$)  node[left=1.3mm]{$(1,4)\rightarrow 8$};
  \draw($(B2)!0.5!(A2)$)  node[below=0.7mm]{$(2,3)\rightarrow 9$};
\end{tikzpicture}
}
	\caption{Mapping obtained using {\tt create\_edge\_list}.}
	\label{edgelist}
\end{figure}

\begin{lstlisting}[caption={\tt create\_edge\_list.m}, label=CreateEdgeList]
%%  "edge_list(p+1, q+1)" denotes the index of edge        %%
%%  whose mid point is  p/(2*n)*[1 0] + q/(2*n)*[a b] .    %%
function [edge_list, end_index] = create_edge_list(n, start_index)
    edge_list = zeros(2*n);
    i = start_index;
    for q = 0:n-1
        for p = 0:n-1-q
            cp = [p p+1 p];
            cq = [q q q+1];
            edge_list(cp(2)+cp(3)+1, cq(2)+cq(3)+1) = i;
            i = i + 1;
            edge_list(cp(3)+cp(1)+1, cq(3)+cq(1)+1) = i;
            i = i + 1;
            edge_list(cp(1)+cp(2)+1, cq(1)+cq(2)+1) = i;
            i = i + 1;
        end
    end
    end_index = i - 1;
end
\end{lstlisting}

\bigskip

The function {\tt create\_face\_list(n, start\_index)} gives the mapping from the oblique coordinates of the face's center of gravity
to the face number.
For {\tt n=2, start\_index=1}, the mapping is given as shown in Fig.~\ref{facelist}.

\begin{figure}[h]
	\centering
{\footnotesize
\begin{tikzpicture}[scale=1.2]
  \coordinate (A) at (0.0,0.0);
  \coordinate (B) at (5.0,0.0);
  \coordinate (C) at (3.5,3.0);
  \draw [line width = 0.8pt](A) -- (B) -- (C) -- cycle;
  \coordinate (C2) at ($(A)!0.5!(B)$);
  \coordinate (A2) at ($(B)!0.5!(C)$);
  \coordinate (B2) at ($(C)!0.5!(A)$);
  \draw [line width = 0.8pt] (A2) -- (B2) -- (C2) -- cycle;
  \coordinate (E1) at ($0.3333*($($(A)+(C2)$)+(B2)$)$);
  \coordinate (E2) at ($0.3333*($($(A2)+(B2)$)+(C2)$)$);
  \coordinate (E3) at ($0.3333*($($(C2)+(B)$)+(A2)$)$);
  \coordinate (E4) at ($0.3333*($($(B2)+(A2)$)+(C)$)$);
  \draw(E1)  circle [radius=0.1];
  \draw(E2)  circle [radius=0.1];
  \draw(E3)  circle [radius=0.1];
  \draw(E4)  circle [radius=0.1];
  \draw(E1)  node[below=0.7mm]{$(2,2)\rightarrow 1$};
  \draw(E2)  node[right=3mm,below=0.7mm]{$(3,3)\rightarrow 2$};
  \draw(E3)  node[below=0.7mm]{$(5,2)\rightarrow 3$};
  \draw(E4)  node[below=0.7mm]{$(2,5)\rightarrow 4$};
\end{tikzpicture}
}
	\caption{Mapping obtained using {\tt create\_face\_list}.}
	\label{facelist}
\end{figure}

\begin{lstlisting}[caption={\tt create\_face\_list.m}, label=CreateFaceList]
%%  "face_index(p+1, q+1)" denotes the index of face             %%
%%  whose center of gravity is  p/(3*n)*[1 0] + q/(3*n)*[a b] .  %%
function [face_list, end_index] = create_face_list(n, start_index)
    face_list= zeros(3*n);
    i = start_index;
    for q = 0:n-1
        for p = 0:n-1-q
            cp = [p p+1 p];
            cq = [q q q+1];
            face_list(sum(cp)+1, sum(cq)+1) = i;
            i = i + 1;
            if p+q < n-1
	            cp = [p+1 p p+1];
    	        cq = [q+1 q+1 q];
                face_list(sum(cp)+1, sum(cq)+1) = i;
                i = i + 1;
            end
        end
    end
    end_index = i - 1;
end
\end{lstlisting}

\bigskip

The function {\tt set\_triangle\_alpha} returns the correspondence from each triangle element to the edge
and face numbers that make up the element.

\begin{lstlisting}[caption={\tt set\_triangle\_alpha.m}, label=SetTriangleAlpha]
function triangle = set_triangle_alpha(n, edge_list, face_list)
    triangle = zeros(n^2, 4);
    i = 0;
    for q = 0:n-1
        for p = 0:n-1-q
            cp = [p p+1 p];
            cq = [q q q+1];
            edge1 = edge_list(cp(2)+cp(3)+1, cq(2)+cq(3)+1);
            edge2 = edge_list(cp(3)+cp(1)+1, cq(3)+cq(1)+1);
            edge3 = edge_list(cp(1)+cp(2)+1, cq(1)+cq(2)+1);
            face  = face_list(sum(cp)+1, sum(cq)+1);
            i = i + 1;
            triangle(i, :) = [edge1, edge2, edge3, face];
            if p+q < n-1
                cp = [p+1 p p+1];
                cq = [q+1 q+1 q];
                edge1 = edge_list(cp(2)+cp(3)+1, cq(2)+cq(3)+1);
                edge2 = edge_list(cp(3)+cp(1)+1, cq(3)+cq(1)+1);
                edge3 = edge_list(cp(1)+cp(2)+1, cq(1)+cq(2)+1);
                face  = face_list(sum(cp)+1, sum(cq)+1);
                i = i + 1;
                triangle(i, :) = [edge1, edge2, edge3, face];
            end
        end
    end
end
\end{lstlisting}

\bigskip

The function {\tt set\_triangle\_beta} returns, for each triangle element, the correspondence between
the node and edge numbers that make up the element, as well as its orientation.

\begin{lstlisting}[caption={\tt set\_triangle\_beta.m}, label=SetTriangleBeta]
function [triangle, orientation] ...
        = set_triangle_beta(n, vertex_list, edge_list)
    triangle = zeros(n^2, 6);
    orientation = zeros(n^2, 1);
    i = 0;
    for q = 0:n-1
        for p = 0:n-1-q
            cp = [p p+1 p];
            cq = [q q q+1];
            vertex1  = vertex_list(cp(1)+1, cq(1)+1);
            vertex2  = vertex_list(cp(2)+1, cq(2)+1);
            vertex3  = vertex_list(cp(3)+1, cq(3)+1);
            edge1 = edge_list(cp(2)+cp(3)+1, cq(2)+cq(3)+1);
            edge2 = edge_list(cp(3)+cp(1)+1, cq(3)+cq(1)+1);
            edge3 = edge_list(cp(1)+cp(2)+1, cq(1)+cq(2)+1);
            i = i + 1;
            triangle(i, :) ...
            = [vertex1, vertex2, vertex3, edge1, edge2, edge3];
            orientation(i) = 1;
            if p+q < n-1
	            cp = [p+1 p p+1];
    	        cq = [q+1 q+1 q];
                vertex1  = vertex_list(cp(1)+1, cq(1)+1);
                vertex2  = vertex_list(cp(2)+1, cq(2)+1);
                vertex3  = vertex_list(cp(3)+1, cq(3)+1);
                edge1 = edge_list(cp(2)+cp(3)+1, cq(2)+cq(3)+1);
                edge2 = edge_list(cp(3)+cp(1)+1, cq(3)+cq(1)+1);
                edge3 = edge_list(cp(1)+cp(2)+1, cq(1)+cq(2)+1);
                i = i + 1;
                triangle(i, :) ...
                = [vertex1, vertex2, vertex3, edge1, edge2, edge3];
                orientation(i) = 2;
            end
        end
    end
end
\end{lstlisting}

\bigskip

The functions {\tt F\_alpha\_0} and {\tt F\_alpha\_1} compute
$F^{(\alpha)}_0(s,w_1,w_2,w_3,u_0)$ and $F^{(\alpha)}_1(s,w_1,w_2,w_3,u_0)$, respectively,
that appear in Lemma~\ref{F1}.

\begin{lstlisting}[caption={\tt F\_alpha\_0.m}, label=Fa0]
function z = F_alpha_0(s, c1, c2, c3, w1, w2, w3, u0)
    v0 =  (3*c2 + 3*c3 - c1) * w1 ...
        + (3*c3 + 3*c1 - c2) * w2 ...
        + (3*c1 + 3*c2 - c3) * w3;
    z = s / 15 * ( ...
        8 * (c1^2 + c2^2 + c3^2) * (w1 + w2 + w3 - 3*u0)^2 ...
                / (c1 + c2 + c3)^2 ...
        - 2 * (w1 + w2 + w3 - 3*u0) * v0 / (c1 + c2 + c3) ...
        + 5 * (w1^2 + w2^2 + w3^2) ...
    );
end
\end{lstlisting}

\begin{lstlisting}[caption={\tt F\_alpha\_1.m}, label=Fa1]
function z = F_alpha_1(s, c1, c2, c3, w1, w2, w3, u0)
    z = ( ...
        32 * (w1 + w2 + w3 - 3*u0)^2 / (c1 + c2 + c3) ...
        + (c1 + c2 - c3) * (w1 - w2)^2 ...
        + (c2 + c3 - c1) * (w2 - w3)^2 ...
        + (c3 + c1 - c2) * (w3 - w1)^2 ...
    ) / 2;
end
\end{lstlisting}

\bigskip

The functions {\tt F\_beta\_0}, {\tt F\_beta\_1}, and {\tt F\_beta\_2} compute
$F^{(\beta)}_0(s,u_1,u_2,u_3,w_1,w_2,w_3)$, $F^{(\beta)}_1(s,u_1,u_2,u_3,w_1,w_2,w_3)$,
and $F^{(\beta)}_2(s,u_1,u_2,u_3,w_1,w_2,w_3)$, respectively,
that appear in Lemma~\ref{F2}.

\begin{lstlisting}[caption={\tt F\_beta\_0.m}, label=Fb0]
function z = F_beta_0(s, c1, c2, c3, u1, u2, u3, w1, w2, w3)
    v1 = (13*u1 + u2 + u3) / 4 - (4*w1 - (c2 - c3)*(u2 - u3)) / c1;
    v2 = (13*u2 + u3 + u1) / 4 - (4*w2 - (c3 - c1)*(u3 - u1)) / c2;
    v3 = (13*u3 + u1 + u2) / 4 - (4*w3 - (c1 - c2)*(u1 - u2)) / c3;
    z = s / 720 * ( ...
        21 * (u1^2 + u2^2 + u3^2) - 6 * (u1*u2 + u2*u3 + u3*u1) ...
        + 6 * (v1^2 + v2^2 + v3^2) + 10 * (v1*v2 + v2*v3 + v3*v1) ...
    );
end
\end{lstlisting}

\begin{lstlisting}[caption={\tt F\_beta\_1.m}, label=Fb1]
function z = F_beta_1(s, c1, c2, c3, u1, u2, u3, w1, w2, w3)
    z = ( ...
          ((u2 - u3)^2 + w1^2) / c1 ...
        + ((u3 - u1)^2 + w2^2) / c2 ...
        + ((u1 - u2)^2 + w3^2) / c3 ...
    ) / 3;
end
\end{lstlisting}

\begin{lstlisting}[caption={\tt F\_beta\_2.m}, label=Fb2]
function z = F_beta_2(s, c1, c2, c3, u1, u2, u3, w1, w2, w3)
    v4 = 2*u1 - u2 - u3 + (4*w1 - (c2 - c3)*(u2 - u3)) / c1;
    v5 = 2*u2 - u3 - u1 + (4*w2 - (c3 - c1)*(u3 - u1)) / c2;
    v6 = 2*u3 - u1 - u2 + (4*w3 - (c1 - c2)*(u1 - u2)) / c3;
    z = ( ...
        (c1*v4 + c2*v5 + c3*v6)^2 - 8*(v4*v5 + v5*v6 + v6*v4) ...
    ) / s / 16;
end
\end{lstlisting}

\bigskip

{\tt create\_coefficient\_matrix} is a function that computes the representation matrix
from the corresponding polynomial of quadratic form.

\begin{lstlisting}[caption={\tt create\_coefficient\_matrix.m}, label=CreateCoefficientMatrix]
function A = create_coefficient_matrix(quadratic_form, variables)
    m = length(variables);
    A = sym(zeros(m));
    for p = 1:m
        df = diff(quadratic_form, variables(p));
        for q = 1:p
            A(p, q) = simplifyFraction(diff(df, variables(q)) / 2);
            if q<p
                A(q, p) = A(p, q);
            end
        end
    end
end
\end{lstlisting}

\bigskip

The functions {\tt L1ab(a,b)}, {\tt L2ab(a,b)}, {\tt L3ab(a,b)}, and {\tt L4ab(a,b)} compute
$L_j(a,b),\;j=1,2,3,4$ defined by \eqref{Ljab}.

\begin{lstlisting}[caption={\tt L1ab.m}, label=L1ab]
function z = L1ab(a, b)
    AA = (1-a)^2 + b^2;
    BB = a^2 + b^2;
    CC = 1;
    SS = b^2 / 4;
    z = (AA + BB + CC) / 28 - SS^2 / (AA * BB * CC);
end
\end{lstlisting}

\begin{lstlisting}[caption={\tt L2ab.m}, label=L2ab]
function z = L2ab(a, b)
    AA = (1-a)^2 + b^2;
    BB = a^2 + b^2;
    CC = 1;
    SS = b^2 / 4;
    z = (AA + BB + CC) / 54 - SS^2 / 2 / (AA * BB * CC);
end
\end{lstlisting}

\begin{lstlisting}[caption={\tt L3ab.m}, label=L3ab]
function z = L3ab(a, b)
    AA = (1-a)^2 + b^2;
    BB = a^2 + b^2;
    CC = 1;
    SS = b^2 / 4;
    z = (AA*BB + BB*CC + CC*AA) / 83 ...
        - (AA*BB*CC / (AA + BB + CC) + SS) / 24;
end
\end{lstlisting}

\begin{lstlisting}[caption={\tt L4ab.m}, label=L4ab]
function z = L4ab(a, b)
    AA = (1-a)^2 + b^2;
    BB = a^2 + b^2;
    CC = 1;
    SS = b^2 / 4;
    z = AA*BB*CC / 16 / SS - (AA + BB + CC) / 30 ...
        - SS / 5 * (1/AA + 1/BB + 1/CC);
end
\end{lstlisting}

\bigskip

Hereafter, functions of the form {\tt Lemma*\_*} in {\tt Lemma*\_*.m} are codes to check the correctness of the corresponding
lemmas *.*.
In these codes, operations such as {\tt 2/3} are written as {\tt 2/sym(3)} to avoid being computed as floating-point numbers.
In addition, large integers that cannot be expressed in the range of double-precision floating-point numbers
are not accurately converted to symbolic integers if written as is, so to be safe,
integers with 10 or more digits are defined using strings such as {\tt sym('1234567890')}.

\begin{lstlisting}[caption={\tt Lemma5\_1.m}, label=Lemma5-1]
function Lemma5_1
    pair = [];

    a = sym('a'); b = sym('b'); h = sym('h');
    w1 = sym('w1'); w2 = sym('w2'); w3 = sym('w3');
    u0 = sym('u0');

    x = sym('x'); y = sym('y');
    t = sym('t'); t1 = sym('t1'); t2 = sym('t2');

    Pu = 1/b/h*( ...
            (b*(2*x - h) + 2*(1 - a)*y)*w1 ...
            - (b*(2*x - h) - 2*a*y)*w2 ...
            - (2*y - b*h)*w3 ...
        ) + 2*(3*(x^2 + y^2) - 2*(1 + a)*h*x - 2*b*h*y + a*h^2) ...
            /(1 - a + a^2 + b^2)/h^2*(w1 + w2 + w3 - 3*u0);
    Pux = diff(Pu, x); Puy = diff(Pu, y);

    W1 = int(subs(Pu, {x, y}, {h*(1 - (1 - a)*t), h*b*t}), t, 0, 1);
    W2 = int(subs(Pu, {x, y}, {h*a*t, h*b*t}), t, 0, 1);
    W3 = int(subs(Pu, {x, y}, {h*t, 0}), t, 0, 1);
    U0 = 2*int(int(subs(Pu, {x, y}, {h*(t1 + a*t2), h*b*t2}), ...
            t1, 0, 1 - t2), t2, 0, 1);
    pair = [pair [W1; w1] [W2; w2] [W3; w3] [U0; u0]];

    s = b*h^2/2;
    c1 = 2*((1 - a)^2 + b^2)/b; c2 = 2*(a^2 + b^2)/b; c3 = 2/b;

    v = subs(Pu^2, {x, y}, {h*(t1 + a*t2), h*b*t2});
    L2 = 2*s*int(int(v, t1, 0, 1 - t2), t2, 0, 1);
    l2 = F_alpha_0(s, c1, c2, c3, w1, w2, w3, u0);
    pair = [pair [L2; l2]];

    v = subs(Pux^2 + Puy^2, {x, y}, {h*(t1 + a*t2), h*b*t2});
    H1 = 2*s*int(int(v, t1, 0, 1 - t2), t2, 0, 1);
    h1 = F_alpha_1(s, c1, c2, c3, w1, w2, w3, u0);
    pair = [pair [H1; h1]];

    for p = pair
        if simplifyFraction(p(1) - p(2)) ~= 0
            error('There is something wrong.');
        end
    end
    disp('It is all right.');
end
\end{lstlisting}

\begin{lstlisting}[caption={\tt Lemma5\_2.m}, label=Lemma5-2]
function Lemma5_2
    pair = [];

    a = sym('a'); b = sym('b'); h = sym('h');
    u1 = sym('u1'); u2 = sym('u2'); u3 = sym('u3');
    w1 = sym('w1'); w2 = sym('w2'); w3 = sym('w3');

    x = sym('x'); y = sym('y');
    t = sym('t'); t1 = sym('t1'); t2 = sym('t2');

    Pu = 1/b/h*( ...
            - (b*(x - h) + (1 - a)*y)*u1 ...
            + (b*x - a*y)*u2 + y*u3 ...
        ) + (b*(x - h) + (1 - a)*y)*(b*x + (1 - a)*y) ...
            /((1 - a)^2 + b^2)/b^2/h^2 ...
            *(b*w1 + ((1 - a)^2 + b^2)*u1 ...
                + (a*(1 - a) - b^2)*u2 - (1 - a)*u3) ...
        + (b*(x - h) - a*y)*(b*x - a*y) ...
            /(a^2 + b^2)/b^2/h^2 ...
            * (b*w2 + (a*(1-a) - b^2)*u1 ...
                + (a^2 + b^2)*u2 - a*u3) ...
        + (y - b*h)*y/b^2/h^2 ...
            * (b*w3 - (1-a)*u1 - a*u2 + u3);
    Pux = diff(Pu, x); Puy = diff(Pu, y);
    Puxx = diff(Pux, x); Puxy = diff(Pux, y); Puyy = diff(Puy, y);

    U1 = subs(Pu, {x, y}, {0, 0});
    U2 = subs(Pu, {x, y}, {h, 0});
    U3 = subs(Pu, {x, y}, {h*a, h*b});
    pair = [pair [U1; u1] [U2; u2] [U3; u3]];

    nx = b*h; ny = (1-a)*h;
    W1 = int(subs(Pux*nx + Puy*ny, ...
            {x, y}, {h*(1-(1-a)*t), h*b*t}), t, 0, 1);
    nx = -b*h; ny = a*h;
    W2 = int(subs(Pux*nx + Puy*ny, {x, y}, {h*a*t, h*b*t}), t, 0, 1);
    nx = 0; ny = -h;
    W3 = int(subs(Pux*nx + Puy*ny, {x, y}, {h*t, 0}), t, 0, 1);
    pair = [pair [W1; w1] [W2; w2] [W3; w3]];

    s = b*h^2/2;
    c1 = 2*((1 - a)^2 + b^2)/b; c2 = 2*(a^2 + b^2)/b; c3 = 2/b;

    v = subs(Pu^2, {x, y}, {h*(t1 + a*t2), h*b*t2});
    L2 = 2*s*int(int(v, t1, 0, 1 - t2), t2, 0, 1);
    l2 = F_beta_0(s, c1, c2, c3, u1, u2, u3, w1, w2, w3);
    pair = [pair [L2; l2]];

    v = subs(Pux^2 + Puy^2, {x, y}, {h*(t1 + a*t2), h*b*t2});
    H1 = 2*s*int(int(v, t1, 0, 1 - t2), t2, 0, 1);
    h1 = F_beta_1(s, c1, c2, c3, u1, u2, u3, w1, w2, w3);
    pair = [pair [H1; h1]];

    v = subs(Puxx^2 + 2*Puxy^2 + Puyy^2, ...
            {x, y}, {h*(t1 + a*t2), h*b*t2});
    H2 = 2*s*int(int(v, t1, 0, 1 - t2), t2, 0, 1);
    h2 = F_beta_2(s, c1, c2, c3, u1, u2, u3, w1, w2, w3);
    pair = [pair [H2; h2]];

    for p = pair
        if simplifyFraction(p(1) - p(2)) ~= 0
            error('There is something wrong.');
        end
    end
    disp('It is all right.');
end
\end{lstlisting}

\begin{lstlisting}[caption={\tt LemmaA\_1.m}, label=LemmaA-1]
function LemmaA_1
    pair = [];

    a = sym('a'); b = sym('b');
    phi = b^2/(a^2 + b^2);

    f = diff(phi, a);
    g = - 2*a*b^2/(a^2 + b^2)^2;
    h = - 2/sym(3)/b ...
        + (2*(a - b)^4 + 2*a*b*(2*a - b)^2)/3/b/(a^2 + b^2)^2;
    pair = [pair [f; g] [f; h]];

    f = diff(phi, a, 2);
    g = 2*b^2*(3*a^2 - b^2)/(a^2 + b^2)^3;
    pair = [pair [f; g]];

    f = diff(phi, b);
    g = 2*a^2*b/(a^2 + b^2)^2;
    pair = [pair [f; g]];

    f = diff(phi, b, 2);
    g = 2*a^2*(a^2 - 3*b^2)/(a^2 + b^2)^3;
    h = - 7/sym(9)/b^2 + ( ...
            3*a^2*b^2*(a^2 + b^2) ...
            + (7*a^2 + 3*b^2)*(2*a^2 - b^2)^2 ...
            + b^2*(13*a^2 - 5*b^2)^2 ...
        )/36/b^2/(a^2 + b^2)^3;
    pair = [pair [f; g] [f; h]];

    for p = pair
        if simplifyFraction(p(1) - p(2)) ~= 0
            error('There is something wrong.');
        end
    end
    disp('It is all right.');
end
\end{lstlisting}

\begin{lstlisting}[caption={\tt LemmaA\_2.m}, label=LemmaA-2]
function LemmaA_2
    pair = [];

    a = sym('a'); b = sym('b');
    phi = b^2/(a^2 + b^2);
    psi = b^2*(1 + 2*a)/(1 + 4*b^2)*phi;

    f = diff(psi, a);
    g = 2*b^2/(1 + 4*b^2)*phi ...
        + b^2*(1 + 2*a)/(1 + 4*b^2)*diff(phi, a);
    pair = [pair [f; g]];

    f = diff(psi, a, 2);
    g = 4*b^2/(1 + 4*b^2)*diff(phi, a) ...
        + b^2*(1 + 2*a)/(1 + 4*b^2)*diff(phi, a, 2);
    pair = [pair [f; g]];

    f = 4*b^2/(1 + 4*b^2)*(-2)/3/b ...
        + b^2*(1 + 2*a)/(1 + 4*b^2)*(-2)/b^2;
    g = -2*(3 + 6*a + 4*b)/3/(1 + 4*b^2);
    pair = [pair [f; g]];

    f = diff(psi, b);
    g = 2*b*(1 + 2*a)/(1 + 4*b^2)^2*phi ...
        + b^2*(1 + 2*a)/(1 + 4*b^2)*diff(phi, b);
    pair = [pair [f; g]];

    f = 2*b*(1 + 2*a)/(1 + 4*b^2)^2 ...
        + b^2*(1 + 2*a)/(1 + 4*b^2)*1/2/b;
    g = (1 + 2*a)*(2 + 16*b - 9*b^2)/10/(1 + 4*b^2) ...
        - (1 + 2*a)*(1 - b)/50/(1 + 4*b^2)^2 ...
            *(11*b^3 + 10*(1 - 3*b)^2 + b*(5 - 13*b)^2);
    pair = [pair [f; g]];

    f = diff(psi, b, 2);
    g = 2*(1 + 2*a)*(1 - 12*b^2)/(1 + 4*b^2)^3*phi ...
        + 4*b*(1 + 2*a)/(1 + 4*b^2)^2*diff(phi, b) ...
        + b^2*(1 + 2*a)/(1 + 4*b^2)*diff(phi, b, 2);
    pair = [pair [f; g]];

    f = 2*(1 + 2*a)*(1 - 12*b^2 - (5/sym(4) - 3*b^2)^2) ...
            /(1 + 4*b^2)^3*phi ...
        - b^2*(1 + 2*a)/(1 + 4*b^2)*7/sym(9)/b^2;
    g = -9*(1 + 2*a)/8/(1 + 4*b^2)*phi ...
        - 7*(1 + 2*a)/9/(1 + 4*b^2);
    pair = [pair [f; g]];

    for p = pair
        if simplifyFraction(p(1) - p(2)) ~= 0
            error('There is something wrong.');
        end
    end
    disp('It is all right.');
end
\end{lstlisting}

\begin{lstlisting}[caption={\tt LemmaA\_3.m}, label=LemmaA-3]
function LemmaA_3
    pair = [];

    a = sym('a'); b = sym('b');
    d1 = 1 - a; d2 = 1 - 2*a;
    omega = a*(1 - a)/(1 - a + a^2 + b^2);

    f = omega;
    g = 4*a*(1 - a)*(7 - 4*b^2)/21 ...
        - a*d1/21/(1 - a*d1 + b^2) ...
            *(3*d2^2 + 4*(1 - b^2)*(d2^2 + 4*b^2));
    pair = [pair [f; g]];

    f = diff(omega, a);
    g = 4*(1 - 2*a)*(2 - b^2 + 5*a*(1-a))/7 ...
        - d2/7/(1 - a*d1 + b^2)^2*( ...
            d2^2 + a*d1*(8*d2^2*(1 + 4*b^2) + 28*b^4) ...
            + 4*a^2*d1^2*(5*a*d1 + 21*b^2) + b^2*(5 - 4*b^4) ...
        );
    h = (1 - 2*a)*(1 + b^2)/(1 - a*d1 + b^2)^2;
    pair = [pair [f; g] [f; h]];

    f = diff(omega, a, 2);
    g = -2*(11*a*(1-a) + 2*b^2)/7/b^2 ...
        + 2*(1 + b^2)/7/b^2/(1 - a*d1 + b^2)^3*( ...
            a^3*d1^3*(a*d1 + 3*d2^2 + b^2)/(1 + b^2) ...
            + a^2*d1^2*(10*a*d1 + 19*d2^2 + b^2) ...
            + a*d1*d2^2*(1 + 6*d2^2 + 7*b^2) ...
            + a*d1*(2 - 3*b^2)^2 + d2^2*b^4 + 2*b^2*(1 - b^2)^2 ...
        );
    pair = [pair [f; g]];

    f = diff(omega, b);
    g = - 2*a*(1-a)*b/(1 - a*d1 + b^2)^2;
    h = - 1/sym(6)/b*(1 - ((1 - 7*a*d1 + b^2)^2 + 12*a*d1*d2^2) ...
                /(1 - a*d1 + b^2)^2 ...
        );
    pair = [pair [f; g] [g; h]];

    f = diff(omega, b, 2);
    g = - (24 - 64*a*(1 - a) - 17*b^2)/44/b^2 ...
        + 1/sym(44)/b^2/(1 - a*d1 + b^2)^3*( ...
            13*d2^2*b^2 + a^3*d1^3*(8 + 17*b^2) ...
            + d2^4*(1 + 6*b^2 + (3 - 2*a*d1+3*b^2)^2) ...
            + a^2*d1^2*(4 + 10*b^2 + 3*b^2*(1 - b^2)) ...
            + 2*(6*a*d1 + 7*d2^2)*(1 - b^2)*(1 - 3*b^2)^2 ...
            + b^2*(1 - b^2)*(34*d2^2 + 3*a*d1*(13 + 27*b^2) ...
                + 82*d2^2*(1 - b^2) + 17*b^4) ...
        );
    pair = [pair [f; g]];

    for p = pair
        if simplifyFraction(p(1) - p(2)) ~= 0
            error('There is something wrong.');
        end
    end
    disp('It is all right.');
end
\end{lstlisting}

\begin{lstlisting}[caption={\tt LemmaA\_4.m}, label=LemmaA-4]
function LemmaA_4
    pair = [];

    a = sym('a'); at = sym('at'); b = sym('b'); bt = sym('bt');
    d1 = 1 - a; d2 = 1 - 2*a;
    phi = b^2/(a^2 + b^2);
    psi = b^2*(1 + 2*a)/(1 + 4*b^2)*phi;
    psi_a = diff(psi, a);
    psi_b = diff(psi, b);
    psi_aa = diff(psi_a, a);
    psi_bb = diff(psi_b, b);

    f = L1ab(a, b);
    g = (1 + a^2 + (1 - a)^2 + 2*b^2)/28 ...
        - b^4/16/(a^2 + b^2)/((1 - a)^2 + b^2);
    h = (1 - a + a^2 + b^2)/14 ...
        - b^4*(1 + 2*a)/16/(a^2 + b^2)/(1 + 4*b^2) ...
        - b^4*(3 - 2*a)/16/((1 - a)^2 + b^2)/(1 + 4*b^2);
    j = (1 - a + a^2 + b^2)/14 - (psi + subs(psi, a, 1 - a))/16;
    pair = [pair [f; g] [f; h] [f; j]];

    f = (1 - a + a^2 + b^2)/14 - 1/sym(16)*( ...
            b^2*(1 + 2*a)/(1 + 4*b^2) ...
            + b^2*(1 + 2*(1 - a))/(1 + 4*b^2) ...
        );
    g = (1 - a + a^2 + b^2)/14 - b^2/4/(1 + 4*b^2);
    pair = [pair [f; g]];

    f = diff(L1ab(a, b), a);
    g = - (1 - 2*a)/14 - (psi_a - subs(psi_a, a, 1 - a))/16;
    pair = [pair [f; g]];

    f = - (1 - 2*a)/14 - 1/sym(16)*( ...
            2*b^2/(1 + 4*b^2) ...
            + 2*b*(1 + 2*(1 - a))/3/(1 + 4*b^2) ...
        );
    g = - (1 - 2*a)/14 - b*(3 - 2*a + 3*b)/24/(1 + 4*b^2);
    pair = [pair [f; g]];

    f = - (1 - 2*a)/14 - 1/sym(16)*( ...
            -2*b*(1 + 2*a)/3/(1 + 4*b^2) - 2*b^2/(1 + 4*b^2) ...
        );
    g = - (1 - 2*a)/14 + b*(1 + 2*a + 3*b)/24/(1 + 4*b^2);
    pair = [pair [f; g]];

    f = subs(diff(L1ab(a, b), a, 2), a, at);
    g = 1/sym(7) ...
        - (subs(psi_aa, a, at) + subs(psi_aa, a, 1 - at))/16;
    pair = [pair [f; g]];

    f = 1/sym(7) - 1/sym(16)*( ...
            - 2*(3 + 6*at + 4*b)/3/(1 + 4*b^2) ...
            - 2*(3 + 6*(1 - at) + 4*b)/3/(1 + 4*b^2) ...
        );
    g = 1/sym(7) + (3 + 2*b)/6/(1 + 4*b^2);
    pair = [pair [f; g]];

    f = diff(L1ab(a, b), b);
    g = b/7 - (psi_b + subs(psi_b, a, 1 - a))/16;
    pair = [pair [f; g]];

    f = b/7 - 1/sym(16)*( ...
            (1 + 2*a)*(2 + 16*b - 9*b^2)/10/(1 + 4*b^2) ...
            + (1 + 2*(1 - a))*(2 + 16*b - 9*b^2) ...
                /10/(1 + 4*b^2) ...
        );
    g = b/7 - (2 + 16*b - 9*b^2)/40/(1 + 4*b^2);
    pair = [pair [f; g]];

    f = subs(diff(L1ab(a, b), b, 2), b, bt);
    g = 1/sym(7) ...
        - subs(psi_bb + subs(psi_bb, a, 1 - a), b, bt)/16;
    pair = [pair [f; g]];

    f = 1/sym(7) - 1/sym(16)*( ...
            - 21*(1 + 2*a)/11/(1 + 4*bt^2) ...
            - 21*(1 + 2*(1 - a))/11/(1 + 4*bt^2) ...
        );
    g = 1/sym(7) + 21/sym(44)/(1 + 4*bt^2);
    pair = [pair [f; g]];

    f = 168*(1 + 4*b^2)*( ...
            2*((1 - a + a^2 + b^2)/14 - b^2/4/(1 + 4*b^2)) ...
            + b*( ...
                - (1 - 2*a)/14 ...
                - b*(3 - 2*a + 3*b)/24/(1 + 4*b^2) ...
            ) ...
        );
    g = 6*(d2^2 + 4*a*b)*(1 + 4*b^2) ...
        + b^2*(3 + 14*a + 3*b + 46*(1 - b)^2) ...
        + 2*(3 - b - 5*b^2)^2;
    pair = [pair [f; g]];

    f = 168*(1 + 4*b^2)*( ...
            2*((1 - a + a^2 + b^2)/14 - b^2/4/(1 + 4*b^2)) ...
            - b*( ...
                - (1 - 2*a)/14 ...
                + b*(1 + 2*a + 3*b)/24/(1 + 4*b^2) ...
            ) ...
        );
    g = 6*(d2^2 + 4*d1*b)*(1 + 4*b^2) ...
        + b^2*(3 + 14*d1 + 3*b + 46*(1 - b)^2) ...
        + 2*(3 - b - 5*b^2)^2;
    pair = [pair [f; g]];

    f = 84*(1 + 4*b^2)*( ...
            5*((1 - a + a^2 + b^2)/14 - b^2/4/(1 + 4*b^2)) ...
            - b^2*(1/sym(7) + (3 + 2*b)/6/(1 + 4*b^2)) ...
        );
    g = 2*a*d1 + 2*d2^2*(4 + 15*b^2) ...
        + 22*(1 + 2*b)*(1 - b)^2 + 9*b^2*(1 + 2*(1 - 2*b)^2);
    pair = [pair [f; g]];

    f = 280*(1 + 4*b^2)*( ...
            2*((1 - a + a^2 + b^2)/14 - b^2/4/(1 + 4*b^2)) ...
            + b*(b/7 - (2 + 16*b - 9*b^2)/40/(1 + 4*b^2)) ...
        );
    g = 64*a*d1*b*(1 - 2*b)^2 ...
        + 2*(2*a*d1*(30 + b^2) + d2^2*(20 + 13*b))*(1 - b) ...
        + b^2*(11 + 3*d2^2 + 320*b^2 + 63*d2^2*b);
    pair = [pair [f; g]];

    f = 28*(1 + 4*b^2)*( ...
            2*((1 - a + a^2 + b^2)/14 - b^2/4/(1 + 4*b^2)) ...
            - b*b/7 ...
        );
    g = 1 + 2*(1 - b^2) + d2^2*(1 + 4*b^2);
    pair = [pair [f; g]];

    f = 14*(1 + 4*b^2)*( ...
            4*((1 - a + a^2 + b^2)/14 - b^2/4/(1 + 4*b^2)) ...
            - b^2*(1/sym(7) + 1/sym(2)/(1 + 4*b^2)) ...
        );
    g = 1 + b^2 + d2^2*(1 + 4*b^2) + 2*(1 - 2*b^2)^2;
    pair = [pair [f; g]];

    for p = pair
        if simplifyFraction(p(1) - p(2)) ~= 0
            error('There is something wrong.');
        end
    end
    disp('It is all right.');
end
\end{lstlisting}

\begin{lstlisting}[caption={\tt LemmaA\_5.m}, label=LemmaA-5]
function LemmaA_5
    pair = [];

    a = sym('a'); at = sym('at'); b = sym('b'); bt = sym('bt');
    d1 = 1 - a; d2 = 1 - 2*a;
    phi = b^2/(a^2 + b^2);
    psi = b^2*(1 + 2*a)/(1 + 4*b^2)*phi;
    psi_a = diff(psi, a);
    psi_b = diff(psi, b);
    psi_aa = diff(psi_a, a);
    psi_bb = diff(psi_b, b);

    f = L2ab(a, b);
    g = (1 + a^2 + (1 - a)^2 + 2*b^2)/54 ...
        - b^4/32/(a^2 + b^2)/((1 - a)^2 + b^2);
    h = (1 - a + a^2 + b^2)/27 ...
        - b^4*(1 + 2*a)/32/(a^2 + b^2)/(1 + 4*b^2) ...
        - b^4*(3 - 2*a)/32/((1 - a)^2 + b^2)/(1 + 4*b^2);
    j = (1 - a + a^2 + b^2)/27 - (psi + subs(psi, a, 1 - a))/32;
    pair = [pair [f; g] [f; h] [f; j]];

    f = (1 - a + a^2 + b^2)/27 - 1/sym(32)*( ...
            b^2*(1 + 2*a)/(1 + 4*b^2) ...
            + b^2*(1 + 2*(1 - a))/(1 + 4*b^2) ...
        );
    g = (1 - a + a^2 + b^2)/27 - b^2/8/(1 + 4*b^2);
    pair = [pair [f; g]];

    f = diff(L2ab(a, b), a);
    g = - (1 - 2*a)/27 - (psi_a - subs(psi_a, a, 1 - a))/32;
    pair = [pair [f; g]];

    f = - (1 - 2*a)/27 - 1/sym(32)*( ...
            2*b^2/(1 + 4*b^2) ...
            + 2*b*(1 + 2*(1 - a))/3/(1 + 4*b^2) ...
        );
    g = - (1 - 2*a)/27 - b*(3 - 2*a + 3*b)/48/(1 + 4*b^2);
    pair = [pair [f; g]];

    f = - (1 - 2*a)/27 - 1/sym(32)*( ...
            -2*b*(1 + 2*a)/3/(1 + 4*b^2) - 2*b^2/(1 + 4*b^2) ...
        );
    g = - (1 - 2*a)/27 + b*(1 + 2*a + 3*b)/48/(1 + 4*b^2);
    pair = [pair [f; g]];

    f = subs(diff(L2ab(a, b), a, 2), a, at);
    g = 2/sym(27) ...
        - (subs(psi_aa, a, at) + subs(psi_aa, a, 1 - at))/32;
    pair = [pair [f; g]];

    f = 2/sym(27) - 1/sym(32)*( ...
            - 2*(3 + 6*at + 4*b)/3/(1 + 4*b^2) ...
            - 2*(3 + 6*(1 - at) + 4*b)/3/(1 + 4*b^2) ...
        );
    g = 2/sym(27) + (3 + 2*b)/12/(1 + 4*b^2);
    pair = [pair [f; g]];

    f = diff(L2ab(a, b), b);
    g = 2*b/27 - (psi_b + subs(psi_b, a, 1 - a))/32;
    pair = [pair [f; g]];

    f = 2*b/27 - 1/sym(32)*( ...
            (1 + 2*a)*(2 + 16*b - 9*b^2)/10/(1 + 4*b^2) ...
            + (1 + 2*(1 - a))*(2 + 16*b - 9*b^2) ...
                /10/(1 + 4*b^2) ...
        );
    g = 2*b/27 - (2 + 16*b - 9*b^2)/80/(1 + 4*b^2);
    pair = [pair [f; g]];

    f = subs(diff(L2ab(a, b), b, 2), b, bt);
    g = 2/sym(27) ...
        - subs(psi_bb + subs(psi_bb, a, 1 - a), b, bt)/32;
    pair = [pair [f; g]];

    f = 2/sym(27) - 1/sym(32)*( ...
            - 21*(1 + 2*a)/11/(1 + 4*bt^2) ...
            - 21*(1 + 2*(1 - a))/11/(1 + 4*bt^2) ...
        );
    g = 2/sym(27) + 21/sym(88)/(1 + 4*bt^2);
    pair = [pair [f; g]];

    f = 432*(1 + 4*b^2)*( ...
            2*((1 - a + a^2 + b^2)/27 - b^2/8/(1 + 4*b^2)) ...
            + b*( ...
                - (1 - 2*a)/27 ...
                - b*(3 - 2*a + 3*b)/48/(1 + 4*b^2) ...
            ) ...
        );
    g = 8 + 8*d2^2 + 16*(1 - b) ...
        + b^2*(18*a + 128*a^2 + 9*b + 28*b^2) ...
        + b*(32*a + 25*b)*(1 - 2*b)^2;
    pair = [pair [f; g]];

    f = 432*(1 + 4*b^2)*( ...
            2*((1 - a + a^2 + b^2)/27 - b^2/8/(1 + 4*b^2)) ...
            - b*( ...
                - (1 - 2*a)/27 ...
                + b*(1 + 2*a + 3*b)/48/(1 + 4*b^2) ...
            ) ...
        );
    g = 8 + 8*d2^2 + 16*(1 - b) ...
        + b^2*(18*d1 + 128*d1^2 + 9*b + 28*b^2) ...
        + b*(32*d1 + 25*b)*(1 - 2*b)^2;
    pair = [pair [f; g]];

    f = 216*(1 + 4*b^2)*( ...
            5*((1 - a + a^2 + b^2)/27 - b^2/8/(1 + 4*b^2)) ...
            - b^2*(2/sym(27) + (3 + 2*b)/12/(1 + 4*b^2)) ...
        );
    g = 9*b*(1 - b) + 15*(2 + b)*(1 - 2*b)^2 ...
        + 10*d2^2*(1 + 4*b^2) ...
        + 96*b*(1 + b)*(1 - b)^2;
    pair = [pair [f; g]];

    f = 2160*(1 + 4*b^2)*( ...
            2*((1 - a + a^2 + b^2)/27 - b^2/8/(1 + 4*b^2)) ...
            + b*(2*b/27 - (2 + 16*b - 9*b^2)/80/(1 + 4*b^2)) ...
        );
    g = 337*b^3 + 40*d2^2*(1 + 4*b^2) + 30*(2 - b - 7*b^2)^2 ...
        + 2*b*(1 - b)*(33 + 352*b + 95*b^2);
    pair = [pair [f; g]];

    f = 108*(1 + 4*b^2)*( ...
            2*((1 - a + a^2 + b^2)/27 - b^2/8/(1 + 4*b^2)) ...
            - b*2*b/27 ...
        );
    g = 3 + 3*(1 - b^2) + 2*d2^2*(1 + 4*b^2);
    pair = [pair [f; g]];

    f = 108*(1 + 4*b^2)*( ...
            4*((1 - a + a^2 + b^2)/27 - b^2/8/(1 + 4*b^2)) ...
            - b^2*(2/sym(27) + 1/sym(4)/(1 + 4*b^2)) ...
        );
    g = 4 + 7*b^2 + 4*d2^2*(1 + 4*b^2) + 8*(1 - 2*b^2)^2;
    pair = [pair [f; g]];

    for p = pair
        if simplifyFraction(p(1) - p(2)) ~= 0
            error('There is something wrong.');
        end
    end
    disp('It is all right.');
end
\end{lstlisting}

\begin{lstlisting}[caption={\tt LemmaA\_6.m}, label=LemmaA-6]
function LemmaA_6
    pair = [];

    a = sym('a'); at = sym('at'); b = sym('b'); bt = sym('bt');
    d1 = 1 - a; d2 = 1 - 2*a;
    omega = a*(1 - a)/(1 - a + a^2 + b^2);
    omega_a = diff(omega, a);
    omega_b = diff(omega, b);
    omega_aa = diff(omega_a, a);
    omega_bb = diff(omega_b, b);

    f = L3ab(a, b);
    g = (b^2 + (1 - a + a^2 + b^2)^2)/83 ...
        + (2*a*(1 - a) - 3*b^2)/96 ...
        - omega/48;
    pair = [pair [f; g]];

    f = (b^2 + (1 - a + a^2 + b^2)^2)/84 ...
        + (2*a*(1 - a) - 3*b^2)/96 ...
        - a*(1 - a)*(7 - 4*b^2)/252;
    g = (24*(1 - a + a^2 + b^2)^2 ...
            - 39*b^2 - 2*a*(1 - a)*(7 - 16*b^2))/2016;
    pair = [pair [f; g]];

    f = diff(L3ab(a, b), a);
    g = - 2*(1 - 2*a)*(1 - a + a^2 + b^2)/83 ...
        + (1 - 2*a)/48 ...
        - omega_a/48;
    pair = [pair [f; g]];

    f = - 25*(1 - 2*a)*(1 - a + a^2 + b^2)/1008 ...
        + (1 - 2*a)/48 ...
        - (1 - 2*a)*(2 - b^2 + 5*a*(1 - a))/84;
    g = - (1 - 2*a)*(28 + 13*b^2 + 35*a*(1 - a))/1008;
    pair = [pair [f; g]];

    f = - 2*(1 - 2*a)*(1 - a + a^2 + b^2)/144 ...
        + (1 - 2*a)/48 ...
        - (1 - 2*a)/96;
    g = - d2*(d2^2 + 4*b^2)/288;
    pair = [pair [f; g]];

    f = subs(diff(L3ab(a, b), a, 2), a, at);
    g = (3 + 3*(1 - 2*at)^2 + 4*b^2)/83 - 1/sym(24) ...
         - subs(omega_aa, a, at)/48;
    pair = [pair [f; g]];

    f = 25*(3 + 3*(1 - 2*at)^2 + 4*b^2)/2016 - 1/sym(24) ...
        + (11*at*(1 - at) + 2*b^2)/168/b^2;
    g = (11 - 25*b^2)*(at - a)*(1 - at - a)/168/b^2 ...
        + (2*a*(1 - a)*(33 - 75*b^2) + 5*b^2*(9 + 10*b^2)) ...
            /1008/b^2;
    pair = [pair [f; g]];

    f = 1/sym(504)/b^2 + ( ...
            2*a*(1 - a)*(33 - 75*b^2) ...
            + 5*b^2*(9 + 10*b^2) ...
        )/1008/b^2;
    g = (2 + 2*a*(1 - a)*(33 - 75*b^2) ...
            + 5*b^2*(9 + 10*b^2))/1008/b^2;
    pair = [pair [f; g]];

    f = diff(L3ab(a, b), b);
    g = b*(5 + (1 - 2*a)^2 + 4*b^2)/83 - b/16 - omega_b/48;
    pair = [pair [f; g]];

    f = 23*b*(5 + (1 - 2*a)^2 + 4*b^2)/1920 - b/16 ...
        + a*(1 - a)*b/96;
    g = b*(9*d2^2 + 46*b^2)/960;
    pair = [pair [f; g]];

    f = 17*b*(5 + (1 - 2*a)^2 + 4*b^2)/1344 - b/16 ...
        + 1/sym(168)/b;
    g = (4 + 9*b^2 - 34*a*(1 - a)*b^2 + 34*b^4)/672/b;
    pair = [pair [f; g]];

    f = subs(diff(L3ab(a, b), b, 2), b, bt);
    g = (5 + (1 - 2*a)^2 + 12*bt^2)/83 - 1/sym(16) ...
        - subs(omega_bb, b, bt)/48;
    pair = [pair [f; g]];

    f = (5 + (1 - 2*a)^2 + 12*bt^2)/83 - 1/sym(16) ...
        + (24 - 64*a*(1 - a) - 17*bt^2)/2112/bt^2;
    g = ((1 - 2*a)^2 + 12*bt^2)/83 ...
        + (4*a*(1 - a) + 3*(1 - 2*a)^2)/264/bt^2 ...
        - 1807/sym(175296);
    pair = [pair [f; g]];

    f = (2*(1 - 2*a)^2 + 19*b^2)/126 ...
        + (4*a*(1 - a) + 3*(1 - 2*a)^2)/252/b^2 - 1/sym(126);
    g = (3 + 2*b^2 + 38*b^4 - 8*a*(1 - a)*(1 + 2*b^2)) ...
            /252/b^2;
    pair = [pair [f; g]];

    f = 1008*( ...
            2*(24*(1 - a + a^2 + b^2)^2 - 39*b^2 ...
                - 2*a*(1 - a)*(7 - 16*b^2))/2016 ...
            + b*(- (1 - 2*a)*(28 + 13*b^2 ...
                    + 35*a*(1 - a))/1008) ...
        );
    g = 6*d2^4 + 6*a*d1*(4 - b^2) + d2*b*(a*d1 + b^2) ...
        + 4*(1 - 2*b^2)^2 + b^2*(4 + b^2) ...
        + (d2 - b)^2*(14 + 18*a*d1 + 7*b^2);
    pair = [pair [f; g]];

    f = 1008*( ...
            2*(24*(1 - a + a^2 + b^2)^2 - 39*b^2 ...
                - 2*a*(1 - a)*(7 - 16*b^2))/2016 ...
            - b*0 ...
        );
    g = 34*a*d1 + 5*b^2 + 24*a^2*d1^2 + 4*d2^2*(6+b^2) + 24*b^4;
    pair = [pair [f; g]];

    f = 1008*( ...
            4*(24*(1 - a + a^2 + b^2)^2 - 39*b^2 ...
                - 2*a*(1 - a)*(7 - 16*b^2))/2016 ...
            - b^2*(2 + 2*a*(1 - a)*(33 - 75*b^2) ...
                + 5*b^2*(9 + 10*b^2))/1008/b^2 ...
        );
    g = 17*d2^4 + 2*a*d1*(12*a*d1+31*d2^2+b^2) ...
        + (1 - b^2)*(29*d2^2+2*b^2);
    pair = [pair [f; g]];

    f = 672*( ...
            3*(24*(1 - a + a^2 + b^2)^2 - 39*b^2 ...
                - 2*a*(1 - a)*(7 - 16*b^2))/2016 ...
            + b*0 ...
        );
    g = 34*a*d1 + 5*b^2 + 24*a^2*d1^2 + 4*d2^2*(6+b^2) + 24*b^4;
    pair = [pair [f; g]];

    f = 336*( ...
            3*(24*(1 - a + a^2 + b^2)^2 - 39*b^2 ...
                - 2*a*(1 - a)*(7 - 16*b^2))/2016 ...
            - b*(4 + 9*b^2 - 34*a*(1 - a)*b^2 + 34*b^4) ...
                /672/b ...
        );
    g = a*d1*(1 + 17*d2^2) + 5*d2^4 ...
        + (1 - b^2)*(11*a*d1 + 5*d2^2 + 5*b^2);
    pair = [pair [f; g]];

    f = 252*( ...
            8*(24*(1 - a + a^2 + b^2)^2 - 39*b^2 ...
                - 2*a*(1 - a)*(7 - 16*b^2))/2016 ...
            - b^2*(3 + 2*b^2 + 38*b^4 - 8*a*(1 - a)*(1 + 2*b^2)) ...
                /252/b^2 ...
        );
    g = 14*d2^2 + 2*a*d1*(1 + 12*a*d1) + 7*(1 - b^2)*(1 + 2*b^2);
    pair = [pair [f; g]];

    for p = pair
        if simplifyFraction(p(1) - p(2)) ~= 0
            error('There is something wrong.');
        end
    end
    disp('It is all right.');
end
\end{lstlisting}

\begin{lstlisting}[caption={\tt LemmaA\_7.m}, label=LemmaA-7]
function LemmaA_7
    pair = [];

    a = sym('a'); at = sym('at'); b = sym('b'); bt = sym('bt');
    d1 = 1 - a; d2 = 1 - 2*a;
    phi = b^2/(a^2 + b^2);
    phi_a = diff(phi, a);
    phi_b = diff(phi, b);
    phi_aa = diff(phi_a, a);
    phi_bb = diff(phi_b, b);

    f = L4ab(a, b);
    g = (a^2 + b^2)*((1 - a)^2 + b^2)/4/b^2 ...
        - (1 + a^2 + (1 - a)^2 + 2*b^2)/30 ...
        - b^2/20*(1 + 1/(a^2 + b^2) + 1/((1-a)^2 + b^2));
    h = a^2*(1 - a)^2/4/b^2 + (11 - 26*a*(1 - a))/60 ...
        + 2*b^2/15 - (phi + subs(phi, a, 1 - a))/20;
    pair = [pair [f; g] [f; h]];

    f = a^2*(1 - a)^2/4/b^2 + (11 - 26*a*(1 - a))/60 ...
        + 2*b^2/15 - (1 + 1)/20;
    g = a^2*(1 - a)^2/4/b^2 + (5 - 26*a*(1 - a) + 8*b^2)/60;
    pair = [pair [f; g]];

    f = diff(L4ab(a, b), a);
    g = (1 - 2*a)*(a*(1 - a)/2/b^2 - 13/sym(30)) ...
        - (phi_a - subs(phi_a, a, 1 - a))/20;
    pair = [pair [f; g]];

    f = subs(diff(L4ab(a, b), a, 2), a, at);
    g = (1 - 6*at*(1 - at))/2/b^2 + 13/sym(15) ...
        - (subs(phi_aa, a, at) + subs(phi_aa, a, 1 - at))/20;
    pair = [pair [f; g]];

    f = (1 - 6*at*(1 - at))/2/b^2 + 13/sym(15) ...
        + 1/sym(20)*(2/b^2 + 2/b^2);
    g = (7 - 30*a*(1 - a) - 30*(at - a)*(1 - at - a))/10/b^2 ...
        + 13/sym(15);
    pair = [pair [f; g]];

    f = (7 - 30*a*(1 - a) + 30*(b/45))/10/b^2 + 13/sym(15);
    g = (21 - 90*a*(1 - a) + 2*b)/30/b^2 + 13/sym(15);
    pair = [pair [f; g]];

    f = diff(L4ab(a, b), b);
    g = - a^2*(1 - a)^2/2/b^3 + 4*b/15 ...
        - (phi_b + subs(phi_b, a, 1 - a))/20;
    pair = [pair [f; g]];

    f = - a^2*(1 - a)^2/2/b^3 + 4*b/15 ...
        - 1/sym(20)*(1/2/b + 1/2/b);
    g = - a^2*(1 - a)^2/2/b^3 - 1/sym(20)/b + 4*b/15;
    pair = [pair [f; g]];

    f = subs(diff(L4ab(a, b), b, 2), b, bt);
    g = 3*a^2*(1 - a)^2/2/bt^4 + 4/sym(15) ...
        - subs(phi_bb + subs(phi_bb, a, 1 - a), b, bt)/20;
    pair = [pair [f; g]];

    f = 3*a^2*(1 - a)^2/2/bt^4 + 4/sym(15) ...
        + 1/sym(20)*(7/9/bt^2 + 7/9/bt^2);
    g = 3*a^2*(1 - a)^2/2/bt^4 + 7/sym(90)/bt^2 + 4/sym(15);
    pair = [pair [f; g]];

    f = 60*b^2*( ...
            3*(a^2*(1 - a)^2/4/b^2 ...
                + (5 - 26*a*(1 - a))/60 + 2*b^2/15) ...
            + b*((1 - 2*a)*(a*(1-a)/2/b^2 - 13/sym(30)) ...
                - 1/sym(30)/b) ...
        );
    g = 11*a*d1^2*b + 5*(3*a*d1 - b)^2 + a*b*(7*d1 - 5*b)^2 ...
        + b^2*(2*(1 - b) + a*(4 + 2*a + 3*b) + 6*(d1 - 2*b)^2);
    pair = [pair [f; g]];

    f = 60*b^2*( ...
            3*(a^2*(1 - a)^2/4/b^2 ...
                + (5 - 26*a*(1 - a))/60 + 2*b^2/15) ...
            - b*((1 - 2*a)*(a*(1 - a)/2/b^2 - 13/sym(30)) ...
                + 1/sym(30)/b) ...
        );
    g = 11*a^2*d1*b + 5*(3*a*d1 - b)^2 + d1*b*(7*a - 5*b)^2 ...
        + b^2*(2*(1 - b) + d1*(4 + 2*d1 + 3*b) + 6*(a - 2*b)^2);
    pair = [pair [f; g]];

    f = 60*b^2*( ...
            9*(a^2*(1 - a)^2/4/b^2 ...
                + (5 - 26*a*(1 - a))/60 + 2*b^2/15) ...
            - b^2*((21 - 90*a*(1-a) + 2*b)/30/b^2 + 13/sym(15))...
        );
    g = 86*a^2*d1^2 + (7*a*d1 - 4*b^2)^2 ...
        + b^2*(2 + 2*a*d1 + (1 - 2*b)^2);
    pair = [pair [f; g]];

    f = 60*b^2*( ...
            3*(a^2*(1 - a)^2/4/b^2 ...
                + (5 - 26*a*(1 - a))/60 + 2*b^2/15) ...
            + b*(- a^2*(1 - a)^2/2/b^3 - 1/sym(20)/b + 4*b/15) ...
        );
    g = b^2*(12*d2^2 + 25*b^2) + 15*(a*d1 - b^2)^2;
    pair = [pair [f; g]];

    f = 60*b^2*( ...
            3*(a^2*(1 - a)^2/4/b^2 ...
                + (5 - 26*a*(1 - a))/60 + 2*b^2/15) ...
            - b*(- a^2*(1 - a)^2/2/b^3 + 4*b/15) ...
        );
    g = b^2*(3 + 12*d2^2 + 5*b^2) + 3*(5*a*d1 - b^2)^2;
    pair = [pair [f; g]];

    f = 60*b^2*( ...
            9*(a^2*(1 - a)^2/4/b^2 ...
                + (5 - 26*a*(1 - a))/60 + 2*b^2/15) ...
            - b^2*(49*a^2*(1 - a)^2/30/b^4 ...
                + 1/sym(12)/b^2 + 4/sym(15)) ...
        );
    g = b^2*(40*d2^2 + 19*b^2) + 37*(a*d1 - b^2)^2;
    pair = [pair [f; g]];

    for p = pair
        if simplifyFraction(p(1) - p(2)) ~= 0
            error('There is something wrong.');
        end
    end
    disp('It is all right.');
end
\end{lstlisting}

\begin{lstlisting}[caption={\tt LemmaA\_8.m}, label=LemmaA-8]
function LemmaA_8
    pair = [];

    a = sym('a'); b = sym('b');
    d1 = 1 - a; d2 = 1 - 2*a;

    f = 112*(a^2 + b^2)*(d1^2 + b^2)/b^2*(L1ab(a,b) - L1ab(a,0));
    g = 8*(a*d1 - b^2)^2 + b^2;
    pair = [pair [f; g]];

    f = 864*(a^2 + b^2)*(d1^2 + b^2)/b^2*(L2ab(a,b) - L2ab(a,0));
    g = 32*(a*d1 - b^2)^2 + 5*b^2;
    pair = [pair [f; g]];

    f = 7968*(d1 + a^2 + b^2)*(d1 + a^2)/b^2 ...
        *(L3ab(a,b) - L3ab(a,0));
    g = 52*a*d1 + 39*d2^2 + 3*a^2*d1^2*(125 + 16*d2^2) ...
         + 3*b^2*(d1 + a^2)*(21 + 32*b^2 + 24*d2^2);
    pair = [pair [f; g]];

    for p = pair
        if simplifyFraction(p(1) - p(2)) ~= 0
            error('There is something wrong.');
        end
    end
    disp('It is all right.');
end
\end{lstlisting}

\begin{lstlisting}[caption={\tt LemmaA\_9.m}, label=LemmaA-9]
function LemmaA_9
    a = sym('a'); b = sym('b');
    d1 = 1 - a; d2 = 1 - 2*a;

    S = b/2;
    c1 = 2*((1-a)^2+b^2)/b; c2 = 2*(a^2+b^2)/b; c3 = 2/b;
    u1 = sym('u1'); u2 = sym('u2'); u3 = sym('u3');
    w1 = sym('w1'); w2 = sym('w2'); w3 = sym('w3');
    w4 = sym('w4'); w5 = sym('w5'); w6 = sym('w6');
    w7 = sym('w7'); w8 = sym('w8'); w9 = sym('w9');

    G1 =  F_beta_1(S/4, c1, c2, c3,  0, u3, u2, w7, w5, w3) ...
        + F_beta_1(S/4, c1, c2, c3, u3,  0, u1, w1, w8, w6) ...
        + F_beta_1(S/4, c1, c2, c3, u2, u1,  0, w4, w2, w9) ...
        + F_beta_1(S/4, c1, c2, c3, u1, u2, u3,-w7,-w8,-w9);

    G2 =  F_beta_2(S/4, c1, c2, c3,  0, u3, u2, w7, w5, w3) ...
        + F_beta_2(S/4, c1, c2, c3, u3,  0, u1, w1, w8, w6) ...
        + F_beta_2(S/4, c1, c2, c3, u2, u1,  0, w4, w2, w9) ...
        + F_beta_2(S/4, c1, c2, c3, u1, u2, u3,-w7,-w8,-w9);

    D = c1*c2*c3/16 - 41*(c1 + c2 + c3)/1080 ...
            - (1/c1 + 1/c2 + 1/c3)/5;
    H = 48*c1^2*c2^2*c3^2*(S*D*G2 - G1);

    E = 48*D - c1 - c2 - c3;
    q1 = 2*E + 2*c1 + c2 + c3;
    q2 = 2*E + 2*c2 + c3 + c1;
    q3 = 2*E + 2*c3 + c1 + c2;
    r1 = c1*(12*D*(E + c1) + 1)/(2*E + c1 + 2*c2 + 2*c3)^2;
    r2 = c2*(12*D*(E + c2) + 1)/(2*E + c2 + 2*c3 + 2*c1)^2;
    r3 = c3*(12*D*(E + c3) + 1)/(2*E + c3 + 2*c1 + 2*c2)^2;
    v1 = c2*c3*(w1 + w4);
    v2 = c3*c1*(w2 + w5);
    v3 = c1*c2*(w3 + w6);
    v4 = w1 + w2 - w3 + w4 - w5 + w6 + 2*(w8 + w9);
    v5 = w2 + w3 - w1 + w5 - w6 + w4 + 2*(w9 + w7);
    v6 = w3 + w1 - w2 + w6 - w4 + w5 + 2*(w7 + w8);
    v7 = c1*c2*c3*(4*q1*(w4 - w8 + w9) - q1*(v5 - v6) ...
                - (c2 - c3)*v4);
    v8 = c1*c2*c3*(4*q2*(w5 - w9 + w7) - q2*(v6 - v4) ...
                - (c3 - c1)*v5);
    v9 = c1*c2*c3*(4*q3*(w6 - w7 + w8) - q3*(v4 - v5) ...
                - (c1 - c2)*v6);

    H2 = 16*( ...
          1/c2/c3/q1*(c1*c2*c3*(q1*(u1 - u2 - u3) ...
                - 6*D*(c1 - c2 - c3)*v4) - 96*D*v1)^2 ...
        + 1/c3/c1/q2*(c1*c2*c3*(q2*(u2 - u3 - u1) ...
                - 6*D*(c2 - c3 - c1)*v5) - 96*D*v2)^2 ...
        + 1/c1/c2/q3*(c1*c2*c3*(q3*(u3 - u1 - u2) ...
                - 6*D*(c3 - c1 - c2)*v6) - 96*D*v3)^2 ...
    ) ...
    + ( ...
          1/c2/c3/q1*(q1*(2*c1*v1 - c2*v2 + c3*v3) ...
                + (c2 - c3)*(96*D - c1)*v1 - v7)^2 ...
        + 1/c3/c1/q2*(q2*(2*c2*v2 - c3*v3 + c1*v1) ...
                + (c3 - c1)*(96*D - c2)*v2 - v8)^2 ...
        + 1/c1/c2/q3*(q3*(2*c3*v3 - c1*v1 + c2*v2) ...
                + (c1 - c2)*(96*D - c3)*v3 - v9)^2 ...
    ) ...
    + 4*( ...
          c1/q1/r1*(v1 + r1*c2*c3*(96*D - c1)*v4)^2 ...
        + c2/q2/r2*(v2 + r2*c3*c1*(96*D - c2)*v5)^2 ...
        + c3/q3/r3*(v3 + r3*c1*c2*(96*D - c3)*v6)^2 ...
    ) ...
    + 8*( ...
          (q1*c1 + 3*D*(c2 + c3)*(c2 - c3)^2) ...
                /q1/(12*D*(E + c1) + 1)*v1^2 ...
        + (q2*c2 + 3*D*(c3 + c1)*(c3 - c1)^2) ...
                /q2/(12*D*(E + c2) + 1)*v2^2 ...
        + (q3*c3 + 3*D*(c1 + c2)*(c1 - c2)^2) ...
                /q3/(12*D*(E + c3) + 1)*v3^2 ...
    ) ...
    + 2/E*( ...
          ((c2 - c3)^2 + 32)/(E + c1) ...
                *(c1 + E/(12*D*(E + c1) + 1))*v1^2 ...
        + ((c3 - c1)^2 + 32)/(E + c2) ...
                *(c2 + E/(12*D*(E + c2) + 1))*v2^2 ...
        + ((c1 - c2)^2 + 32)/(E + c3) ...
                *(c3 + E/(12*D*(E + c3) + 1))*v3^2 ...
    ) ...
    + 2/E*( ...
          (8*E*(3*D*c1 - 1)*c1 - (c2 - c3)^2 - 32)*v1^2 ...
        + (8*E*(3*D*c2 - 1)*c2 - (c3 - c1)^2 - 32)*v2^2 ...
        + (8*E*(3*D*c3 - 1)*c3 - (c1 - c2)^2 - 32)*v3^2 ...
        + 48*D*E*( ...
              (c1*c2 - 4)*v1*v2 ...
            + (c2*c3 - 4)*v2*v3 ...
            + (c3*c1 - 4)*v3*v1 ...
        ) ...
    );

    e1 = 45*b^3*E/4;
    e2 = 108*(a*d1 - b^2)^2*(1 + d2^2 + 4*b^2)/c1/c2 ...
        + 270*a^2*d1^2 ...
        + b^2*(47*a*d1 + 88*d2^2 + (1 - 100*b^2)) + 81*b^4;
    if simplifyFraction(e1 - e2) ~= 0
        error('There is something wrong.');
    end

    %%  Verify the equality term by term.  %%
    variables = [u1 u2 u3 w1 w2 w3 w4 w5 w6 w7 w8 w9];
    for p = 1:12
        df1 = diff(H - H2, variables(p));
        for q = p:12
            df2 = simplifyFraction(diff(df1, variables(q)));
            if df2 ~= 0
                error('There is something wrong.');
            end
        end
    end
    disp('It is all right.');
end
\end{lstlisting}

\begin{lstlisting}[caption={\tt LemmaA\_11.m}, label=LemmaA-11]
function LemmaA_11
    pair = [];

    a = sym('a'); b = sym('b');
    d1 = 1 - a; d2 = 1 - 2*a;
    c1 = 2*((1-a)^2 + b^2)/b; c2 = 2*(a^2 + b^2)/b; c3 = 2/b;
    D = c1*c2*c3/16 - 41*(c1 + c2 + c3)/1080 ...
        - (1/c1 + 1/c2 + 1/c3)/5;
    E = 48*D - c1 - c2 - c3;
    A1 = 8*E*(3*D*c1 - 1)*c1 - (c2 - c3)^2 - 32;
    A2 = 8*E*(3*D*c2 - 1)*c2 - (c3 - c1)^2 - 32;
    A3 = 8*E*(3*D*c3 - 1)*c3 - (c1 - c2)^2 - 32;
    B1 = 24*D*E*(c2*c3 - 4);
    B2 = 24*D*E*(c3*c1 - 4);
    B3 = 24*D*E*(c1*c2 - 4);
    
    f = b^6*B1*B2*B3;
    g = 884736*D^3*E^3*a^2*d1^2*(a*d1 - b^2)^2;
    pair = [pair [f; g]];

    f = 2025*b^6/c1/c2*A3;
    g = 1492992*d2^2*b^2/c1^3/c2^3 ...
        + 6912*(a*d1 - b^2)^4/b^2/c1^2/c2^2*( ...
            209*a*d1 + 524*b^2 + d2^2*(209 + 99*b^2) + 396*b^4 ...
        ) + 4*b^2/c1/c2*( ...
            625080*b^2 + 1047320*b^4 ...
            + d2^2*(768 + 29840*d2^2 + 367663*b^2) ...
        ) + 722304*a*d1*b^2 + 583200*(a*d1 - b^2)^2 ...
        + (2603105 + 171072*a*d1)*b^4 ...
        + b^2*(1 - 100*b^2)*(41472 + 1711*b^2) + 28*b^6;
    pair = [pair [f; g]];

    f = 225*b^10/4096/E^2/D^2*(A1*B1^2 + A2*B2^2 - 2*B1*B2*B3);
    g = 432*b^4*(a*d1 - b^2)^2/c1/c2*( ...
            + 864*d2^2*(1 + 2*b^2)/c1/c2 + 209 + 2334*b^2 ...
            + 4696*b^4 + 1728*b^6 ...
            + d2^2*(411 + 1542*b^2 + 432*b^4) ...
        ) ...
        + b^2*(1 - 100*b^2)*(300*b^4 + 3225*b^6 + 1867*b^8 ...
            + 392040*a^4*d1^4) ...
        + 4*b^4*(1 - 10*a*d1)^2*(93271*a^2*d1^2 + 3707*d2^2*b^2) ...
        + 1166400*a^6*d1^6 + 1426680*a^4*d1^4*d2^2*b^2 ...
        + 54*a^3*d1^3*b^4*(49418 + 230061*a*d1) ...
        + a*d1*b^6*(12564*d2^2 + a*d1*(14187 + 713783*d2^2)) ...
        + b^8*(93*d2^2 + 3075550*a^2*d1^2 + 3649*d2^4) ...
        + b^10*(89 + 48752*d2^2) + 76*b^12;
    pair = [pair [f; g]];

    f = sym('8303765625')*b^24*c1^4*c2^4/16384*( ...
        A1*A2*A3 + 2*B1*B2*B3 - A1*B1^2 - A2*B2^2 - A3*B3^2 ...
    );
    g = sym('406239826673664')*d2^6*b^18/c1^2/c2^2 ...
        + sym('940369969152')*d2^4*b^14/c1/c2*( ...
            6*b^2*(5*a*d1 - 18*b^2)^2 ...
            + a^2*d1^2*b^2*(2576*b^2 + 29*(1 - 100*b^2)) ...
            + 1253*a^2*d1^2*(a*d1 - b^2)^2 ...
            + 6*b^4*(a*d1 + 54*(a*d1 - 2*b^2)^2) ...
        ) ...
        + b^4*(1 - 100*b^2)*( ...
            sym('12121552738440000')*a^14*d1^14 ...
            + sym('38379608724728421')*a^12*d1^12*b^2 ...
            + sym('18971342095206083')*a^10*d1^10*b^4 ...
            + sym('2090743154936166')*a^8*d1^8*b^6 ...
            + sym('279697348741361')*a^6*d1^6*b^8 ...
            + sym('3822652869396')*a^4*d1^4*b^10 ...
            + sym('756253238998')*a^2*d1^2*b^12 ...
            + sym('4091161727505')*b^14 ...
            + sym('12150003729052')*b^16 ...
            + sym('50821685560135')*b^18 ...
            + sym('98411555030530')*b^20 ...
            + sym('122686738307665')*b^22 ...
            + sym('97941102999784')*b^24 ...
            + sym('50251546065961')*b^26 ...
            + sym('92177583947338')*b^28 ...
            + sym('7977409739887423')*b^30 ...
        ) + a^3*d1^3*b^8*(1 - 8*a*d1)^2*( ...
            + a^7*d1^7*( ...
                sym('201474903892777042') ...
                + sym('90643664222052075')*d2^4 ...
            ) ...
            + sym('87413325433574170')*a^5*d1^5*d2^4*b^2 ...
            + a^3*d1^3*d2^4*b^4*( ...
                sym('26280162838987044')*a*d1 ...
                + sym('18393062792105615')*d2^2 ...
                + sym('263384776326212272')*a^2*d1^2 ...
            ) + 12*a*d1*d2^2*b^6*( ...
                sym('921487390368760')*a*d1 ...
                + sym('349514116301673')*d2^2 ...
            ) + sym('1983138862306248')*d2^10*b^8 ...
        ) + sym('47569496486400000')*a^18*d1^18*(3*a*d1 + d2^2) ...
        + sym('6802444800000')*b^2*a^16*d1^16*( ...
            13631*a*d1 + 38284*d2^2 + 108214*a^2*d1^2 ...
        ) + 24603750*b^4*a^14*d1^14*( ...
            900363843 + sym('7650287352')*d2^2 ...
            + sym('10182035079')*d2^4 ...
            + sym('3366302686')*d2^6 ...
        ) + 9*b^6*a^12*d1^12*( ...
            sym('60274379589129731')*d2^6 ...
            + sym('647576875800691136')*a^4*d1^4 ...
            + a*d1*d2^2*( ...
                16761821 + sym('32817780980698951')*d2^2 ...
                + sym('123184084009465804')*a*d1*d2^2 ...
            ) ...
        ) + 4*b^8*a^11*d1^11*d2^2*( ...
            sym('89625675600532618')*d2^4 + a*d1*( ...
                sym('1232319731556914497') + 172*d2^2 ...
                + sym('282706757366186700')*d2^4 ...
            ) ...
        ) + b^10*a^9*d1^9*( ...
            252*a^2*d1^2*d2^2 + 8*d2^4*( ...
                sym('680730978067758') ...
                + sym('176303141792417999')*a*d1 ...
            ) + a^3*d1^3*( ...
                sym('28350046403323666988') ...
                + sym('48947181654129068587')*d2^2 ...
                + sym('10169754750431798896')*d2^4 ...
            ) ...
        ) + 2*b^12*a^9*d1^9*( ...
            738*d2^4 + a^2*d1^2*( ...
                sym('6476622057673320575') ...
                + sym('16775086553269092583')*d2^2 ...
            ) + a*d1*d2^4*( ...
                sym('3205280764781424730') ...
                 + sym('2163731183135420737')*d2^2 ...
            ) ...
        ) + b^14*a^6*d1^6*( ...
            7*a^3*d1^3*( ...
               sym('267606134502121283') + 6725*d2^2 ...
            ) ...
            + 3*a*d1*d2^6*( ...
                sym('41287243358734152') ...
                + sym('1035751844957493491')*a*d1 ...
            ) + sym('102133612824349884')*d2^10 ...
            + 2*a^4*d1^4*d2^2*( ...
                sym('4444298389065673601') ...
                + sym('11374900226453699636')*d2^2 ...
            ) ...
        ) + 2*b^16*a^2*d1^2*( ...
            38*a^7*d1^7*( ...
                1217 + sym('119048568159419288')*a*d1*d2^2 ...
                + sym('150837118919724652')*d2^4 ...
            ) + sym('2834407261786822999')*a^6*d1^6*d2^6 ...
            + sym('1114301757844765956')*a^5*d1^5*d2^8 ...
            + sym('3612664492939608')*a^2*d1^2*d2^14 ...
            + d2^10*( ...
                sym('485477278694229') ...
                + sym('11576717008928092')*a^3*d1^3 ...
                + sym('297856692052747442')*a^4*d1^4 ...
            ) ...
        ) + b^18*( ...
            a^7*d1^7*( ...
                80161 + sym('613280621514985073')*a*d1*d2^2 ...
                + sym('2168086120174861319')*d2^4 ...
            ) + 3*a^5*d1^5*d2^6*( ...
                sym('201266834139916751') ...
                + sym('140164857867870021')*d2^2 ...
            ) + sym('134092514706817166')*a^4*d1^4*d2^10 ...
            + a^2*d1^2*d2^12*( ...
                sym('958467215549044') ...
                + sym('23606915947923884')*a*d1 ...
                + sym('113721778713257259')*a^2*d1^2 ...
            ) + sym('850988675449304')*a*d1*d2^18 ...
            + sym('55106777425135')*d2^20 ...
        ) + a*d1*b^20*( ...
            sym('5005973317536822')*a*d1 ...
            + sym('6878225893133844')*d2^10 ...
            + sym('581926730554137620')*a^2*d1^2*d2^8 ...
            + sym('6500684793939788360')*a^7*d1^7 ...
            + sym('19917465388756768960')*a^6*d1^6*d2^2 ...
            + sym('22114487883990056318')*a^5*d1^5*d2^4 ...
            + 2*a^3*d1^3*d2^6*( ...
                sym('697437936152400250') ...
                + sym('2785841237498890944')*a*d1 ...
                + sym('1482271309993033463')*a^2*d1^2 ...
            ) ...
        ) + 2*a*d1*b^22*( ...
            sym('23130096864387570')*d2^8 ...
            + sym('2315937521698808738')*a^5*d1^5 ...
            + sym('840904065784953540')*a^2*d1^2*d2^6 ...
            + a*d1*d2^2*( ...
                sym('14037263072295615') ...
                + sym('2867348900259792053')*a^3*d1^3 ...
            ) ...
            + 3*a^3*d1^3*d2^4*( ...
                sym('332465786752201665') ...
                + sym('682141502084226293')*a*d1 ...
                + sym('977253515068504924')*a^2*d1^2 ...
            ) ...
        ) + a*d1*b^24*( ...
            sym('1278193172773325750')*a^3*d1^3 ...
            + sym('577141788962864220')*a*d1*d2^4 ...
            + a^2*d1^2*d2^2*( ...
                sym('794463032340756283') ...
                + sym('1790154667867305990')*a*d1 ...
            ) + sym('136816263813704700')*d2^8 ...
            + 3*a^2*d1^2*d2^6*( ...
                sym('295254266683834159') ...
                + sym('189245075752900744')*a*d1 ...
            ) ...
        ) + a*d1*b^26*( ...
            sym('7342158644044464784')*a^4*d1^4 ...
            + sym('13195017945764608985')*a^3*d1^3*d2^2 ...
            + sym('6273782071117534828')*a^2*d1^2*d2^4 ...
            + sym('1505967078540509160')*a*d1*d2^6 ...
            + sym('222276557026854540')*d2^8 ...
            + sym('11142172580460000000')*a^5*d1^5 ...
        ) + 2*a*d1*b^28*( ...
            sym('371333521783157905')*a^2*d1^2 ...
            + sym('110859693910065990')*d2^4 ...
            + a*d1*d2^2*( ...
                sym('113043479305733967') ...
                + sym('681897555397501901')*a*d1 ...
                + sym('350076133701617264')*a^2*d1^2 ...
            ) ...
        ) + a*d1*b^30*( ...
            sym('43083707928314887') ...
            + sym('94188165135443957')*d2^4 ...
            + 2*a*d1*d2^2*( ...
                sym('38270182911979807') ...
                + sym('19065464961694444')*d2^2 ...
            ) ...
        ) + 4*a*d1*b^32*( ...
            sym('59079847399876688')*a^2*d1^2 ...
            + 3*d2^2*( ...
                sym('3833975366004819') ...
                + sym('7362306726062380')*a*d1 ...
            ) ...
        ) + 4*a*d1*b^34*( ...
            sym('1097149099950981') ...
            + sym('4490808002806144')*a*d1 ...
        ) + 4*b^36*( ...
            sym('199324506182122989') ...
            + sym('113055729475640')*d2^2 ...
        ) + sym('84587928319744')*b^38;
    pair = [pair [f; g]];

    for p = pair
        if simplifyFraction(p(1) - p(2)) ~= 0
            error('There is something wrong.');
        end
    end
    disp('It is all right.');
end
\end{lstlisting}

\end{document}